\title{Model Equivalences}
\author{Michael Benedikt and Ehud Hrushovski}
\setlist[enumerate]{leftmargin=*}
\newtheorem{thm}{Theorem}[section]
\newtheorem{cor}[thm]{Corollary}
\newtheorem{lem}[thm]{Lemma}
\newtheorem{prop}[thm]{Proposition}
\theoremstyle{definition}
\newtheorem{defn}[thm]{Definition}
\newtheorem{rem}[thm]{Remark}
\newtheorem{question}[thm]{Question}
\newtheorem{example}[thm]{Example}
\newtheorem{warning}[thm]{Warning}
\newtheorem{conjecture}[thm]{Conjecture}
\newtheorem{claim}[thm]{Claim}
\DeclareSymbolFont{largesymbols}{OMX}{yhex}{m}{n}
\DeclareMathAccent{\widetilde}{\mathord}{largesymbols}{"65}
\newcommand{\lequalslprime}{(L,=,L')}
\newcommand{\lequalslprimea}[1]{(L,=_{#1},L')}
\newcommand{\lpluslprime}{(L,+,L')}
\newcommand{\lpluslprimea}[1]{(L,+_{#1},L')}
\newcommand{\myeat}[1]{}
\newcommand{\sorts}{{\mathcal S}}
\newcommand{\coupled}{0}
\newcommand{\modeleq}{\ee}
\newcommand{\bigp}{definable \, groupoid}
\newcommand{\mer}{  \kw{MER}}
\newcommand{\bimer}{{\rm definable ~ \mer}}
\newcommand{\inftybimer}{{ \mer}}
\newcommand{\kw}[1]{{\mathsf{#1}}\xspace}
\newcommand{\thmref}[1]{Theorem~\ref{#1}}
\newcommand{\secref}[1]{\S\ref{#1}}
\newcommand{\lemref}[1]{Lemma~\ref{#1}}
\newcommand{\defref}[1]{Definition~\ref{#1}}
\newcommand{\propref}[1]{Proposition~\ref{#1}}
\newcommand{\corref}[1]{Corollary~\ref{#1}}
\newcommand{\remref}[1]{Remark~\ref{#1}}
\newcommand{\maxred}[1]{L^{#1}}
\newcommand{\maxreddl}[1]{L^{#1}_{DL}}
\newcommand{\bitheorymax}{T \times_{\maxred{\groupoid}} T} 
\newcommand{\groupoid}{\mathcal{G}}
\newcommand{\nc}{\newcommand}
\nc{\renc}{\renewcommand}
\nc{\ssec}{\subsection}
\nc{\sssec}{\subsubsection} 
\nc\ol{\overline}
\nc\wT{\widetilde{T}}
\nc\wh{\widehat}
\nc{\Aa}{{\mathbb{A}}}
\nc{\Bb}{{\mathbb{B}}}
 \nc{\Gg}{{\mathbb{G}}}  
\def\rR{\mathcal{R}}
\renc{\L}{\mathcal{L}}
\nc{\Hh}{{\mathbb{H}}}
 \nc{\Nn}{{\mathbb{N}}}
\nc{\Pp}{{\mathbb{P}}}
\nc{\Rr}{{\mathbb{R}}}
\nc{\BV}{{\mathbb{V}}}
\nc{\BW}{{\mathbb{W}}}
\nc{\Zz}{{\mathbb{Z}}}
\nc{\Qq}{{\mathbb{Q}}}
\nc{\Ss}{{\mathbb{S}}}
\nc{\Cc}{{\mathbb{C}}}
\nc{\Ff}{{\mathbb{F}}}
\nc{\ff}{{\bar{f}}}
 \nc{\EL}{{L_\infty}}
\nc{\CA}{{\mathcal{A}}}
\nc{\CB}{{\mathcal{B}}}
\nc{\CF}{{\mathcal{F}}}
 \nc{\Las}{\mathsf{Las}}
\nc{\CG}{{\mathcal{G}}}
 \nc{\R}{{\mathsf{Def}}}
\nc{\CC}{{\mathcal{C}}}
\nc{\CM}{{\mathsf{M}}}
\nc{\CN}{{\mathcal{N}}}
\nc{\Oog}{{\mathbb{O}}}
\nc{\Oo}{{\mathcal{O}}}
\nc{\CQ}{{\mathcal{Q}}} 
\nc{\CS}{{\mathcal{S}}}
\nc{\CT}{{\mathcal{T}}}
\nc{\CV}{{\mathcal{V}}}
\nc{\CW}{{\mathcal{W}}}
\nc{\CZ}{{\mathcal{Z}}}
\nc{\modec}{{\overset{e.c.}\models}{}}
\nc{\subsetec}{{\underset{e.c.}\subset}{}}
\nc{\bvo}{{\underset{\omega}\bigvee}}
\nc{\bwo}{{\underset{\omega}\bigwedge}}
\nc{\bb}{{\mathbf{b}}}
\nc{\bc}{{\mathbf{c}}}
\nc{\bd}{\partial}
\nc{\be}{{\mathbf{e}}}
\nc{\bj}{{\mathbf{j}}}
\nc{\bn}{{\mathbf{n}}}
\nc{\bp}{{\mathbf{p}}}
\nc{\bq}{{\mathbf{q}}}
\nc{\bF}{{\mathbf{F}}}
\nc{\bu}{{\mathbf{u}}}
\nc{\bv}{{\mathbf{v}}}
\nc{\bx}{{\mathbf{x}}}
\nc{\bs}{{\mathbf{s}}}
\nc{\by}{{\bar{y}}}
\nc{\bw}{{\mathbf{w}}}
\nc{\bA}{{\mathbf{A}}}
\nc{\bK}{{\mathbf{K}}}
\nc{\bI}{{\mathbf{I}}}
\nc{\bB}{{\mathbf{B}}}
\nc{\bG}{{\mathbf{G}}}
\nc{\bC}{{\mathbf{C}}}
\nc{\bD}{{\mathbf{D}}}
\nc{\bP}{{\mathbf{P}}}
\nc{\bH}{{\mathbf{H}}}
\nc{\bM}{{\mathbf{M}}}
\nc{\bN}{{\mathbf{N}}}
\nc{\bV}{{\mathbf{V}}}
\nc{\bU}{{\mathbf{U}}}
\nc{\bL}{{\mathbf{L}}}
\nc{\bT}{{\mathbf{T}}}
\nc{\bW}{{\mathbf{W}}}
\nc{\bX}{{\mathbf{X}}}
\nc{\bY}{{\mathbf{Y}}}
\nc{\bZ}{{\mathbf{Z}}}
\nc{\bS}{{\mathbf{S}}}
\nc{\bSi}{{\bar{\Sigma}}}
\nc{\sA}{{\mathsf{A}}}
\nc{\sB}{{\mathsf{B}}}
\nc{\sC}{{\mathsf{C}}}
\nc{\sD}{{\mathsf{D}}}
\nc{\sF}{{\mathsf{F}}}
\nc{\sG}{{\mathsf{G}}}
\nc{\sK}{{\mathsf{K}}}
\nc{\sM}{{\mathsf{M}}}
\nc{\sO}{{\mathsf{O}}}
\nc{\sQ}{{\mathsf{Q}}}
\nc{\sP}{{\mathsf{P}}}
\nc{\sZ}{{\mathsf{Z}}}
\nc{\sfp}{{\mathsf{p}}}
\nc{\sr}{{\mathsf{r}}}
\nc{\sg}{{\mathsf{g}}}
\nc{\sff}{{\mathsf{f}}}
\nc{\sfb}{{\mathsf{b}}}
\nc{\sfc}{{\mathsf{c}}}
\nc{\sd}{{\Delta}}
 \def\e{\epsilon}
  \nc{\vol}{{\mathop{\operatorname{\rm vol\,}}}}
  \nc{\pr}{{\mathop{\operatorname{\rm pr\,}}}}
\nc{\co}{{\mathop{\operatorname{\rm Core\,}}}}
\nc{\comm}{{\mathop{\operatorname{\rm Comm\,}}}}
  \nc{\gal}{{\mathop{\operatorname{\rm Gal\,}}}}
  \nc{\Mod}{{\mathop{\operatorname{\rm Mod\,}}}}
  \nc{\modelsof}{\Mod} 
 \nc{\Aff}{{\mathop{\operatorname{\rm Aff\,}}}}
  \nc{\disc}{{\mathop{\operatorname{\rm disc}}}}
  \nc{\Sym}{{\mathop{\operatorname{\rm Sym}}}}
   \nc{\Alt}{{\mathop{\operatorname{\rm Alt}}}}
   \nc{\Aut}{{\mathop{\operatorname{\rm Aut}}}}
 \nc{\Spec}{{\mathop{\operatorname{\rm Spec}}}}
  \nc{\spec}{{\mathop{\operatorname{\rm Spec}}}}
\nc{\Ker}{{\mathop{\operatorname{\rm Ker}}}}
 \nc{\dom}{{\mathop{\operatorname{\rm dom}}}}
\nc{\End}{{\mathop{\operatorname{\rm End}}}}
 \nc{\Hom}{\operatorname{Hom}}
 \nc{\GL}{{\mathop{\operatorname{\rm GL}}}}
 \nc{\Id}{{\mathop{\operatorname{\rm Id}}}}
 \nc{\rk}{{\mathop{\operatorname{\rm rk}}}}
 \nc{\length}{{\mathop{\operatorname{\rm length}}}}
\nc{\supp}{{\mathop{\operatorname{\rm supp} \, }}}
\nc{\val}{{\rm val}}
\nc{\res}{{\mathop{\operatorname{\rm res}}}}
\def\Ind#1#2#3{{#1} {\downarrow}_{#3} {#2} }
\def\meet{\cap}
\def\union{\cup}
\def\si{\sigma} \def\Si{\Sigma}
\def\g{\gamma}
\def\G{\Gamma}
\def\m{\smallsetminus}
\nc{\seq}[1]{\stackrel{#1}{\sim}}
\def\inv{^{-1}}
\def\beq#1{\begin{equation} \label{ #1}}
\def\eeq{\end{equation}}
\def\prf{\begin{proof}}
\def\pv{\end{proof} }
 \def\eprf{\end{proof} }
\def\acl{\mathop{\rm acl}\nolimits}
 \def\dcl{\mathop{\rm dcl}\nolimits}
\def\a{\alpha}
 \renc{\b}{{\beta}}
\def\Ind#1#2{#1\setbox0=\hbox{$#1x$}\kern\wd0\hbox to 0pt{\hss$#1\mid$\hss}
\lower.9\ht0\hbox to 0pt{\hss$#1\smile$\hss}\kern\wd0}
 \def\Om{\Omega}
\def\ee{\mathcal{E}}
\setlist[itemize]{leftmargin=*}
\begin{document}

\begin{abstract}We look at equivalence relations on the set of models of a theory   such that the class of equivalent pairs is itself an elementary class,
in a language appropriate for pairs of models. We call these $\mer$s, for short.
We provide many examples of  $\bimer$s, along with the first steps of a classification theory for them.   We characterize the special classes
of   $\bimer$s associated
with preservation of formulas,  either in classical first order logic or in continuous logic,  and uncover an intrinsic role  for the latter.   We bring out a nontrivial  relationship with interpretations (imaginary sorts), leading to 
a wider hierarchy of classes related to the preservation of reducts.   
We give results about the relationship between these classes, both
for general theories and for theories satisfying additional model-theoretic properties, such as stability.   

\end{abstract}

\maketitle

\section{Introduction} \label{sec:intro}
We will study equivalence relations defined on the set of models of a given theory defined via a set of logical formulas in a vocabulary appropriate
for pairs of models, $\mer$s for short.

A sentence in the language of pairs of models will usually not define an equivalence relation. There are some simple classes of sentences that do give   $\mer$s, and we pay particular attention to these. One family of $\mer$s is  associated with preservation of open formulas (a.k.a. definable relations) -- either formulas of first-order logic or  (``ydlept $\mer$s'') or open formulas 
of continuous logic (``yclept $\mer$s''). We also extend the ydlept $\mer$ to a hierarchy of families (``$n$-ydlept'', for $n$ a natural number). These can be described via the preservation of definable families of sets, or by considering ydlepts over imaginaries.
We also consider the  subclass of $\mer$s  defined by restricting the quantifier complexity of the formula in the two-model language defining
the $\mer$. 

One motivation for the study is a step towards model-theoretic analysis of families of mappings  between structures. We have  examples of $\mer$s that arise 
from considering natural function classes.
A second motivation  is to generalize  classification theory to the study of models ``up to an equivalence relation''.

Naturally, we are far from achieving any of these  goals. 
 We will show that continuous logic plays a natural role in the study of $\mer$s, in that every $\mer$ can be associated
with a continuous logic lower approximation.
We are able to  identify some connections between families mentioned above: for example,
$\mer$s that  are definable with restricted quantifier alternation must be given by an open formula. 
We point the reader to Theorem \ref{thm:aerelative}, which gives a classification of the $\mer$s that are given by reducts
in continuous logic.
The reader may wish to look ahead to Figure \ref{fig:classes}, which shows the classes we study, illustrating some connections.

We also establish some connections between traditional
model-theoretic criteria on the underlying theory and properties of the corresponding $\mer$s. In stable theories one family of $\mer$s collapses to an a priori
smaller class. We show that theories defining a nontrivial partial ordering on their universe always admit non-yclept $\mer$s: see Proposition \ref{prop:yclept-unsop}.
The hypothesis is natural in view of Shelah's basic stability classification, where one of the classes is theories with a partial ordering on tuples (SOP).

Finally, we provide a large class of examples of $\mer$s, in the process showing non-connections  between  families.

{\bf Organization.}
The first part of the paper introduces our basic objects of study.
We give preliminaries on the logics we use in Section \ref{sec:prelims}, then introduce our main definition in
Section \ref{sec:defs}, $\mer$s that are definable either with a finite or an infinite set of sentences. The former will be called
$\bimer$s.
In Section \ref{sec:invar} we define a special class of $\mer$s given by preserving continuous logic definable sets -- the yclept $\mer$s.
And similarly for classical logic, the ydlept $\mer$s. We also introduce a hierarchy of $\mer$s that extend ydlept $\mer$s, the
$n$-ydlepts for $n \geq 1$.
In Section \ref{sec:maximalcl} we show that each $\mer$ definable by infinitely many sentences
is associated with a maximal yclept $\mer$. This \emph{maximal CL reduct} often gives significant information about the $\mer$.

The second part of the paper deals with relationship between the families of $\mer$s defined previously.
In Section \ref{sec:cldl} and  Section \ref{sec:notydlept} we investigate the intersection of yclept $\mer$s and $\bimer$s.
The intersection of $n$-ydlept $\mer$s with yclepts, and more generally
$n$-ydlept $\mer$ coarsenings of an yclept $\mer$, is
 examined in Section \ref{sec:yclepinthigherydlept}.
 In Section \ref{sec:stable} we show that when we introduce a classical model-theoretic restriction,
 stability, the hierarchy of $n$-ydlept $\mer$s collapses.
Section \ref{sec:aecomp} shows that if we assume that a $\bimer$ has a defining  formula
of low quantifier complexity, then we can make strong conclusions about where it fits in the hierarchy of families.
Section \ref{sec:smooth} utilizes the results of the prior section to give several characterizations of yclept $\mer$s.

The final part of the paper moves from relationships between restricted classes to explore generalization of traditional classification theory to the setting of model equivalence relations.   
While most of the results in the paper concern the relationship between $\mer$s within restricted families, like the yclept $\mer$s,
Section \ref{sec:basis} give some results about what structure in a theory is implied by the presence of a non-trivial $\mer$, while Section \ref{sec:theories} provides some preliminary results concerning theories  all of whose $\mer$s 
are yclept, making a connection to the strict order property.

In Section \ref{sec:examples}, we present a larger number of examples.
We close with further questions in Section \ref{sec:questions}.

\section{Preliminaries} \label{sec:prelims}

We often refer to two-valued first order logic as DL (discrete, or disconnected logic), in the context of formulas
or  theories, contrasting
it with Continuous Logic (CL).  In this work, unless otherwise specified,   $T$ will denote a DL  theory in language $L$. 
The word \emph{``definable'' will always mean, by default, definable with a formula without  parameters}.

Most of our results require only basic logic.   Occasionally we will also discuss statements  specific to certain tame classes in Shelah's classification, and for these we will  assume some basic familiarity:
\begin{itemize}
\item Stable theories, \cite{poizatmodeltheory}. 
\item NIP theories and the strict order property \cite{nipbook}
\item o-minimal theories (which are NIP) \cite{ominimalbook}
\end{itemize}
In fact we use little more than their definitions; NIP occurs only in examples.   Stability enters in \secref{sec:stable} via  the more general notion of  {\em stable embeddedness}, see e.g.  \cite[3.1]{nipbook}.

We will make use of continuous logic (CL) as in 
\cite{itayalexwardcontinuous}, but ``unmetrized''
(as in \cite{ckcontinuous}).  Equivalently
we can use the foundations of \cite{itayalexwardcontinuous} but assume a trivial metric. This drastically reduces the complexity of the metatheory. Continuous logic works over models in which predicates take
truth values in a bounded subset of the reals.

Formulas are built up from real-valued predicates 
using composition with continuous functions from products of reals to the reals, along with the quantifiers
sup and inf. Each such formula defines a bounded real-valued function on a model. We also close under
uniform limits: if $F_i(x)$ are real-valued predicates that converge for all $x$, their limit is a new real-valued predicate.

In much of the paper, we will be interested in CL over classical models -- the truth values of the predicates are just $\{0, 1\}$, but the formulas can take more general real values. The reader might look ahead to Example \ref{ex:CLexample} and 
Example \ref{ex:easyycleptnotydlept} for typical uses of CL.
While in most of the paper we will need little more than the semantics of CL formulas,
in  Section \ref{sec:smooth} and Section  \ref{sec:notydlept} we will make use of the notion of a \emph{CL theory}: this is a collection of assertions
of the form $\phi=0$, where $\phi$ is a CL sentence. The notion of a model satisfying such a theory is the obvious one.   We crucially use type spaces as well;  they are compact Hausdorff but not necessarily totally disconnected, in fact they may be connected.  


Compactness forms an essential feature of the theory of $\mer$s. We will use it  both 
directly and via the usual device of 'sufficiently saturated models'. In our setting, since we work with expansions throughout, the notion of {\em resplendency} provides a well-suited precise measure of the degree of saturation required.  We use it  for countable models.  We define a countable $L$-structure $M$ to be {\em resplendent} if  for any finite $A 
\subset M$, for any  expansion $L'$ of $L_A$ 
by finitely many symbols,   any recursive $L'$-theory $T'$ such that $T_A \union T'$ is consistent is realized in an expansion of $M$.


Note that for a countable model $M$, 
resplendence is equivalent to recursive saturation: any recursive partial type over a finite subset of $M$ realized in some elementary extension is already realized in $M$. 
See \cite{resplendent}. We leave the transposition to continuous logic to the reader.
For any recursively saturated $M$, there exists a Borel map $b$ from pairs $(M,T)$, with $M$ a recursively
saturated model and $T$ a finite 
theory consistent with $Th(M)$ to expansions of $M$, such that $b(M,T) \models T$. 
See the proof of Proposition \ref{prop:reducts} for one way to see this.

\section{Definition of model equivalences and basic properties} \label{sec:defs}
Let $L$ be a vocabulary, possibly with multiple sorts.  We let $S_\coupled$ be a subset of the sorts of $L$: the \emph{coupled sorts}. The other
sorts are the \emph{decoupled sorts} of $L$.

We define a new language  $\lequalslprimea{S_0}$.
It will have two copies of the decoupled sorts,  unprimed
and primed,  and only one copy of the coupled sorts. It will also have
two copies of each non-logical symbol of $L$.
Suppose we have a relation $R(x_1, \ldots x_n)$ of $L$, where $x_i$ has sort $S_i$. Then
in $\lequalslprime$ we have a relation $R(x_1 \ldots x_n)$, where each $x_i$ has the unprimed  sort corresponding to $S_i$,
along with a relation $R'(x_1 \ldots x_n)$ where $x_i$ has sort $S_i$ if $S_i$ has a coupled sort, while $x_i$ has sort $S'_i$ if $S_i$
is a decoupled sort.
The intuition for $\lequalslprimea{S_0}$ is that it describes a pair of $L$-structures, where on the coupled sorts they are forced to have the
same universe, while on the decoupled sorts they are disjoint.

We now formalize this intuition.
Let  $T$ be a first order $L$ theory
Let $\Om$ be a universe for each coupled sort of $L$. By $\modelsof_\Om(T)$ we denote the models of theory $T$ where the universes  of the coupled
sorts match $\Om$. A pair of models in $\modelsof_\Om(T)$  can be made into a model for 
$\lequalslprimea{S_0}$.
  in the obvious way, just by priming
the relations and the decoupled sorts in the second model.
Thus we can talk about a pair of models satisfying a sentence in   $\lequalslprimea{S_0}$.

We consider  
the situation where a theory $\tau$ in $\lequalslprimea{S_0}$ 
defines an equivalence relation on $\modelsof_\Omega(T)$, for any set $\Omega$. 
We say that $\Xi$ is a $\mer$.   If $\tau$ is 
finitely axiomatizable,  we talk of a 
\emph{bidefinable Model Equivalence Relation} or just $\bimer$.
Thus in case $T$ has no finite models, it suffices to check the definition of a $\bimer$ on a single set $\Omega$ of size at least $|L|+\aleph_0$;  and similarly for $\inftybimer$s.   Only $\inftybimer$'s (and among them, $\bimer$'s) will be considered from here on.

For most of the result in this work, it suffices to deal with the case where all sorts of $L$ are coupled: we refer to
these as \emph{completely-coupled} $\mer$s, and this will be our default. 
We refer to the language for pairs in this case as $\lequalslprime$.
While when we want to highlight the more general case with some decoupled sorts, we refer to  \emph{decoupled $\mer$s}.
In the completely-coupled case we will sometimes assume a single sort for simplicity.

We note that when the language is finite, we can basically assume the theory is finite as well:

\begin{lem} \label{finiteT}   Let $T$ be a theory in a finite language $L$. 
Let $\ee$ be a  $\bimer$ for $T$.  Then
there exists a finite $T_0 \subset T$ and a  $\bimer$  $\ee_0$ on models of $T_0$, such that $\ee_0 \upharpoonright \modelsof(T) = \ee$.
\end{lem}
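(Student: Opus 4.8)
The plan is to split the requirement that $\ee$ is a $\bimer$ into its three constituents — reflexivity, symmetry and transitivity of the relation defined by $\ee$'s defining sentence — and to apply compactness to each separately. Since a $\bimer$ is finitely axiomatizable, first fix a single $\lequalslprime$-sentence $\tau$ axiomatizing $\ee$, and let $T'$ denote $T$ re-expressed in the primed copy of the vocabulary. As $L$ is finite, the \emph{diagonal sentence} $\delta := \bigwedge_{R\in L}\forall\bar x\,(R(\bar x)\leftrightarrow R'(\bar x))$ — with the obvious additional clauses for function and constant symbols — is a single $\lequalslprime$-sentence, satisfied by a pair $(M,N)$ exactly when $M=N$ as $L$-structures on the common universe. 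Let $\sigma$ be the involution of $\lequalslprime$-sentences interchanging primed and unprimed symbols, so $(M,N)\models\sigma(\psi)$ iff $(N,M)\models\psi$; and let $L_3$ carry three disjoint copies $L^1,L^2,L^3$ of $L$ over one universe, with $\tau^{ij}$ (for $1\le i<j\le 3$) the result of sending the unprimed symbols of $\tau$ to $L^i$ and the primed ones to $L^j$.

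Next I would note that, in the completely-coupled case, a model of $T$ in two (resp.\ three) copies of $L$ over a common universe is precisely a pair (resp.\ triple) of $T$-models with equal universes; hence the statement that $\ee$ is a $\bimer$ for $T$ unwinds, uniformly over all $\Omega$, into three first-order consequences: reflexivity becomes $T\cup T'\cup\{\delta\}\models\tau$, symmetry becomes $T\cup T'\cup\{\tau\}\models\sigma(\tau)$, and transitivity becomes $T^1\cup T^2\cup T^3\cup\{\tau^{12},\tau^{23}\}\models\tau^{13}$ — and each of these is an actual equivalence with the named property, not merely an implication. Applying compactness to the three consequences in turn (only $T$ gets shrunk, since $\delta$, $\sigma(\tau)$ and the $\tau^{ij}$ are single sentences) yields finite subtheories $T_1,T_2,T_3\subseteq T$ witnessing them, and I set $T_0:=T_1\cup T_2\cup T_3\subseteq T$, finite.

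Finally I would check that the same sentence $\tau$ defines an equivalence relation $\ee_0$ on $\modelsof_\Omega(T_0)$ for every $\Omega$: if $M\models T_0$ then $M\models T_1$, so $(M,M)\models T_1\cup T_1'\cup\{\delta\}$ and thus $(M,M)\models\tau$; if $M,N\models T_0$ have a common universe and $(M,N)\models\tau$, then $(M,N)\models T_2\cup T_2'\cup\{\tau\}$, so $(M,N)\models\sigma(\tau)$, i.e.\ $(N,M)\models\tau$; transitivity is identical using $T_3$. Thus $\ee_0$ is a $\bimer$ for $T_0$, and since $\modelsof(T)\subseteq\modelsof(T_0)$ while $\ee$ and $\ee_0$ are defined by the \emph{same} sentence $\tau$, we get $\ee_0\upharpoonright\modelsof(T)=\ee$.

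The step needing care is the translation in the second paragraph: one must verify that the three consequences really are equivalences holding uniformly in $\Omega$ — so that a single fixed sentence $\tau$ (and single auxiliary sentences) can be handed to compactness — and that after shrinking to $T_0$ the consequences still run in the useful direction, which is automatic since $T_0\supseteq T_i$ forces every model of $T_0$ into the hypothesis of the $i$-th consequence. Beyond this bookkeeping there is no real obstacle; finiteness of $L$ is used only to make $\delta$ a single sentence. In the general decoupled setting one replaces $\delta$ by the sentence asserting that the two structures agree under the canonical identification of the corresponding primed and unprimed copies of each decoupled sort, and the argument is otherwise unchanged.
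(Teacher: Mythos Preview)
Your proposal is correct and follows essentially the same approach as the paper: express reflexivity, symmetry, and transitivity of the defining sentence as first-order consequences of (copies of) $T$, then apply compactness to extract a finite $T_0$. The paper's proof is terser---it simply states that $\ee(R,R')$ is reflexive, symmetric, and transitive on models of $T(R)+T(R')$ and invokes compactness---whereas you spell out the diagonal sentence $\delta$, the swap $\sigma$, and the three-copy language for transitivity explicitly; but the underlying idea is identical.
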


\prf  We can assume that $L$ consists of a single relation $R$.  \\
So $\lequalslprime$ is generated by $R,R'$ and the equality across models.  We write $T(R')$ for the theory $T$, with $R$ replaced everywhere by $R'$.
We abuse notation to identify $\ee$ with its defining formula.
We have:  $  \ee(R,R')  $ is symmetric, reflexive and transitive on $\lequalslprime$-models of $T(R) + T(R')$.    By compactness, for some finite 
$T_0 \subset T$, $\ee(R,R')$ is symmetric, reflexive and transitive on $\lequalslprime$-models of $T_0(R) + T_0(R')$.
\eprf 

Actually the finite language assumption is unnecessary in \lemref{finiteT}; we leave this to the reader.

\ssec{Correspondence with definable groupoids}  

A {\em groupoid} is a category where every morphism is invertible.  
\label{defgr}  All groupoids considered here will have the same class of objects, namely $\modelsof(T)$; 
so they will be distinguished by their morphisms.  
We have the groupoid $Iso_T$ of models of $T$ with isomorphisms.  We also have the larger groupoid $Bij_T$ whose objects are models of $T$, and whose morphisms $Bij_T(M,N)$ are all bijections $M \to N$.
By a $T$-groupoid we mean an intermediate groupoid $\groupoid$, with objects $\modelsof(T)$ and
morphisms $\groupoid(M,N)$ with $Iso_T(M,N) \subset \groupoid(M,N) \subset Bij_T(M,N)$.

We consider a language describing two $L$-structures, along with bijections between 
their coupled sorts $S_0$.   A groupoid $\groupoid$ is {\em $\bigwedge$-definable} if there exists
a set $\Theta$ of sentences of $\lpluslprimea{S_0}$ whose models are precisely the 
$\groupoid$ morphisms.   If $\Theta$ is finite, we say $\groupoid$ is definable.

In more detail, let $S_0$ again represent the coupled sorts.   

By $\lpluslprimea{S_0}$ we denote the language with two copies of each  sort of $L$,
referred to as primed and unprimed.  For each relation or function in
$L$, $\lpluslprimea{S_0}$ has two copies, 
one taking the unprimed sorts and the other taking the primed sorts.
We have additional function symbols, $f$, going from the unprimed copy of a sort in $S_0$ to its primed copy.
A {\em bidefinable groupoid} $\CG$ over coupled sorts $S_0$
 is given
by a formula $\Theta$ in $\lpluslprimea{S_0}$,  where
 $\groupoid(M,N) = \{f: (M,N,f) \models \Theta \}$.   If  $\Theta$
 is allowed to be a possible infinite set of formulas,  we refer
to an {$\bigwedge$-definable groupoid}.   We write $Th(\groupoid)$ to denote the $\lpluslprimea{S_0}$-partial type defining 
groupoid $\groupoid$.
Thus $Th(\groupoid)$ is the $\lpluslprimea{S_0}$ theory of $\{(M,N,f):  M,N \models T, f \in \groupoid(M,N) \}$.
When all sorts are coupled we refer simply to $\lpluslprime$.

\begin{warning} \label{bipartitewarning}
  The definition of bidefinability and $\bigwedge$-bidefinability used in groupoids
applies even if it happens that $M=N$.   It is   not  the same as definability in the structure $(M,f)$, since
in our definition the sorts of the two components are distinct, so we cannot compare
(e.g.) $x$ and $f(x)$ directly.    Put another way, equality in the $\lequalslprime$ setting is being replaced by
identification via $f$ in $\lpluslprime$, so we do not have an additional equality.
\end{warning}

In the rest of this section, we will assume for simplicity a single-sorted completely-coupled setting. That is,
$L$ has a single sort, this is the coupled sort. 
In this case there is a single function symbol in $\lpluslprime$.

\begin{prop} \label{prop:equivrelationgroupoid}There is a natural 1-1 correspondence between  $\bimer$s for $T$,
and   bidefinable groupoids for $T$, 
and similarly for $\inftybimer$s and {$\bigwedge$-definable groupoid}s.
 \end{prop}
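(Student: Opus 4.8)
The plan is to build the correspondence explicitly in both directions and check it is well-defined and mutually inverse. Given a $\bimer$ $\ee$ for $T$, defined by a sentence (or set of sentences) $\tau$ in $\lequalslprime$, I would define a groupoid $\groupoid_\ee$ by declaring a bijection $f : M \to N$ to be a morphism precisely when $f$ transports the primed structure of $N$ onto a structure on $M$'s universe that, together with $M$'s own structure, satisfies $\tau$. Concretely, $(M,N,f) \models \Theta_\ee$ iff $(M, f^{*}N) \models \tau$, where $f^{*}N$ is the pushforward of $N$ along $f^{-1}$, living on the same universe as $M$; this is expressible by a formula in $\lpluslprime$ obtained from $\tau$ by replacing each occurrence of a primed relation $R'(x')$ with $R'(f(x))$ and each primed variable $x'$ with $f(x)$. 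One checks that $\Theta_\ee$ is finite when $\tau$ is, and a (possibly infinite) partial type otherwise.

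The first thing to verify is that $\groupoid_\ee$ really is a $T$-groupoid: that $Iso_T(M,N) \subseteq \groupoid_\ee(M,N) \subseteq Bij_T(M,N)$ and that it is closed under composition and inverse. Containment in $Bij_T$ is by construction. For $Iso_T \subseteq \groupoid_\ee$: if $f$ is an isomorphism then $f^{*}N = M$ as $L$-structures on the same universe, so $(M, f^{*}N) = (M,M)$, and reflexivity of $\ee$ (which says $(M,M) \models \tau$) gives the morphism. The groupoid axioms for $\groupoid_\ee$ translate exactly into reflexivity, symmetry, and transitivity of $\ee$: identity morphisms correspond to reflexivity, $f^{-1}$ being a morphism whenever $f$ is corresponds to symmetry (since $(f^{-1})^{*}M$ on universe $N$ gives the pair $(N, (f^{-1})^{*}M)$, which is $\ee$-related to $(N,N)$ iff $(M, f^{*}N)$ is related to $(M,M)$ after relabeling by $f$, i.e.\ iff $M \ee N$), and composition corresponds to transitivity. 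Here one uses that $\ee$ is, by hypothesis, an equivalence relation on $\modelsof_\Omega(T)$ for every $\Omega$, so that picking $\Omega$ to be the common universe of $M$ and $N$ justifies the equivalence-relation steps. Conversely, given a bidefinable ($\bigwedge$-definable) groupoid $\groupoid$ with defining type $Th(\groupoid)$, I would define $\ee_\groupoid$ on $\modelsof_\Omega(T)$ by: $M \ee_\groupoid N$ iff $\groupoid(M,N) \neq \emptyset$. To see this is an $\inftybimer$, note that $M \ee_\groupoid N$ (for $M,N$ with universe $\Omega$, so that the identity bijection $\iota$ makes sense) holds iff $(M,N,\iota)$ satisfies the existential-over-$f$ consequence of $Th(\groupoid)$; but since $f$ ranges over bijections $\Omega \to \Omega$ and any such bijection is an automorphism of the pure set $\Omega$, the relation ``$\exists f\, (M,N,f)\models Th(\groupoid)$'' is equivalent to the set of $\lequalslprime$-sentences obtained by pushing the $f$-quantifier through — and by resplendence/saturation arguments this can be arranged to be first-order (a set of sentences), finite if $Th(\groupoid)$ is. That $\ee_\groupoid$ is an equivalence relation is immediate from $\groupoid$ being a groupoid containing all isomorphisms (reflexivity needs $\iota \in \groupoid(M,M)$, which follows from $Iso_T \subseteq \groupoid$).

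The main obstacle I anticipate is the elimination of the function symbol $f$ to pass between the $\lpluslprime$ description and the $\lequalslprime$ description — specifically, showing that ``there exists a bijection $f$ such that $(M,N,f) \models \Theta$'' is equivalent to a first-order (or $\bigwedge$-definable) condition purely in $\lequalslprime$, and conversely that every $\lequalslprime$-definable $\mer$ arises this way. The forward direction ($\lpluslprime \to \lequalslprime$) is the subtle one: a priori, existentially quantifying over $f$ introduces a second-order flavor. The resolution is that $f$ is quantified over \emph{all} bijections of a fixed universe $\Omega$ with itself (or between two copies of $\Omega$), and the group of such bijections acts transitively enough — combined with the fact that we only need the statement to be correct on $\kappa$-resplendent models for large $\kappa$, where any consistent expansion by a bijection satisfying a consistent type can be realized — that the existential quantifier over $f$ can be replaced by consistency of a type, which is a first-order condition on the pair. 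I would use resplendence precisely here, as flagged in the Preliminaries, and I would be careful to state the correspondence at the level of the defining theories $\tau \leftrightarrow Th(\groupoid)$ and then check that the two constructions $\ee \mapsto \groupoid_\ee$ and $\groupoid \mapsto \ee_\groupoid$ are inverse to each other, which amounts to the round-trip identities $\ee_{\groupoid_\ee} = \ee$ and $\groupoid_{\ee_\groupoid} = \groupoid$ — both following once the translation of defining formulas is set up coherently, the second using that a bidefinable groupoid is determined by its class of morphism-triples.
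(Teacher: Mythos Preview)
Your forward direction, $\ee \mapsto \groupoid_\ee$, is correct and matches the paper exactly. The problem is in the reverse direction. You define $M \ee_\groupoid N$ to mean $\groupoid(M,N) \neq \emptyset$, but this is the wrong relation: the paper instead takes $M \ee_\groupoid N$ iff the \emph{identity} bijection lies in $\groupoid(M,N)$. Your definition does not give a 1-1 correspondence. To see the round-trip fail, compute $\groupoid_{\ee_\groupoid}$ under your definition: a bijection $f:M\to N$ lies in $\groupoid_{\ee_\groupoid}(M,N)$ iff $\groupoid(M, f^*N) \neq \emptyset$; but since $f:f^*N \to N$ is an $L$-isomorphism and $Iso_T \subseteq \groupoid$, composition shows $\groupoid(M,f^*N)\neq\emptyset$ iff $\groupoid(M,N)\neq\emptyset$, independently of $f$. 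Hence $\groupoid_{\ee_\groupoid}(M,N)$ is either empty or all of $Bij_T(M,N)$, and you have lost $\groupoid$ entirely. Concretely, take $\groupoid = Iso_T$: your $\ee_\groupoid$ is ``isomorphic on the same universe'', and the groupoid you get back is $Bij_T$ on isomorphic pairs, not $Iso_T$.

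With the correct definition ($Id_\Omega \in \groupoid(M,N)$), the ``main obstacle'' you worry about --- eliminating the second-order quantifier $\exists f$ --- simply disappears. The translation between $\lpluslprime$ and $\lequalslprime$ is purely syntactic: substituting $f$ by the identity in an $\lpluslprime$-formula yields an $\lequalslprime$-formula (identify primed and unprimed sorts, erase $f$), and conversely any $\lequalslprime$-formula becomes an $\lpluslprime$-formula by inserting $f$ wherever a primed variable is compared to an unprimed one. No resplendence, saturation, or compactness is needed; bidefinability on one side corresponds directly to bidefinability on the other, and likewise for the $\bigwedge$-definable case.
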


\prf  
Let $\Psi$ be a $\bimer$ for $T$.
Define a $T$-groupoid $Gr(\Psi)$ by letting   the morphisms $Gr(\Psi)(M,M')$ be the set of bijections $f:  M \to M'$ such that $M+ f ^*M' \models \Psi$,
where $M+ f ^*M'$ is the $L=L$ structure with the same universe as $M$ such that the interpretation of the first copy of a given $R$ is $R^{M}$,
and of the second copy, $f \inv (R^{M})$.  

Conversely, given a $T$-groupoid $\groupoid$, let $\Psi_\groupoid$ be the set of double models $(M',M'')$ of $T$ with the same universe $M$, such that $Id_M \in \groupoid(M',M'')$.  
It is easy to see that these operations are inverse to each other, and that $\Psi$ is bidefinable iff $\groupoid$ is bidefinable.

\eprf

The following proposition indicates that the behavior of the groupoid is in some sense determined by its restriction to a single model.

\begin{prop}\label{prop:groupoidsubgroup}  Assume $T$ is complete.     Let $M$ be any resplendent model of $T$, countable or not. 
Then there is a canonical $1$ to $1$ correspondence between $\bigp$s for $T$  and  
bidefinable subgroups of $Sym(M)$ containing $Aut(M)$.  Likewise for {$\bigwedge$-definable groupoid} and $\bigwedge$-bidefinable subgroups. \end{prop}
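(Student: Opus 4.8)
The plan is to exhibit mutually inverse maps between bidefinable groupoids for $T$ and bidefinable subgroups of $Sym(M)$ containing $Aut(M)$; by \propref{prop:equivrelationgroupoid} I will move freely between a groupoid $\groupoid$ and the $\bimer$ $\ee$ it induces. The forward map is $\groupoid\mapsto\groupoid(M,M)$: this is a subgroup of $Sym(M)$ by the groupoid axioms read at the object $M$, it contains $Aut(M)=Iso_T(M,M)$ because $Iso_T\subseteq\groupoid$, and it is bidefinable, since if the $\lpluslprime$-formula $\Theta$ defines $\groupoid$ then $\{f\in Sym(M):(M,M,f)\models\Theta\}$ is simply $\Theta$ evaluated on the $\lpluslprime$-structure $(M,M,f)$ (the two copies of $L$ being kept formally distinct, as in Warning \ref{bipartitewarning}). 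For the inverse map I would start from a bidefinable $H\le Sym(M)$ with $Aut(M)\subseteq H$, fix a formula $\Theta_0$ with $H=\{f:(M,M,f)\models\Theta_0\}$, and show that $\Theta_0$ defines a $T$-groupoid over all of $\modelsof(T)$, whose vertex group at $M$ is then automatically $H$; independence of the choice of $\Theta_0$ will follow from injectivity of the forward map.

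The engine of the argument is two transport identities. First, since formulas are isomorphism-invariant, transporting along a bijection $f\colon N_1\to N_2$ gives $(N_1,N_2,f)\models\Theta \iff (N_1,f^*N_2,\mathrm{id})\models\Theta$, where $f^*N_2$ is the pullback of $N_2$ to the universe of $N_1$; this lets me reduce to the $\lequalslprime$-picture of pairs on a common universe, i.e.\ to $\ee$. Second, if $N_1,N_2\cong M$ with isomorphisms $\psi_i\colon N_i\to M$, then $(N_1,N_2,f)\cong (M,M,\psi_2 f\psi_1^{-1})$; consequently, on the isomorphism class of $M$ the relation $\ee_0$ given by ``$(P,Q,\mathrm{id})\models\Theta_0$'' is the equivalence relation read off $H$ in the evident way (well-definedness uses $Aut(M)\subseteq H$, the equivalence axioms use that $H$ is a subgroup), and likewise any bidefinable groupoid is determined on that class by its vertex group. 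Moreover, by completeness of $T$ the $L$-sentence ``$(N,N,\mathrm{id})\models\Theta_0$'' holds in $M$ (because $\mathrm{id}\in H$) hence in every $N\models T$, so $\Theta_0$ contains all identities, and by the first identity all isomorphisms. Thus for the inverse map only \emph{symmetry} and \emph{transitivity} of $\ee_0$ on arbitrary models remain, and injectivity of the forward map reduces to: a triple $(N_1,N_2,f)$ distinguishing two candidate formulas $\Theta_0,\Theta_0'$ can be replaced by one built from copies of $M$.

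Both residual tasks are a single transfer statement, proved via resplendence of $M$. Say symmetry of $\ee_0$ failed, so $(P,Q,\mathrm{id})\models\Theta_0$ and $(Q,P,\mathrm{id})\models\neg\Theta_0$ for some $P,Q\models T$ on a common universe. Since $P\models T=Th(M)$, the $\lequalslprime$-sentence recording this, together with $T(L')$, is consistent with $Th(M)$; so resplendence furnishes an $L'$-expansion of $M$ — that is, $M_2\models T$ on the universe of $M$ with $(M,M_2,\mathrm{id})\models\Theta_0$ and $(M_2,M,\mathrm{id})\models\neg\Theta_0$. Next, using resplendence of $M$ once more, I would arrange in addition that $M_2$ is \emph{isomorphic to $M$} — this being the point at which the full strength of resplendence is used, since it amounts to realizing the configuration using only copies of $M$. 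Transporting through $\gamma\colon M_2\cong M$ then turns $(M,M_2,\mathrm{id})\models\Theta_0$ into $\gamma\in H$ and $(M_2,M,\mathrm{id})\models\neg\Theta_0$ into $\gamma^{-1}\notin H$ — impossible, as $H$ is a subgroup. Transitivity (using a second auxiliary copy $L''$) and the injectivity statement (where one ends instead with an element lying in one vertex group but not the other) are handled in exactly the same way. The $\inftybimer$ version runs identically with ``$\lpluslprime$-partial type'' replacing ``formula'', the handful of compactness appeals (reduction to a single $\Omega$, L\"owenheim--Skolem) replaced by the standard facts noted after the definition of $\bimer$, and resplendence now producing expansions by a possibly infinite consistent set of sentences.

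The main obstacle is precisely this transfer: upgrading the group laws holding at the single vertex $M$ to groupoid laws over the whole class $\modelsof(T)$. Concretely, the delicate point is that after pulling a configuration back over $M$ by resplendence, the \emph{paired} model(s) can still be taken isomorphic to $M$ — i.e.\ that the isomorphism class of $M$ is ``$\lpluslprime$-type-dense'' among pairs and triples of models of $T$. Establishing this, especially for configurations of cardinality exceeding $|M|$ and in the $\inftybimer$ setting, is where resplendence is essential and is the technical heart of the proof.
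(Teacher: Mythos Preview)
Your proposal is correct and follows essentially the same route as the paper: map $\groupoid\mapsto\groupoid(M,M)$, and for the inverse take a defining formula $\Psi$ for $H$ and check that it cuts out a $T$-groupoid on all of $\modelsof(T)$, reducing every verification to a statement about $H$ by transporting configurations onto copies of $M$. The one presentational difference is that the paper does the transport \emph{externally} rather than internally: given a putative counterexample $(N_1,N_2,N_3,f,g)$ (or a pair, for injectivity), it passes to an elementarily equivalent $(M_1,M_2,M_3,f',g')$ with each $M_i$ saturated of cardinality $|M|$, whence $M_i\cong M$ by uniqueness of such models of a complete theory, and the contradiction with the group laws of $H$ is immediate. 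Your ``expand $M$ by resplendence, then arrange the paired model is $\cong M$'' is the same step viewed from inside $M$; the consistency of adding the isomorphism $\gamma$ is exactly what the external picture supplies. So the ``obstacle'' you flag is dispatched by a single appeal to uniqueness of resplendent/saturated models of a complete theory in a given cardinality---no separate density argument for large configurations is needed, since one first replaces the configuration by an elementarily equivalent one of the right size.
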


See Warning \ref{bipartitewarning} regarding the meaning of ``bidefinability'' even when the two parts are the same model.

\prf In one direction the correspondence is obvious:  a  {$\bigwedge$-definable groupoid} $\groupoid$ maps to $\groupoid(M,M) \leq Sym(M)$.   Let us show that this is one to one.
  Let $\groupoid$  be a   {$\bigwedge$-definable groupoid},
and  assume $\groupoid(M,M)=H$ is known.  We show that $\groupoid$ is the unique {$\bigwedge$-definable groupoid} groupoid inducing $H$ on $M$.    Given any  $N_1,N_2 \models T$ and bijection $b: N_1 \to N_2$, one can find  $(M_1, M_2,b')$   elementarily equivalent to $(N_1,N_2,b)$, as $\lpluslprime$ structures,
and with $M_1$ and $M_2$   saturated of the same cardinality as $M$.  Then $M_i \cong M$ and we 
have $(M_1, M_2,b') \cong (M,M,f)$ for some bijection $f: M \to M$.  
We have $b \in \groupoid(N_1,N_2)$ iff $b' \in \groupoid(M_1,M_2)$
iff $f \in \groupoid(M,M) = H$.  Thus the values of the groupoid on an arbitrary pair of models $N_1,N_2$ are determined
by $H$.

It remains to prove surjectivity.  Let $H$ be a  $\bigwedge$-definable subgroup of $Sym(M)$ containing $Aut(M)$.  
Then $H$ is defined by some collection $\Psi$ in $\lpluslprime$.  Define $\groupoid$ by
\[\groupoid(N_1,N_2) = \{f:N_1 \to N_2:  (N_1,N_2,f) \models \Psi  \}   \]
So $\groupoid(N_1,N_2)$ is a $\bigwedge$-definable set of functions $N_1 \to N_2$, but we have to check it is a groupoid:     if $f \in \groupoid(N_1,N_2)$
and $g \in \groupoid(N_2,N_3)$ then $g \circ f \in \groupoid(N_1,N_3)$.  And similar statements for inverses and for the identity maps.
Suppose otherwise, and take a saturated $(M_1,M_2,M_3,f',g')$ elementarily equivalent to $(N_1,N_2,N_3,f,g)$
and of the same cardinality as $M$.  Then $M_1 \cong M_2$ by completeness, so we may assume $M_1=M_2=M$.  In this case we have $f',g' \in H$
but $g' \circ f' \notin H$, a contradiction.   

It is clear that going in this way from a subgroup $H$  to a groupoid  $\groupoid$ and back  returns the same subgroup  $H$.

 The other direction follows from the fact, shown in the first paragraph, that $\groupoid \mapsto H$ is 1-1.  For the same reason, 
the choice of defining formulas $\Psi$ does not matter, so the correspondence is canonical.

\eprf

\begin{rem}  It is remarkable that in the setting of $\mer$s, a definable equivalence relation automatically gives a definable groupoid.

Similarly, a definable partial ordering on models  automatically gives a definable category.  

Assume for simplicity that $T$ is complete and that 
$\kappa$ is a cardinal with $\kappa^{+} =2^\kappa$; let $M$ be a saturated model of size $\kappa^+$.
We  saw that $\inftybimer$ is determined by a subgroup $H$ of   $Sym(M)$  containing
 $Aut(M)$.   $H$ is not in general closed in the usual topologies on $Sym(M)$; however it does enjoy a closure property similar to the one Lascar introduced in \cite{lascar}, in defining the topology on the Lascar group.   Namely if $U$ is an  ultrafilter on $\kappa$, $(M^*,H^*)$ is the $U$-ultrapower of $(M,H)$, and   $f: M \to M^*$ 
is an isomorphism,  we have $f \inv H^* f \leq H$.

 Investigating this further would be very interesting. 
\end{rem}

\section{Special classes of  equivalence relations} \label{sec:invar}

 We will
look at equivalence relations defined over  models of $T$ using preservation of a set of CL or DL formulas.

A \emph{DL reduct} of a theory $T$ is given by a collection of formulas of discrete logic and the associated theory.
Given a set of relation symbols, we consider the DL reduct given by these relations as atomic formulae. Thus
a language $L$ can be considered a vacuous reduct of itself.
A CL reduct is defined analogously.

Now suppose we are given a reduct $R$ on the coupled sorts within $L$, given by formulas of the corresponding logic. We let $\equiv_R$ be
the equivalence relation saying that two structures agree on these formulas, and $\Aut(R)$ the
corresponding groupoid, which consists of mappings $g$ taking a model $M$ to a model $M'$.
Thus, for a CL reduct given by  a collection of formulas $F$, we are saying that each the function defined by $f \in F$ on model $M$ is the same as the function
defined on $M'$.

When the theory is incomplete, the reduct may include $0$-place relations, corresponding to sentences the theory does not decide.   To say that these relations agree on two $L$-structures just means that they have the same truth value.    

\begin{defn}
We will say that a model equivalence relation is  \emph{yclept}  
if it has the form $\equiv_R$ for some  continuous logic reduct $R$. And we say that it is \emph{ydlept} 
if $R$ consists of standard first-order two-valued
formulas: that is,  it is a ``disconnected'' or ``discrete''  or '`classical'' first-order logic reduct.  \end{defn}

Note that \emph{a reduct only defines a $\mer$ when its vocabulary  is on the coupled sorts, since on the decoupled sorts we cannot
talk about two formulas having the same tuples}. Often in the sequel, we will specify an yclept or ydlept by simply listing
the formulas or relations that are preserved. The default convention is that the coupled sorts are all the sorts.
Note also that a ydlept $\mer$ is not necessarily definable, since it can involve preservation of infinitely many first order formulas.

A yclept $\mer$ can also involve either finitely many or infinitely many CL formula. But we observe that
any yclept equivalence relation, even one involving infinitely many formulas,   is $\bigwedge$-definable:
\begin{lem} The equivalence relation corresponding to preserving a CL reduct is a $\inftybimer$.
\end{lem}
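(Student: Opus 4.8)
The plan is to show that the class of pairs $(M, M')$ with $M \equiv_R M'$ is axiomatized by a (possibly infinite) set of sentences of $\lequalslprime$, and that this set defines an equivalence relation on $\modelsof_\Omega(T)$ for every $\Omega$. First I would handle the form of the defining sentences. Let $R$ be the given CL reduct, given by a collection $F$ of CL formulas over the coupled sorts. For a formula $\phi(\bar x) \in F$ with free variables $\bar x$ in coupled sorts, the condition ``$\phi^M = \phi^{M'}$ as functions on the common universe'' is captured as follows: for each rational $q$ and each $\bar a$ of the appropriate sort, the statements $\phi^M(\bar a) \le q$ and $\phi^{M'}(\bar a) \le q$ should be equivalent, and likewise with $\ge$. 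Since the coupled sorts share a universe in $\lequalslprime$, the tuple $\bar a$ ranges over the same set in both structures, and ``$\phi^M(\bar a) \le q$'' is expressible by a DL formula of $\lequalslprime$ using the unprimed symbols (this is the standard observation that the set $\{\phi \le q\}$ is type-definable, hence a $\bigwedge$-definable condition in the discrete language). So for each $\phi \in F$ and each rational $q$ we get a sentence (in fact a universally quantified $\bigwedge$ of DL conditions) of $\lequalslprime$ asserting agreement of the primed and unprimed versions on the sublevel set $\{\le q\}$, and symmetrically for $\{\ge q\}$; call the resulting set of $\lequalslprime$-sentences $\tau_R$. Agreement on all rational sublevel and superlevel sets is equivalent to equality of the two bounded real-valued functions, so a pair $(M,M')$ satisfies $\tau_R$ iff $M \equiv_R M'$.

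Next I would observe that $\tau_R$ is literally a set of $\lequalslprime$-sentences (after absorbing the $\bigwedge$ into an infinite set of sentences, which is allowed since we only need an $\inftybimer$, not a $\bimer$), so it remains only to check that $\tau_R$ defines an equivalence relation on $\modelsof_\Omega(T)$ for every choice of universe $\Omega$ for the coupled sorts. This is immediate from the semantics: reflexivity holds because $\phi^M = \phi^M$; symmetry because equality of functions is symmetric and $\tau_R$ is manifestly symmetric under swapping primed and unprimed; transitivity because if $\phi^M = \phi^{M'}$ and $\phi^{M'} = \phi^{M''}$ then $\phi^M = \phi^{M''}$. One should note the point flagged in the surrounding text: because $R$ lives on the coupled sorts, the tuples $\bar a$ over which we quantify genuinely live in the shared universe $\Omega$, so ``the same function on the same domain'' makes sense; this is exactly where the restriction to coupled sorts is used.

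The only genuinely non-routine step is the first one: verifying that for a CL formula $\phi$ the sublevel sets $\{\bar a : \phi(\bar a) \le q\}$ are $\bigwedge$-definable by DL formulas, uniformly enough to be written down in $\lequalslprime$. This follows by induction on the construction of CL formulas: for atomic real-valued predicates over classical (or even CL) structures the sublevel sets are handled directly; continuous-function connectives are handled because a continuous $g:\mathbb R^k \to \mathbb R$ has $\{g \le q\}$ closed, hence an intersection of finite unions of products of closed rational intervals, pulling back to a $\bigwedge$ of DL conditions on the components; the quantifiers $\sup_y$ and $\inf_y$ turn $\{\le q\}$ into, respectively, a $\bigwedge$ over $y$ or a $\bigvee$ over $y$ of the inner condition, and in either case one gets a (possibly infinitary) DL condition — but crucially $\{\sup_y \phi \le q\} = \bigwedge_y \{\phi \le q\}$ is type-definable, and $\{\sup_y \phi < q\}$ need not be, which is why we phrase everything in terms of the closed conditions $\le q$ and $\ge q$ and why the result is only an $\inftybimer$ and not in general a $\bimer$; uniform limits are absorbed since a uniform limit of functions with type-definable sublevel sets again has type-definable sublevel sets. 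I expect this inductive bookkeeping — getting the right closed conditions at the quantifier step so that everything stays type-definable — to be the main thing to be careful about; everything else is formal.
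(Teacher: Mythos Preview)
Your argument has a genuine gap at the step where you claim that ``agreement on the sublevel set $\{\le q\}$'' is expressible as a set of DL $\lequalslprime$-sentences. You correctly note that each of $\{\phi\le q\}$ and $\{\phi'\le q\}$ is type-definable in DL; but a \emph{biconditional} of two type-definable conditions is in general not type-definable, and here it indeed fails. Concretely, over the theory with unary predicates $P_n$, take $\phi(x)=\sum_{n\ge 1}2^{-n}P_n(x)$ and $q=1/2$. On one-point structures $(M_k,M'_k)$ with $P_1^{M_k}(a)$, $P_k^{M_k}(a)$ (all others off) and $P'_1{}^{M'_k}(a)$, $P'_2{}^{M'_k}(a)$ (all others off), both sublevel sets $\{\phi\le 1/2\}$ are empty, so the equality of sublevel sets holds. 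In any nonprincipal ultraproduct, however, $\phi^{M^*}(a^*)=1/2$ while $\phi^{M'^*}(a^*)=3/4$, so the sublevel sets disagree. Thus the condition ``$\{\phi^M\le 1/2\}=\{\phi^{M'}\le 1/2\}$'' is not preserved under ultraproducts and cannot be written as a set of first-order $\lequalslprime$-sentences. Your decomposition of $\phi^M=\phi^{M'}$ into per-$q$ sublevel-set equalities is semantically correct, but the pieces you produce are not individually elementary, so they do not witness $\bigwedge$-definability.

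The paper sidesteps this by approximating $\phi$ uniformly by \emph{essentially DL} functions $f_n$ with finite range and $|f-f_n|\le 1/n$. Then the single DL $\lequalslprime$-formula $(\forall x)\,|f_n(x)-f_n'(x)|\le 2/n$ makes sense (a finite Boolean combination over the finitely many values of $f_n$), and the conjunction over $n$ exactly captures $\phi^M=\phi^{M'}$. Equivalently, work directly with the CL $\lequalslprime$-formula $|\phi-\phi'|$: its zero set is closed in the Stone space of $\lequalslprime$-types, hence type-definable, and the universal closure gives the desired set of DL sentences. Either route avoids ever forming a biconditional of infinite conjunctions.
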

\begin{proof} The reduct includes certain uniform limits $f(x)$ of $T$-definable functions $f_n(x)$ with finite range in $[0,1]$.  We
may assume $|f-f_n| \leq 1/n$ uniformly.  
Thus  the groupoid consists of triples $(M, M',g)$ such that $M \models T, M' \models T$
and $(\forall x)(|f_n(x) - f_n(g(x))| \leq 2/n)$ for each such $f$ and $n$.   Thus
it is defined by infinitely many DL sentences. \end{proof}

\begin{lem} \label{prop:finitelymany} Let $\ee$ be an ydlept $\mer$.  Then $\ee$ is a $\bimer$ if and only if 
it has the form $\equiv_R$ for $R$ a finite collection of   first-order formulas.
\end{lem}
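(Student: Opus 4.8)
The plan is to treat the two implications separately; the forward implication ($\Leftarrow$) is immediate from the definitions, and the reverse implication ($\Rightarrow$) is a compactness argument.

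For ($\Leftarrow$): if $\ee=\equiv_R$ with $R=\{\phi_1(\bar x_1),\dots,\phi_k(\bar x_k)\}$ finite, then two models $M,M'\in\modelsof_\Omega(T)$ are $\ee$-equivalent precisely when, for each $j\le k$, the formulas $\phi_j$ and $\phi_j'$ define the same subset of the common universe. This is expressed by the single $\lequalslprime$-sentence $\bigwedge_{j\le k}\forall\bar x_j\,(\phi_j(\bar x_j)\leftrightarrow\phi_j'(\bar x_j))$, so $\ee$ is finitely axiomatizable over $T$, i.e.\ a $\bimer$.

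For ($\Rightarrow$): assume $\ee$ is ydlept, so $\ee=\equiv_R$ for some DL reduct $R=\{\phi_i(\bar x_i):i\in I\}$ (allowing $0$-place formulas in the incomplete case), and also that $\ee$ is a $\bimer$, so that there is a single $\lequalslprime$-sentence $\psi$ with $(M,M')\models\psi\iff(M,M')\in\ee$ for all $M,M'\in\modelsof_\Omega(T)$. For $i\in I$ put $\theta_i:=\forall\bar x_i\,(\phi_i(\bar x_i)\leftrightarrow\phi_i'(\bar x_i))$, and write $T'$ for $T$ with every symbol primed. An $\lequalslprime$-model of $T\cup T'$ is exactly a pair $(M,M')$ of $L$-structures on a common universe with $M,M'\models T$, and for such a pair $(M,M')\models\bigwedge_{i\in I}\theta_i$ iff $M$ and $M'$ agree on $R$ iff $(M,M')\in\equiv_R=\ee$ iff $(M,M')\models\psi$. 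In particular $T\cup T'\cup\{\theta_i:i\in I\}\models\psi$, so by compactness there is a finite $I_0\subseteq I$ with $T\cup T'\cup\{\theta_i:i\in I_0\}\models\psi$. Then $\ee=\equiv_{R_0}$ for $R_0:=\{\phi_i:i\in I_0\}$: the inclusion $\equiv_R\subseteq\equiv_{R_0}$ is trivial since $I_0\subseteq I$, and if $(M,M')\in\equiv_{R_0}$ then $(M,M')\models\{\theta_i:i\in I_0\}$, hence $(M,M')\models\psi$, hence $(M,M')\models\theta_j$ for every $j\in I$ (as $\psi$ entails every $\theta_j$ over $T\cup T'$), i.e.\ $(M,M')\in\equiv_R=\ee$. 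This exhibits a finite defining reduct, as required.

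There is no serious obstacle here; the only points needing care are that a $\bimer$ really does provide a single defining sentence $\psi$ (take the conjunction of a finite axiomatization of the defining theory $\tau$), and that the equivalence ``$(M,M')\models\bigwedge_i\theta_i$ iff $(M,M')\models\psi$'' holds on \emph{all} $\lequalslprime$-models of $T\cup T'$ --- which is precisely what ``$\psi$ defines $\ee$'' and ``$\ee=\equiv_R$'' jointly assert --- so that compactness is legitimately applicable. The remainder is bookkeeping relating reducts to sentences of $\lequalslprime$.
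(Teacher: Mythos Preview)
Your proof is correct and follows essentially the same approach as the paper: both are compactness arguments exploiting that a $\bimer$ is defined by a single $\lequalslprime$-sentence, so that agreement on all the $\phi_i$ implies $\psi$, and hence finitely many $\phi_i$ suffice. The paper phrases the same idea contrapositively via an ultraproduct (if every finite $I_0$ failed, an ultraproduct would produce a pair agreeing on all $R_i$ yet inequivalent), but this is just the ultraproduct proof of the compactness step you invoke directly.
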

\begin{proof}   One direction is immediate.  For the other, we have by ydleptness that $\ee$ is equivalent to a conjunction of
$\equiv_{R_i}$, $i \in I$.  
Suppose, for each finite $I_0 \subset I$, we can find inequivalent $M,N$ on the same universe, such that 
$R_i^M =R_i^N$ for $i \in I_0$.  Taking an ultraproduct (using definability of $E$), we find $M,N$ that are inequivalent,
yet $R_i^M =R_i^N$ for $i \in I$.  This contradiction shows that we may take $I$ to be finite, say $I=\{1,\ldots,n\}$.
\end{proof}

An extension is to look at defining equivalence relations via \emph{definable families}.  

Let $\phi(\vec x_1 \ldots \vec x_{n+1})$ be a formula, possibly with parameters from the model, with $\vec x_i$ a partition of the free variables.
We define the $(n+1)$-set associated with the partitioned formula by induction on 
$n$, denoted $[\phi(\vec x_1 \ldots \vec x_{n+1})]$.
If $n=0$ it is just the single set of satisfiers of $\phi$. For $n>0$, the $(n+1)$-set  is the family
of sets $[\phi(\vec x_1 \ldots \vec a_{n+1})]$ as $a_{n+1}$ varies over the model.

For a definable set $R$, let $\ee_n([R])$ be the  equivalence relations of preserving the $n$-set $[R]$.   
If $R$ is defined by $\phi(\vec x_1 \ldots \vec x_{n+1})$, the $n$-set is an element of the iterated power set
over the sorts of $\vec x_1$. Thus it is natural to take the coupled sorts of this $\mer$ to be the sorts of $\vec x_1$.
When  we want to emphasize the particular
sorts $S_0$  that the higher-order object is built on 
we talk about an $n$-set \emph{over} sort $S_0$.
An $n^{th}$ order ydlept equivalence relation is given by preserving some collection of $n$-sets.
An $n^{th}$ order ydlept definable equivalence relation is thus given by preserving a finite set of $n$-sets: it is easy to see that these
can be consolidated into a single $n$-set.

We give another definition of higher-order ydlept that will be more amenable to inductive proofs.

We   recall the usual $^{eq}$ procedure, in a form that will be more convenient here.   Let $D$ be a definable subset of some finite product 
$S_1 \times \cdots \times S_n$  of sorts of a language $L$.
Let  $F$ be a definable family of  subsets of $D$.  Thus for some definable $Q$ and definable $R \subset D \times Q$, we have 
\[ F = \{R(d'): d' \in Q \} \]

\begin{defn} \label{eq} Let $T$ be a theory in a language $L$, and $F$ a definable family as above.   We define the 
   ``Shelahization'' $T_F$ of $T$ at $F$.   
We let $L_F$ be the language $L$ with an additional sort $S_F$, and  a new relation $C_F$ that relates elements of $S_F$ to elements
of $D$. Abusing notation, we will sometimes  treat $C_F$ as  a function from  elements of the sort  to subsets of $D$.
 The theory $T_F$ includes $T$ along with  the statement that 
\[ s \mapsto C_F(s) : \ \ S_F \to F \]
is a bijection.  \end{defn}

Note that a model of $T$ extends canonically to a model of $T_F$, and every model of $T_F$ is obtained in this way.  Definable sets of $T_F$
can be understood in terms of definable sets in $T$.

 \begin{defn}  \label{def:nydlept}  
We define the class of $n$-ydlept equivalence relations by induction on $n$. The base case, $n=1$,  
are the ydlept $\bimer$s.
  An $(n+1)$-ydlept  equivalence relation on models with a given universe is determined by a definable family
${\mathcal F}$, say  given by $\phi(\vec x_0, \vec p)$ with $\vec x_0$ having
sorts $S_0$, along with a \emph{well-behaved}  $n$-ydlept equivalence relation $\ee$ on $T_{\mathcal F}$ with respect to 
 coupled sorts $S_1$ that contain: the sorts $S_0$ and   the additional imaginary sort $S_F$, where   well-behaved means that
they preserve the new relation $C_F$. The equivalence relation will have coupled sorts $S_1$, and
we declare models $M,M'$ of $L$ that agree on the universe for $S_1$ to be  equivalent iff
the $2$-sets given by $\mathcal{F}$ are the same in both models,  and the expansions of both models  are $\ee$ equivalent.
\end{defn}

Thus an $n$-ydlept is formed by first taking a definable family to create an equivalence relation, and identify equivalent tuples. Then take
a definable family in the quotient, and so forth.
As with $n$-sets, we can talk about an $n$-ydlept on sort $X$, where we iterate the process above where in every iteration the
$L$ sorts used are in $S$.

It is easy to see that every $n$-set induces an $n$-ydlept. The fact that a $2$-set is  a $2$-ydlept is almost by definition.
For a $3$-set given by partitioned formula $R(\vec x_1; \vec x_2; \vec x_3)$, we first use the definable family $R(\vec x_1, \vec x_2; \vec x_3)$
to create an  imaginary sort with elements
$o$ corresponding to tuples $\vec x_1, \vec x_2$. We then define a $2$-ydlept on the corresponding
expansion, via a family indexed by $\vec x_1$ and defining all the  $o$'s corresponding to $\vec x_1, \vec x_2$.
Similarly for higher values of $n$.

We will now  show the converse: every $n$-ydlept  $\bimer$ is induced by an $n$-set.
We start by showing that every $2$-ydlept is a $2$-set.

\begin{lem} \label{lem:2ydleptand3set}
Let $L$ be a language, $S$ a subset of the sorts
of $L$, $\phi_1(\vec x_1, \vec p)$ be  a partitioned $L$ formula, representing a definable family, with $\vec x_1$ having sorts in $S$.
Let $L_1$ be formed from adding
a sort $S_1$ and a relation $C_1$ mapping $\vec x$ to the corresponding  canonical parameters for $\phi_1$.
Consider an ydlept $\bimer$ $\ee$ over $L_1$ with coupled sorts $S \cup S_1$, given by $\phi_2(\vec x, \vec o)$, where $\vec x$ are also over sorts in $S$
and
$\vec o$ are over sorts in $S_1$. 
Then there is a $2$-partitioned $L$-formula $\phi_3(\vec x_1, \vec p_1)$ such that for $M,M'$ $L$-structures that agree on sorts $S$,
$M$ and $M'$ agree on the $2$-set defined by $\phi_3$ if and only if they agree on the $2$-set $\phi_1$, and,
letting $M_1$ and $M'_1$ be their canonical extensions with $S_1$ and $C_1$, the models $M_1$ and $M'_1$ agree on the definable set $\phi_2$.
\end{lem}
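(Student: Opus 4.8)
The plan is to absorb the definable set $\phi_2(\vec x,\vec o)$ on the expanded language $L_1$ back into a single partitioned $L$-formula, using the fact that the $S_1$-sort and the relation $C_1$ are themselves coded by $\phi_1$. Since $\ee$ is \emph{well-behaved}, any $M,M'$ that are $\ee$-equivalent already agree on $C_1$, hence on the $2$-set $[\phi_1]$; so the "if and only if" between "agree on $[\phi_3]$" and "agree on $[\phi_1]$ and agree on $\phi_2$" is really the statement that knowing $[\phi_1]$ plus $\phi_2^{M_1}$ is equivalent to knowing a single $2$-set in the $L$-structure. First I would replace each variable $\vec o$ of sort $S_1$ in $\phi_2$ by a representative: an element $o\in S_1$ is a canonical parameter for some instance $\phi_1(\vec x_1;\vec p)$, so $o$ is interdefinable (over $\emptyset$) with the set $[\,\phi_1(\vec x_1;\vec p)\,]\subseteq$ (product of sorts in $S$) cut out by that instance. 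Thus $\phi_2(\vec x,\vec o)$, after pulling $\vec o$ back along $C_1$, becomes an $L$-formula $\psi(\vec x;\vec p)$ with parameters $\vec p$ ranging over the index sorts of $\phi_1$, but where $\psi$ depends on $\vec p$ only through the \emph{set} $[\phi_1(\cdot;\vec p)]$ — i.e. $\psi(\vec x;\vec p)$ and $\psi(\vec x;\vec p')$ define the same set whenever $\phi_1(\cdot;\vec p)$ and $\phi_1(\cdot;\vec p')$ do. This congruence property is exactly what lets the combined datum be presented as a $2$-set.

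The next step is to package $[\phi_1]$ and the $\psi$-instances into one partitioned formula. I would take the new parameter sort to be (a copy of) the index sorts of $\phi_1$, and define
\[
\phi_3(\vec x_1;\vec p_1)\ :\equiv\ \bigl(\vec x_1 = (\vec y,0)\wedge \phi_1(\vec y;\vec p_1)\bigr)\ \vee\ \bigl(\vec x_1 = (\vec y,1)\wedge \psi(\vec y;\vec p_1)\bigr),
\]
using two extra coordinates (a flag, and padding so the two disjuncts live in a common product of sorts of $S$ — one can add a two-element auxiliary sort, or reuse an existing definable $2$-element set if $T$ has one, or simply work with $\phi_1$ and $\psi$ as a pair of $2$-sets and then merge by the standard fact that a finite tuple of $2$-sets over the same base sorts is coded by a single $2$-set). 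For a fixed $L$-structure $M$ agreeing with $M'$ on $S$: the $1$-sets $[\phi_3(\cdot;\vec p_1)]$ occurring are in bijection with pairs $([\phi_1(\cdot;\vec p)],\ [\psi(\cdot;\vec p)])$; by the congruence property the second coordinate is determined by the first, so the family $[\phi_3]$ as a set of sets is canonically $\{\,([\phi_1(\cdot;\vec p)],[\psi(\cdot;\vec p)]) : \vec p\in M\,\}$. Hence $M,M'$ agree on the $2$-set $[\phi_3]$ iff they agree on $[\phi_1]$ \emph{and}, for each instance, on the corresponding $\psi$-set — and since $o\mapsto [\phi_1(\cdot;o)]$ identifies $S_1^{M_1}$ with $S_1^{M'_1}$ once $[\phi_1]$ agrees, agreeing on all $\psi$-instances is precisely $M_1,M'_1$ agreeing on $\phi_2$. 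This gives both directions of the biconditional.

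The main obstacle I anticipate is the bookkeeping around the relation $C_1$ and the imaginary sort: I must be careful that $\phi_2$ may genuinely use variables of sort $S_1$ in a way that is not literally a formula about subsets of $S$-sorts (e.g. equalities between $\vec o$'s, or nested occurrences), and that "agree on $\phi_2$" for the canonical extensions $M_1,M'_1$ is the right target. The key point making this go through is that $C_1$ is, by construction, the graph of $\vec x\mapsto$ (canonical parameter of $\phi_1(\cdot;\ldots)$), so $S_1$ adds no new subsets of $S$-sorts beyond reindexing the family $[\phi_1]$; once $[\phi_1]$ is preserved there is a unique isomorphism of the $S_1$-parts compatible with $C_1$, and under it $\phi_2^{M_1}$ transports correctly. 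A secondary, purely cosmetic, obstacle is that $\phi_2$ might involve several $S_1$-variables $\vec o = (o_1,\dots,o_k)$ simultaneously; then $\psi$ has parameter $\vec p_1 = (\vec p^{(1)},\dots,\vec p^{(k)})$ a tuple of index-parameters for $\phi_1$, and the congruence property becomes: $\psi(\vec x;\vec p_1)$ depends on $\vec p_1$ only through $([\phi_1(\cdot;\vec p^{(1)})],\dots,[\phi_1(\cdot;\vec p^{(k)})])$. This is handled the same way, at the cost of letting $\phi_3$'s $1$-sets encode a $k$-tuple of $\phi_1$-instances together with the $\psi$-value; it does not change the logic of the argument, only the arity of the auxiliary coding, which is absorbed by the standard "tuple of $2$-sets is a $2$-set" reduction.
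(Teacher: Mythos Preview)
Your proposal is correct and follows essentially the same idea as the paper: pull $\phi_2$ back through $C_1$ to obtain an $L$-formula $\psi(\vec x;\vec p)$ (the paper's $D_{2,1}$), relying on the standard fact that adjoining the imaginary sort $S_1$ adds no new $0$-definable relations on the original sorts. The paper's proof is a three-sentence sketch that stops once $D_{2,1}$ is seen to be $L$-definable, leaving the packaging into a single $2$-partitioned $\phi_3$ and the verification of the biconditional implicit; your flag construction and the congruence argument make these steps explicit, and your treatment of multiple $S_1$-variables in $\vec o$ fills in a case the paper suppresses ``for simplicity.''
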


\prf    For simplicity we consider the case $\phi_2(x, o)$: one ordinary variable $x$ and one imaginary variable $o$.
Now the $2$-ydlept $\ee$ is the equivalence relation setting $M$ equivalent to $M'$ if and only if $M$ and $M'$ agree on 
the $2$-set given by  $\phi_1$ and also on $D_{2,1}= \{ x, \vec p ~ | ~ \phi_2(x, C_{1}(\vec p))\}$. 
But since the addition of imaginaries gives no new $0$-definable sets on the original $L$-structure,
$D_{2,1}$ is a definable set.
\eprf

The following  is proven similarly, using induction.

\begin{lem}   \label{lem:special} Every $n$-ydlept $\bimer$ $\ee$ is induced by an $n$-set, and similarly every an $n$-ydlept $\mer$ is induced by a collection of $n$-sets.
\end{lem}

We say that a $\mer$ is $\omega$-ydlept if it is $n$-ydlept for some $n$.
We have now defined the main subclasses that will be considered in the paper.
Figure \ref{fig:classes} shows them schematically.
Note that in the figure the intersection of yclept with $\omega$-ydlept is ydlept. This is Corollary
\ref{cor:ycleptcaphigherydleptimpliesydlept} that will be proven in Section \ref{sec:yclepinthigherydlept}.
On the other hand, the figure shows the intersection of yclept with $\bimer$ as being bigger than ydlept.
This will be shown in Section \ref{sec:notydlept}.

  \begin{figure}[h!] \label{fig:classes}
 \includegraphics[width=8cm]{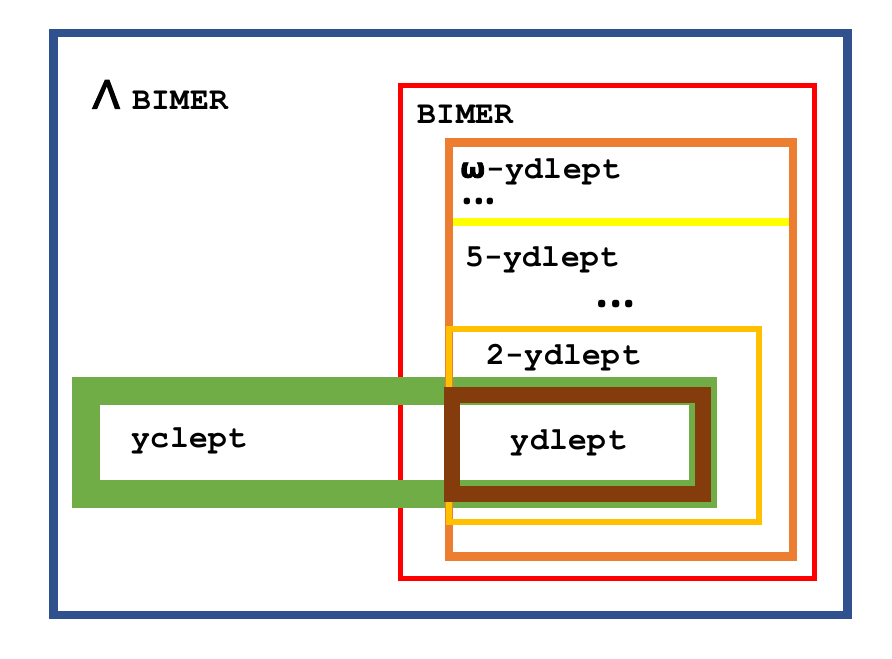}
 \caption{Classes of model equivalence relations}
 \end{figure}

\section{The maximal CL reduct} \label{sec:maximalcl}

We study here the maximal definable structure preserved by a {$\bigwedge$-definable groupoid}.  Our proofs, 
as well as the main statements, require 
 working in continuous logic,  even if $T$ is a DL theory; see  Example \ref{ex:CLexample}.
In case $T$ is small, any CL reduct is essentially DL so that the statements are true for
the maximal DL reduct; however our proofs still go through
continuous logic.

For a groupoid $\groupoid$ and theory $T$, we say that a model of $T$ is
\emph{ $\groupoid$-resplendent} if  $M$ is $\aleph_0$-homogeneous, and whenever $a,b \in M^n$ and the $\lpluslprime$ partial type
$tp_M(a)(x) + tp_M(b)(y)  +(M,M,g) \models Th(\groupoid) + g(x)=y$ is consistent,  there exists
$g \in \groupoid(M,M)$ with $g(a)=b$.   Note that if $L$ is countable, any countable $M_0$ has a $\groupoid$-resplendent countable
elementary extension.   In fact, if  $M$ is countable and recursively-in-$Th(\groupoid)$-saturated, then
$M$ is $\groupoid$-resplendent.  If $\groupoid$ is definable, ordinary resplendence suffices.

\begin{prop} \label{prop:maxred}   Let  $T$ be a first order theory in a language $L$, and $\groupoid$ be a {$\bigwedge$-definable groupoid} for $T$.    Then: \begin{enumerate}
\item  There exists a unique richest continuous logic reduct $\maxred{\groupoid}$   of $T$,  such that,
$Iso_T = {Iso}_L  \leq \groupoid  \leq {Iso}_{(\maxred{\groupoid})}$.   
 
\item  $\groupoid$ is dense in each automorphism group of the groupoid corresponding to $\maxred{\groupoid}$, taken with the pointwise convergence topology; i.e.:    for $M$ $\groupoid$-resplendent,
the orbits of $\groupoid$ and of  ${Iso}_{\maxred{\groupoid}}$ on $M^n$  coincide for each $n$.

\item  Suppose $T$ is small: only countably many  $n$-types over $\emptyset$
for each $n$.  Then ${Iso}_{\maxred{\groupoid}}$ is  ydlept: that is, is given by a DL reduct.
\end{enumerate}
\end{prop}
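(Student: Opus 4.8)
\textbf{Proof proposal for Proposition \ref{prop:maxred}.}

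The plan is to build $\maxred{\groupoid}$ as the CL reduct consisting of \emph{all} CL formulas preserved by $\groupoid$, and then to verify the three claims in turn, with part (3) — the descent from continuous to discrete logic under smallness — being the real content.

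For part (1): I would let $\maxred{\groupoid}$ be the CL reduct whose primitive predicates are all CL formulas $\phi(\vec x)$ (built from $L$-formulas, continuous connectives, $\sup$/$\inf$ quantifiers, and uniform limits) such that $\groupoid$ preserves the function $M \mapsto \phi^M$; i.e.\ $(M,N,g)\models Th(\groupoid)$ implies $\phi^M(\vec a) = \phi^N(g\vec a)$ for all $\vec a$. This collection is closed under continuous connectives and quantification and uniform limits essentially by inspection, so it \emph{is} a CL reduct, and it is by construction the richest one preserved by $\groupoid$, giving $\groupoid \leq {Iso}_{\maxred{\groupoid}}$; the inclusion $Iso_T \le \groupoid$ is hypothesis, and $Iso_T = {Iso}_L$ since isomorphisms preserve all $L$-formulas (the point being $\maxred{\groupoid}$ is a genuine reduct, so no information is lost at the bottom). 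Uniqueness is immediate from the maximality characterization.

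For part (2): Fix a $\groupoid$-resplendent $M$. One inclusion of orbits is trivial since $\groupoid \le {Iso}_{\maxred{\groupoid}}$. For the other, suppose $a,b \in M^n$ lie in the same ${Iso}_{\maxred{\groupoid}}$-orbit, i.e.\ $\phi^M(a)=\phi^M(b)$ for every $\phi$ in the reduct. I want to show the $\lpluslprime$-partial type $\tp_M(a)(x) + \tp_M(b)(y) + \big((M,M,g)\models Th(\groupoid)\big) + g(x)=y$ is consistent; then $\groupoid$-resplendence delivers $g \in \groupoid(M,M)$ with $g(a)=b$. Consistency is a compactness argument: a finite inconsistency would, after pushing through $\sup$/$\inf$, produce an $L$-formula (or CL formula) $\psi$ with $\psi^M(a)$ and $\psi^M(b)$ forced apart by some $\varepsilon$ whenever $g\in\groupoid$ carries $a$ to $b$ — more precisely, the obstruction can be packaged as a CL formula $\chi(\vec x)$, namely an appropriate $\inf$ over potential images under $\groupoid$ of the discrepancy, which is preserved by $\groupoid$ and hence belongs to the reduct; but then $\chi^M(a)\neq\chi^M(b)$ contradicts $a,b$ being in the same reduct-orbit. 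This is the step where one must be careful that the "distance to $\groupoid(a)$" type of quantity is itself expressible as a CL formula of $\vec x$ — the unmetrized/trivial-metric setting and the use of $Th(\groupoid)$ as a $\lpluslprime$-partial type are what make the relevant $\inf$ a legitimate CL formula over $M$. I'd expect to phrase this as: the function $\vec x \mapsto \inf\{\, d(\phi^M(g\vec x_{(0)}),\phi^M(g\vec x))\,:\, g\in\groupoid\,\}$ type expressions are approximable by CL formulas, hence preserved.

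For part (3): Assume $T$ small. Then for each $n$ there are only countably many complete $n$-types over $\emptyset$ in $T$, and the CL formulas in finitely many variables — being uniform limits of finite-range $L$-definable functions — take only countably many values and, on a $\groupoid$-resplendent (hence sufficiently homogeneous) model, each CL formula $\phi(\vec x)$ is constant on each ${Iso}_{\maxred{\groupoid}}$-orbit, which by part (2) equals a $\groupoid$-orbit. The key point: the partition of $M^n$ into $\groupoid$-orbits is, by part (2) and resplendence, into finitely or countably many pieces, and each piece is cut out by a (DL) type; moreover the quotient $M^n / {Iso}_{\maxred{\groupoid}}$ carries the structure of a compact totally disconnected space because smallness forces the CL type space to be second countable and the reduct to separate orbits — so the CL reduct, being a continuous function on a totally disconnected compact space into $[0,1]$ that is constant on orbits, is a uniform limit of $\{0,1\}$-valued (equivalently, DL-definable) functions. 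Hence every primitive of $\maxred{\groupoid}$ is DL-definable, so ${Iso}_{\maxred{\groupoid}}$ is given by a DL reduct, i.e.\ is ydlept. The main obstacle, and where I'd spend the most care, is justifying that under smallness the relevant type spaces are totally disconnected — equivalently that every CL formula preserved by $\groupoid$ is, on a resplendent model, \emph{equivalent} to one valued in a finite discrete set — rather than merely separable; this is exactly the phenomenon flagged in the remark before the Proposition that for small $T$ "any CL reduct is essentially DL", and the argument will route through the fact that a second-countable compact Hausdorff space on which all the relevant continuous invariants factor through the countable orbit-partition must be zero-dimensional.
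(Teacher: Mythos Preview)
Your part (1) matches the paper. The real issue is part (2), and since your part (3) leans on the machinery of (2), the gap propagates.

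In part (2) you correctly reduce to: if $a,b$ are not in the same $\groupoid$-orbit on a resplendent $M$, then some $\groupoid$-invariant CL formula separates them. But your proposed construction of that formula --- ``an appropriate $\inf$ over potential images under $\groupoid$ of the discrepancy'' --- does not work. The groupoid $\groupoid$ is not a definable object inside $M$; you cannot quantify or $\inf$ over it in a CL formula of $L$. The compactness step you sketch only yields DL formulas $\alpha \in \tp(a)$, $\beta \in \tp(b)$ with $\alpha(x) \wedge \beta(gx)$ inconsistent with $Th(\groupoid)$; neither $\alpha$ nor $\beta$ need be $\groupoid$-invariant, and there is no syntactic trick that ``invariantizes'' them.

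The paper instead works on the type space $S_n$: define $p \sim q$ iff some $g \in \groupoid$ links realizations. An ultraproduct argument (essentially your compactness step, reorganized) shows $\sim$ is a \emph{closed} equivalence relation; hence $S_n/{\sim}$ is compact Hausdorff, and Urysohn's lemma gives a continuous $[0,1]$-valued function separating the classes of $\tp(a)$ and $\tp(b)$. Pulled back to $S_n$, this \emph{is} a CL formula, and it is $\groupoid$-invariant for free since it factors through $\sim$. This Urysohn step on the quotient is the missing idea.

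For part (3), your instinct is right but the justification is off. You invoke second countability, which is insufficient ($[0,1]$ is second countable, compact, Hausdorff, connected); and the claim that CL formulas ``take only countably many values'' is false in general. What smallness actually gives is that $S_n$, hence $S_n/{\sim}$, is \emph{countable}. A countable compact Hausdorff space is totally disconnected (Baire category forces an isolated point; iterate via Cantor--Bendixson), so in the Urysohn step one can take the separating function to be $\{0,1\}$-valued, i.e.\ a DL formula. That is the whole argument.
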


\prf  We start by proving (1). Let $L^c$ consist of all continuous logic relations, that are $\groupoid$-invariant.
Here continuous logic relations are viewed as $0$-definable functions   $F: X \to [0,1] \subset \Rr$ where $X$ is a finite product of sorts of $T$, and  $0$-definable means that $F \inv(U)$ is $\bigwedge$-definable for any closed $U \subset [0,1]$.
   $\groupoid$-invariant means that if $g \in \groupoid(M,N)$ then for any $x \in X$, $F(x)=F(gx)$.  
It is clear that $L^c$ is the richest continuous logic reduct $T'= T \upharpoonright {L^c}$  of $T$ with
$\groupoid  \leq {Iso}_{L'}$. 

We turn to (2),  density. Let $S_n$ be the space of types of $T$ on $X^n$.    
 Define an equivalence relation on $S_n$:  $p \sim q$ iff there exist $M \models T$ and $a \models p, b \models q$ in $M$ and 
$f \in \groupoid(M,M)$ with $f(a)=b$.  This is a closed subset of $(S_n)^2$  by an ultraproduct argument:   if $p_i \sim q_i$ for $i \in I$, this is witnessed by $f_i \in \groupoid(M_i, M_i)$.  
Let $U$ be any ultrafilter on $I$. Let $M$ be the ultraproduct of $M_i$
let $p$ be the limit of the $p_i$ along $U$, and similarly $q$. Let $f: M \to M$ be the ultraproduct of $f_i: M_i \to M_i$.  Then since $\groupoid(M,M)$ is 
$\bigwedge$-bidefinable,
$f \in \groupoid(M,M)$.  

Note that $p \sim q$ iff for  some/any $\groupoid$-resplendent $M \models T$ that realizes $p,q$, for some / any $a \models p$ and $b \models q$ there exists   $f \in \groupoid(M,M)$ with $f(a)=b$.  The second passage from ``some'' to ``any'' uses the fact that $Iso_T \leq \groupoid$, composing  with an automorphism  taking one realization of $q$ to another.  It follows that $\sim$ is  an equivalence relation.   In the case $n=0$,  $p,q$ are completions of $T$, and $p \sim q$ iff for some resplendent pair $(M,N)$ we have $p=Th(M)$, $q=Th(N)$,
iff this holds for all resplendent pairs of models  of $T$.

  Let $Y$ be the quotient space
$S/\sim$, $\pi: X \to Y$ the quotient map.  From the fact that
$\sim$ is closed in $X^2$, we conclude that $Y$ is a Hausdorff space,
and since it is compact it is thus a normal space, and by Urysohn's Lemma  points can be separated
by continuous functions.
Let $M \models T $ be resplendent, $a,b \in X(M)$,   and  suppose there is no $g \in \groupoid$ with $g(a)=b$.
Let $p =tp(a), q=tp(b)$.  Then $p \not \sim q$.  So $p,q$ have distinct images $p',q'$ in $Y$.  As mentioned just above, there
is a continuous function $\phi: Y \to [0,1]$ with $\phi(p') \neq \phi(q')$.  So $F=\phi \circ \pi$ is a definable function and it is $\groupoid$-invariant,
hence belongs to the reduct $L^c$; and $F(a) \neq F(b)$.   Thus $tp_{L^c}(a) \neq tp_{L^c}(b)$. 
We have shown that if two tuples agree on all definable relations in the maximal CL reduct, there is a groupoid element that maps from one to another. Thus we have proven density.

Finally, we argue for (3). For small $T$, by definition $S_n(T)$ is countable and hence so is the quotient. Countable compact spaces are totally disconnected, so
we can choose $\phi$ above valued in $\{0,1\}$: there must be a clopen subset $C$ of $Y$ with $p' \in C$ and $q' \notin C$, so
we let $\phi$ be the characteristic function of $C$.   This shows that we could take $L^c$ to consist of all $\{0,1\}$-valued $\groupoid$-invariant definable functions, i.e. an ordinary reduct.

\eprf

We can similarly refer to $\maxred{\modeleq}$ for the maximal CL reduct of a model equivalence $\modeleq$.
We also introduce notation for the theory of the maximal reduct.

\begin{defn} \label{def:languagetheoryofgroupoid}
For a $\bigwedge$-definable groupoid  $\groupoid$, we let $L^\groupoid$ 
be the set of CL-definable relations that are invariant under $\groupoid$. We let
$T^\groupoid$ be the restriction of $T$ to $L^\groupoid$.
\end{defn}

Consider a CL formula whose range is finite. Such a formula corresponds to a finite collection of pairs, each consisting
of a DL formula and a real number.
We call such a formula {\em essentially DL}, and similarly refer to an essentially DL reduct.

\begin{rem}\label{rem:CLDLy}  Assume $\ee$ is an yclept $\bimer$.  Then $\ee$ is ydlept iff the maximal CL reduct is essentially DL.
\end{rem}

\prf  Let $\ee$ be given by CL formulas $\Gamma$. The easy direction is to assume that the maximal CL reduct is essentially DL. Since the maximal CL reduct
will include $\Gamma$, we have that $\Gamma$ can be taken to be essentially DL, and thus the equivalence relation is generated by DL formulas.
Assume  $\ee$ is ydlept.   So it is the $\bimer$ corresponding to some DL reduct $L^d$.  We have $L^d \subset L^\ee$ as $L^\ee$ is maximal invariant.
And any model of $T'=T \upharpoonright L^c$ expands {\em uniquely} to $T \upharpoonright L^\ee$.   It follows by Beth's theorem that  all relations of $L^\ee$ are already definable over $L^d$.
\eprf

\begin{defn} \label{def:bimax} For a groupoid $\groupoid$ on models of a theory $T$,
we let $\bitheorymax$ denote  the theory of the equivalence relation generated by $\maxred{\groupoid}$, which we conflate below with the theory of the groupoid, the latter
consisting of triples $(M, M', g)$.
\end{defn}

\begin{lem} \label{lem:ae1} Let $\groupoid$ be an $\inftybimer$. Assume that $T$ has quantifier-elimination.
Then
every universal consequence $\alpha$ of the theory of $\groupoid$ in the language $\lpluslprime$ is a consequence of $\bitheorymax$.
 \end{lem}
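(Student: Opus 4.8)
The plan is to prove the contrapositive: assume $\alpha$ is a universal $\lpluslprime$-sentence that is \emph{not} a consequence of $\bitheorymax$, and produce a model of $Th(\groupoid)$ in which $\alpha$ fails. So fix a triple $(M_0, N_0, h_0)$ with $M_0, N_0 \models T$ and $h_0 \in {Iso}_{\maxred{\groupoid}}(M_0, N_0)$ (i.e.\ a model of $\bitheorymax$ in the guise of Definition~\ref{def:bimax}) in which $\alpha$ is false. Since $\alpha = \forall \bar z\, \psi(\bar z)$ with $\psi$ quantifier-free, there is a tuple $\bar c$ from $(M_0, N_0, h_0)$ with $\neg\psi(\bar c)$. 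The goal is to find $(M, N, g) \models Th(\groupoid)$ containing a tuple on which $\neg\psi$ still holds; because $\alpha$ is universal, any such triple witnesses $\groupoid \not\models \alpha$.

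The key step is to realize $(M_0, N_0, h_0)$, or at least the quantifier-free type of $\bar c$ over it, inside a model of $Th(\groupoid)$. First I would pass to a $\groupoid$-resplendent model: by Proposition~\ref{prop:maxred}(2), $\groupoid$ is dense in ${Iso}_{\maxred{\groupoid}}$ in the sense that for a $\groupoid$-resplendent $M$ the orbits coincide. Using Proposition~\ref{prop:groupoidsubgroup} (the value of the groupoid on any pair of models is determined by $\groupoid(M,M)$ for a single resplendent $M$), take $M$ resplendent of large cardinality with $M_0 \preceq M$. Replace $N_0$ similarly by a resplendent extension $N$ and extend $h_0$ to a bijection; the point is that once $M \cong N$ (which we may assume by completeness if $T$ is complete, or handle completion-by-completion in general via the $n=0$ case of $\sim$ in Proposition~\ref{prop:maxred}), the condition "$h_0$ preserves $L^\groupoid$" becomes exactly the statement that the image type of $\bar c$ is $\sim$-equivalent to $\tp(\bar c)$, and by density there is $g \in \groupoid(M,M) = \groupoid$ mapping $\bar c$ to $h_0(\bar c)$. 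Now crucially I invoke quantifier elimination for $T$: the quantifier-free $\lpluslprime$-type of $\bar c$ in $(M_0, N_0, h_0)$ is built from the $L$-types of the projections to the two models together with the way $h_0$ matches them up; QE for $T$ means these $L$-types are the full types, so the quantifier-free $\lpluslprime$-type of $\bar c$ in $(M, N, g)$ — where $g$ realizes the same matching on $\bar c$ — agrees with that in $(M_0, N_0, h_0)$. Hence $\neg\psi(\bar c)$ still holds in $(M, N, g) \models Th(\groupoid)$, so $\alpha$ fails there.

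I expect the main obstacle to be the bookkeeping around incompleteness of $T$ and the reduction to a single model. Proposition~\ref{prop:maxred}(2) and Proposition~\ref{prop:groupoidsubgroup} are stated for complete $T$ and for a single resplendent model, so to use density cleanly one either restricts to a completion of $T$ (legitimate, since $Th(\groupoid)$ and $\bitheorymax$ decompose over completions, and a universal sentence is a consequence of a theory iff it is a consequence of each completion) or else reworks the density argument for an elementarily-equivalent resplendent \emph{pair} $(M, N)$ as in the surjectivity argument of Proposition~\ref{prop:groupoidsubgroup}. A second, more minor point is checking that quantifier elimination for $T$ really does let one reconstruct the quantifier-free $\lpluslprime$-type of $\bar c$ from the $L$-data; here one must be a little careful that the atomic $\lpluslprime$-formulas involve the function symbol $f$, so an atomic formula like $R'(f(x_1),\dots,f(x_k))$ is governed by the $L$-type of $(h_0(c_1),\dots,h_0(c_k))$ in $N_0$, which QE again pins down — so the argument goes through, but this is the place where the QE hypothesis is genuinely used and should be spelled out.
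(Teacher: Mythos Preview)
Your proposal is correct and takes essentially the same approach as the paper: both arguments reduce to the density statement of \propref{prop:maxred}(2), namely that in a resplendent model two tuples with the same $\maxred{\groupoid}$-type lie in the same $\groupoid$-orbit. The only difference is packaging: the paper argues directly, showing that for each $\maxred{\groupoid}$-type $p$ the set $p(x)\wedge\beta(x,y)\wedge p(y)$ is inconsistent with $T$ (else a resplendent witness plus density contradicts $\alpha\in Th(\groupoid)$), and then invokes compactness to extract finitely many $L^{\groupoid}$-formulas $P_j$ from which $\bitheorymax\vdash\alpha$ follows; you run the contrapositive, starting from a model of $\bitheorymax$ violating $\alpha$ and producing a model of $Th(\groupoid)$ violating $\alpha$, which lets you skip the compactness step. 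Your worry about incompleteness is legitimate but is already absorbed by \propref{prop:maxred}: if $\bar a\in M_0$ and $h_0(\bar a)\in N_0$ share the same $L^{\groupoid}$-type then their full $L$-types are $\sim$-related, so a single resplendent $M$ realizes both and carries the needed $g\in\groupoid(M,M)$ --- no separate completion-by-completion argument is required.
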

Here, assuming $T$ has quantifier-elimination is equivalent to redefining ``universal'' $\lpluslprime$
formulas to be those that have only universal quantification across the primed and unprimed signature.
 \prf A consequence $\alpha$ can be taken to have the form $(\forall x)(\neg \beta(x,gx))$ where $\beta$ is
a formula of $L$ - indeed a Boolean combination of $L$ formulas in $x$ and in $g(x)$, which we  considered quantifier-free by convention, 
  and $x$ is a tuple of variables.
Consider any complete  type
$p(x)$ of $\maxred{\groupoid}$.  
 If  $p(x) \wedge \beta(x,y) \wedge p(y)$ is consistent with $T$,
it is realized in a resplendent model $M$ of $T$ by some $(a,b)$; but then by density of $\groupoid(M,M)$
in $Aut_{\maxred{\groupoid}}(M)$, \propref{prop:maxred},
there exists $\si \in \groupoid(M,M)$ with $\si(a)=b$. This contradicts the assumption that $\alpha$ follows from
the theory of $\groupoid$.  Hence $T \union \{\beta(x,y)\} \union tp_{\maxred{\groupoid}}(x)=tp_{\maxred{\groupoid}}(y)$ is inconsistent.

In case  $\maxred{\groupoid}$ is generated by DL formulas, by compactness, for some $L^{\groupoid}$-formulas $P_j$, we have  $T  \vdash (\forall x) \bigvee_{j=1}^m P_j(x)$ and $T \models P_j(x) \wedge P_j(y) \vdash \neg \beta(x,y)$.  Since
$\bitheorymax \models P_j(x) \iff P_j(gx)$,  it
 follows that   
$\bitheorymax$ proves $\alpha$ as claimed.  The argument in the case where $\maxred{\groupoid}$ is generated by CL formulas is similar:
we find $P_j$ and $\e$ such that 
$T$ is inconsistent with  $|P_j(x)-P_j(y) | \leq \e \wedge \beta(x,y)$.

\eprf

We give a  corollary of Proposition \ref{prop:maxred}.

\begin{cor} \label{cor:closedyclept} Let $\ee$ be a $\inftybimer$
 with groupoid   $\groupoid$.   Then $\ee$  is  yclept
 iff $\groupoid(M,M)$ is a closed subgroup of $Sym(M)$, for all  $\groupoid$-resplendent    $M$.

  In case $T$ is complete, $\ee$ is yclept
  iff $\groupoid(M,M)$ is closed for one resplendent $M$.
  \end{cor}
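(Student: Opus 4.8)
The statement to prove is Corollary~\ref{cor:closedyclept}: that a $\inftybimer$ $\ee$ with groupoid $\groupoid$ is yclept if and only if $\groupoid(M,M)$ is a closed subgroup of $\Sym(M)$ for every $\groupoid$-resplendent $M$, and, if $T$ is complete, that checking a single resplendent $M$ suffices. I would deduce this from \propref{prop:maxred} together with \corref{prop:groupoidsubgroup}-style facts on the group/groupoid correspondence. The forward direction is the easy one: if $\ee$ is yclept, it is $\equiv_R$ for a CL reduct $R$, so $\groupoid = {Iso}_R$ and $\groupoid(M,M) = {Aut}_R(M)$; the automorphism group of a structure (here in continuous logic, but the underlying point set is classical) is always closed in $\Sym(M)$ under the pointwise convergence topology, since preservation of each relation/function of $R$ is a closed condition. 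Actually one must be slightly careful in the CL setting: $\groupoid = {Iso}_R$ where $R$ may involve uniform limits of DL-definable functions with values in $[0,1]$; but preservation of a $[0,1]$-valued definable function $F$ by $g$, i.e. $F(x) = F(gx)$ for all $x$, is still an intersection of conditions each depending on finitely many coordinates of $g$, hence closed. So $\groupoid(M,M)$ is closed for every $M$, in particular every $\groupoid$-resplendent one.

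\textbf{The converse.} Suppose $\groupoid(M,M)$ is closed in $\Sym(M)$ for every $\groupoid$-resplendent $M$. Let $\maxred{\groupoid}$ be the maximal CL reduct from \propref{prop:maxred}, so $\groupoid \leq {Iso}_{\maxred{\groupoid}}$, and let $\ee'$ be the yclept $\mer$ defined by $\maxred{\groupoid}$, with groupoid $\groupoid' = {Iso}_{\maxred{\groupoid}}$. I want to show $\groupoid = \groupoid'$, which gives that $\ee = \ee'$ is yclept. By the groupoid-determined-by-a-single-model principle (the argument in the first paragraph of the proof of \propref{prop:groupoidsubgroup}), it suffices to show $\groupoid(M,M) = \groupoid'(M,M)$ for a suitable resplendent $M$ — here I take $M$ to be $\groupoid$-resplendent of large enough cardinality, which one can always arrange. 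Now \propref{prop:maxred}(2) says precisely that $\groupoid(M,M)$ is \emph{dense} in $\groupoid'(M,M) = {Aut}_{\maxred{\groupoid}}(M)$ in the pointwise convergence topology (the orbits agree on each $M^n$, which is exactly density). But $\groupoid(M,M)$ is assumed closed. A dense closed subgroup of a topological group is the whole group, so $\groupoid(M,M) = \groupoid'(M,M)$, and hence $\groupoid = \groupoid'$, so $\ee$ is yclept.

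\textbf{The "one $M$" refinement.} For the last sentence, assume $T$ complete. The forward direction already gives closedness at every resplendent model, so only the converse needs the weakening. Here I would argue: if $\groupoid(M_0,M_0)$ is closed for one resplendent $M_0$, I want closedness at a $\groupoid$-resplendent $M$ of larger cardinality. Since $T$ is complete, any two resplendent models of the same cardinality are isomorphic, and the proof of \propref{prop:groupoidsubgroup} shows the induced subgroup on one resplendent model determines the groupoid and in particular, via the density Proposition~\ref{prop:maxred}(2), the orbit structure is the same at all resplendent models; closedness of $\groupoid(M_0,M_0)$ forces $\groupoid(M_0,M_0) = {Aut}_{\maxred{\groupoid}}(M_0)$ by the density-plus-closed argument, and this equality of groupoids-of-reducts is model-independent, so it propagates to the $\groupoid$-resplendent $M$ as well, giving closedness there. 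Then the converse argument above applies.

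\textbf{Main obstacle.} I expect the subtle point to be the interaction between resplendence notions: \propref{prop:maxred}(2) is stated for $\groupoid$-resplendent $M$, whereas the single-model-determines-the-groupoid argument in \propref{prop:groupoidsubgroup} used saturated (and in the complete case, resplendent) models, and one must make sure the same $M$ can serve both purposes — i.e. that $\groupoid$-resplendent models of arbitrarily large cardinality exist and that the density statement genuinely yields topological density (orbit equality $\Leftrightarrow$ density). A secondary subtlety is the CL wrinkle in the easy direction: verifying that ${Iso}_R(M,M)$ is closed when $R$ contains uniform limits of definable functions — but since a uniform limit is itself a single $[0,1]$-valued definable function and preservation of it is a closed condition, this is routine once spelled out. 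I would handle the CL case by just invoking that ${Aut}$ of any (possibly continuous) structure is closed in $\Sym$ of its underlying set.
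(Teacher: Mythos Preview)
Your forward direction and the complete-$T$ case (including the ``one $M$'' refinement) are correct and match the paper's approach: dense plus closed gives $\groupoid(M,M)=\Aut_{\maxred{\groupoid}}(M)$, and then \propref{prop:groupoidsubgroup} finishes. The gap is in the converse for \emph{incomplete} $T$. You invoke \propref{prop:groupoidsubgroup} to pass from $\groupoid(M,M)=\groupoid'(M,M)$ at a single suitable resplendent $M$ to $\groupoid=\groupoid'$ globally, but that proposition is stated and proved only under the hypothesis that $T$ is complete. When $T$ is incomplete, a single $M$ sees only one completion; and even knowing $\groupoid(M,M)=\groupoid'(M,M)$ for one resplendent $M$ per completion does not by itself yield $\groupoid(M_1,M_2)=\groupoid'(M_1,M_2)$ when $M_1,M_2$ lie in different completions---you would first need to exhibit \emph{some} element of $\groupoid(M_1,M_2)$, and that is exactly the missing step.

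The paper handles this by working on the $\mer$ side rather than the groupoid side. Given $M,N$ on the same universe with $M\upharpoonright\maxred{\groupoid}=N\upharpoonright\maxred{\groupoid}$, it first reduces to $Th(M)=Th(N)$: since $M,N$ agree on the $0$-ary part of $\maxred{\groupoid}$, the $n=0$ case of the density analysis inside the proof of \propref{prop:maxred}(2) gives that $Th(M)$ and $Th(N)$ are $\sim$-related, hence there is $M'\ee M$ with $Th(M')=Th(N)$. Replacing $M$ by $M'$ puts us in a single completion; then one passes to a resplendent pair, obtains an isomorphism $g:M\to N$ which is automatically an $\maxred{\groupoid}$-automorphism, and your closed-plus-dense argument places $g\in\groupoid(M,M)$, whence $M\ee N$. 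Your proof becomes correct once this reduction-to-a-common-completion step is inserted before invoking the single-model principle.
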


\prf   
For one direction, if $\ee$ is yclept,
then $\groupoid(M,M)$ is the subgroup of $Sym(M)$ fixing a certain family of finitary
relations, or functions $\rho$ into the reals. This is clearly closed: if $g$ does not fix $\rho$
then $\rho(a) \neq \rho(g(a))$ for some tuple $a$, and so the same inequality holds for
each $h$ with $h(a) = g(a)$, an open subset of $\groupoid$.

We prove the direction from right to left, assuming that  $\groupoid(M,M)$ is a closed subgroup of $Sym(M)$ for any resplendent $M$, and arguing that $\ee$ is yclept.
 Being closed and dense, by Proposition \ref{prop:maxred}, $\groupoid(M,M)$  must be the full  automorphism group of the maximal CL reduct
$\maxred{\groupoid}$.

Let $(M,N)$ be a   pair of models of $T$ on the same universe, and assume $M \upharpoonright \maxred{\groupoid} = N \upharpoonright
\maxred{\groupoid}$; we have to show that $M \ee N$.   By the remark following Proposition
\ref{prop:maxred} there exists $M' \ee M$ with $Th(M')=Th(N)$; thus we may assume 
$Th(M)=Th(N)$.  Further we may assume $(M,N)$ is resplendent; so $M \cong N$, say
via $g: M \to N$.   Since $M \upharpoonright \maxred{\groupoid} = N \upharpoonright 
\maxred{\groupoid}$, $g$ is a $\maxred{\groupoid}$-automorphism, and hence by the previous paragraph, $g \in \groupoid(M,M)$.  So $M \ee N$. 
 \eprf

\section{DL and CL reducts} \label{sec:cldl}

With the definitions behind us, we now begin the second part of the paper, where we address some basic questions about yclept $\mer$s versus $\bimer$s.
For example, we give properties of $\mer$s that are both  yclept and a $\bimer$, but show
that this does not  imply being ydlept.

In small theories  the distinction between CL and DL reducts is not critical. Let us show a converse
to this.  

  \begin{prop} \label{prop:smallchar}  Let $T$ be a theory  in a countable language.   Then
$T$ is   small iff   every CL reduct of an expansion of
  $T$ by finitely many constants can be presented as a DL reduct.

  Therefore  a small $T$ as well as expansions by  finitely many constants have
     the property that ydlept = yclept:
     if an equivalence relation  comes from a CL reduct
  then it comes from a DL reduct.    \end{prop}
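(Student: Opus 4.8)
The statement packages two things: the biconditional characterising smallness, and the ``Therefore'' clause, which is merely the forward implication of the biconditional applied to $T$ itself (with no added constants) and to its expansions by $k$ constants. So the plan is to prove (i) if $T$ is small then every CL reduct of an expansion of $T$ by finitely many constants is interdefinable with a DL reduct, and (ii) if $T$ is not small then some such expansion carries a CL reduct not of this form. For (i) I would first reduce to the case of no added constants: if $T$ is small and $T'$ is obtained from $T$ by adjoining constants $c_1,\dots,c_k$, then a complete $n$-type of $T'$ over $\emptyset$ yields, on substituting variables for the $c_i$, a complete $(n+k)$-type of $T$ over $\emptyset$, and this map is injective; hence $|S_n(T')|\le|S_{n+k}(T)|\le\aleph_0$ and $T'$ is again small.

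So let $T'$ be small and let $R$ be a CL reduct of $T'$, with associated $\mer$ $\ee=\equiv_R$ and groupoid $\groupoid$. The first observation is that $\ee$ already equals $\equiv_{\maxred{\ee}}$, where $\maxred{\ee}=\maxred{\groupoid}$ is the maximal CL reduct: indeed $\maxred{\ee}\supseteq R$ gives $\equiv_{\maxred{\ee}}\subseteq\ee$, while if $M\ee M'$ then $\mathrm{id}\in\groupoid(M,M')$ by the correspondence of \propref{prop:equivrelationgroupoid}, and every relation in $\maxred{\ee}$ is $\groupoid$-invariant, so $M$ and $M'$ agree on it; thus $\ee\subseteq\equiv_{\maxred{\ee}}$. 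Now apply \propref{prop:maxred}(3) to $T'$ and $\groupoid$: since $T'$ is small, ${Iso}_{\maxred{\groupoid}}$ is ydlept, i.e.\ $={Iso}_{L^d}$ for some DL reduct $L^d$; by the bijection of \propref{prop:equivrelationgroupoid} between groupoids and $\mer$s this gives $\equiv_{\maxred{\ee}}=\equiv_{L^d}$, hence $\ee=\equiv_{L^d}$ is ydlept. To upgrade ``ydlept'' to the stronger ``$R$ is interdefinable with a DL reduct'', note that $\ee$ is yclept, so by \corref{cor:closedyclept} together with \propref{prop:maxred}(2) the group $\groupoid(M,M)$ is closed and dense in $\Aut_{\maxred{\groupoid}}(M)$, hence equal to it; thus $R$, $L^d$ and $\maxred{\ee}$ all have the same automorphism groupoid, and a Beth/Svenonius-type definability argument, along the lines of the proof of \remref{rem:CLDLy}, shows they are interdefinable.

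For (ii), assume $T$ is not small, say $S_n(T)$ is uncountable; since the language is countable this is a compact metrizable space, so it contains a copy of the Cantor set, and by the Tietze extension theorem there is a continuous surjection $F:S_n(T)\twoheadrightarrow[0,1]$. For any closed $U\subseteq[0,1]$ the set $F^{-1}(U)$ is closed in $S_n(T)$, i.e.\ is cut out by a partial type, hence is $\bigwedge$-definable; thus $F$ is a parameter-free continuous-logic relation of $T$ in the sense of \propref{prop:maxred}, and $R:=\{F\}$ is a CL reduct of $T$ (no constants are needed). Let $\groupoid$ be the groupoid of $\equiv_R$ and consider, as in the proof of \propref{prop:maxred}, the relation $p\sim q$ on $S_n(T)$ saying that some $\groupoid$-morphism of a saturated model moves a realisation of $p$ to one of $q$. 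Since every $\groupoid$-morphism preserves $F$, $p\sim q$ forces $F(p)=F(q)$; conversely, in a saturated model a partial bijection preserving the single $[0,1]$-colouring $F$ of $n$-tuples can always be extended one point at a time, by saturation, so $F(p)=F(q)$ implies $p\sim q$. Hence the quotient $Y_n=S_n(T)/\!\sim$ is homeomorphic to $F(S_n(T))=[0,1]$; in particular $Y_n$ is connected, so by the argument of \propref{prop:maxred}(3) (equivalently, by \remref{rem:CLDLy}) the maximal CL reduct $\maxred{\equiv_R}$ is not essentially DL and $\equiv_R$ is not ydlept. So $R$ cannot be presented as a DL reduct, which is the contrapositive of the remaining implication.

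The step I expect to be the real work is the back-and-forth in (ii): a priori a $\groupoid$-morphism for $\equiv_R$ is only required to preserve the colouring $F$, nothing else, and one must check that this single constraint is all there is, so that $Y_n$ is genuinely homeomorphic to $[0,1]$ rather than merely a (possibly totally disconnected) compact space surjecting onto $[0,1]$ -- a distinction that is essential here, since e.g.\ the Cantor set itself surjects onto $[0,1]$, and only connectedness of $Y_n$ rules out essential-DL-ness. The remaining ingredients -- the type-counting for finite-constant expansions, the Tietze/Cantor-set input, and the upgrade from ``ydlept'' to ``interdefinable with a DL reduct'' via Beth/Svenonius -- are routine.
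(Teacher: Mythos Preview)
Your argument for (i) is correct, though a bit more roundabout than the paper's: the paper simply observes that smallness passes to expansions by constants and to CL reducts, so every type space involved is countable compact, hence totally disconnected, and the reduct is generated by its $\{0,1\}$-valued formulas.  Your route through \propref{prop:maxred}(3) and \remref{rem:CLDLy} reaches the same conclusion.

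The gap is in (ii).  Your back-and-forth claim --- that in a saturated model any finite partial bijection preserving the $n$-tuple colouring $F$ can be extended one element at a time, so that $F(p)=F(q)$ implies $p\sim q$ --- is false for $n>1$.  Suppose $a=(a_1,\dots,a_n)\models p$ and $b=(b_1,\dots,b_n)\models q$.  A bijection $g$ with $g(a_i)=b_i$ that preserves $F$ must in particular satisfy $F(a_{i_1},\dots,a_{i_n})=F(b_{i_1},\dots,b_{i_n})$ for \emph{every} sequence $(i_1,\dots,i_n)\in\{1,\dots,n\}^n$, not just the identity sequence.  The hypothesis $F(p)=F(q)$ gives you only the latter; nothing prevents, say, $F(a_1,\dots,a_1)\neq F(b_1,\dots,b_1)$, in which case no $F$-preserving bijection can send $a_1$ to $b_1$ at all, and the back-and-forth never gets started.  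So the quotient $Y_n$ is \emph{not} $[0,1]$ in general; it is only a compact Hausdorff space admitting a surjection onto $[0,1]$, which, as you yourself note, is useless (the Cantor set does that).

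The fix is exactly the move you dismissed: use the constants that the statement hands you.  If $T$ is not small then some $S_n(T)$ is uncountable, and a standard pigeonhole (project to the last coordinate over all choices of the first $n-1$) shows that for suitable constants $c_1,\dots,c_{n-1}$ the expansion $T(c_1,\dots,c_{n-1})$ has uncountably many $1$-types.  Now take $F:S_1\to[0,1]$ surjective as you did.  With a \emph{unary} $F$ there are no sub-tuples to worry about: the CL reduct generated by $F$ alone is a pure set with a $[0,1]$-colouring, it has quantifier elimination, and its space of $1$-types is literally $[0,1]$.  Connectedness then kills all nontrivial DL formulas immediately, with no back-and-forth and no appeal to the groupoid needed.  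This is the paper's argument, and your phrase ``no constants are needed'' is precisely where your version goes off the rails.
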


 \prf   If $T$ is small, then every expansion by constants and every CL reduct of such an expansion is small. Hence every CL reduct is
 generated by the discretely valued formulas in it: the type spaces are totally disconnected.

 If $T$ is not small, some expansion by constants has uncountably many $1$-types. So we may assume this holds for $T$.   So
   $S_x$ is uncountable.  Then $S_x$ contains a perfect subset $P$.
 $P$ is homeomorphic to the Cantor set, and admits a continuous map onto the interval $[0,1]$.  By the Tietze extension theorem, there exists a continuous, surjective $F: S_x \to [0,1]$.  Now $F$ can be viewed as a  CL relation in the variables $x$.  Since $x$ is a single variable, the reduct generated by $F$ admits quantifier elimination, and actually
 $F(x)=\alpha$ generates a complete type, so that the space of $1$-types is homeomorphic to $[0,1]$.
 Hence every unary relation of the reduct to $F$ is a continuous function $C(F(x))$ of $F(x)$, and as $[0,1]$ is connected, if
 $C \circ F$ takes more than one value then it takes infinitely many.  So $F$ cannot be approximated by
 definable relations with finitely many values.   \eprf

As an alternative statement,  a theory is small iff in any bi-interpretable theory, every yclept $\mer$ is ydlept.   (Smallness is of course
preserved under bi-interpretation, giving the left-to-right direction.  In the opposite direction, assuming $T$ is not small,  it has uncountably many $n$-types on some product $S$ of sorts.  Taking a bi-interpretable theory with $S$ a sort, we obtain an yclept but non-ydlept reduct as above.

We now consider the intersection of $\bimer$s with yclepts.

 \begin{thm} \label{thm:approxequiv} 
Assume 
$\ee$ is  a $\bimer$ on models of theory $T$, and that $\ee$ is also yclept.

Then  there exists a CL
definable map $R$ on $M^k$ into a finite metric space $(Y,d)$, and some $\e>0$ such that
$M \ee N$ iff $d(R^M(a),R^N(a))< \e$ for all $a$. Note that  such a map is essentially DL, in the sense defined earlier.

Conversely, the existence of such a map  implies that $\ee$ is yclept.
\end{thm}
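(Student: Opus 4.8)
The plan is to combine the structural picture of Proposition~\ref{prop:maxred} with the finite-axiomatizability hypothesis. Since $\ee$ is yclept, by Corollary~\ref{cor:closedyclept} the group $\groupoid(M,M)$ is a closed subgroup of $Sym(M)$ for $\groupoid$-resplendent $M$, and hence (being also dense, by part (2) of Proposition~\ref{prop:maxred}) it is the full automorphism group of the maximal CL reduct $\maxred{\groupoid}$. So $\ee$ agrees with $\equiv_{\maxred{\groupoid}}$, i.e.\ $M \ee N$ iff $M$ and $N$ assign the same values to every CL-definable $\groupoid$-invariant function. The content to extract is that, \emph{because $\ee$ is a $\bimer$}, finitely much of this reduct suffices, and moreover one can package those finitely many functions into a single map into a finite metric space and replace ``equal'' by ``$\e$-close''.

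First I would write the $\bimer$ $\ee$ as a single $\lequalslprime$-sentence $\sigma$, and use the yclept presentation to approximate: the defining condition ``$M$ and $N$ agree on all $\maxred{\groupoid}$-functions'' is a conjunction, over all CL-definable $\groupoid$-invariant $F$ valued in $[0,1]$ and all $n$, of the $\bigwedge$-definable conditions $(\forall x)\,|F_n(x) - F_n^{N}(x)| \le 2/n$ (as in the proof that yclept implies $\inftybimer$). Since $\sigma$ together with $Th(\groupoid)$ on one side forces non-$\ee$-equivalence to fail, a compactness/ultraproduct argument — exactly the style used in Lemma~\ref{prop:finitelymany} and in Lemma~\ref{lem:ae1} — shows that finitely many of these CL functions $F^{(1)},\dots,F^{(m)}$, on tuples of some bounded length $k$, and a single tolerance $\e_0 > 0$, already determine $\ee$: if $M,N$ agree on the same universe and $|F^{(j),M}(a) - F^{(j),N}(a)| < \e_0$ for all $j \le m$ and all $a \in M^k$, then $M \ee N$; and conversely $M \ee N$ forces exact agreement, hence the inequalities. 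The point where I would be most careful is checking that the ultrafilter limit of the would-be counterexamples really does land in a model where $\ee$ fails — this uses definability of $\ee$ (it is a $\bimer$, so its negation is closed under ultraproducts), and it is the step where the $\bimer$ hypothesis, as opposed to mere $\inftybimer$-ness, is genuinely used.

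Next I would assemble $R := (F^{(1)},\dots,F^{(m)}) : M^k \to [0,1]^m =: Y$ with $d$ the sup metric (or the max metric), rescaling so that the finite set of values actually taken is what matters; since each $F^{(j)}$ is CL-definable and $\groupoid$-invariant, $R$ is a CL-definable map, and by the observation that each $F^{(j)}$ may be taken discretely valued up to $\e_0/3$ (it is approximated uniformly by DL-definable finite-range functions, as in the proof of the first lemma of Section~\ref{sec:invar}), $R$ can be taken to land in a genuinely finite subset of $Y$, i.e.\ $R$ is essentially DL, and then $M \ee N$ iff $d(R^M(a), R^N(a)) < \e$ for all $a$, for a suitable $\e$ derived from $\e_0$. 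This proves the main direction.

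For the converse, suppose we are given a CL-definable $R : M^k \to (Y,d)$ with $Y$ finite and an $\e > 0$ with $M \ee N \iff (\forall a)\, d(R^M(a), R^N(a)) < \e$. I would argue that this condition is itself an equivalence relation of the form $\equiv_{R'}$ for a CL reduct $R'$: namely, since $Y$ is finite, fix a maximal family of ``$\e$-nets'' — more precisely, consider for each point $y \in Y$ the indicator-type functions, but the clean way is this: define an equivalence $\approx_\e$ on $Y$ generated by ``distance $< \e$'' is \emph{not} transitive, so instead one argues that agreement within $\e$ everywhere is equivalent to agreement of the finitely many $\{0,1\}$-valued relations ``$d(R(x), y_0) < \e/2$'' for $y_0$ ranging over $Y$, after refining the net so that the $\e/2$-balls around net points partition the possible values; then the hypothesis ``$d(R^M(a),R^N(a)) < \e$ for all $a$'' becomes ``$M$ and $N$ agree on each of these finitely many CL (indeed essentially DL) relations,'' which is manifestly $\equiv_{R'}$ for the CL reduct $R'$ consisting of those relations. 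Hence $\ee = \equiv_{R'}$ is yclept. The only subtlety here is handling the non-transitivity of ``$\e$-close,'' which is why the hypothesis is phrased with a single fixed $\e$ and why passing to net points at scale $\e/2$ is the right move; since we are told $\ee$ \emph{is} an equivalence relation, this forces the value set of $R$ to be compatible with such a partition, and that compatibility is exactly what lets us rewrite the condition in the required form.
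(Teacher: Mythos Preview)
Your forward direction is essentially the paper's argument: reduce to finitely many CL relations by compactness (using that $\ee$ is a $\bimer$, so its negation is ultraproduct-closed), package them as a single map $R$ into $[0,1]^m$ with the sup metric, extract $\e$ by a second compactness step, and then discretize $R$ to an essentially DL map with finite range. The detour through Proposition~\ref{prop:maxred} and Corollary~\ref{cor:closedyclept} to identify the yclept presentation with $\equiv_{\maxred{\groupoid}}$ is harmless but unnecessary: you are already \emph{given} that $\ee=\equiv_{\mathcal R}$ for some CL family $\mathcal R$, and the compactness argument runs directly on that family.

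Your converse, however, has a genuine gap. You try to exhibit a CL reduct directly by passing to indicator relations for $\e/2$-balls around a net in $Y$, and you acknowledge that ``$\e$-close'' is not transitive. Your proposed fix---``since $\ee$ \emph{is} an equivalence relation, this forces the value set of $R$ to be compatible with such a partition''---is not justified and is not true at the level of $Y$ alone. Transitivity of $\ee$ on models does not force the relation $d(y,y')<\e$ on the finite range $Y$ to be transitive: it may hold for reasons having to do with which pairs $(R^M(a),R^N(a))$ are jointly realizable, not with the combinatorics of $Y$. So the reduct you write down need not define $\ee$.

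The paper's converse takes a completely different and much cleaner route. Since $R$ is essentially DL and $Y$ is finite, the condition $(\forall a)\,d(R^M(a),R^N(a))<\e$ is, after Morleyizing $T$ to QE, a \emph{universal} $\lequalslprime$-sentence. A universally defined $T$-groupoid is visibly closed in $Sym(M)$ (if $g\notin\groupoid(M,M)$ this is witnessed by a single tuple), and then Corollary~\ref{cor:closedyclept} gives yclept directly. This bypasses the transitivity issue entirely: one never needs to identify the CL reduct explicitly.
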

  One might say $T$ is approximately ydlept in the above case.

We note a corollary:
\begin{cor} Whenever we have  a $\bimer$ $\ee$ that is also yclept,  and we move to an expansion of $T$ with quantifier elimination, then
 $\ee$ is defined by a   universal  $\lequalslprime$ sentence.
 \end{cor}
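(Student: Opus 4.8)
The plan is to combine \thmref{thm:approxequiv} with \lemref{lem:ae1}. By \thmref{thm:approxequiv}, since $\ee$ is a $\bimer$ that is also yclept, there is a CL-definable map $R: M^k \to (Y,d)$ into a finite metric space and some $\e > 0$ such that $M \ee N$ iff $d(R^M(a), R^N(a)) < \e$ for all $a \in M^k$; moreover $R$ is essentially DL, so the maximal CL reduct $\maxred{\groupoid}$ is essentially DL, and hence by \remref{rem:CLDLy} the $\mer$ is in fact ydlept — the relevant reduct $L^d$ is generated by the finitely many DL formulas coming from the components of $R$ together with the thresholds of $d$.

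Now pass to an expansion $T^*$ of $T$ with quantifier elimination (e.g. Morleyization). The point is that $\ee$, as a $\mer$ on models of $T$, is unchanged when we view models of $T$ as (reducts of) models of $T^*$: the defining reduct $L^d$ can be taken inside the language of $T^*$, and in $T^*$ each formula of $L^d$ is equivalent to a quantifier-free formula. The $\mer$ $\ee$ is then the equivalence relation $\equiv_{R^d}$ for $R^d$ a \emph{finite} collection of quantifier-free $L^*$-formulas (using \lemref{prop:finitelymany}, since $\ee$ is a $\bimer$). Agreement of two structures on a quantifier-free formula $\psi(\bar x)$ is exactly the universal $\lequalslprime$ sentence $(\forall \bar x)(\psi(\bar x) \leftrightarrow \psi'(\bar x))$, which — since $\psi$ is quantifier-free and $T^*$ has QE — has no quantifiers across the primed/unprimed signature and is thus genuinely universal in the sense of \lemref{lem:ae1}. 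Taking the (finite) conjunction over the formulas in $R^d$ yields a single universal $\lequalslprime$ sentence defining $\ee$ on $\modelsof(T^*)$, hence on $\modelsof(T)$.

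Alternatively, and perhaps more in the spirit of the preceding lemmas, one can invoke \lemref{lem:ae1} directly: since $\ee$ is yclept with closed groupoid equal to the automorphism group of $\maxred{\groupoid} = T^\groupoid$, we have $\groupoid = Iso_{T^\groupoid}$ and $\bitheorymax$ is exactly the theory of $\groupoid$. Every universal consequence of $Th(\groupoid)$ is then a consequence of $\bitheorymax = Th(\groupoid)$ trivially; the content is the converse — that $Th(\groupoid)$ is \emph{axiomatized} by its universal consequences over the $\lpluslprime$-diagram of $T$ — which follows because, after QE, being $\ee$-related is preserved under substructure in the $\lequalslprime$ sense (agreement on a quantifier-free formula passes to substructures). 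A standard Łoś–Tarski style argument, using the definability (compactness) of $\ee$, then gives a universal axiomatization, and \lemref{prop:finitelymany} reduces it to a single sentence.

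The main obstacle I anticipate is bookkeeping about \emph{which} language the universal sentence lives in and making sure the QE expansion does not secretly change the class of pairs: one must check that the reduct $L^d$ witnessing ydleptness can be chosen so that its formulas become quantifier-free after Morleyization, and that the translation of ``$M$ and $N$ agree on $\psi$'' into $\lequalslprime$ really is quantifier-free on both sides. Once the reduct is known to be finite (\lemref{prop:finitelymany}) and essentially DL (\thmref{thm:approxequiv} plus \remref{rem:CLDLy}), this is routine, but it is the step where the hypothesis ``move to an expansion with quantifier elimination'' is actually used, so it deserves an explicit sentence in the write-up.
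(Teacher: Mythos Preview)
Your argument contains a genuine error: you claim that since the map $R$ from \thmref{thm:approxequiv} is essentially DL, the maximal CL reduct $\maxred{\groupoid}$ is essentially DL, and hence by \remref{rem:CLDLy} the $\mer$ is ydlept. This inference fails. The essentially DL map $R$ produced by \thmref{thm:approxequiv} need not lie in the maximal CL reduct at all: the characterization is $d(R^M(a), R^N(a)) < \e$ for all $a$, \emph{not} $R^M = R^N$, so $R$ itself is not asserted to be $\ee$-invariant. More decisively, \secref{sec:notydlept} constructs an yclept $\bimer$ that is \emph{not} ydlept, directly refuting your conclusion. Your second paragraph, and the parenthetical justification (``agreement on a quantifier-free formula passes to substructures'') in your alternative approach, both rest on this false ydleptness claim.

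The paper's proof is shorter and avoids the detour. From \thmref{thm:approxequiv} one has $M \ee N$ iff for all $a \in M^k$, $d(R^M(a), R^N(a)) < \e$, where $R$ is essentially DL: say $R$ takes the value $y_i \in Y$ exactly on the set defined by the DL formula $R^i$. Then the condition on a given tuple $a$ is simply a Boolean combination of the truth values of $R^i(a)$ in $M$ and $R^j(a)$ in $N$ (namely the disjunction over pairs $(i,j)$ with $d(y_i,y_j)<\e$). After passing to a QE expansion each $R^i$ is quantifier-free, so the defining condition is already a single universal $\lequalslprime$ sentence. No appeal to ydleptness---or to \L o\'s--Tarski---is needed; the ``approximate'' equality $d(\cdot,\cdot)<\e$ is precisely what makes the argument go through even when exact agreement on a DL reduct is unavailable.
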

 \prf
The theorem shows that $M \ee N$ iff for all $a$, a certain Boolean combination of the truth values of  $R^i(a),R^i(b)$ is valid, where $R^i$ are DL 
formulas.
\eprf

We now prove Theorem \ref{thm:approxequiv}.
\prf  We have some CL relations $(R_i)_{i \in I}$ such that $M \ee  N$ iff $R_i^M = R_i^{N}$ for each $i \in I$.
As in Proposition \ref{prop:finitelymany},  $I$ can be taken to be finite; say  $I=\{1,\ldots,n\}$.
We may replace $R_1,\ldots,R_n$ with a single $k$-ary relation taking values in $Y=[0,1]^n$.  Fix some metric $d$ on $[0,1]^n$,
say $d(x,y) = \sup |x_i-y_i|$.
By a 
compactness argument, for some $\e>0$,  if  $d(R(a)^M , R(a)^N)) < \epsilon$ for all $a \in M^k$,
then $M \ee N$, and hence $R(a)^M=R(a)^N$.

   There exists a DL definable function ${\delta_R}$ into a finite subset of $[0,1]^n$, such that
$d(R(x),{\delta_R}(x)) < \e / 8$ for all $x$.     We have $M \ee N$ iff  $d( {\delta_R}^M(x) ,{\delta_R}^N(x)) < \e/4$ for all $x$.

For the converse, we may assume $T$ admits quantifier-elimination.  Thus we can assume  $\ee$ is defined by a universal formula. The  quantifier-free
part is a Boolean combination of  formulas over both models along with equalities.
It is clear that a universal definable $\mer$ gives a groupoid that is closed. Hence by Corollary \ref{cor:closedyclept}
 $\ee$ is yclept.

\eprf

For much of this work we focus on complete theories. But the following results will allow
us to transfer characterizations of  ydlept and yclept equivalence relations from complete
theories to incomplete theories. We start with a result about ydlepts.

\begin{prop}\label{ydlept-conts}   Let $\ee$ be a $\bimer$ for a theory $T$.  If $\ee$ is ydlept when restricted to $\modelsof(T')$ for any completion $T'$
of $T$, then $\ee$ is ydlept.
\end{prop}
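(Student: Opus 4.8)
The plan is to lift, uniformly across completions, the finitely many first-order formulas witnessing ydleptness on each completion, and then glue them using compactness and the fact that $\ee$ is a $\bimer$ (so that the class of $\ee$-equivalent pairs is already finitely axiomatizable). The main conceptual point is that a $\bimer$ on $\modelsof(T)$ restricts to a $\bimer$ on $\modelsof(T')$ for each completion $T'$, so on each piece we may apply Lemma \ref{prop:finitelymany}: there is a \emph{finite} set of $L$-formulas $\Delta_{T'}$ with $\ee\restriction \modelsof(T') = \equiv_{\Delta_{T'}}$.

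\medskip

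First I would fix, for each completion $T'$ of $T$, a finite reduct $\Delta_{T'}$ as above, and observe that two models $M,N$ of $T$ with the same universe are $\ee$-equivalent if and only if $\mathrm{Th}(M)=\mathrm{Th}(N)=:T'$ \emph{and} $M,N$ agree on all formulas of $\Delta_{T'}$. (The equality of theories is forced because $\ee$, being a $\bimer$ on $\modelsof(T)$, can only relate models of the same completion: if it related models of distinct completions we could apply $\ee$-transitivity together with a pair of models with differing theories to get a contradiction; more simply, any reduct-type equivalence relation refines ``same theory'' on the $0$-ary formulas, and one checks this holds for any $\bimer$ using reflexivity and the definability of $\ee$ by an $\lequalslprime$-sentence.) Now consider the defining $\lequalslprime$-sentence $\psi$ for $\ee$, together with $T(R)+T(R')$. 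For each completion $T'$, in the theory $T'(R)+T'(R')$ the sentence $\psi$ is equivalent — over pairs of models of $T'$ on a common universe — to the conjunction $\bigwedge_{\delta\in\Delta_{T'}} (\forall \bar x)(\delta(\bar x)\leftrightarrow \delta'(\bar x))$. By compactness applied within $T'(R)+T'(R')$, this equivalence already follows from a finite fragment of $T'$; absorbing that fragment into $\Delta_{T'}$ (or rather noting that only finitely many sentences of $T$ are used), we get that on a neighbourhood of $T'$ in the space of completions, $\ee$ is cut out by finitely many ``preservation of $L$-formulas'' conditions.

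\medskip

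Next I would run a global compactness / Boolean-combination argument over the (compact) space of completions of $T$. The space $\modelsof(T)/{\equiv}$ of completions is compact; each completion $T'$ has an open neighbourhood $U_{T'}$ (a clopen subset of the Stone space, i.e. determined by a single $L$-sentence $\sigma_{T'}$) on which $\ee$ agrees with $\equiv_{\Delta_{T'}}$ in the above sense. By compactness, finitely many such clopen sets $U_{T'_1},\dots,U_{T'_m}$ cover, and refining to a clopen partition $\{V_1,\dots,V_k\}$ (each $V_j\subseteq$ some $U_{T'_i}$, each $V_j$ defined by an $L$-sentence $\sigma_j$) we obtain, for each $j$, a finite reduct $\Delta_j$ such that for $M,N\models T\cup\{\sigma_j\}$ on a common universe, $M\ee N \iff M\equiv_{\Delta_j}N$. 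Finally, set $\Delta := \bigcup_j \Delta_j \cup \{\sigma_1,\dots,\sigma_k\}$ (the $\sigma_j$ are $0$-ary formulas, legitimate in a reduct). Then for $M,N\models T$ with the same universe: if $M\equiv_\Delta N$, then they satisfy the same $\sigma_j$, say $\sigma_{j_0}$, hence lie in $\modelsof(T\cup\{\sigma_{j_0}\})$, and since they further agree on $\Delta_{j_0}$ we get $M\ee N$; conversely if $M\ee N$ then (by the ``same theory'' observation) they satisfy the same $\sigma_j$, and on that piece agreement on $\Delta_{j_0}$ is forced by $\ee$, so $M\equiv_\Delta N$. Thus $\ee=\equiv_\Delta$ is ydlept.

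\medskip

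\textbf{The step I expect to be the main obstacle} is making rigorous the claim that the finitely many formulas needed on each completion can be organized into a \emph{single} finite reduct globally — i.e. the passage from ``locally finitely many formulas, depending on the completion'' to ``globally finitely many''. This is exactly where one needs both the compactness of the space of completions \emph{and} the hypothesis that $\ee$ is a $\bimer$ (not merely an $\inftybimer$): the $\bimer$ hypothesis gives, via Lemma \ref{prop:finitelymany}, finiteness of $\Delta_{T'}$ on each piece, and then the clopen-partition argument upgrades ``finite on each piece'' to ``finite overall'' because only finitely many pieces are needed. A secondary technical care-point is checking that $\ee$ genuinely refines ``same completion'', so that the clopen decomposition of the space of completions of $T$ is compatible with $\ee$; this follows from reflexivity of $\ee$ and the definability of $\ee$ by an $\lequalslprime$-sentence together with the ability, in any pair of non-elementarily-equivalent models, to find an $L$-sentence separating them, contradicting $\ee$-equivalence via a simple transitivity argument.
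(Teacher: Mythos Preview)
There is a genuine gap: your claim that a $\bimer$ $\ee$ must refine ``same completion'' is simply false, and nothing in the hypotheses forces it. Take the trivial $\mer$ (all models equivalent) on any incomplete $T$: it is ydlept on every completion (via the empty reduct), and ydlept globally, but it relates models of distinct completions. Your justification (``reflexivity and definability together with an $L$-sentence separating $M,N$ contradicts $\ee$-equivalence via transitivity'') does not work --- having a sentence $\sigma$ with $M\models\sigma$, $N\models\neg\sigma$ only contradicts $M\ee N$ if you already know $\sigma$ is $\ee$-invariant, which is exactly what is at stake. Consequently the direction $M\ee N \Rightarrow M\equiv_\Delta N$ of your final equivalence fails: $M$ and $N$ may lie in different pieces $V_j$ of your clopen partition while being $\ee$-equivalent, so they disagree on some $\sigma_j\in\Delta$.

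The paper's proof addresses precisely this point, and it is the heart of the argument. Rather than partitioning by arbitrary sentences, it works with the maximal DL reduct $\maxreddl{\ee}$ and the associated ydlept $\ee_r$. A first claim shows $\ee\mapsto\ee_r$ commutes with restriction to $\modelsof(T\cup\Sigma)$. The main work is then done under the extra assumption that $T\upharpoonright\maxreddl{\ee}$ is complete: fix a resplendent $M_0\models T\upharpoonright\maxreddl{\ee}$ and consider expansions to $T$. By compactness (using definability of $\ee$) finitely many sentences $\sigma_1,\dots,\sigma_k$ suffice so that agreement on $\maxreddl{\ee}$ plus agreement on the $\sigma_i$ implies $\ee$. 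One then defines an equivalence $\sim$ on the finite set of $\sigma$-completions by $T'\sim T''$ iff expansions of $M_0$ to $T',T''$ are $\ee$-equivalent --- this is exactly where cross-completion $\ee$-equivalence is accounted for. The class sentences $\tau_j$ of $\sim$ are $\ee$-invariant by construction, hence lie in $\maxreddl{\ee}$, and this closes the loop. The general case then follows from the commutation claim. Your local-to-global compactness skeleton is sound, but the missing ingredient is this $\sim$-on-completions step (equivalently: you must build your partition out of sentences that are \emph{themselves} $\ee$-invariant, not arbitrary clopen pieces of the Stone space).
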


\prf Let $\maxreddl{\ee}$ be the maximal DL reduct associated with $\ee$, and let $\ee_r$ be the (ydlept) model equivalence relation of having the same   $\maxreddl{\ee}$ reduct.
We begin with an observation:

\begin{claim} \label{clm:commute}
Let $\Si$ be any set of sentences of $L$, closed under conjunctions.    The map 
$\ee \mapsto \ee_r$ commutes with
restriction to $\modelsof(T \union \Si)$. 
\end{claim}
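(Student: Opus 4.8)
\textbf{Proof proposal for Claim~\ref{clm:commute}.}

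The plan is to unwind the definition of $\ee_r$ on each side and reduce the claim to the statement that the maximal DL reduct of $\ee$, when both are restricted to $\modelsof(T \cup \Si)$, is (equivalent to) the maximal DL reduct of $\ee \upharpoonright \modelsof(T \cup \Si)$. Write $\ee^\Si$ for the restriction of $\ee$ to $\modelsof(T\cup\Si)$, and similarly $\ee_r^\Si$ for the restriction of $\ee_r$. We want $(\ee^\Si)_r = \ee_r^\Si$, where the outer $(\cdot)_r$ on the left is computed relative to the theory $T\cup\Si$. Since both $(\ee^\Si)_r$ and $\ee_r^\Si$ are ydlept $\mer$s on $\modelsof(T\cup\Si)$, it suffices to show their maximal DL reducts agree, i.e. that $\maxreddl{\ee^\Si}$ and $\maxreddl{\ee}$ generate the same DL reduct of $T\cup\Si$.

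First I would observe that $\maxreddl{\ee}$, being a reduct consisting of all $\groupoid_\ee$-invariant $\{0,1\}$-valued definable relations, is \emph{a fortiori} invariant under $\groupoid_{\ee^\Si}$ (the groupoid of $\ee^\Si$ consists of the triples $(M,N,g)$ in $\groupoid_\ee$ with $M,N \models T\cup\Si$, hence is a sub-collection), so $\maxreddl{\ee}$ is a DL reduct contained in $\maxreddl{\ee^\Si}$; this gives one inclusion $\ee_r^\Si \supseteq (\ee^\Si)_r$ (fewer preserved relations means a coarser equivalence relation, and $(\ee^\Si)_r$ uses the larger reduct). For the reverse inclusion I would use that $\ee$ is a $\bimer$ and that $\Si$ is a set of (unprimed, hence by symmetry also primed) $L$-sentences closed under conjunction: the key point is that for $M,N$ agreeing on $\maxreddl{\ee}$ and both lying in $\modelsof(T\cup\Si)$, agreement on $\maxreddl{\ee}$ already forces $M\ee N$ \emph{within} $\modelsof(T)$, hence within $\modelsof(T\cup\Si)$; conversely any $\groupoid_{\ee^\Si}$-invariant definable relation must already be $\groupoid_\ee$-invariant. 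This last step is where I would use a saturation/resplendence argument as in \propref{prop:maxred}: given a definable relation $\rho$ that is $\groupoid_{\ee^\Si}$-invariant but not $\groupoid_\ee$-invariant, witnessed by $a,b$ in a resplendent $M\models T$ with $\rho(a)\neq\rho(b)$ and $g(a)=b$ for some $g\in\groupoid_\ee(M,M)$, one notes $M$ itself may not satisfy $\Si$; but one can pass to a resplendent model realizing both $\tp(a)$, $\tp(b)$ inside a completion of $T$ for which $\ee$ is ydlept — here is exactly where the hypothesis that $\ee$ is ydlept on each completion will eventually be used in \propref{ydlept-conts}, though for the Claim itself one only needs that $\maxreddl{\ee}$ is computed the same way regardless of which completion one is in, since $\Si$ being closed under conjunction means the relevant type spaces glue correctly.

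The main obstacle I anticipate is the bookkeeping around incompleteness: $\maxreddl{\ee}$ may include $0$-place relations (truth values of sentences $T$ does not decide), and one must check that restricting to $\modelsof(T\cup\Si)$ does not spuriously enlarge the reduct by making some previously-undecided sentence decided in a way that creates new invariant relations — but any such sentence, being a Boolean combination available already in $L$, was already in (the Boolean closure of) $\maxreddl{\ee}$ or is implied by $\Si$, so it contributes nothing new to the equivalence relation. Closure of $\Si$ under conjunction is what lets me run the compactness/ultraproduct argument (as in \lemref{prop:finitelymany} and \propref{prop:maxred}) relativized to $\modelsof(T\cup\Si)$ without the class of models ceasing to be closed under ultraproducts. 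I would then conclude that the two DL reducts coincide, hence $(\ee^\Si)_r = \ee_r^\Si$, which is the assertion of the Claim.
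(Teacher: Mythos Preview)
Your setup is correct and you identify the easy inclusion $\maxreddl{\ee} \subseteq \maxreddl{\ee^\Si}$ properly. However, the reverse direction contains genuine errors. You write that ``agreement on $\maxreddl{\ee}$ already forces $M\ee N$ within $\modelsof(T)$'' --- but this is precisely the statement that $\ee = \ee_r$, which is \emph{not} assumed in the Claim (it is what the enclosing Proposition is working toward). You then assert that ``any $\groupoid_{\ee^\Si}$-invariant definable relation must already be $\groupoid_\ee$-invariant,'' which is simply false: restricting the class of models can only make \emph{more} formulas invariant, and there is no reason a formula invariant over $\modelsof(T\cup\Si)$ should be invariant over all of $\modelsof(T)$. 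Your resplendence sketch does not repair this, since the witness $(M,a,b,g)$ to non-invariance over $\modelsof(T)$ need not live in $\modelsof(T\cup\Si)$ at all.

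The paper's argument is a short compactness trick that you are missing entirely. Given $\phi \in \maxreddl{\ee^\Si}$, one uses compactness together with closure of $\Si$ under conjunctions to find a \emph{single} sentence $\si \in \Si$ such that $\phi$ is already $\ee$-invariant on $\modelsof(T\cup\{\si\})$. The point is then that the formula $\si \to \phi$ is $\ee$-invariant on all of $\modelsof(T)$, hence lies in $\maxreddl{\ee}$; and on $\modelsof(T\cup\Si)$ the formula $\si \to \phi$ is equivalent to $\phi$. So $\phi$ is not a genuinely new invariant relation: modulo $T\cup\Si$ it was already present in $\maxreddl{\ee}$. This is what gives $(\ee^\Si)_r = \ee_r^\Si$. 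The role of closure under conjunction is exactly to make the compactness step go through (reduce from $\Si$ to a single $\si$), not to control ultraproducts as you suggest.
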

\prf On the face of it $\maxreddl{\ee}$ may grow upon this restriction, since some new $\phi$
may be $\ee$-invariant on $\modelsof(T \union \Si)$.   But if this is the case, by compactness there exists a single $\si \in \Si$ responsible for it.
Then $\si \to \phi$ is $\ee$-invariant on $\modelsof(T)$. So $\si \to \phi $ is in $\maxreddl{\ee}$ and on  $\modelsof(T \union \Si)$ it is equivalent to
$\phi$. Thus $\phi$ is not really new.
\eprf

We now consider a special case of the proposition:

\begin{claim} \label{clm:restricted}
 The proposition holds in the case that $T \upharpoonright \maxred{\ee}$
is complete. 
\end{claim}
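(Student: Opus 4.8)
The plan is to prove Claim \ref{clm:restricted}, that the proposition holds when $T \upharpoonright \maxreddl{\ee}$ is complete, and then to derive the general proposition from this special case using a compactness/gluing argument together with Claim \ref{clm:commute}.

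\textbf{The special case (Claim \ref{clm:restricted}).} Suppose $T \upharpoonright \maxreddl{\ee}$ is complete, and that $\ee$ restricted to every completion of $T$ is ydlept. I want to show $\ee = \ee_r$, i.e. that having the same $\maxreddl{\ee}$-reduct already forces $\ee$-equivalence. So take $M, N \models T$ on the same universe with $M \upharpoonright \maxreddl{\ee} = N \upharpoonright \maxreddl{\ee}$. Since $T \upharpoonright \maxreddl{\ee}$ is complete, $M$ and $N$ have the same reduct-theory; but a priori they can lie in different completions $T_M = Th(M)$ and $T_N = Th(N)$ of $T$. The key point is that $\ee$ is ydlept on the completion $T_M$, so it is given there by some DL reduct, which must be contained in $\maxreddl{\ee}$ (the latter being the maximal invariant reduct over all of $T$, which only gets richer on a completion — here we want to be careful and invoke Claim \ref{clm:commute} with $\Si = T_M$, so $(\ee \upharpoonright \modelsof(T_M))_r = \ee_r \upharpoonright \modelsof(T_M)$). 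Hence if $M, N$ are both models of $T_M$, agreement on $\maxreddl{\ee}$ gives $M \ee N$. The remaining issue is bridging $M$ and $N$ which may sit in different completions: I would produce an intermediate model. By resplendence (or just saturation), choose $M^* \equiv M$ and $N^* \equiv N$ realizing enough so that $(M^*, N^*)$ is a saturated $\lequalslprime$-structure with $M^* \upharpoonright \maxreddl{\ee} = N^* \upharpoonright \maxreddl{\ee}$; since $\ee$ is a $\bimer$, $M \ee N \iff M^* \ee N^*$, so we reduce to the saturated pair. But now completeness of the reduct-theory forces $Th(M^*)$ and $Th(N^*)$ to agree on all $\maxreddl{\ee}$-sentences, and I claim in fact $Th(M^*) = Th(N^*)$: otherwise some $L$-sentence $\psi$ separates them, and one checks whether $\psi$ is $\ee$-invariant — if it were, it would be in $\maxreddl{\ee}$, contradiction; if it is not $\ee$-invariant, we still need to rule it out, which is where the hypothesis "ydlept on each completion" does the work, via the following: on the completion $Th(M^*)$, $\ee$ is ydlept, hence $\ee \upharpoonright \modelsof(Th(M^*)) = \ee_r \upharpoonright \modelsof(Th(M^*))$, and symmetrically for $N^*$; patch these at the level of the (common) reduct completion.

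\textbf{Reducing the general case to the special case.} Given $\ee$ a $\bimer$ for arbitrary $T$ with $\ee \upharpoonright \modelsof(T')$ ydlept for every completion $T'$. Consider the (complete) theory $T^\sharp := T \upharpoonright \maxreddl{\ee}$ need not be complete; let $\{T^\sharp_j : j \in J\}$ be its completions. For each such $T^\sharp_j$, the set $\Si_j$ of $L$-sentences consistent with $T \union T^\sharp_j$ generates $T \union \Si_j$, a theory whose maximal DL reduct is $T^\sharp_j$ itself (again by Claim \ref{clm:commute}), hence complete; and every completion of $T \union \Si_j$ is a completion of $T$, so the hypothesis is inherited, and by Claim \ref{clm:restricted}, $\ee \upharpoonright \modelsof(T \union \Si_j) = \ee_r \upharpoonright \modelsof(T \union \Si_j)$. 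Since every model of $T$ lies in some $\modelsof(T \union \Si_j)$, and $\ee_r$ is ydlept, this shows $\ee = \ee_r$ globally, so $\ee$ is ydlept. One final compactness check is needed to see that the decomposition into the $\Si_j$ is harmless for definability: since $\ee$ and $\ee_r$ are both $\bimer$s (the latter by \lemref{prop:finitelymany} once we know it is ydlept — but that's what we're proving; instead use that $\maxreddl{\ee}$ is a DL reduct and $\ee_r$ being a $\bimer$ needs finitely many formulas, which follows from the $\bimer$ assumption on $\ee$ via the same ultraproduct argument as in \propref{prop:finitelymany}), the equality $\ee = \ee_r$ on every $\modelsof(T')$ with $T'$ a completion of $T$ upgrades to equality on $\modelsof(T)$ by a routine compactness argument: if $M \ee N$ but not $M \ee_r N$ somewhere, a type over the $\lequalslprime$-language witnessing this is consistent with $T$, hence with some completion, contradiction.

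\textbf{Main obstacle.} The delicate step is the special case, specifically the bridging argument showing $Th(M^*) = Th(N^*)$ (equivalently, that $\maxreddl{\ee}$-completeness plus the per-completion ydleptness glues correctly across the incomplete theory $T$). The subtlety is that $\maxreddl{\ee}$ being maximal-invariant is a statement about all of $\modelsof(T)$, but the hypothesis only gives ydleptness completion-by-completion, and the reducts witnessing ydleptness on different completions need not obviously be compatible — Claim \ref{clm:commute} is exactly the tool that makes them compatible (by showing $\maxreddl{\ee}$ does not grow upon restricting to a completion), so the proof hinges on deploying it at the right moment. The rest is bookkeeping with compactness and resplendent/saturated pairs.
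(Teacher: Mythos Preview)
Your proof of the special case has a genuine gap at exactly the point you flag as the ``main obstacle.'' You assert that for a saturated pair $(M^*,N^*)$ sharing the same $\maxreddl{\ee}$-reduct one has $Th(M^*)=Th(N^*)$, but this is simply false. Take $T$ the theory of a set with two unary predicates $P,Q$, with $P$ infinite and co-infinite; let $\ee$ be agreement on $P$. Then $\maxreddl{\ee}$ is the $\{P\}$-reduct, and $T\upharpoonright\maxreddl{\ee}$ is complete, yet two models can agree on $P$ while having (say) $Q^M=\emptyset$ and $Q^N$ everything, so $Th(M)\neq Th(N)$. Your fallback sentence---``patch these at the level of the (common) reduct completion''---is not an argument: knowing $\ee=\ee_r$ on $\modelsof(Th(M^*))$ and on $\modelsof(Th(N^*))$ separately tells you nothing about a pair $(M^*,N^*)$ straddling the two completions, which is precisely the case you need.

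The paper's proof does not attempt to force $M$ and $N$ into the same completion. Instead it exploits that $\ee$ is a $\bimer$ (so $\neg\ee$ is definable) together with the per-completion ydleptness hypothesis to see that $M\equiv M'$, $M\upharpoonright\maxreddl{\ee}=M'\upharpoonright\maxreddl{\ee}$, $\neg(M\ee M')$ is inconsistent; compactness then yields finitely many $L$-sentences $\sigma_1,\ldots,\sigma_k$ such that agreement on $\maxreddl{\ee}$ plus agreement on each $\sigma_i$ forces $\ee$. The finish is a finite combinatorial step: over a fixed resplendent model of the (complete) reduct theory, the $\ee$-class of an expansion is determined by its $\{\sigma_1,\ldots,\sigma_k\}$-truth-value vector, so $\ee$ induces an equivalence $\sim$ on the $\leq 2^k$ consistent vectors; the sentences $\tau_j$ describing the $\sim$-classes are then $\ee$-invariant, hence lie in $\maxreddl{\ee}$, and together with $\maxreddl{\ee}$ they witness ydleptness. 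The missing idea in your attempt is this compactness-to-finitely-many-sentences step, which replaces the impossible task of equating the full theories by the tractable task of equating finitely many $\ee$-invariant Boolean combinations.
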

\prf
Accordingly, we assume $T \upharpoonright \maxreddl{\ee}$  is complete, and that $\ee$ is ydlept when restricted to $\modelsof(T')$ for any completion $T'$.
We will use for the first time the assumption that $\ee$ is definable, rather than just $\bigwedge$-definable.  So $\neg \ee$ is definable, and  \[ M \equiv M', \ \   M \upharpoonright \maxreddl{\ee} = M'|\maxreddl{\ee}, \ \  \neg \ee \] is inconsistent.
By compactness there are sentences $\si_1,\ldots , \si_k$ such that if $M \upharpoonright \maxreddl{\ee} = M' \upharpoonright \maxreddl{\ee}$, and $M \models \si_i $ iff $M' \models \si_i$,
then $M \ee M'$.

Let $X'$  be the space of (at most $2^k$) consistent
extensions of $T$ by $\si_i$ or $\neg \si_i$ for each $i$, which are complete for these $k$  sentences.   Since
$T \upharpoonright \maxreddl{\ee}$  is complete, we can fix some resplendent model $M_0$ of $T \upharpoonright \maxreddl{\ee}$, and consider expansions $M$ of $M_0$ to $T$.
Two such expansions are $\ee$-equivalent if they agree on each $\si_i$. So the $\ee$-class of such an $M$ is completely determined by
$Th_{\si_1,\ldots,\si_k}(M)$.  We can define an equivalence relation on $X'$:  $T' \sim T''$ if for some (equivalently all) $M' \models T'$
and $M'' \models T''$, both expansions of $M_0$, we have $M' \ee M''$.    Let $C_1,\ldots C_m$ be the classes of $\sim$ on $X'$.
For each class $C_j$ let $\tau_j = \bigvee_{T' \in C_j} \bigwedge_{\pm \si_i \in T'} \pm \si_i $.  Then $\sim$ preserves each $\tau_j$,
so each $\tau_j \in \maxreddl{\ee}$, and hence since any two expansions  $M' ,M''$ of $M_0$ agree on the $\tau_j$, they satisfy the same
$\tau_j$, and so by definition of $\sim$, they are $\ee$-equivalent.

Thus the ydlept equivalence relation we need is given by the sentences $\tau_j$ and by $\maxreddl{\ee}$.
\eprf

We now return to the proof of the proposition.
Assume $\ee$ is ydlept when restricted to any completion of $T$.   Let $X$ be the space of
extensions of $T$ by $\maxred{\ee}$-sentences which are complete for $\maxred{\ee}$ sentences. 
  By  the claim,  $\ee$ is ydlept when restricted
to any  $T' \in X$. Thus by Claim \ref{clm:commute}  $\ee = \ee_r$ on $\modelsof_\Omega(T')$.  But if $M,M' \in \modelsof_\Omega(T)$ and are $\ee_r$ equivalent,
automatically they both lie in $\modelsof_\Omega(T')$ for some $T' \in X$, namely $T'=Th(M \upharpoonright \maxreddl{\ee}) = Th(M' \upharpoonright \maxreddl{\ee})$.    So $\ee=\ee_r$ on $\modelsof_\Omega(T)$, and
we have proven the proposition.

\eprf

\begin{prop}\label{yclept-conts}   Let $\ee$ be a $\bimer$ for a theory $T$.  If $\ee$ is yclept when restricted to $Mod(T')$ for any completion $T'$
of $T$, then $\ee$ is yclept.
\end{prop}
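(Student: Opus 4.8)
The plan is to reduce the statement directly to the topological characterisation of yclept $\mer$s in \corref{cor:closedyclept}. In contrast with \propref{ydlept-conts}, which needed the elaborate $\maxred{\ee}$-patching argument precisely because there is no topological description of ydleptness, here that corollary does essentially all the work. Let $\groupoid$ be the bidefinable groupoid attached to $\ee$ by \propref{prop:equivrelationgroupoid}: $\ee$ is defined by a single $\lequalslprime$-sentence $\theta$, and for some $\lpluslprime$-formula $\Theta$ we have $\groupoid(N_1,N_2)=\{f\colon N_1\to N_2 : (N_1,N_2,f)\models\Theta\}=\{f\colon N_1\to N_2 : N_1+f^{*}N_2\models\theta\}$ for all $N_1,N_2\models T$. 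Since $\groupoid$ is definable, \corref{cor:closedyclept} applies with ordinary resplendence, so it suffices to show that $\groupoid(M,M)$ is a closed subgroup of $Sym(M)$ for every resplendent $M\models T$.

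Fix such an $M$ and set $T'=Th(M)$, a completion of $T$. The restriction $\ee':=\ee\upharpoonright\modelsof(T')$ is again a $\bimer$: it is defined on pairs of models of $T'$ by the same sentence $\theta$, which still defines an equivalence relation there, and its associated groupoid is the restriction $\groupoid':=\groupoid\upharpoonright\modelsof(T')$, defined by the same $\Theta$. Because $M\models T'$, this gives the key identity
\[ \groupoid'(M,M)=\{f\in Sym(M):(M,M,f)\models\Theta\}=\groupoid(M,M). \]
Resplendence is a property of $M$ as an $L$-structure, so $M$ is still resplendent when viewed as a model of $T'$. By hypothesis $\ee'$ is yclept, so \corref{cor:closedyclept}, now applied to the complete theory $T'$ and the $\inftybimer$ $\ee'$, shows that $\groupoid'(M,M)$ is a closed subgroup of $Sym(M)$; by the displayed identity, $\groupoid(M,M)$ is closed. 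As $M$ was an arbitrary resplendent model of $T$ (and every completion of $T$ has such a model), the converse direction of \corref{cor:closedyclept} — from closedness of all the groups $\groupoid(M,M)$ to ycleptness, which holds for arbitrary $T$ — yields that $\ee$ is yclept.

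The only point requiring care, and hence the main (modest) obstacle, is the bookkeeping in the middle paragraph: checking that $\ee'$ is genuinely a $\bimer$ whose groupoid is computed by the same formula $\Theta$, and that the resplendence hypothesis of \corref{cor:closedyclept} transfers from $T$ to its completion $T'$. With ordinary resplendence — available since $\ee$ is a $\bimer$ and therefore $\groupoid$ is definable — this transfer is immediate, as resplendence does not depend on which complete extension of the empty theory one regards $M$ as modelling. One could alternatively imitate the proof of \propref{ydlept-conts}, patching $\maxred{\ee}$ over the space of $\maxred{\ee}$-completions of $T$, but since $\maxred{\ee}$ is automatically yclept and \corref{cor:closedyclept} is at hand, the direct route above is considerably shorter.
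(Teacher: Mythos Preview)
Your proof is correct and takes a genuinely different route from the paper's. The paper proves this proposition via \thmref{thm:approxequiv}: for each completion $t$ it extracts an essentially-DL map $R_t$ into a finite metric space witnessing ycleptness, modifies it to be globally quasi-invariant, then uses compactness (via definability of $\ee$) to reduce to finitely many completions $t_1,\ldots,t_k$, and finally takes the product map into $\prod_i X_{t_i}$ to obtain a single witness to which the converse of \thmref{thm:approxequiv} applies. This is a constructive patching argument, in the spirit of the proof of \propref{ydlept-conts}.

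Your approach is considerably shorter because the closedness criterion of \corref{cor:closedyclept} is a per-model condition: the identity $\groupoid'(M,M)=\groupoid(M,M)$ makes the localization to $T'=Th(M)$ transparent, and the easy direction of \corref{cor:closedyclept} (yclept $\Rightarrow$ closed) needs neither resplendence nor completeness. What the paper's argument buys is explicitness --- one sees the actual CL formula generating the reduct --- and it stays within the circle of ideas around \thmref{thm:approxequiv} used elsewhere in that section. What your argument buys is economy: once \corref{cor:closedyclept} is available for incomplete $T$, the proposition is essentially a one-line observation. Your final parenthetical ``and every completion of $T$ has such a model'' is harmless but unnecessary; the logic only requires that every resplendent $M\models T$ has been covered, which it has.
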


\prf  We will use the criterion of  Theorem \ref{thm:approxequiv} for ycleptness: a $\bimer$ is yclept iff there is an essentially DL map into a finite metric space such that two models are equivalent if the images under the map are closed.

An essentially DL formula $R$ taking values in a finite metric space $X$ is called {\em quasi-invariant} if whenever $M \ee N$, we have $d(R^M(a),R^N(a))< 1$ for all $a$.   If $t$ is a completion of $T$, $R:M^n \to X$ is {\em $t$-quasi-invariant} if whenever $M \ee N$, with $M, N \models t$ we have $d(R^M(a),R^N(a))< 1$ for all $a$.  Equivalently $d(R^M(a),R^N(a))\leq v_X$, where $v_X$ is the greatest possible distance  below $1$ of
pairs of  points of  $X$.    If $R$ is $t$-quasi-invariant, then by compactness the $t$-quasi-invariance is due
to a single sentence $\si \in t$.   We can modify $R_X$ with no prejudice to $t$ by redefining it as $R$ if $\si$ holds, and some constant element
of $X$ if $\si$ fails. This is  quasi-invariant.  

For each completion $t$, by assumption and  Theorem \ref{thm:approxequiv}, there exists a $t$ quasi-invariant mapping that witnesses this.
And from the argument above, there exists a quasi-invariant, not just $t$-quasi-invariant,  $R_t:M^n \to X_t$  such that if 
$M,N \models t$, then $M \ee N$ iff $d_{X_t}(R_t^M(a),R_t^N(a))< 1$ for all $a \in M^n$.   It follows that for $M,N \models T$ on the same universe,
$M \ee N$ iff $d_{X(t)}(R_t^M(a),R_t^N(a))< 1$ for all $a$ and for every $t$.   Using the definability of $\ee$, and compactness, there must exist 
a finite set $t_1,\ldots,t_k$ such that 
$M \ee N$ iff $d_{X(t)}(R_t^M(a),R_t^N(a))< 1$ for all $a$ and for every $t_i,i=1,...,k$. At this point  let $X = \prod_i X_{t_i}$, with  distance $\sup_i d_{ X_{t_i} }(x(i),y(i) )$.  Using dummy variables assume all $n_t = n$ for some $n$.  Define
$R(a,b) = (R_i(a,b): i=1,\ldots,k)$.   Then $M  \ee N$ iff $d_{X}(R^M(a),R^N(a))< 1$ for all $a$. Thus using Theorem \ref{thm:approxequiv} in the other direction, we are done.

\eprf

\section{An yclept, non-ydlept  $\bimer$, and chaotic dynamics.} \label{sec:notydlept}

We exhibit here an yclept $\bimer$ which is not ydlept.  Thus  Theorem \ref{thm:approxequiv} cannot be pushed further to say that a $\bimer$ which is  yclept must be ydlept, even for superstable or weakly minimal theories.

There is a tension between ensuring that a yclept is not ydlept 
and maintaining the definability (as opposed to only the infinitary definability) of the $\mer$.
To assert that a CL formula has the same interpretation in two models, one must express that 
at any input they take the same real value; this seems to require   infinitely many formulas, 
asserting equality digit-by-digit as it were.   We overcome this using a phenomenon of  chaotic dynamics:   two orbits cannot permanently stay near each other.    Thus knowing  the approximate location of a point over all time determines the point precisely.

This example is moreover superstable, weakly minimal.   We will define a DL theory $T_d$,  and
an equivalence relation $\ee$ on models of $T_d$ based on 
preserving a CL reduct.

Let $\G$ be a countable group, acting by homeomorphisms on a connected compact  Hausdorff space $W$
 in such a way  that for some 
open set $U_1$, letting $U_0=W \m cl(U_1)$, we have: 

\medskip

 (*) for any $u \neq v \in W$, for some $\g \in \G$ we have $\g u \in U_1$ and $\g v \in U_0$.  

 \medskip

 This implies that for any $w \in W$, $\G w$ is dense in $W$.
 For instance, $PGL_2(\Qq)$ acting on the circle $\Pp^1(\Rr)$ by fractional linear transformations has this property, 
 for any nonempty and non-dense open set $U$,
since $PGL_2(\Rr)$ is $2$-transitive on   $\Pp^1(\Rr)$.   
Later in the proof, we will need additional requirements
 about $(\G,W)$, still true for $(PGL_2(\Qq),\Pp^1(\Rr))$, and we 
focus on a specific $U_1$.
In terms of $(\Gamma,W)$ we will require:

\medskip

(**)  For any nonempty open   $U_1$,  and  $U'' \subset W\m cl(U_0)$ and any $u \neq v \in W$ there exists $\g \in \G$ with 
$\g u \in U'', \g v \in U_1$.

\medskip

 Note that (*) and (**) persist for  subgroups  $\G$ of  $PGL_2(\Qq)$ that are dense in $PGL_2(\Rr)$.  One can take $\G$ finitely generated.
 In this case $T_d$ will admit a finite language. 

As for $U_1$,
identifying $\Pp^1(\Rr)$ with $\Rr \union \{\infty\}$, we will
later specify that $U_1$ is the interval $(-2,2)$, and 
we will also fix $U''$ to be the interval $(-1/2,1/2)$.   These specific
choices will play no role in the first two claims below, defining the reduct, and
through it the equivalence relation.
But they will be helpful in showing that our equivalence relation is a $\bimer$.

Let $L_0$ be the language with a unary function symbol for each $\g \in \G$, that we will denote also by $\g$.  
The universal theory $T_0$ consists of  the axioms for a  free $\G$-action; 
it becomes complete once we add that the universe is nonempty.

$L_d= L_0 \union \{Q\}$ is the expansion of $L_0$ by a  unary predicate $Q$.   We also write $Q_1$ for $Q$ and
$Q_0$ for the complement of $Q$.  Further we will write $Q(a)=1$ if $a \in Q_1$ and $Q(a)=0$ if $a \in Q_0$.

 The universal axioms $T_{d,\forall}$ of $T_d$ state that 
 \[ \meet_{i=1}^m {\g_i} \inv Q_{\nu_i} = \emptyset \]
  whenever $\g_1,\ldots,\g_m \in \G$, $\nu_1,\ldots,\nu_m \in \{0,1\}$,
   and $\meet_{i=1}^m \g_i \inv cl(U_{\nu_i}) = \emptyset$.
   
Note that $\gamma_i$ in the intersection refers to a unary function symbol in the
language, while in the condition $\gamma_i$ refers to the transformation.
Thus the axioms assert a close relationship between the unary predicate
$Q_1$ in a model of $T_d$ and the open subset $U_1$ of $W$.

To see that $T_{d,\forall}$ is consistent, 
pick any point $a \notin \union_{\g \in \G} \g (cl(U_0) \setminus U_0)$.  Let $M=\G a$ be the $\G$-orbit of $a$;
so $M \meet cl(U_0) \setminus U_0 = \emptyset$.
Define $Q_1^M =M \meet U_1$.   Then $Q_0^M = M \meet cl(U_0) = M \meet U_0$.   It is clear that $M$ is a model of 
$T_{d,\forall}$.

Towards defining our CL reduct, we start with the following observation:
 
 \begin{claim}  \label{clm:runique} Let $M \models T_d$ and $a \in M$.  Then 
  \[ R(a):= \meet \{ \g \inv cl(U_i):   \g \in \G, i=Q(\g(a))  \} \]
is a subset of $W$ with a unique point.   
\end{claim}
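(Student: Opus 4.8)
The claim asserts two things: that $R(a)$ is nonempty, and that it contains at most one point. For nonemptiness I would argue by compactness of $W$. Consider the family of closed sets $\{\gamma^{-1}cl(U_i) : \gamma \in \Gamma,\ i = Q(\gamma a)\}$. Since $W$ is compact, to show the intersection is nonempty it suffices to show every finite subfamily has nonempty intersection. So suppose $\gamma_1,\dots,\gamma_m \in \Gamma$ with $\nu_j = Q(\gamma_j a)$, and suppose for contradiction that $\bigcap_{j=1}^m \gamma_j^{-1} cl(U_{\nu_j}) = \emptyset$. That is precisely the hypothesis under which the universal axiom $T_{d,\forall}$ asserts $\bigcap_{j=1}^m \gamma_j^{-1} Q_{\nu_j} = \emptyset$ in $M$. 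But $a$ lies in each $\gamma_j^{-1}Q_{\nu_j}$ by the very choice $\nu_j = Q(\gamma_j a)$, so $a \in \bigcap_{j=1}^m \gamma_j^{-1}Q_{\nu_j}$, contradicting emptiness. Hence every finite subfamily has nonempty intersection, and by compactness $R(a) \neq \emptyset$.

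For uniqueness, suppose $u \neq v$ both lie in $R(a)$. Apply hypothesis (*): there is $\gamma \in \Gamma$ with $\gamma u \in U_1$ and $\gamma v \in U_0 = W \setminus cl(U_1)$. Now I would examine what $R(a)$ forces about $u$ and $v$ at the index $\gamma^{-1}$, i.e. consider the set $\gamma^{-1}cl(U_i)$ where $i = Q(\gamma^{-1}\cdot\,?)$ — more carefully, since $u, v \in R(a)$, for \emph{every} $\delta \in \Gamma$ we have $u, v \in \delta^{-1}cl(U_{Q(\delta a)})$. Take $\delta$ such that $\delta$ sends $u$ into $U_1$ and $v$ into $U_0$; such $\delta$ exists by (*). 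Then $\delta u \in cl(U_{Q(\delta a)})$ and $\delta v \in cl(U_{Q(\delta a)})$. If $Q(\delta a) = 1$, then $\delta v \in cl(U_1)$, contradicting $\delta v \in U_0 = W \setminus cl(U_1)$. If $Q(\delta a) = 0$, then $\delta u \in cl(U_0)$; but $\delta u \in U_1$ and $U_1$ is open with $U_0 = W \setminus cl(U_1)$, so $cl(U_0) \subseteq W \setminus U_1$, again a contradiction. Either way we reach a contradiction, so $u = v$.

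\textbf{Main obstacle.} The two halves are genuinely short once the right hypothesis is invoked; the step requiring the most care is making sure the finite-intersection argument for nonemptiness matches the exact form of the axiom scheme $T_{d,\forall}$ — in particular that the indices $\nu_j = Q(\gamma_j a)$ are chosen so that $a$ witnesses nonemptiness of $\bigcap \gamma_j^{-1} Q_{\nu_j}$, which is exactly the contrapositive of the axiom. One should also double-check the topological bookkeeping in the uniqueness argument: $U_0$ is defined as $W \setminus cl(U_1)$, so $U_0$ is open, $cl(U_0) \cap U_1 = \emptyset$, and $cl(U_1) \cap U_0 = \emptyset$; these are the only facts used, and they hold by construction. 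No saturation or model-theoretic machinery beyond plain compactness of $W$ and the universal axioms is needed.
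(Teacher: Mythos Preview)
Your proposal is correct and follows essentially the same approach as the paper: compactness of $W$ plus the universal axioms $T_{d,\forall}$ for nonemptiness via the finite intersection property, and hypothesis (*) for uniqueness by a case split on $Q(\delta a)$. Your write-up is in fact more careful than the paper's on both halves, spelling out explicitly why $a$ witnesses the finite intersection and checking the disjointness facts $cl(U_0)\cap U_1=\emptyset$ and $cl(U_1)\cap U_0=\emptyset$.
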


\prf  The axioms of $T_d$  ensure the finite intersection property for the above sets.  Since $W$ is compact,
$\meet \{ \g \inv cl(U_i):   \g \in \G, i=Q(\g(x)) \}  \neq \emptyset$.   If $u \neq v $ are two distinct points
of $W$ then by (*), for some $\g \in \G$ we have $\g u \in U_1$ and $\g v \in U_0$.  Say $Q(\g(a)) = 0$.
Then $R(a) \subset \g \inv cl(U_1)$  while $v \in \g \inv (U_0)$ so  $v \notin R(a)$.  Likewise if $Q(\g(a)) =1$
then $u \notin R(a)$.  Hence $R(a)$ cannot contain two distinct points, so $|R(a)|=1$.  
\eprf

We denote the point whose existence is shown by the claim as $r(a)$.
The behavior of $r(a)$ in terms of $U_1$ closely mirrors that of $\a$ in terms
of $Q_1$, and similarly for $\gamma(a)$ with $\gamma \in \G$.  For example if $r(a) \in U_1$ then $a \in Q_1$, and if $a \in Q_1$ then
$r(a) \in cl(U_1)$.
The function $r$, along with the $L_0$ structure,  will be our CL reduct $L_c$.

\begin{claim}   The function $r(a)$ depends only on $tp(a)$, and is a continuous function of $tp(a)$.  
\end{claim}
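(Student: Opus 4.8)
The plan is to show that $r$ factors through the type space $S_1(T_d)$ and that the induced map is continuous when $W$ carries its given compact Hausdorff topology. First I would observe that whether $r(a) \in \g\inv U_1$ or $r(a) \in \g\inv U_0$ is, for each fixed $\g \in \G$, determined by $Q(\g(a))$: by the previous claim $r(a)$ is the unique point of $\meet\{\g\inv cl(U_i) : i = Q(\g(a))\}$, so if $Q(\g(a)) = 1$ then $r(a) \in \g\inv cl(U_1)$, and if $Q(\g(a)) = 0$ then $r(a) \in \g\inv cl(U_0)$. Since the formula $Q(\g(x))$ is an $L_d$-formula, its truth value depends only on $\tp(a)$; hence the collection $\{\g\inv cl(U_{Q(\g(a))}) : \g \in \G\}$ depends only on $\tp(a)$, and therefore so does its unique common point $r(a)$. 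This gives a well-defined map $\hat r : S_1(T_d) \to W$.

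Next I would prove continuity of $\hat r$. Let $p \in S_1(T_d)$ and let $V$ be an open neighborhood of $\hat r(p)$ in $W$; I must produce a basic clopen neighborhood of $p$ on which $\hat r$ lands inside $V$. The key point is a compactness argument using property (*): for the single point $w_0 = \hat r(p)$ and any point $v \neq w_0$, property (*) supplies $\g \in \G$ with $\g v \in U_1$ and $\g w_0 \in U_0$ (or vice versa), i.e. an index $\g$ and a value $i \in \{0,1\}$ with $w_0 \in \g\inv U_i$ but $v \notin \g\inv cl(U_i)$. Thus the open sets $\{\g\inv U_i : \g \in \G,\ i,\ w_0 \in \g\inv U_i\}$ separate $w_0$ from every other point of $W$, so their complements together with $V$ cover the compact space $W\setminus V$; extracting a finite subcover yields finitely many pairs $(\g_1,i_1),\dots,(\g_m,i_m)$ with $w_0 \in \g_k\inv U_{i_k}$ for all $k$ and $\bigcap_k \g_k\inv cl(U_{i_k}) \subseteq V$. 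Now the formula $\bigwedge_k \big(Q(\g_k(x)) = i_k\big)$ defines a clopen neighborhood $N$ of $p$ in $S_1(T_d)$ (it is in $p$ since $w_0 = \hat r(p) \in \g_k\inv U_{i_k} \subseteq \g_k\inv cl(U_{i_k})$ forces $Q(\g_k(a)) = i_k$ for $a \models p$, using the relationship between $r$ and $Q$ spelled out after the preceding claim — more precisely, $Q(\g_k(a)) = i_k$ is exactly the clause appearing in the definition of $R(a)$ that places $r(a)$ inside $\g_k\inv cl(U_{i_k})$, so it must already hold). For any $q \in N$ and $a \models q$ we then have $r(a) \in \g_k\inv cl(U_{i_k})$ for each $k$, hence $\hat r(q) \in \bigcap_k \g_k\inv cl(U_{i_k}) \subseteq V$. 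This establishes continuity.

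The main obstacle I anticipate is getting the bookkeeping exactly right in the separation/covering step: I need the neighborhoods of $w_0$ that I use to be of the form $\g\inv U_i$ (open) with the corresponding closed version $\g\inv cl(U_i)$ small enough after finite intersection, and I must make sure the clauses $Q(\g_k(x)) = i_k$ that cut out $N$ are genuinely forced by membership in the intersection defining $r$ — i.e. that $w_0 \in \g_k\inv U_{i_k}$ (not merely $\g_k\inv cl(U_{i_k})$) is what guarantees $Q(\g_k(a)) = i_k$ rather than the opposite value. This is where the asymmetry between $U_1$ and $U_0 = W \setminus cl(U_1)$ and the precise statement of the axioms $T_{d,\forall}$ must be invoked carefully: if $w_0$ lies in the open set $\g_k\inv U_{i_k}$ and $Q(\g_k(a))$ were the other value $1 - i_k$, then $r(a) = w_0$ would be forced into $\g_k\inv cl(U_{1-i_k})$, which is disjoint from $\g_k\inv U_{i_k}$, a contradiction. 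Once that is pinned down, continuity follows, and since $S_1(T_d)$ is compact and $W$ Hausdorff the map $\hat r$ is automatically a closed map, though that is not needed for the statement. The fact that $r(a)$ depends only on $\tp(a)$ is then the first assertion of the claim, and continuity of $\hat r$ is the second.
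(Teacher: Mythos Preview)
Your argument is correct, but the paper's proof is shorter and avoids the extra bookkeeping you flag in your third paragraph.  For the first assertion, both approaches agree: $R(a)$ is defined entirely in terms of the truth values $Q(\g(a))$, so dependence only on $\tp(a)$ is immediate.

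For continuity, the paper does not re-invoke property~(*).  Since the previous claim already gives $\bigcap_{\g \in \G} \g\inv cl(U_{Q(\g(a))}) = \{r(a)\}$, one simply notes that for any open $O \ni r(a)$ this intersection lies in $O$; compactness of $W$ then yields a finite $F_0 \subset \G$ with $\bigcap_{\g \in F_0} \g\inv cl(U_{Q(\g(a))}) \subseteq O$.  The conjunction $\bigwedge_{\g \in F_0} \big(Q(\g(x)) = Q(\g(a))\big)$ defines a clopen neighborhood of $\tp(a)$ on which $r$ lands in $O$.  Because the values $i_k$ are \emph{chosen} to be $Q(\g_k(a))$, there is nothing to check about membership in the type --- the formulas are in $\tp(a)$ by construction.

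Your route instead re-manufactures separating sets via (*), insisting that $w_0$ lie in the open part $\g_k\inv U_{i_k}$, and then must argue backwards that the resulting $i_k$ coincide with $Q(\g_k(a))$.  That verification is sound (using $U_i \cap cl(U_{1-i}) = \emptyset$), but it duplicates the work already absorbed into the uniqueness statement of the preceding claim.  The paper's version shows (*) is not needed a second time.
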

\prf  The first statement is obvious.  To prove continuity, let $O$ be an open neighborhood of $r(a)$.  
Then $ \meet \{ \g \inv cl(U_i):   \g \in \G, i=Q(\g(a)) \}  \subset O$, so by compactness of $W$, some 
finite intersection  $ \meet_{\g \in F_0} \{ \g \inv cl(U_i):   \g \in \G, i=Q(\g(a))\}  \subset O$.  But this means
that the truth values of finitely many atomic relations $Q(\g_i(x))$ force $r(x) $ to be in $O$, i.e.
$r \inv(O)$ contains a clopen neighborhood of $tp(a)$.    \eprf

At this point we may view $r$ as a CL definable relation on $M$.     
 Define ${\mathcal G}$ to be the groupoid of $L_c$-isomorphisms between models of $T_d$,  i.e. bijections $g$ preserving $L_0$  and $r$, but not necessarily $Q$.
  $\ee$ is the corresponding equivalence relation.  
Since $r$ is CL definable, this is a CL reduct.
 
The next thing to show is that $\mathcal{G}$ is a $\bigp$, or alternatively $\ee$ is   a $\bimer$.
In fact, we get universal definability. But this is to be expected, from Theorem \ref{thm:approxequiv}.

  Let $\g_1$ be the element $x \mapsto x+1$.   Note that if $\g_1 \inv a,a, \g_1 a \in cl(U_1)$ then $a \in U_1$.

\begin{claim}  \label{clm:eedefinable} Let $\ee$ be the equivalence relation over $T_d$ corresponding to preserving the $L_0$
structure and $r$.
Then  $M \ee M'$ iff $M$ and $M'$ agree on the $L_0$ structure and
\[ \diamond \ \ \  (\forall x \in M) ( M \models Q_1 ({(\g_1)} \inv x) \wedge Q_1(x) \wedge Q_1 (\g_1 x)) \rightarrow M' \models  Q_1(x) )\]

Thus in particular $\ee$ is a $\bimer$.
\end{claim}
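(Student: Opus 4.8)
The plan is to show that the condition $\diamond$ exactly pins down when two $L_0$-structures preserving $r$ also satisfy the equivalence $\ee$, i.e. that $M \ee M'$ iff they agree on $L_0$ and $\diamond$ holds in both directions (the claim statement as written gives one implication of $\diamond$; the symmetric version is obtained by swapping $M$ and $M'$, and reflexivity/symmetry/transitivity must be checked). Since $M$ and $M'$ share their universe and $L_0$-structure, for each element $a$ the tuple of values $(Q^M(\g a))_{\g \in \G}$ is constrained by the $T_d$-axioms exactly so that $r^M(a)$ is the unique point of $\bigcap\{\g^{-1}cl(U_i): i = Q^M(\g a)\}$, and likewise for $M'$. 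So $r^M = r^{M'}$ on the common universe is equivalent to: for every $a$, the two (possibly different) families of truth values $Q^M(\g a)$ and $Q^{M'}(\g a)$ nevertheless cut out the same point of $W$.

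First I would record the elementary geometric fact, already noted just before the claim, that $r^M(a) \in U_1$ forces $a \in Q_1^M$, and $a \in Q_1^M$ forces $r^M(a) \in cl(U_1)$; more generally $Q^M(\g a)$ is squeezed between $[\g r^M(a) \in U_1]$ and $[\g r^M(a) \in cl(U_1)]$. The only freedom in the value $Q^M(\g a)$, once $r^M(a)$ is fixed, is when $\g r^M(a) \in cl(U_1) \setminus U_1 = \partial U_1$ (the boundary). With the specific choice $U_1 = (-2,2)$ this boundary is $\{-2, 2\}$, and I would use the translation $\g_1: x \mapsto x+1$ (and its powers) to detect which side a point sits on: the hypothesis $Q_1(\g_1^{-1}x) \wedge Q_1(x) \wedge Q_1(\g_1 x)$, together with the remark ``if $\g_1^{-1}a, a, \g_1 a \in cl(U_1)$ then $a \in U_1$,'' shows that this Boolean condition on $Q^M$ forces $r^M(x) \in U_1$ (a genuinely open condition), hence $r^{M'}(x) = r^M(x) \in U_1$, hence $Q_1^{M'}(x)$. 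That gives the stated implication. Conversely, if $M$ and $M'$ agree on $L_0$ and each satisfies $\diamond$ relative to the other, I must deduce $r^M = r^{M'}$; the point is that $r^M(a)$ and $r^{M'}(a)$, if distinct, can be separated by some $\g$ via (*), and then on one of the two translated pictures one has a point strictly inside $U_1$ while the other is in $U_0$ — and one chases this back, using the density/transitivity hypotheses (*), (**) and finitely many further translates (the $\g_1^{\pm 1}$ trick applied at $\g a$) to a violation of $\diamond$ in one direction or the other.

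The main obstacle I anticipate is precisely this converse direction: turning ``$r^M(a) \ne r^{M'}(a)$'' into a \emph{first-order, quantifier-free-over-$L_0$-and-$Q$} witness of the form appearing in $\diamond$. The subtlety is that $\diamond$ only talks about a single specific Boolean pattern around one point ($Q_1$ at $\g_1^{-1}x, x, \g_1 x$), so I need to argue that if the $r$-values ever disagree then, after translating by a suitable $\g \in \G$ provided by (*)/(**), the disagreement manifests at \emph{that} pattern: one model has $\g a$ sitting well inside $U_1$ with its two $\g_1$-neighbours also in $cl(U_1)$, while in the other model $\g a \in Q_0$. Here the freedom to choose $U_1 = (-2,2)$ and $U'' = (-1/2,1/2)$ matters, because I need the $\g$-image of one point to land in a region small enough that its $\pm 1$-translates are still in $cl(U_1)$ while the other point is pushed outside $cl(U_1)$ entirely — this is exactly what (**) with $U'' = (-1/2,1/2)$ buys. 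I would also need to handle the boundary-value ambiguity ($\g r(a) \in \{-2,2\}$) carefully, since there the predicate $Q$ need not be determined by $r$; the resolution is that the disagreement point can always be moved off the boundary by a further group element, because $\G$-orbits are dense and (*) gives separation at an \emph{open} set. Finally I would note that once $M \ee M'$ is characterized by this universal $\lequalslprime$ sentence, it is automatically a $\bimer$; and that $\ee$ being an equivalence relation (symmetry is clear from the symmetric form of $\diamond$, transitivity because $r^M = r^{M'}$ and $r^{M'} = r^{M''}$ give $r^M = r^{M''}$) completes the claim.
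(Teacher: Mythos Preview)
Your proposal is correct and follows essentially the same route as the paper's proof. For the forward direction you use exactly the observation the paper uses: the three-$Q_1$ pattern forces $r^M(x)-1,\,r^M(x),\,r^M(x)+1 \in cl(U_1)=[-2,2]$, hence $r^M(x)\in[-1,1]\subset U_1$, so if $r^M=r^{M'}$ then $Q_1^{M'}(x)$; and for the converse you correctly identify that (**) with $U''=(-1/2,1/2)$ lets you translate a disagreement $r^M(a)\neq r^{M'}(a)$ to a point $b=\g a$ with $r^M(b)\in(-1/2,1/2)$ and $r^{M'}(b)\in U_0$, which violates $\diamond$---this is precisely the paper's argument. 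Your worries about boundary ambiguity and symmetry are harmless over-caution: (**) already lands $r^{M'}(b)$ in the open set $U_0$, avoiding the boundary, and symmetry of $\ee$ is automatic from its definition via preservation of $r$, so no separate verification is needed.
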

\prf  For the ``only if'' direction, we assume that there is 
$x_0 \in M$ such that $Q_1$ holds of $x_0$, $\g_1(x_0)$ and $ {(\g_1}) \inv(x_0)$, but
in  $M'$ $Q_1(x_0)$ fails to hold. The hypothesis concerning $M$
tell us  that in $M$ we must have $r(x_0) \in cl(U_1)$, $\g_1(r(x_0)) \in cl(U_1)$,
$\g_1 \inv (r(x_0)) \in cl(U_1)$, and keeping in mind $U_1=(-2,2)$ we have $r(x_0) \in (-1,1)$.
On the other hand, in $M'$ we cannot have $r(x_0) \in cl(U_1)$. So clearly
$r$ is not preserved by the equivalence relation.

  We prove the ``if'' direction by contrapositive.  Suppose 
$M$ and $M'$ agree on their domains and the $L_0$ structure, let $r$ denote
the function $r$ in $M$ and $r'$ the function in $M'$.
Assume $r(a) \neq r'(a)$ for some $a \in M$.  By (**), there exists $\g \in G$ with 
$\g(r'(a)) \in U_0$ and $ \g(r(a)) \in U''$.
But we can check that $r$ ``commutes modulo starring'' with $\g$ in any  model.  

Thus we have
$r'(\g a) ) = \g r'(a) \in U_0$ and $r(\g a) \in U''$.  Let  $b  = \g a$; then  
$r'(b) \in U_0$, and $r(b) \in U''=(-1/2,1/2)$.  
So applying the  definition of $r$ in $M$, we get
$M \models {{(\g_1)}} \inv b , b, \g_1 b \in Q_1$.  By $\diamond$, $M' \models Q_1(b)$, contradicting 
$r'(b) \in U_0$.  \eprf

We now need to show that $\groupoid$ is not given by a DL reduct.   We begin by 
studying a CL theory $T_c$ that will turn out to be a reduct of $T_d$.

 Let $L_c$   expand $L_0$ by    a unary predicate $r$ in the sense of CL, interpreted as a map into $W$.   The universal axioms,
denoted $T_{c,\forall}$, state that $T_0$ holds and $r$ is a $\G$-homomorphism,
 i.e. $r(\g x) = \g r(x)$, referring to the action above on $W$.

\begin{claim}  \label{clm:uniqueext} There is
  a (necessarily unique) theory $T_c$ eliminating quantifiers with universal part axiomatized by $T_{c, \forall}$.
\end{claim}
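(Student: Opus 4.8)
The plan is to prove the sharper statement that $T_{c,\forall}$ \emph{already} eliminates quantifiers, so that one may take $T_c := T_{c,\forall}$ (with the usual convention that structures are nonempty), and to deduce uniqueness from this. First I would record the shape of the models and the one substantive consequence of $(*)$. A model of $T_{c,\forall}$ is a free $\G$-set equipped with a $\G$-equivariant map $r$ into $W$; choosing a representative in each orbit, it is the same datum as a set $I$ (indexing the orbits) together with an arbitrary function $\theta\colon I\to W$, the model being $\G\times I$ with $r(\gamma,i)=\gamma\cdot\theta(i)$, and $L_c$-embeddings are the injections of index sets respecting $\theta$. Fixing a metric $d$ on the compact metrizable space $W=\Pp^1(\Rr)$, the map $v\mapsto d(v,w)$ is a legitimate continuous connective on the $W$-valued predicate $r$, so ``$d(r(x),w)=0$'' is a bona fide closed $L_c$-condition for each $w\in W$. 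Now $(*)$ implies that each $\G$-orbit of $W$ is dense, hence in every nonempty model each individual orbit of the structure already has dense $r$-image, and therefore $T_{c,\forall}\vdash\inf_x d(r(x),w)=0$ for every $w\in W$. Consequently every sufficiently saturated model realises the one-condition type ``$d(r(x),w)=0$'', i.e.\ attains every point of $W$ as an $r$-value; and since each orbit contributes infinitely many elements with $r$-value near $w$, this can be arranged while avoiding any prescribed finite set of elements.

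Next comes the quantifier elimination itself, which I would carry out via the substructure-completeness criterion: given $M,N\models T_{c,\forall}$ sharing a common substructure $A$, show $(M,a)_{a\in A}\equiv(N,a)_{a\in A}$. Passing to elementary extensions one may assume $M,N$ are $\kappa$-saturated of the same cardinality $\kappa$ and still contain $A$, and then build an $L_c$-isomorphism $M\to N$ over $A$ by back-and-forth, keeping the partial isomorphism $g$ a union of fewer than $\kappa$ orbits containing $A$. The only non-bookkeeping step is to extend $g$ by an element $b$ in a fresh orbit with $r(b)=w$; here the point from the first paragraph is that a quantifier-free formula (or a type over a finite set) can never force ``$x$ is in a fresh orbit'' — it can only assert ``$x\neq\gamma c$'' for finitely many group elements $\gamma$ — so the requirement ``$d(r(x),w)=0$ and $x$ lies outside every orbit of $\operatorname{ran}(g)$'' is a type over $\operatorname{ran}(g)$ consistent with $Th(N,\operatorname{ran}(g))$ (finitely satisfiable via density, using $\inf_x d(r(x),w)=0$), hence realised by $\kappa$-saturation; equivariance together with $r(b')=r(b)$ makes $\gamma b\mapsto\gamma b'$ an isomorphism of the two orbits, and elements in already-mapped orbits are handled automatically. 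This gives QE, so $T_c:=T_{c,\forall}$ works. (An equivalent packaging: $T_{c,\forall}$ has strong amalgamation — the pushout of index sets — and, by the same ``$\inf$-formulas cannot see orbits'' observation, \emph{every} model of $T_{c,\forall}$ is existentially closed, so $T_{c,\forall}$ is its own model companion, hence its own model completion, hence eliminates quantifiers.)

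Finally, uniqueness: any theory with QE is model-complete, and a compactness argument shows that every model of its universal part embeds in a model of it, so it is \emph{the} model companion of its universal part; model companions are unique when they exist. Hence any QE theory whose universal part is $T_{c,\forall}$ must coincide with the model companion of $T_{c,\forall}$, i.e.\ with the $T_c$ above. I expect the only genuinely delicate point to be the continuous-logic bookkeeping underlying the first two paragraphs: treating $r$ as a $W$-valued predicate composed only with continuous maps $W\to[0,1]$ (so that ``$d(r(x),w)=0$'' is a closed condition), noting that quantifier-free formulas involve only finitely many elements of $\G$ (so a fresh orbit is never pinned down and the back-and-forth has room), and observing that $(*)$ upgrades ``dense $r$-image'' to ``every point of $W$ attained'' in saturated models — which is precisely what allows fresh-orbit elements to be matched across $M$ and $N$.
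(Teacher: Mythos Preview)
Your argument is correct and is essentially the same back-and-forth as the paper's, only packaged differently. The paper singles out an explicit richness condition $(\clubsuit)$ --- every $w\in W$ is the $r$-value of elements in infinitely many distinct $\Gamma$-orbits --- and builds an isomorphism between any two $(\clubsuit)$-models of size $2^{\aleph_0}$ by matching orbit representatives; completeness and QE are then read off. You instead run the back-and-forth between $\kappa$-saturated models of $T_{c,\forall}$. These amount to the same thing: the $(\clubsuit)$-models are precisely the sufficiently saturated ones, and the paper's matching of orbit representatives is exactly your ``find $b'$ in a fresh orbit with $r(b')=w$'' step.

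Your presentation is in one respect sharper: you note that $T_{c,\forall}$ is already its own model companion, so one may take $T_c=T_{c,\forall}$. This is right, and reconciles with the paper once one observes that $(\clubsuit)$ is not CL-first-order --- ``$x,y$ lie in distinct $\Gamma$-orbits'' is only type-definable when $\Gamma$ is infinite, since a formula mentions only finitely many group elements --- so as a set of CL sentences the paper's $T_c$ coincides with $T_{c,\forall}$. The paper's approach buys a concrete isomorphism at a fixed cardinality without invoking saturation; yours buys the cleaner statement that no extra axioms are needed, at the cost of a mild saturation hypothesis ($\kappa>|\Gamma|$, which you implicitly use when realising the type ``fresh orbit with $r(x)=w$'').
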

   
\prf  Let $T_c$ be $T_{c,\forall}$ along with axioms:
\medskip

 ($\clubsuit$)   For each $w \in W$ there exist infinitely many $\G$-orbits $O$ of $N$
 with elements $a$ such that $r(a)=w$.  

 \medskip

Any model of $T_{c,\forall}$ clearly extends to a model of $T_c$; so the universal part of $T_c$ is precisely $T_{c,\forall}$.

To see that $T_{c}$ is complete, suppose $T',T''$ are two completions.  Let $M,M'$ be models of $T',T''$ respectively, of cardinality $2^{\aleph_0}$,
and such that for any $w \in W$ there exist $2^{\aleph_0}$ distinct elements $a$ of $M$ (resp. $M'$) with $r(a)=w$.  It is easy to find such models by compactness.

We argue that $M \cong M'$.  Let $(w_i: i \in I)$ be a set of representatives for the $\G$-orbits on $W$.   For each $i$, let $m_{i,j}: j < 2^{\aleph_0}$ be
a maximal subset of $M$ consisting of elements in distinct orbits, such that $r(m_{i,j}) = w_i$.  Then any element of $M$ has the form $\gamma m_{i,j}$
for some $i \in I, j < 2^{\aleph_0}$ and some $\gamma \in \Gamma$, necessarily unique.  In the same way, we find $m'_{i,j} \in M'$ with $r(m'_{i,j}) = w_i$.   Let $f$ map
$\gamma (m_{i,j})$ to $\gamma (m'_{i,j})$.  This is well-defined and an $L_c$-isomorphism. So $M \cong M'$.

 Hence $T'=T''$. So $T_c$ is complete.
 
In a similar way, we can construct an isomorphism between countable substructures of $M,M'$ extends to an isomorphism $M \to M'$.  This holds also when $M=M'$;   so the quantifier-free type of a tuple from $M$ determines the complete type.   Thus $T_c$ admits quantifier-elimination.  
  
 \eprf
  
   By QE for $T_{c}$, or more directly by the argument for Claim \ref{clm:uniqueext}, complete $1$-types have the form $r(x)=w$ for some $w \in W$.  A partial type can be viewed as a closed set of complete types, 
   and hence has the form  $r \inv(C)= \{x: r(x) \in C \}$ for some closed $C \subset W$.

\begin{claim} \label{clm:definable} Every DL definable set $X$ of $T_c$ in $n$ variables is already $L_0$-definable
\end{claim}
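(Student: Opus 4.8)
The claim asserts that $T_c$ has no DL definable sets beyond the $L_0$-definable ones. The strategy is to exploit the description of type spaces for $T_c$ already available from Claim \ref{clm:uniqueext}: by quantifier elimination, a complete $1$-type over $\emptyset$ is just $r(x)=w$ for $w \in W$, so the space of $1$-types is homeomorphic to $W$; and more generally an $n$-type is determined by the $L_0$-quantifier-free data (which $\G$-orbit relations hold among $x_1,\ldots,x_n$, i.e.\ for which $\g$ we have $x_i = \g x_j$) together with the values $r(x_1),\ldots,r(x_n) \in W$, subject to the compatibility constraint $r(\g x) = \g r(x)$. A DL definable set $X \subseteq M^n$ is a clopen subset of this type space $S_n(T_c)$; I want to show that membership in $X$ depends only on the $L_0$-quantifier-free type of the tuple.

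\textbf{Key steps.} First I would set up the coordinatization: every element of a model is of the form $\g m$ where $m$ ranges over a transversal of orbit representatives, and the orbit of $\g m$ is pinned down by $r(m) \in W$ once we know $r$ is a $\G$-homomorphism; so an $n$-type is a pair consisting of a ``pattern'' (a quantifier-free $L_0$-type, i.e.\ a partial matching of the variables into orbits with prescribed group elements linking matched variables) and a point of $W^k$ recording the $r$-values of one representative per distinct orbit appearing among $x_1,\ldots,x_n$. Because there are only finitely many patterns on $n$ variables, $S_n(T_c)$ is a finite disjoint union, over patterns $\pi$, of spaces $W^{k(\pi)}$. Next, a clopen $X$ restricts on each piece $W^{k(\pi)}$ to a clopen subset of $W^{k(\pi)}$; since $W$ is \emph{connected}, $W^{k(\pi)}$ is connected, so each such clopen subset is either all of $W^{k(\pi)}$ or empty. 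Therefore $X$ is a union of some of the pattern-pieces — that is, membership in $X$ is determined purely by the $L_0$-quantifier-free type, which is exactly saying $X$ is $L_0$-definable (here one uses that the $L_0$-structure $T_0$, a free $\G$-action, itself has quantifier elimination, so that ``patterns'' are $L_0$-definable). Finally I would remark that the case of parameters follows the same way after naming the parameters, or note that the statement as phrased is over $\emptyset$ and the parametric version is what gets used later; in any event adding finitely many constants only refines the pattern decomposition and keeps the fibers connected.

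\textbf{Main obstacle.} The delicate point is getting the type-space decomposition exactly right, in particular handling the compatibility constraint between the $L_0$-pattern and the $W$-coordinates: if two variables $x_i, x_j$ are forced by the pattern to lie in the same orbit via $x_i = \g x_j$, then their $r$-values must satisfy $r(x_i) = \g r(x_j)$, so one must be careful to parametrize each orbit-class by a \emph{single} free $W$-coordinate (the $r$-value of a chosen representative) rather than one per variable — otherwise one would be imposing equations and the fibers might fail to be the full connected $W^{k}$. Once the parametrization records one $W$-value per orbit-class and the pattern records the linking group elements, there are no further constraints (any point of $W^{k(\pi)}$ is realized, by $(\clubsuit)$ there are even infinitely many realizations), so the fiber is genuinely $W^{k(\pi)}$, connected, and the connectedness argument closes. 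A secondary point worth stating carefully is why a DL definable set corresponds to a clopen subset of the type space and why clopen $+$ connected fibers forces the conclusion — this is routine Stone-duality-style reasoning but should be spelled out since the type space here is not totally disconnected.
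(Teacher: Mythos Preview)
Your overall strategy is sound, but there is a genuine error that breaks the argument as written: you assert that ``there are only finitely many patterns on $n$ variables.'' This is false. The group $\G$ is countably infinite (e.g.\ $PGL_2(\Qq)$ in the intended application), so already for $n=2$ there are infinitely many $L_0$-quantifier-free types: one for each $\gamma \in \G$ asserting $x_1 = \gamma x_2$, plus the type asserting $x_1,x_2$ lie in distinct orbits. Consequently your decomposition of $S_n(T_c)$ has infinitely many pieces, and the ``distinct orbits'' piece is closed but \emph{not} open (it is cut out by the infinite conjunction $\bigwedge_{\gamma} x_1 \neq \gamma x_2$). So your final step --- ``$X$ is a union of some of the pattern-pieces, hence $L_0$-definable'' --- does not follow directly, since an arbitrary union of these pieces need not be clopen, and the individual patterns are not all $L_0$-definable sets.

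Your approach is salvageable with one extra observation: the restriction map $\pi: S_n(T_c) \to S_n(T_0)$ is a continuous surjection between compact Hausdorff spaces, hence closed. You have correctly shown that each fiber of $\pi$ is homeomorphic to some $W^{k}$ and thus connected, so any clopen $X$ is a union of fibers, i.e.\ $X = \pi^{-1}(\pi(X))$. Then $\pi(X)$ and $\pi(X^c) = \pi(X)^c$ are both closed, so $\pi(X)$ is clopen in $S_n(T_0)$, which gives $L_0$-definability. The paper takes a different route, avoiding this issue by induction on $n$: it isolates only the single ``distinct orbits'' locus $P$ (whose image in $W^n$ is connected), shows $X$ or $X^c$ is disjoint from $P$, and then observes that a definable set disjoint from $P$ must imply a finite disjunction of equations $\gamma x_i = x_j$, each of which reduces the number of free variables. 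Your fiberwise-connectedness argument is more uniform once patched; the paper's induction sidesteps the need to control infinitely many strata at once.
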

\prf  Take $n=1$.  As $X$ is $\bigwedge$-definable, it is a partial type. By the comment above the claim, it has the form $r \inv(C)$ for some closed $C \subset W$.
As the complement of $X$ is also $\bigwedge$-definable, it also has this form, so that $X = r \inv(C')$ for some open $C' \subset W$.
By surjectivity of $r$, in sufficiently saturated models, we have $C'=r(X)=C$ so $C$ is  clopen.  Since $W$ is connected, $C=\emptyset$ or $C=W$
and so  there are no unary definable sets other than $\emptyset$ and the universe.     For $n>1$, let $P$ be the $L_0$-partial
$n$-type asserting that $x_1,\ldots,x_n$ lies in distinct orbits.   Then an $n$-type extending $P$ is determined by the values of $r(x_1),\ldots,r(x_n)$.
A partial $n$ type extending $P$ is the intersection of $P$ with the pullback of a closed subset of $W^n$.  Since $W^n$ is connected, the same argument shows
that if $X$ is a definable set in $n$ variables, then either $X$ or the complement $X$ is disjoint from $P$; say the former.  Then $X$ implies a finite disjunction
of terms $\gamma x_i = x_j$.   So $X = X_1 \union \ldots \union X_k$ where $X_i$ includes such a term.  By induction each $X_i$ is $L_0$-definable and hence so is $X$. \eprf

\begin{claim} Let $M \models T_d$, and let $M_0$ be the reduct to $L_0$.   Then 
$(M_0,r)$ is a model of $T_c$.  
\end{claim}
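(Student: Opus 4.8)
The plan is to verify that $(M_0,r)$ satisfies the two groups of axioms of $T_c$: the universal part $T_{c,\forall}$ — that is, $T_0$ together with the homomorphism law $r(\g x)=\g r(x)$ — and the genericity scheme $(\clubsuit)$. The first group is routine. We have $M_0\models T_0$ because $M\models T_d\supseteq T_0$. For the homomorphism law I would use Claim~\ref{clm:runique}, which identifies $r(a)$ as the unique point of $R(a)=\meet\{\g\inv cl(U_i):\g\in\G,\ i=Q(\g a)\}$; rewriting each index $\delta$ of $R(\g a)$ as $\epsilon\g\inv$ and using that $\g$ acts on $W$ by a homeomorphism, hence commutes with arbitrary intersections, one gets $R(\g a)=\g\cdot R(a)$, so $r(\g a)=\g r(a)$. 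This is the ``$r$ commutes modulo starring'' fact already used in the proof of Claim~\ref{clm:eedefinable}.

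The substance lies in $(\clubsuit)$: for each $w\in W$, the structure $(M_0,r)$ must carry infinitely many $\G$-orbits meeting $r\inv(w)$. Fixing $w$ and using the partition $W=cl(U_1)\sqcup U_0$, I would consider the quantifier-free $1$-type
\[ p_w(x)\ =\ \{\,Q_1(\g x):\g w\in cl(U_1)\,\}\ \cup\ \{\,Q_0(\g x):\g w\in U_0\,\}, \]
which assigns exactly one value to each atom $Q(\g x)$. Two facts are then needed. First, $p_w$ is consistent with $T_{d,\forall}$: it merely prescribes the $Q$-values on a single free $\G$-orbit $\G b$ (with $Q(\g b)=1$ iff $\g w\in cl(U_1)$), and this prescription violates no axiom $\meet_i\delta_i\inv Q_{\nu_i}=\emptyset$ of $T_{d,\forall}$, since a witness $\g b\in\meet_i\delta_i\inv Q_{\nu_i}$ would force $\delta_i\g w\in cl(U_{\nu_i})$ for every $i$, i.e.\ $\g w\in\meet_i\delta_i\inv cl(U_{\nu_i})$, whereas such an axiom applies only when that intersection is empty. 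Second, if $a\models p_w$ in any model of $T_d$, then $r(a)=w$: for each $\g$ we have $\g w\in cl(U_{Q(\g a)})$ by construction, so $w$ lies in $\meet\{\g\inv cl(U_i):i=Q(\g a)\}=\{r(a)\}$.

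To pass from ``$p_w$ is realized'' to ``$p_w$ is realized in infinitely many orbits'', I would note that the axioms of $T_{d,\forall}$ are single-variable universal sentences, so disjoint unions of its models are again models; hence for every $n$, the set $T_{d,\forall}\cup\{p_w(b_j):j<n\}$ together with the assertions that the $b_j$ lie in pairwise distinct $\G$-orbits is consistent, so $T_{d,\forall}$ imposes no finite bound on the number of orbits realizing $p_w$. Assuming, as is surely intended, that $T_d$ is the generic (model-companion) completion of $T_{d,\forall}$ — equivalently, the completion with quantifier elimination axiomatized over $T_{d,\forall}$ by the scheme that each quantifier-free orbit-type consistent with $T_{d,\forall}$ is realized in infinitely many $\G$-orbits, which is the $T_d$-analogue of $(\clubsuit)$ and is established exactly as $T_c$ was in Claim~\ref{clm:uniqueext} — every model of $T_d$ realizes each $p_w$ in infinitely many distinct orbits. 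Combined with the second fact above, this gives $(\clubsuit)$ for $(M_0,r)$, and the claim follows.

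The main obstacle is precisely this last step. The axioms $T_0$ and $r(\g x)=\g r(x)$ are local and hold in every model of $T_{d,\forall}$, but $(\clubsuit)$ genuinely requires richness of $M$: a single free orbit $\G b$ with $Q$ pulled back from one point $w$ is a model of $T_{d,\forall}$ satisfying all of $T_{c,\forall}$ yet failing $(\clubsuit)$. So the argument must go through the precise definition of $T_d$, and the content is knowing that $T_d$ forces every orbit-type $p_w$ consistent with $T_{d,\forall}$ to be realized unboundedly often — the same amalgamation/back-and-forth reasoning that underlies Claim~\ref{clm:uniqueext}. A minor point to get right along the way is the behaviour at points $\g w$ lying on the common boundary $cl(U_1)\cap cl(U_0)$, which is why $p_w$ above is keyed to the partition $W=cl(U_1)\sqcup U_0$ rather than to $U_1$ directly.
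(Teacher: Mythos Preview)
Your proof is correct and takes essentially the same route as the paper's. The paper's version differs only in packaging: it first passes to a saturated $M$, then for each $w$ constructs the single-orbit $L_d$-structure $(\Gamma w,\,Q^w)$ with $Q^w := \Gamma w \cap U_1$ (so boundary points land in $Q_0$ rather than in $Q_1$ as in your $p_w$), checks $(\Gamma w,Q^w)\models T_{d,\forall}$, and embeds infinitely many disjoint copies into $M$ by saturation --- your type $p_w$ and appeal to the model-companion character of $T_d$ amount to the same move.
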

\begin{proof}
For this we may assume $M$ is saturated.   Recall the set function $R$ from Claim \ref{clm:runique}.
Clearly, $R(\g x) =     \meet \{ \b \inv cl(U_i):   \b \in \G, i=Q(\b(\g a)) \} =  
    \meet \{ \b \inv cl(U_i):   \b \g \in \G, i=Q(\b(\g a))  \} = \meet \{ ({\a \g} \inv) \inv cl(U_i): \a \in \G, i=Q(\a a) \} 
    = \g R(x)$.  
 So   $r(\g x) = \g r(x)$. 
  Thus $(M_0,r) \models  T_{c,\forall}$.   We also have to show  ($\clubsuit$): 
  
  \medskip
  
  For
any $w \in W$ there exist, in infinitely many $\G$-orbits, $a \in M$ with $r(a)=w$ 

\medskip

Define  
an $L_d$-structure on the orbit $\G w$ by setting $Q^{ w} = \G w \meet U_1$.  If 
$\meet_{i=1}^m \g_i \inv cl(U_{\nu_i}) = \emptyset$, then $\meet_{i=1}^m {\g_i} \inv Q_{\nu_i} = \emptyset$.
This is because  $Q_1 \subset cl(U_1)$ and $Q_0 \subset cl(U_0)$.   Thus $(\G w, Q^w) \models  T_{d,\forall}$.
And so there exist (many disjoint)  $L_d$-embeddings $j:(\G w, Q^w) \to M$; and we have $w \in R(j(w))$ so $w = r(j(w))$. 
Letting $a = j(w)$, we have proven our claim.
\end{proof}

We have seen in Claim \ref{clm:definable}  that  DL reducts of $T_c$ must be  $L_0$ definable.
Hence $(M_0,r)$ has no
DL reduct bigger than $L_0$.   
So $L_0$ is the maximal DL reduct respected by $\ee$.  Certainly $r$ is not $L_0$-definable, since $T_0$ has a unique $1$-type: thus any $L_0$-definable map into the reals  is constant.
Hence $\ee$ is not ydlept.

  \section{Higher ydlept equivalence relations that are yclept} \label{sec:yclepinthigherydlept}
  
In the previous section we considered the interaction of $\bimer$s with yclept $\mer$s, and showed that this does not imply  that the $\mer$ is ydlept.
In this section we look at the intersection of higher ydlepts with yclept. In contrast to the previous section, we show that this does imply that the $\mer$ is ydlept.

  Let $T$ be a theory,  for simplicity in a countable language.  Let $D$  be one of the sorts or a finite product of sorts of $T$.
 
  Recall that  in a CL theory with trivial metric, a subset $X$ of $D$ is called definable if  both $X$ and $D \setminus X$ are $\bigwedge$-definable.  Equivalently,
  there exists a formula $\phi$ valued in $\{0,1\}$ with $\phi \inv (1)=X$.
  
  \begin{prop}\label{prop:higherydleptsubyclept}  Let $\ee$ be an yclept $\bimer$ on $T$,   associated to the CL reduct $\mathcal{R}$.  Let $\mathcal{D}=\{D_c: c \in Q\}$ be a $T$-definable family of definable subsets.
  Assume $\mathcal{D}$ is $\ee$-invariant, i.e. invariant for the groupoid corresponding to $\ee$.   Then  $\mathcal{D}$
can be expressed as $\mathcal{D} =\{D'_d: d \in Q'\}$ with $D'$ and $Q'$ both
  contained in the reduct $\mathcal{R}$.   
  \end{prop}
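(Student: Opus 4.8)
The plan is to use the density of $\groupoid$ in $\Aut_{\maxred{\groupoid}}$ (Proposition \ref{prop:maxred}(2)) together with compactness of type space to show that membership in the family $\mathcal{D}$ can be detected by finitely many $\maxred{\groupoid}$-formulas. Work in a $\groupoid$-resplendent model $M \models T$. For $c \in Q$, the set $D_c \subseteq D$ is $\emptyset$-definable \emph{over $c$}; write $D_c = \{a \in D : \psi(a,c)\}$ for an $L$-formula $\psi$. The $\ee$-invariance of the family $\mathcal{D}$ says: for $g \in \groupoid(M,M)$ and $c \in Q(M)$, there is $c' \in Q(M)$ with $g(D_c) = D_{c'}$; equivalently, $\psi(g(a),c) \leftrightarrow \psi(a,c')$ for all $a$. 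The key observation is that, by density, this must then hold for all $g \in \Aut_{\maxred{\groupoid}}(M)$ as well: if $g \in \Aut_{\maxred{\groupoid}}(M)$ and $c \in Q(M)$, then for each finite tuple $\bar a$ there is $g_{\bar a} \in \groupoid(M,M)$ agreeing with $g$ on $\bar a \cup g^{-1}(\bar a)$ (using that $\groupoid$-orbits equal $\maxred{\groupoid}$-isomorphism-orbits on all finite tuples), and a compactness/saturation argument patches these together to conclude $g(D_c) \in \mathcal{D}$.

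Next I would upgrade this to a uniform statement. Consider, in the language $\maxred{\groupoid}$, the type-definable relation on $D \times Q$ given by: $(a,c) \mapsto$ "$a \in D_c$". I claim this relation is $\maxred{\groupoid}$-invariant, hence (since $\maxred{\groupoid}$ is a full reduct with its own automorphism group acting on $M$) type-definable over $\maxred{\groupoid}$ — but since $D_c$ is a \emph{definable} subset (both it and its complement $\bigwedge$-definable), the relation and its complement are both $\bigwedge$-definable over $\maxred{\groupoid}$, so by compactness it is definable over $\maxred{\groupoid}$ by a single formula $\theta(x,y)$ with $x$ ranging over $D$ and $y$ over a sort (or finite product of sorts) of $\maxred{\groupoid}$. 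Concretely: suppose $(a_1,c_1)$ and $(a_2,c_2)$ have the same $\maxred{\groupoid}$-type. Then some $\maxred{\groupoid}$-automorphism takes one to the other (in a saturated model); by the density argument above, that automorphism sends $D_{c_1}$ to a member of $\mathcal{D}$, and since it fixes enough structure it must send it to $D_{c_2}$ (using that $\mathcal{D}$ is a family parametrized so that $c \mapsto D_c$ is invariant), giving $a_1 \in D_{c_1} \iff a_2 \in D_{c_2}$. So membership depends only on the $\maxred{\groupoid}$-types, and the invariance-plus-definability yields $\theta \in \maxred{\groupoid}$.

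Finally I would produce the required reparametrization. Let $Q' \subseteq \maxred{\groupoid}$ be (a $\maxred{\groupoid}$-definable set of codes for) the image of $Q$ under $c \mapsto$ the $\maxred{\groupoid}$-canonical parameter of $D_c$, well-defined because $c$ and $c'$ with $D_c = D_{c'}$ get identified, and $\maxred{\groupoid}$-definable because we may pass to $(\maxred{\groupoid})^{eq}$ / use the Shelahization of $\maxred{\groupoid}$ at the family $\theta$ as in Definition \ref{eq}; set $D'_d = \{a : \theta(a,d)\}$. Then $\mathcal{D} = \{D'_d : d \in Q'\}$ with $D'$ and $Q'$ both in $\maxred{\groupoid} = \mathcal{R}$, as desired. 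The main obstacle I anticipate is the patching step: turning the pointwise density of $\groupoid$ in $\Aut_{\maxred{\groupoid}}$ into the statement "$g(D_c) \in \mathcal{D}$ for every $\maxred{\groupoid}$-automorphism $g$", i.e. showing that the $\bigwedge$-definable (not definable) nature of $\groupoid$ does not obstruct transferring the invariance of the whole family across the closure; this is exactly where one must use $\groupoid$-resplendence and the fact that $D_c$ being a \emph{definable} (not merely $\bigwedge$-definable) set lets a single formula witness membership, so only finitely much of $g$ is ever consulted.
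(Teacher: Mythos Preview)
Your main gap is in step 2: the claim that the relation $(a,c) \mapsto \text{``}a \in D_c\text{''}$ on $D \times Q$ is $\mathcal{R}$-invariant does not follow from the hypotheses. Invariance of the \emph{family} $\mathcal{D}$ only says that an $\mathcal{R}$-automorphism $g$ sends each $D_c$ to some $D_{c'}$; it does \emph{not} say $g(D_c) = D_{g(c)}$. The parametrization $c \mapsto D_c$ is given by an $L$-formula $\psi(x,c)$, and $g$ need not preserve $\psi$. Concretely, if $g$ fixes $c$ (so $c_1=c_2=c$ in your notation) and $g(a_1)=a_2$, you would need $g(D_c)=D_c$ to conclude $a_1\in D_c \Leftrightarrow a_2\in D_c$; but $g(D_c)$ may well be $D_{c'}$ for some $c' \neq c$. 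Your parenthetical ``using that $\mathcal{D}$ is a family parametrized so that $c \mapsto D_c$ is invariant'' is precisely the unproved (and generally false) assertion. What \emph{is} $\mathcal{R}$-invariant is the membership relation between $D$ and the \emph{imaginary} sort of codes for elements of $\mathcal{D}$; but then you have to get back down to real parameters, which is the whole content of the proposition.

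A secondary point: since $\ee$ is assumed yclept with reduct $\mathcal{R}$, you have $\groupoid(M,M) = \Aut(M \mid \mathcal{R})$ on the nose, not merely densely; so the patching worry you flag as the main obstacle is actually a non-issue. The paper exploits this directly: from ycleptness, each $D_c$ has only countably many $\Aut(M\mid\mathcal{R})$-conjugates in a countable resplendent $M$ (they all lie in $\mathcal{D}(M)$), so Makkai's parameterized Beth theorem gives that each $D_c$ is $\mathcal{R}$-definable with \emph{some} parameters $d$ from $M$. A CL compactness argument then uniformizes this to finitely many $\mathcal{R}$-formulas $\phi_i$, and the set of parameters $d$ for which $\phi_i(\,\cdot\,,d)$ cuts out a member of $\mathcal{D}$ is itself $L$-definable and $\ee$-invariant, hence lies in $\mathcal{R}$. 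That last move---building the new parameter set $Q'$ as an $\ee$-invariant definable set---is exactly what bypasses the failure of $c \mapsto D_c$ to be $\mathcal{R}$-equivariant.
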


Before proving this, we give a corollary:

\begin{cor}\label{cor:ycleptand2ydlept}  A $\mer$ that is both $2$-ydlept and yclept is ydlept.
  \end{cor}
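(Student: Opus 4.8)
The plan is to derive Corollary~\ref{cor:ycleptand2ydlept} from Proposition~\ref{prop:higherydleptsubyclept} together with the structural description of $2$-ydlepts. Suppose $\ee$ is both $2$-ydlept and yclept, and let $\mathcal{R}$ be a CL reduct witnessing that $\ee$ is yclept, with associated groupoid $\groupoid$. By Lemma~\ref{lem:special} (the case $n=2$, i.e.\ Lemma~\ref{lem:2ydleptand3set}), the $2$-ydlept structure of $\ee$ is induced by a $2$-set, say given by a $2$-partitioned formula $\phi(\vec x_1;\vec p)$; write $\mathcal{D}=\{D_{\vec p}:\vec p\in Q\}$ for the corresponding definable family of definable subsets, so that $M\ee M'$ holds (for $M,M'$ sharing the relevant universe) exactly when $M$ and $M'$ agree on the $2$-set $\mathcal{D}$, i.e.\ $\{D^M_{\vec p}:\vec p\}=\{D^{M'}_{\vec p}:\vec p\}$.

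The key point is that $\mathcal{D}$ is $\ee$-invariant, hence $\groupoid$-invariant: if $g\in\groupoid(M,M')$ then $g$ carries the collection $\{D^M_{\vec p}\}$ bijectively to $\{D^{M'}_{\vec p}\}$, because $M$ and $M'$ agree on the $2$-set and $g$ preserves membership in each member of the family up to reindexing. (This is precisely what ``agreeing on the $2$-set'' means after transporting via $g$.) Therefore Proposition~\ref{prop:higherydleptsubyclept} applies and tells us that $\mathcal{D}$ can be re-presented as $\mathcal{D}=\{D'_d:d\in Q'\}$ with $D'$ and the index set $Q'$ both defined over the reduct $\mathcal{R}$. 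In particular the predicate ``$\vec x\in D'_d$'' and the predicate ``$d\in Q'$'' are $\mathcal{R}$-formulas, so each is a CL-definable relation invariant under $\groupoid$; since these relations are $\{0,1\}$-valued (they are definable subsets in the sense of CL with trivial metric), they are essentially DL relations.

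It then remains to observe that the equivalence relation ``agree on the $2$-set $\mathcal{D}$'' coincides with ``agree on the $2$-set $\mathcal{D}$ as presented over $Q'$ and $D'$'', which is a statement purely about DL formulas: two models $M,M'$ on the same universe are $\ee$-equivalent iff for every $d$ (in the $\mathcal{R}$-definable index set $Q'$, which is the same set in both models since $Q'$ is $\mathcal{R}$-definable and $M,M'$ share the relevant universe structure — here one uses that $\ee$ being yclept means $M,M'$ already agree on all of $\mathcal{R}$ when they are $\ee$-equivalent) there is $d'$ with $D'_d$ in $M$ equal to $D'_{d'}$ in $M'$, and conversely. One has to be slightly careful: $\ee$-equivalent models need not literally have the same $\mathcal{R}$-reduct a priori — what we know is that $\ee=\equiv_{\mathcal{R}}$ on the nose, since $\ee$ is yclept via $\mathcal{R}$, so in fact $M\ee M'$ iff $M{\upharpoonright}\mathcal{R}=M'{\upharpoonright}\mathcal{R}$. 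Thus $\ee$ is the equivalence relation of having the same reduct for the DL reduct generated by $D'$ and $Q'$ (which are essentially DL, hence can be replaced by genuine $\{0,1\}$-valued first-order formulas), so $\ee$ is ydlept.

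The main obstacle I expect is the bookkeeping in the previous paragraph: matching up the $2$-ydlept presentation (a definable family of definable sets, with an auxiliary imaginary sort $S_F$ and relation $C_F$ from Definition~\ref{def:nydlept}) with the re-presentation over $\mathcal{R}$ supplied by Proposition~\ref{prop:higherydleptsubyclept}, and verifying that ``having the same $2$-set'' becomes literally ``having the same DL reduct'' once both the family $D'$ and its index set $Q'$ live inside $\mathcal{R}$ — in other words, checking that no information is lost in passing from the set-of-sets formulation to a plain reduct. Since $\mathcal{R}$ is a CL reduct over a countable language and the relevant relations turn out to be $\{0,1\}$-valued, the essentially-DL observation (as in Remark~\ref{rem:CLDLy}) finishes the argument without needing smallness of $T$.
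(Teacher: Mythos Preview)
Your proposal is correct and follows essentially the same route as the paper: apply Proposition~\ref{prop:higherydleptsubyclept} to reparameterize the defining family $\mathcal{D}$ as $\{D'_d:d\in Q'\}$ with $D',Q'$ in $\mathcal{R}$, then observe that $\ee$ coincides with the ydlept $\mer$ generated by $D',Q'$. The paper's version is more compressed: once $D',Q'$ lie in $\mathcal{R}$ they are $\ee$-invariant as definable relations, so agreeing on them forces agreement on the family $\mathcal{D}$ and hence $\ee$-equivalence, while the converse is immediate from $\ee=\equiv_{\mathcal{R}}$; your longer third and fourth paragraphs (the essentially-DL discussion and the bookkeeping worry) are not wrong but are more than is needed.
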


\prf Let $\{D_c: c \in Q\}$ witness that the $\mer$ $\ee$ is $2$-ydlept, and $\mathcal{R}$ witness
that $\ee$ is yclept.  Then the proposition implies
that the family can be reparameterized as $\{D'_d: d \in Q'\}$ with $D',Q'$ in $\mathcal{R}$.
Then $D',Q'$ are invariant under $\ee$, so $\ee$ is the ydlept equivalence relation generated
by  $D', Q'$.
\eprf

Note that if we add in Corollary \ref{cor:ycleptand2ydlept} that the $\mer$ is definable, we can conclude that it is a definable ydlept, and hence is given by finitely many first order formulas.

As a step towards proving the proposition, we start with:

\begin{claim}
Let  $\mathcal{R}$ and $\mathcal{D}$ be as in the statement of Proposition \ref{prop:higherydleptsubyclept}. In any countable resplendent model $M$ of $T$, each individual $D_c$ is definable with parameters in $M$ from $\mathcal{R}$ alone. We recall that for CL
 this means that both the set and its complement are $\bigwedge$-definable using symbols from $\mathcal{R}$ with parameters.
\end{claim}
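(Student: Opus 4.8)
The plan is to work in a fixed countable resplendent model $M$ of $T$ and show that for each parameter $c$, the set $D_c$ and its complement are $\bigwedge$-definable over $\mathcal{R}$ with parameters drawn from $M$. Since $\mathcal{D}$ is $\ee$-invariant, the groupoid $\groupoid$ corresponding to $\ee$ permutes the sets in $\mathcal{D}$; by Proposition \ref{prop:maxred}(2), $\groupoid(M,M)$ is dense in $\Aut_{\maxred{\groupoid}}(M)$, and since $\mathcal{R} \subseteq \maxred{\groupoid}$, every $\maxred{\groupoid}$-automorphism (in particular every $\mathcal{R}$-automorphism) of $M$ maps each $D_c \in \mathcal{D}$ to some $D_{c'} \in \mathcal{D}$. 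So $\mathcal{D}$ is a family of subsets of $D(M)$ that is setwise invariant under $\Aut_{\mathcal{R}}(M)$.

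First I would fix $c$ and let $b$ be a tuple in $M$ containing the canonical parameter for $D_c$ in the $^{eq}$ sense, and set $p(y) = \tp_{\mathcal{R}}(b)$, the $\mathcal{R}$-type of $b$ over $\emptyset$. The key point is that whether $a \in D_c$ is controlled by $\tp_{\mathcal{R}}(a/b)$: if $a, a'$ realize the same $\mathcal{R}$-type over $b$ in $M$, then (using resplendence/homogeneity to find an $\mathcal{R}$-automorphism of $M$ fixing $b$ and sending $a$ to $a'$, and the fact that such an automorphism fixes $D_c$ because it fixes its canonical parameter — here I need that an $\mathcal{R}$-automorphism fixing the canonical parameter of $D_c$ fixes $D_c$ setwise, which follows because the automorphism permutes $\mathcal D$ and any member of $\mathcal D$ is determined by a canonical parameter that the automorphism must fix) we get $a \in D_c \iff a' \in D_c$. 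Hence $D_c$ is a union of complete $\mathcal{R}$-types over $b$, i.e. $D_c$ is $\bigwedge$-definable over $\mathcal{R}$ with parameter $b$; the same argument applied to the complement (which is also $\groupoid$-invariant as a family, being $\{D \setminus D_c : c \in Q\}$, or simply because automorphisms permute complements of members of $\mathcal D$) shows $D(M) \setminus D_c$ is $\bigwedge$-definable over $\mathcal{R}$ with parameter $b$. Thus $D_c$ is definable in the CL sense over $\mathcal{R}$ with parameters from $M$.

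The main obstacle I expect is the bookkeeping around canonical parameters and imaginaries: I need to be careful that "the canonical parameter of $D_c$" makes sense and is fixed by exactly the right automorphisms, and that it lies in (a finite tuple of sorts of) a model over which we can apply resplendence — this is why the statement restricts to countable resplendent $M$ and why it is phrased with parameters. One must also check that the density statement of Proposition \ref{prop:maxred}(2) transfers from $\maxred{\groupoid}$-automorphisms to the coarser reduct $\mathcal{R}$: since $\mathcal{R} \subseteq \maxred{\groupoid}$, every $\maxred{\groupoid}$-automorphism is an $\mathcal{R}$-automorphism, so the orbits only get larger, and $\groupoid$-invariance of $\mathcal D$ already gives invariance under the (smaller) group of $\maxred{\groupoid}$-automorphisms; to conclude invariance under all of $\Aut_{\mathcal R}(M)$ I should instead argue directly that $\mathcal D$ being $\groupoid$-invariant plus density gives that each $D_c$ is fixed setwise by any element of $\Aut_{\maxred{\groupoid}}(M)$, and it is precisely this group (not the possibly larger $\Aut_{\mathcal R}(M)$) that I use in the homogeneity argument of the previous paragraph — working with $\maxred{\groupoid}$-types rather than $\mathcal R$-types throughout, and only at the very end noting that $\maxred{\groupoid}$ and $\mathcal R$ generate the same $\ee$, so the reparametrization lands in $\mathcal R$. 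The remaining ingredient, to be used in the proof of the Proposition itself afterwards, is a compactness/definability-of-$\ee$ argument to pass from "definable with parameters in each model" to a uniform reparametrization of the whole family inside $\mathcal{R}$.
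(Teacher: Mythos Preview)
There is a genuine gap in your argument, precisely at the step you flagged as ``bookkeeping around canonical parameters''. You choose $b$ to be a real tuple coding the $T$-canonical parameter of $D_c$ and then claim that an $\mathcal{R}$-automorphism $\sigma$ fixing $b$ must fix $D_c$ setwise. But the relationship between $b$ and $D_c$ is given by a $T$-formula, and $\sigma$ is \emph{not} a $T$-automorphism; it need not respect $T$-definable closure or act coherently on $T^{eq}$-imaginaries. All you know is that $\sigma(D_c) = D_{c'}$ for some $c' \in Q$, and there is no reason that $\sigma(b)=b$ forces $D_{c'}=D_c$. Your justification (``any member of $\mathcal D$ is determined by a canonical parameter that the automorphism must fix'') is circular: the only sensible action of $\sigma$ on the canonical parameter of $D_c$ is to send it to the canonical parameter of $\sigma(D_c)$, and that tells you nothing about its relation to $b$.

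The paper closes this gap with a different idea. Since $\ee$ is yclept via $\mathcal R$, one has $\groupoid(M,M)=\Aut(M|\mathcal R)$ exactly (no density argument needed), and the $\Aut(M|\mathcal R)$-orbit of $D_c$ is contained in $\mathcal D(M)$, which is \emph{countable} because $M$ is countable. Now invoke Makkai's parameterized Beth theorem (equivalently: a closed subgroup of a Polish group of countable index is open, so the setwise stabilizer of $D_c$ contains the pointwise stabilizer of some finite tuple $d$). This produces the parameter $d$ you were trying to get from a canonical parameter. From there your second paragraph is essentially correct: $D_c$ and its complement are unions of complete $\mathcal R$-types over $d$, hence both $\bigwedge$-definable; the paper packages this into a single $\{0,1\}$-valued CL formula via compactness and Urysohn, but that is equivalent to what you wrote.
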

\prf 
Let $\groupoid$ be the groupoid correspoing to $\ee$.
Any $Aut(M|\mathcal{R})$-conjugate of $D_c$ lies in $\mathcal{D}(M)$.   This is because $\mathcal{D}$ is   $\groupoid$-invariant, and 
$\groupoid= Aut(M|\mathcal{R}$ since $\groupoid$ is generated by $\mathcal{R}$.  Thus there are only countably many  $Aut(M|\mathcal{R})$-conjugates of $D_c$.   We can apply  
Makkai's parameterized  version of Beth's theorem \cite{makkaibeth}, which can be seen to apply also to the CL case.  It yields that there exists a finite $d$ from $M$ such that $Aut(M|\mathcal{R}, d)$ fixes $D_c$.  
So there are no $a \in D_c, a' \notin D_c$ with the same type  over $d$. That is, the following $L(c,d)$-partial type is inconsistent:
 
\[\{ \phi(x,d)=\phi(x',d ) \wedge x \in D_c \wedge x' \notin D_c  : \phi \in \mathcal{R} \} \]

So for some finite number of $\phi_i$, which we can put together into a single
$\phi$ valued in $[0,1]^n$,  the $\phi^M$-images of $D_c$ and of $D \setminus D_c$ 
are disjoint closed subsets, whose union of course covers $\phi^M(D)$.  Thus the image of $D$ in $[0,1]^n$ is disconnected,
and there exists a continuous map $f: [0,1]^n \to \{0,1\}$
such that $f(\phi(x))=1$ if $x \in D_c$ and $f(\phi(x))=0$ if $x \notin D_c$.   \eprf

We now continue the proof of the proposition. 
\prf
Let us say that $\phi(x,d)$ is a \emph{gap formula} if it takes values in $[0,1] \setminus [1/5,4/5]$.  
Consider the following set of formulas:

\[ 
(\forall d) (\exists x)( 1/5 \leq \phi(x,d) \leq 4/5) \vee (\exists x)(x \notin D_u \iff   \phi(x,d) \leq 1/5 )
\]
 Here the formula includes $u$ as a free variable, and $D_u$ refers to $\mathcal D$ at variable $u$.  $\phi$ ranges over $\mathcal{R}$.
For each $\phi$ the formula  asserts that $\phi$ is either not a gap formula or  does not define $D_c$.
The conjunction  is inconsistent.
 So   there are finitely many formulas $\phi_i \in \mathcal{R}$, $i=1,\ldots,\ell$, valued in $[0,1]$, such that for each $c$,
and for some $d$, $\phi_i(x,d)$ is a gap formula and defines $D_c$.  

Let $\psi_i (i=1,\ldots,\ell)$ be an essentially DL formula --  corresponding to a finite set of pairs of $L$ formulas
and real values --
such that $|\phi_i - \psi_i| < 1/5 $.   Then for each $c$,
for some $i \leq l$, for some $d$:
\[
\psi_i(x,d)  < 2/5  \iff x \in D_c
\]

For each $i \leq l$, consider $\theta_i$ defined as   
\begin{align*}
\{ d ~| ~ \exists Y \in \mathcal{D} ~ (\psi_i(x,d)<2/5 \iff x \in Y) \}
\end{align*}
This is $L$-definable and $\ee$-invariant, so it is in $\rR$.
 
  Thus the equivalence relation given by our original reduct $\mathcal{D}$ is the same
  as the equivalence relation preserving the family of sets
  $\{x ~ \mid ~ \psi_i(x,d) \}$ where $d$ ranges over all values such that $\theta_i(d)$.

   \eprf

\begin{cor} \label{cor:ycleptcaphigherydleptimpliesydlept}  A $\mer$ $\ee$ that is yclept and $k$-ydlept is ydlept.  \end{cor}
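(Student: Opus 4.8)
The plan is to induct on $k$, with nothing to do when $k=1$ since ``$1$-ydlept'' means ``ydlept'' by definition. So fix $k\ge 2$, assume the statement for $k-1$, and let $\ee$ be a $\mer$ on $T$ that is both yclept and $k$-ydlept; for the sake of invoking \propref{prop:higherydleptsubyclept} I will assume $L$ countable, the general case being no different. Unwinding \defref{def:nydlept}, $\ee$ is determined by a definable family $\mathcal F$ over coupled sorts $S_0$ together with a well-behaved $(k-1)$-ydlept $\mer$ $\ee'$ on the Shelahization $T_{\mathcal F}$, with coupled sorts $S_1\supseteq S_0\cup S_F$; here $\ee'$ preserves the incidence relation $C_F$, the map $M\mapsto M_{\mathcal F}$ is a bijection between $\modelsof(T)$ and $\modelsof(T_{\mathcal F})$, and for $M,M'\models T$ we have $M\ee M'$ iff the $2$-sets of $\mathcal F$ agree in $M,M'$ and $M_{\mathcal F}\ee' M'_{\mathcal F}$. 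Fix also a CL reduct $\mathcal R$ of $T$ with $\ee=\equiv_{\mathcal R}$.

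First I would reparameterize $\mathcal F$. Since $M\ee M'$ forces the $2$-set of $\mathcal F$ to be the same in $M$ and $M'$, the family $\mathcal F$ is invariant for the groupoid of $\ee$, so \propref{prop:higherydleptsubyclept} lets us rewrite $\mathcal F=\{D_d':d\in Q'\}$ with $D'$ and $Q'$ both definable in $\mathcal R$. A point of the imaginary sort $S_F$ is then just a class of $Q'$ modulo the $\mathcal R$-definable equivalence relation $D_d'=D_{d'}'$, so $S_F$ and $C_F$ are definable in $\mathcal R^{\mathrm{eq}}$; in particular every $T_{\mathcal F}$-definable subset of a product of $L$-sorts is already $T$-definable, since adding imaginaries introduces no new $0$-definable sets on the original sorts (as used in \lemref{lem:2ydleptand3set}).

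Next I would show that $\ee'$ is yclept on $T_{\mathcal F}$. Because $\ee'$ is well-behaved it preserves $C_F$ as a relation, so whenever $M_{\mathcal F}\ee' M'_{\mathcal F}$ the two models already agree on $C_F$, hence \emph{a fortiori} on the $2$-set of $\mathcal F$; thus the first clause in the definition of $\ee$ is redundant and $M\ee M'$ iff $M_{\mathcal F}\ee' M'_{\mathcal F}$. Let $\mathcal R_F$ be the CL reduct of $T_{\mathcal F}$ obtained by adjoining $S_F$ and $C_F$ to $\mathcal R$. Since $S_F,C_F$ are definable in $\mathcal R^{\mathrm{eq}}$, two models $M_{\mathcal F},M'_{\mathcal F}$ agree on $\mathcal R_F$ iff $M$ and $M'$ agree on $\mathcal R$, i.e.\ iff $M\ee M'$, i.e.\ iff $M_{\mathcal F}\ee' M'_{\mathcal F}$. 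Hence $\ee'=\equiv_{\mathcal R_F}$ is yclept on $T_{\mathcal F}$, and it is still $(k-1)$-ydlept there, so by the induction hypothesis $\ee'=\equiv_{\mathcal S}$ for some DL reduct $\mathcal S$ of $T_{\mathcal F}$, which we may harmlessly enlarge so that $C_F\in\mathcal S$ (harmless because $\ee'$ already preserves $C_F$).

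Finally I would transport $\mathcal S$ back to $T$: substituting, in each relation of $\mathcal S$, the $\mathcal R$-class of a $Q'$-variable for each $S_F$-variable turns every relation of $\mathcal S$ into a $T_{\mathcal F}$-definable, hence $T$-definable, subset of a product of $L$-sorts and copies of $Q'$; let $\mathcal S^{\flat}$ be the resulting DL reduct of $T$. The translate of $C_F$ recovers $D'$ and $Q'$, so $\mathcal S^{\flat}$ determines the family $\mathcal F$ and the projection $Q'\to S_F$, whence two models of $T$ agree on $\mathcal S^{\flat}$ iff their canonical $T_{\mathcal F}$-expansions agree on $\mathcal S$, i.e.\ iff they are $\ee'$-equivalent, i.e.\ iff they are $\ee$-equivalent. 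Thus $\ee=\equiv_{\mathcal S^{\flat}}$ is ydlept, closing the induction. I expect the main obstacle to be the middle step --- showing that $\ee'$ genuinely is \emph{yclept} on $T_{\mathcal F}$ --- which rests on the two delicate points that well-behavedness makes the ``$2$-set of $\mathcal F$'' clause vacuous (so $\ee'$ on $T_{\mathcal F}$ is literally the same $\mer$ as $\ee$ on $T$), and that, after the reparameterization of \propref{prop:higherydleptsubyclept}, the fresh imaginary sort $S_F$ together with $C_F$ lives inside the CL reduct $\mathcal R$, so that passing to the Shelahization does not enlarge the equivalence relation.
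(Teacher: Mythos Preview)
Your proof is correct and follows essentially the same inductive skeleton as the paper's (very terse) argument: pass to the Shelahization $T_{\mathcal F}$, verify that the $(k-1)$-ydlept $\ee'$ there is still yclept so the induction hypothesis applies, and descend. The paper's sketch differs only in the endgame: rather than transporting the DL reduct $\mathcal S$ back to $T$ by hand, it observes that once $\ee'$ is ydlept on $T_{\mathcal F}$ the original $\ee$ is $2$-ydlept on $T$, and then invokes \corref{cor:ycleptand2ydlept} (which itself uses \propref{prop:higherydleptsubyclept}) to finish. Your direct transport and the paper's appeal to \corref{cor:ycleptand2ydlept} are doing the same work; you simply front-load the reparameterization of $\mathcal F$ inside $\mathcal R$, while the paper defers it to the base-case corollary.

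One small simplification: to see that $\ee'$ is yclept on $T_{\mathcal F}$ you do not actually need the reparameterization --- taking $\mathcal R_F=\mathcal R$ (the original CL reduct, viewed on the $L$-sorts inside $T_{\mathcal F}$) already works, since well-behavedness of $\ee'$ forces agreement on $C_F$, hence on the $2$-set of $\mathcal F$, so $M_{\mathcal F}\ee' M'_{\mathcal F}$ iff $M\ee M'$ iff $M\equiv_{\mathcal R} M'$. The reparameterization is only genuinely needed for your final transport step (or, equivalently, inside the proof of \corref{cor:ycleptand2ydlept} in the paper's route).
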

\prf  Using Corollary \ref{cor:ycleptand2ydlept}
and induction, along the same lines as the proof of \propref{prop:stable-n} from  \lemref{lem:invariantdef}.  \eprf

Proposition \ref{prop:higherydleptsubyclept} dealt with a higher ydlept preserved by
a higher ydlept.
We can also get a result on yclepts preserved by a higher ydlept, but this time only for maximal ones:

\begin{prop} \label{prop:maxycleptofhigher} The maximal yclept of a $k$-ydlept
must be ydlept.
\end{prop}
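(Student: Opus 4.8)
The plan is to reduce to Proposition \ref{prop:higherydleptsubyclept} by an induction on the number $k$ of iterations in the definition of a $k$-ydlept, the key point being that each iteration of the $^{eq}$-style construction used in Definition \ref{def:nydlept} is compatible with taking maximal CL reducts. Fix a $k$-ydlept $\mer$ $\ee$, and let $\ee^{cl}$ be its maximal yclept coarsening, i.e.\ the $\mer$ given by the reduct $\maxred{\groupoid}$ where $\groupoid$ is the groupoid of $\ee$. We want to show $\ee^{cl}$ is ydlept. For $k=1$ this is immediate: a $1$-ydlept is ydlept by definition, and the maximal yclept of a ydlept is ydlept by Remark \ref{rem:CLDLy} (its maximal CL reduct is essentially DL, since any CL-invariant function is in particular invariant under the larger groupoid $\groupoid \leq {Iso}_{\maxred{\groupoid}}$, and one runs the argument of \propref{prop:maxred}(3) — but here smallness is not available, so instead we invoke Corollary \ref{cor:ycleptand2ydlept}/\ref{cor:ycleptcaphigherydleptimpliesydlept}: $\ee^{cl}$ is yclept and coarsens a $1$-ydlept, hence is $1$-ydlept, hence ydlept).

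For the inductive step, suppose the proposition holds for $(k-1)$-ydlepts. By Lemma \ref{lem:special} (and its proof), $\ee$ is presented as follows: there is a $T$-definable family $\mathcal F$ giving an imaginary sort $S_F$ with predicate $C_F$, and a $(k-1)$-ydlept $\ee_1$ on $T_{\mathcal F}$ with coupled sorts containing $S_F$, preserving $C_F$, such that $M \ee M'$ iff the $2$-set of $\mathcal F$ agrees in $M,M'$ and the canonical $T_{\mathcal F}$-expansions are $\ee_1$-equivalent. Now let $\mathcal R = \maxred{\groupoid}$ be the maximal CL reduct of $\ee$. The crucial observation is that $\mathcal R$ is $\ee$-invariant, hence its groupoid contains $\groupoid$, hence the family $\mathcal F$ — being $\ee$-invariant — is invariant for the groupoid of $\mathcal R$; so Proposition \ref{prop:higherydleptsubyclept} applies and lets us reparametrize $\mathcal F = \{D'_d : d \in Q'\}$ with $D', Q'$ in $\mathcal R$, in particular the imaginary sort $S_F$ and the predicate $C_F$ are interpretable inside (the expansion generated by) $\mathcal R$. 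Thus the construction $T \leadsto T_{\mathcal F}$ can be performed on the reduct $\mathcal R$ itself: $\maxred{\groupoid}$ expands canonically to a reduct $\mathcal R_{\mathcal F}$ of $T_{\mathcal F}$, and it is the maximal CL reduct of $\ee_1$ — any CL-definable relation of $T_{\mathcal F}$ invariant under $\ee_1$ pushes down, via the uniqueness of the $T_{\mathcal F}$-expansion, to a CL-definable relation invariant under $\ee$, hence lies in $\mathcal R$. Since $\ee_1$ is a $(k-1)$-ydlept, the induction hypothesis says its maximal yclept is ydlept, i.e.\ $\mathcal R_{\mathcal F}$ is essentially DL (a DL reduct of $T_{\mathcal F}$). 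Then $\mathcal R$, being $\mathcal R_{\mathcal F}$ restricted to the $L$-sorts together with the (now $\mathcal R$-definable) data of $\mathcal F$ and $C_F$, is itself essentially DL by the remark that adding imaginaries produces no new $0$-definable sets on the original sorts (used already in Lemma \ref{lem:2ydleptand3set}). Hence $\ee^{cl} = \equiv_{\mathcal R}$ is ydlept.

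The main obstacle I expect is the bookkeeping in the inductive step: making precise that "$\maxred{\groupoid}$ for $\ee$ restricted through the Shelahization equals $\maxred{\cdot}$ for $\ee_1$". This requires knowing that (i) the $T_{\mathcal F}$-expansion of any model is unique and canonical (Definition \ref{def:nydlept}, note following Definition \ref{eq}), so that $\ee_1$-invariant CL relations correspond bijectively to $\ee$-invariant CL relations once $S_F, C_F$ are available as CL-definable data, and (ii) that "available as CL-definable data" is exactly what Proposition \ref{prop:higherydleptsubyclept} delivers — this is why we need $\mathcal F$ to be $\ee$-invariant, which is automatic since $\mathcal F$ is part of the data defining $\ee$ and the groupoid of $\maxred{\groupoid}$ contains $\groupoid$. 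A secondary subtlety is that Proposition \ref{prop:higherydleptsubyclept} is stated for the $2$-ydlept layer ($\mathcal D$ a definable family of definable sets); one must check the same argument gives reparametrization inside $\mathcal R$ at each layer of the $k$-fold iteration, which is precisely the content of the inductive unwinding via Lemma \ref{lem:special}. Modulo this bookkeeping, everything reduces to Proposition \ref{prop:higherydleptsubyclept}, Corollary \ref{cor:ycleptcaphigherydleptimpliesydlept} and the $^{eq}$-transparency of maximal reducts.
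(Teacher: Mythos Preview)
The inductive step contains a genuine error in the direction of an implication.  You write: ``the family $\mathcal F$ --- being $\ee$-invariant --- is invariant for the groupoid of $\mathcal R$.''  But you have just (correctly) observed that the groupoid of $\mathcal R = \maxred{\groupoid}$ \emph{contains} $\groupoid$; it is the \emph{larger} groupoid.  Invariance under the smaller groupoid $\groupoid$ does not imply invariance under the larger one --- the implication goes the other way.  Hence you cannot apply \propref{prop:higherydleptsubyclept}, whose hypothesis requires $\mathcal F$ to be invariant under the yclept $\mer$ whose reduct you want to reparametrize into.  This is not a bookkeeping issue: \exref{ex:nipbutneedafamily} gives a $2$-ydlept whose defining family $\{R_p : p \in P\}$ is \emph{not} preserved by the automorphism group of its maximal CL reduct $(P,Q,<_Q)$, so your step fails there.

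The paper's argument avoids this obstacle by never attempting to reparametrize $\mathcal F$ inside $\mathcal R$.  Instead it works directly with a CL generator $\rho$ of $\mathcal R$: since $\rho$ is $\ee$-invariant (by definition of the maximal CL reduct) and $\ee$ is exactly the relation of agreeing on the DL Shelahization reduct $R'$, a Beth-type argument shows $\rho$ is CL-definable over $R'$.  One then approximates $\rho$ by essentially DL formulas built over $R'$; each DL piece $\rho_{i,j}$, being $R'$-definable on the $L$-sorts, is itself $\ee$-invariant and therefore already lies in $\mathcal R$.  Thus $\mathcal R$ is generated by DL formulas, i.e.\ the maximal yclept is ydlept.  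Note the direction of the Beth step is opposite to yours: rather than pushing $\mathcal F$ into $\mathcal R$, one pulls $\rho$ into the (richer) $R'$.

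A minor remark on the base case: if $\ee$ is ydlept then $\ee$ is already yclept, so by \corref{cor:closedyclept} and \propref{prop:maxred}(2) its groupoid is closed and equals the groupoid of $\maxred{\groupoid}$; hence the maximal yclept is $\ee$ itself.  No appeal to \corref{cor:ycleptcaphigherydleptimpliesydlept} is needed.
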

\begin{proof}
We prove the case $k=2$, leaving the general case to the reader.
Thus $\ee$ is specified by  DL formulas $\phi_i(\vec x, \vec o)$ in the Shelahization, where $\vec o$ is over the imaginary sort. Then $\ee$ represents preservation of the imaginary sort and the corresponding function, and also the  $\phi_i$. Let $R'$ denote this reduct.

Thee maximal CL reduct consissts of CL formulas $\rho(\vec x)$ where $\vec x$ are variables in the language $L$ of $T$. We know $\rho$ must be preserved by $\ee$.
We claim that each such $\rho$ must be definable in CL over $R'$. If not
there are two models $M_1$ and $M_2$ with the same reduct to these formulas and sorts,
disagreeing on $\rho$. But this contradicts the fact that $\rho$ is invariant under $\ee$.

Thus $\rho$ is a limit of essentially DL formulas $\rho_i$ definable over $R'$. Each of these formulas is specified by a DL-definable partition $\rho_{i,j}(\vec x)$.
Since they are definable over $R'$, each $\rho_{i,j}(\vec x)$ is invariant under $\ee$.
But then each $\rho_{i,j}(\vec x)$ is in the maximal CL reduct.
We see that $\ee$ is in fact equivalent to preserving $\rho_{i,j}$, hence is  ydlept.
\end{proof}

\begin{rem} \label{rem:coupledycleptand}
Note that we state and prove these results for the ``completely-coupled'' case. But the same arguments hold if we talk about a ``yclept $\mer$ over coupled sorts $S_0$''.
 Recall that a reduct defined using formulas having sorts $S_0$ gives a $\mer$ over $S_0$, or over any set of sorts $S'_0$ containing $S_0$: we are free to couple more.
 Thus modifying Corollary \ref{cor:ycleptcaphigherydleptimpliesydlept} we can conclude that if a $\mer$ is both yclept over $S'_0$ and $n$-ydlept over the same $S'_0$, then it
 is ydlept over $S'_0$. We cannot conclude that if it is yclept over $S'_0$ and $n$-ydlept over a different $S''_0$ then it is ydlept over $S''_0$.
 \end{rem}
 
\section{Intrinsically stably embedded theories and higher ydlepts} \label{sec:stable}

In this section we characterize theories for which every higher ydlept $\mer$ is ydlept.   It turns out to be an interesting extension of the class of stable theories.   We conjecture that every o-minimal structure has this property, and prove it for DLO.

Recall our convention that ``definable'' means $0$-definable, i.e. by a formula of $L$.

\begin{defn}\label{def:stablyembedded} 
A set of sorts $\sorts$ in a multi-sorted theory $T'$ is \emph{stably embedded in $T'$} if in every model $M$ of $T'$,
any $M$-definable relation on the sorts $\sorts$ is definable with parameters 
from $\sorts(M)$.
\end{defn}

Equivalently, by a routine compactness argument every $T'$-definable family of definable relations on $\sorts$ is equal, as a family, to a $T'$-definable family with parameter variables in $\sorts$:

\medskip

For every $n$, for every $\phi(x,y)$ with $x$ an $n$-tuple of variables with sorts in $\sorts$,  $\phi$ in $T'$ -- that is,
a $T'$  definable family of subsets
of $n$-tuples of elements of $\sorts$ -- there is a formula $\delta(x, x')$, where $x$  are the same variables as in $\phi$ and $x'$ is a tuple of variables with sorts in $\sorts$ $\delta$ a formula in $T'$, such that, in all models of $T'$, there is $x'_0$  with $\forall y ~ \exists x'_0 ~ \forall x ~ \phi(x,y) \leftrightarrow \delta(x,x'_0)$.

For background on stable embeddedness, equivalent definitions, and extensions to partial types, 
see \cite[appendix B] {diffgal}, \cite[Appendix]{chatz}, \cite{pillayembedded}.

 \begin{defn}\label{def:istem} A theory $T$ with sorts $\sorts$ is {\em intrinsically stably embedded} (istem) if $\sorts$ is stably embedded within any reduct of $T^{eq}$.  \end{defn}

In particular, 
 a one-sorted theory $T$ is  istem  if its only sort is stably embedded within any reduct of $T^{eq}$.
It is easy to see that a stable theory is istem: if $T$ is stable, so is $T^{eq}$,
and hence any reduct of $T^{eq}$.

 \begin{lem} \label{lem:stable-2} Let $T$ be an istem theory with one sort $X$.
 If the model equivalence relation $\ee$ is $2$-ydlept then $\ee$ is ydlept.   
 \end{lem}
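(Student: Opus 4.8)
\textbf{Proof proposal for Lemma \ref{lem:stable-2}.}
The plan is to combine the structural description of $2$-ydlepts from \lemref{lem:2ydleptand3set} with the istem hypothesis to show that the imaginary sort introduced by the defining definable family can, in fact, be eliminated. Recall that a $2$-ydlept $\ee$ is specified by a definable family $\mathcal F = \{D_c : c \in Q\}$ over the single sort $X$, together with (since $n=1$ in the inductive clause) an honest ydlept $\bimer$ on the Shelahization $T_{\mathcal F}$ whose coupled sorts are $X$ together with the canonical-parameter sort $S_F$, and which preserves $C_F$. By \lemref{lem:2ydleptand3set} this is the same as a $2$-set: there is a $2$-partitioned $L$-formula $\phi(\vec x_1; \vec p_1)$, i.e.\ a genuine definable family $\mathcal F' = \{D'_d : d \in Q'\}$ over $X$, such that $M \ee N$ iff $M$ and $N$ agree on this $2$-set (together with whatever $L$-definable set $D_{2,1}$ encoded the old $\phi_2$-part, which by the argument there is already $L$-definable). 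So without loss of generality $\ee$ is "preservation of a definable family $\{D'_d\}_{d\in Q'}$ of subsets of $X^k$, viewed as an unordered family".

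First I would make precise the groupoid attached to this $2$-set: $\groupoid(M,N)$ consists of bijections $g: M \to N$ such that for every $d \in Q'(M)$ there is $d' \in Q'(N)$ with $g(D'^{M}_d) = D'^{N}_{d'}$, and symmetrically. The goal is to produce a \emph{finite} list of $L$-formulas whose preservation is equivalent to preservation of this family; by \lemref{prop:finitelymany} (or directly) that makes $\ee$ ydlept. The key observation is that preserving the unordered family $\mathcal F'$ is exactly preserving, as a definable relation on $X^k \times X^k$ (or on $X^k$ alone, via a suitable coding), the equivalence relation "$\vec a$ and $\vec b$ lie in exactly the same members of $\mathcal F'$" — but that alone does not suffice, because an automorphism of the $L$-structure could permute the members of $\mathcal F'$. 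This is where istem enters.

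Here is the mechanism I expect to use. Form $T^{eq}$ and let $\sigma: X^k \to S$ be the canonical map sending $\vec a$ to the imaginary $e(\vec a)$ coding the set $\{d : \vec a \in D'_d\}$ of parameters — equivalently, form the quotient sort $S_F$ of the Shelahization restricted to those $s$ that actually occur. Now consider the reduct $L^-$ of $T^{eq}$ whose basic relations are: the quotient map $\sigma$ onto $S$, and the incidence structure between $S$ and $S_F$ (i.e.\ "$\vec a \in D'_d$" pushed through $\sigma$). Preservation of $\mathcal F'$ by a bijection of $X$ is precisely the statement that it extends to an isomorphism of this $L^-$-structure. Since $T$ is istem, $X$ is stably embedded in every reduct of $T^{eq}$, in particular in $L^-$; by stable embeddedness, the $L^-$-induced structure on $X^k$ is $0$-definable from the induced structure on $X$ itself, which is a \emph{definable family over $X$} — and by a further routine compactness/boundedness argument of the sort used in \propref{prop:higherydleptsubyclept}, it is a single definable relation, hence the reduct is interdefinable with an honest $L$-reduct. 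Therefore $\ee = \equiv_R$ for an $L$-reduct $R$, i.e.\ $\ee$ is ydlept.

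\textbf{Main obstacle.} The delicate point is the passage from "stably embedded, so the induced structure on $X^k$ comes from parameters in $X$" to "$\ee$ is generated by finitely many $L$-formulas without parameters." Stable embeddedness gives parameters in $X$, not parameter-freeness; one must absorb those parameters back into the formula using that $\ee$ is a $\bimer$ (definable, by finitely many sentences) and a compactness argument in the spirit of \propref{prop:higherydleptsubyclept} and \propref{ydlept-conts}. I would also need to be careful that the reduct $L^-$ I build genuinely sits inside $T^{eq}$ and that "preserving $\mathcal F'$" really is "being an $L^-$-isomorphism" — in particular that no ordering or naming of the members of the family has sneaked in, which is exactly the content of working with the unordered family and the quotient sort $S$ rather than with $Q'$ directly. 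Getting that bookkeeping right, rather than any deep model theory, is where the work lies.
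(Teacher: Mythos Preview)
Your approach is essentially the paper's, but you have misidentified where the work lies. The paper isolates the key step as a separate lemma (\lemref{lem:invariantdef}): in an istem theory, any definable family $\mathcal{F}=\{D_b : b \in Y\}$ can be reparameterized as $\{R_c : c \in Z\}$ with $Z \subset X^n$, where the relation $R$ itself is invariant under every permutation fixing $\mathcal{F}$. The proof is exactly your setup: form the reduct $R'$ of $T^{eq}$ consisting only of the canonical-parameter sort $S'$ for $\mathcal{F}$ and the membership relation, and use istem to re-index the family by an $R'$-formula $\delta'(x,\vec y)$ with $\vec y$ ranging over $X$.

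The point you are circling around but not landing on is this: since $\delta'$ is definable in the reduct $R'$, and the $R'$-structure is determined by $\mathcal{F}$ alone, $\delta'$ is \emph{automatically} invariant under anything preserving $\mathcal{F}$. So $\delta'$ (together with its $R'$-definable parameter-domain) is already a $0$-definable $L$-relation witnessing ydleptness. There is no parameter-absorption problem and no compactness argument is needed: the reparameterizing formula produced by stable embeddedness lives in the minimal reduct encoding the family, and is therefore $0$-definable and $\ee$-invariant by construction. Your ``main obstacle'' dissolves once you note this; the phrase ``the $L^-$-induced structure on $X^k$ is $0$-definable from the induced structure on $X$ itself'' is the muddled version of the sharp statement that $\delta'$ is $R'$-definable.
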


For simplicity we prove the lemma for a definable $\ee$ that is $2$-ydlept; the proof will easily be seen to work
for general $2$-ydlept nodels equivalence relations.
Thus let the equivalence relation $\ee$ be given by   a $0$-definable family of definable subsets 
of $X$. 
So it has the form $\{D_b:  b \in Y \}$, where $D \subset X \times Y$ and
$D_b=\{a \in X: D(a,b)\}$, and $D$ is definable, hence $Y \subseteq X^n$ is definable.

The following lemma states that in an istem theory we can re-parameterize a definable family in a canonical way:

\begin{lem} \label{lem:invariantdef}
Let $T$ be an istem theory in one sort $X$.  Let 
   $D \subset X \times Y$ be definable in $T$ 
${\mathcal{F}}=\{D_b:  b \in Y \}$, $D_b=\{a \in X: D(a,b)\}$. 
  Then there exists a definable relation $R \subset X \times Z$ ($Z$ a definable subset of $X^n$) such that 
\begin{enumerate}
\item ${\mathcal{F}}=\{R_c:  c \in Z \}$; 
\item Any permutation leaving  $X$ and ${\mathcal{F}}$ invariant also leaves $R$ invariant.
\end{enumerate}
\end{lem}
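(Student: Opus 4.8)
The goal is to canonicalize the definable family $\mathcal F = \{D_b : b \in Y\}$ so that it is represented by a relation $R \subseteq X \times Z$ whose parameter set $Z \subseteq X^n$, and the relation $R$ itself, are preserved by every permutation of $X$ that preserves $X$ and the family $\mathcal F$. The natural first move is to pass to $T^{eq}$ and replace $Y$ by the canonical parameter sort for $\phi$: that is, let $e(b)$ be the code of $D_b$, living in an imaginary sort $Y_e$, and write $D'_{e(b)} = D_b$. Now the family is indexed injectively by $Y_e$, and any permutation of $X$ preserving $\mathcal F$ acts on $Y_e$. The difficulty is that $Y_e$ is an imaginary sort, not a subset of a power of $X$, so this alone does not give us $Z \subseteq X^n$; this is exactly where the istem hypothesis enters.

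\textbf{Using istem.} Consider the reduct $T^-$ of $T^{eq}$ whose sorts are $X$ and $Y_e$, and whose primitive relations are: the $L$-structure on $X$, and the relation $D' \subseteq X \times Y_e$. The automorphism group of a (monster) model of $T^-$ is precisely the group of permutations of $X$ that preserve $X$-structure and the family $\mathcal F$ (acting on $Y_e$ in the induced way) — this is the point of working with this particular reduct. By the istem hypothesis, the sort $X$ is stably embedded in $T^-$ as well. Now I want to say that $Y_e$ is internal to (or at least controlled by) $X$ in $T^-$: each element $c \in Y_e$ is the canonical parameter of the definable-in-$T^-$ set $D'_c \subseteq X$, and by stable embeddedness of $X$ in $T^-$ the family $\{D'_c : c \in Y_e\}$ is also a $T^-$-definable family with parameter variables ranging over $X$. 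Concretely, there is a $T^-$-formula $\delta(x, x')$ with $x$ of sort $X$, $x'$ a tuple of sort $X$, and $Z' \subseteq X^m$ definable in $T^-$, such that $\{D'_c : c \in Y_e\} = \{\,\{x : \delta(x,x'_0)\} : x'_0 \in Z'\,\}$. Since $\delta$ is definable in $T^-$ and every $T^-$-formula is in particular an $L$-formula on the reduct, $\delta$ is an $L$-formula on $X$ (relations of $T^-$ restricted to $X$ are $L$-definable), so this gives a definable-in-$T$ relation $R_0 \subseteq X \times Z'$ with $Z' \subseteq X^m$ and $\{R_{0,x'} : x' \in Z'\} = \mathcal F$.

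\textbf{Getting invariance.} The family $\mathcal F$ represented by $R_0$ is correct, but $R_0$ itself need not yet be invariant, because distinct parameters $x' \in Z'$ may code the same member of $\mathcal F$ with different ``multiplicity'' under the group action. To fix this, canonicalize one more time: for $x' \in Z'$ let $\pi(x')$ be the canonical code of $R_{0,x'}$, again living in $Y_e$, and let $Z$ be the image $\pi(Z') \subseteq Y_e$; then set $R = \{(x,c) \in X \times Z : x \in D'_c\}$. But now $Z$ is again imaginary — so instead, run the stable-embeddedness argument a second time with the additional structure of the fibers of $\pi$: more cleanly, take $Z \subseteq X^m$ to be the set of $x' \in Z'$ that are lexicographically least in their $\pi$-fiber with respect to some $L$-definable (in $T$) ordering obtained from a fixed interpretation — here one uses that istem theories, or at least that we may first expand $X$ harmlessly — actually the simplest fix is: the relation ``$R_{0,x'} = R_{0,x''}$'' is $L$-definable on $X$, hence $\emptyset$-definable, hence preserved by every automorphism of $T^-$; so we can choose a $T^-$-definable, hence $L$-definable, section $s: Y_e \to X^m$ picking one representative per class — wait, such a section need not be definable. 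The robust route is: let $R(x, x')$ hold iff $x' \in Z'$ and $x \in R_{0,x'}$ and $x'$ satisfies a $\emptyset$-definable condition selecting, within each equivalence class $\{x'' : R_{0,x''} = R_{0,x'}\}$, a nonempty invariant subclass — e.g. using that the equivalence class is itself definable and the theory has definable (or at least $\emptyset$-invariant) choice on finite or uniformly-definable sets is not available in general, so instead simply keep $R = R_0$ but observe that any permutation $\sigma$ preserving $X$ and $\mathcal F$ preserves the relation ``$x' \in Z'$'' (as $Z'$ is $\emptyset$-definable) and preserves the map $x' \mapsto$ code of $R_{0,x'}$ (as this map is $\emptyset$-definable into $Y_e$ and $\sigma$ acts on $Y_e$ compatibly) and hence preserves $R_0$ itself, since $(x,x') \in R_0$ iff $x$ lies in the set coded by $x' \mapsto$ code.

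\textbf{Main obstacle.} The step I expect to be the crux — and where the istem hypothesis is doing real work — is the passage from ``$\mathcal F$ is a $T^{eq}$-definable family indexed by an imaginary sort $Y_e$'' to ``$\mathcal F$ is represented by a $T$-definable relation with parameters from $X^n$ that is moreover invariant under the relevant permutation group.'' The two requirements (parameters in a real power of $X$, and invariance under all family-preserving permutations) pull against each other: canonicalizing for invariance naturally lands in imaginaries, while demotion to real sorts risks destroying invariance. The resolution is to apply stable embeddedness of $X$ \emph{inside the reduct $T^-$ whose automorphism group is exactly the group in question}, so that the re-parameterizing formula $\delta$ obtained is $\emptyset$-definable in $T^-$ and therefore automatically invariant under precisely the permutations we care about, while simultaneously being an $L$-formula on $X$ and hence giving real parameters. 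Once the correct reduct $T^-$ is identified, verifying that its automorphisms are the family-preserving permutations of $X$, and that stable embeddedness of $X$ in $T^-$ follows from istem, is routine; writing $R$ from $\delta$ and checking conditions (1) and (2) is then bookkeeping.
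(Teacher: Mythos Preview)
Your overall strategy is exactly the paper's: pass to the canonical-parameter sort $Y_e$, work in a reduct of $T^{eq}$ containing $X$, $Y_e$, and the membership relation $D'$, invoke istem to get stable embeddedness of $X$ there, and take the re-parameterizing formula $\delta$ as the desired $R$. The paper's proof is essentially a one-paragraph version of your plan.

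There is, however, a genuine gap in your execution. You put the full $L$-structure on $X$ into your reduct $T^-$. As you yourself note, the automorphism group of $T^-$ is then the group of \emph{$L$-automorphisms} preserving $\mathcal F$, not the group of \emph{all permutations} preserving $\mathcal F$. But condition (2) of the lemma---and its application to proving that the $2$-ydlept $\mer$ is ydlept---requires invariance under the latter, larger group: the groupoid of the $2$-ydlept $\mer$ contains bijections that need not respect $L$ at all. Your final invariance argument (``$Z'$ is $\emptyset$-definable, so $\sigma$ preserves it'') tacitly assumes $\sigma$ is a $T^-$-automorphism, which it is not once $L$ is part of $T^-$; a permutation preserving only $\mathcal F$ has no reason to respect an $L$-formula $\delta$ that may use $L$-predicates.

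The fix is simply to take $T^-$ with primitive relations consisting of $D'$ \emph{only} (no $L$-structure on $X$). This is still a reduct of $T^{eq}$, so istem applies; its automorphism group is now precisely the permutations of $X$ preserving $\mathcal F$; and the $\delta$ produced by stable embeddedness is $\emptyset$-definable in this sparser language, hence automatically invariant under the correct group. You then need a one-line argument that $\delta$, with all free variables in $X$, defines a $T^{eq}$-definable and hence $T$-definable relation on $X^{m+1}$---this replaces your ``$\delta$ is an $L$-formula'' step, which no longer holds trivially. With this correction the entire ``Getting invariance'' section, whose false starts are really symptoms of having chosen the wrong reduct, becomes unnecessary: invariance of $R_0$ is immediate.
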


\prf 
We consider the imaginary sort related to the definable family $D$ itself: this will have a sort $S'$  for the equivalence classes of the
equivalence relation ``coding the  same member of the definable family'', and the corresponding relationship $F$ relating equivalence classes in $S'$ to elements
$x$ of the definable family. Thus we can consider $D$ as a family, but now indexed by $S'$.  It is still a definable
family of subsets of $X$ in the reduct $R'$ consisting of $S'$ and $F$. Since  $X$ is intrinsically stably embedded, we can obtain a $\delta'(x, \vec y)$ definable in the reduct $R'$,
such that for every member $R_c$ of the family there is $\vec a_0$ such that $\delta'(x, \vec a_0)$ is $R_c$. $\delta'$ can be converted
to a formula in the language of $T$. But since it is definable in the reduct $R'$, it is invariant under any permutation that preserves the equivalence classes.

.

\eprf

\prf[Proof of \lemref{lem:stable-2}] By Lemma \ref{lem:special} we can assume $\ee$ is given by
a $2$-set: that is, a definable family on some   $X^n$.
Apply \lemref{lem:invariantdef} to get a definable family  $R$ the generates the same $2$-set, but
any permutation leaving  $X$ and ${\mathcal{F}}$ invariant also leaves $R$ invariant.
It is easy to to that $X$ and $R$ witness
that $\ee$ is ydlept. 
\eprf

We now extend Lemma \ref{lem:stable-2} from $2$-ydlept to $n$-ydlept for $n>2$.

 \begin{prop} \label{prop:stable-n} Let $T$ be an istem theory.
If the model equivalence relation $\ee$ is $n$-ydlept for some
$n$ then $\ee$ is ydlept.

In particular, this is true for a stable theory.
 \end{prop}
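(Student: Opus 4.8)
The plan is to prove \propref{prop:stable-n} by induction on $n$, using \lemref{lem:stable-2} as the base case $n=2$ (the case $n=1$ being trivial since a $1$-ydlept is ydlept by definition). So suppose $n > 2$ and that the statement holds for all smaller values. Let $\ee$ be an $n$-ydlept $\mer$ on $T$. By \lemref{lem:special} we may assume $\ee$ is induced by an $n$-set, given by a partitioned $L$-formula $R(\vec x_1; \vec x_2; \ldots; \vec x_n)$ with all variable sorts among the (single) sort $X$ of $T$. Unwinding \defref{def:nydlept}, this means: we first form the definable family $\mathcal{F} = R(\vec x_1, \ldots, \vec x_{n-1}; \vec x_n)$ indexed by $\vec x_n$, pass to the Shelahization $T_{\mathcal F}$ adding an imaginary sort $S_{\mathcal F}$ coding the members of $\mathcal F$ together with the membership relation $C_{\mathcal F}$, and then $\ee$ asks that two models agree on the $2$-set $\mathcal F$ \emph{and} that their canonical $T_{\mathcal F}$-expansions are $\ee'$-equivalent, where $\ee'$ is a well-behaved $(n-1)$-ydlept $\mer$ on $T_{\mathcal F}$ over coupled sorts containing $X$ and $S_{\mathcal F}$.

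The key observation is that $T_{\mathcal F}$ is a reduct of $T^{eq}$ — indeed $S_{\mathcal F}$ and $C_{\mathcal F}$ are just a piece of the imaginary sorts of $T$ — so $T_{\mathcal F}$ is itself istem: the sort $X$ is stably embedded in any reduct of $(T_{\mathcal F})^{eq}$, because any such reduct is a reduct of $T^{eq}$, and $T$ is istem. Therefore I would like to apply the induction hypothesis to the $(n-1)$-ydlept $\ee'$ over $T_{\mathcal F}$. There is a subtlety: $\ee'$ has coupled sorts $S_1 \supseteq \{X, S_{\mathcal F}\}$, not just the single sort $X$; and the induction hypothesis as stated is for theories in one sort. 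The clean way around this is to note, following \remref{rem:coupledycleptand}, that the arguments of \lemref{lem:invariantdef} and \lemref{lem:stable-2} go through with $X$ replaced by any product of sorts that is stably embedded — and in an istem theory every product of sorts of $T^{eq}$ (hence of $T_{\mathcal F}$) is stably embedded inside any further reduct. So I would first state and prove a mild strengthening of \lemref{lem:invariantdef}: in an istem theory $T'$, a $0$-definable family of definable subsets of any definable set $D$ (not just of $X^n$) can be re-parameterized by a definable relation $R \subset D \times Z$ with $Z$ contained in a product of the base sorts, such that every permutation preserving the base sorts and the family preserves $R$. The proof is identical to that of \lemref{lem:invariantdef}: pass to the canonical-parameter imaginary sort $S'$ of the family, use stable embeddedness of the base sorts in the reduct $R' = (S', F)$ to get a defining formula $\delta'$ with parameters in the base sorts, and observe $\delta'$ is automatically invariant under permutations fixing the base sorts pointwise-as-a-structure.

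With this strengthening in hand the induction runs smoothly: by the induction hypothesis applied to $T_{\mathcal F}$ (which is istem) and the $(n-1)$-ydlept $\ee'$ over the coupled sorts $S_1$, $\ee'$ is ydlept over $S_1$ — that is, $\ee'$ is the equivalence relation of preserving some $0$-definable relations of $T_{\mathcal F}$ on sorts in $S_1$. Pulling these back through the Shelahization (every $0$-definable set of $T_{\mathcal F}$ is, by the remark after \defref{eq}, a $0$-definable set of $T$ together with the imaginary sort $S_{\mathcal F}$ and $C_{\mathcal F}$), and adding the formula defining the $2$-set $\mathcal F$ together with (by the strengthened \lemref{lem:invariantdef}) a canonically re-parameterized version of $\mathcal F$ over sorts of $T$, we exhibit $\ee$ as the ydlept $\mer$ of preserving a finite list of $0$-definable relations of $T$. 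The final clause — that this applies to stable $T$ — is immediate since stable theories are istem, as already noted in the text. The main obstacle I anticipate is precisely the bookkeeping across sorts: making sure that ``istem'' is genuinely inherited by the Shelahization $T_{\mathcal F}$ and that the coupled sorts $S_1$ of the inductively-handled $\ee'$ are stably embedded, so that \lemref{lem:invariantdef} applies to them; everything else is routine unwinding of \defref{def:nydlept}.
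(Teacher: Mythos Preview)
Your proposal is correct and follows the same inductive scheme as the paper's (very terse) proof: unwind $(n+1)$-ydlept to obtain an $n$-ydlept $\ee_1$ on the Shelahization $T_{\mathcal F}$, apply the induction hypothesis there, and collapse the resulting $2$-ydlept on $T$ via \lemref{lem:stable-2}. Your explicit handling of the multi-sort bookkeeping---checking that the istem hypothesis transfers to $T_{\mathcal F}$ so that induction applies---fills in exactly what the paper's four-line argument elides. One small point: your blanket claim that every product of sorts of $T^{eq}$ is stably embedded in \emph{any} reduct is stronger than needed and not obviously true; what you actually use is that $\{X, S_{\mathcal F}\}$ is stably embedded in reducts containing $C_{\mathcal F}$ (which is guaranteed by well-behavedness of $\ee'$), and this does follow from the one-sorted istem hypothesis on $X$, since $C_{\mathcal F}$ lets you code $S_{\mathcal F}$-elements by $X$-tuples.
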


 \prf  
 Assume inductively that every $n$-ydlept on a stable theory is ydlept.
Let $\ee$ be $(n+1)$-ydlept.  By definition there exists  a definable family $F$ of definable sets,  such that $\ee$ arises from a well-behaved equivalence relation $\ee_1$ of $T_F$, where $\ee_1$ is $n$-ydlept.   By induction, $\ee_1$ is ydlept.   Hence $\ee$ is $2$-ydlept.  By \lemref{lem:stable-2}, $\ee$ is ydlept.

\eprf

We show that the istem assumption is sharp; thus $T$ is istem iff every higher ydlept is ydlept.  In fact:

\begin{prop} \label{prop:istemandydleptcollapse}  $T$ is istem iff  every $2$-ydlept $\mer$ for $T$ is ydlept.    \end{prop}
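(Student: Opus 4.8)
The plan is to prove the contrapositive of the new direction: if $T$ is \emph{not} istem, then there is a $2$-ydlept $\mer$ for $T$ that is not ydlept. (The forward direction, that istem implies every $2$-ydlept is ydlept, is exactly \lemref{lem:stable-2} together with \lemref{lem:special}.) So suppose $T$ is not istem. By \defref{def:istem} there is a reduct $L'$ of $T^{eq}$ in which the home sort $X$ — or some sorts $\sorts$ of $T^{eq}$, but we will see we can take it to be a single product of sorts of $T^{eq}$ — fails to be stably embedded. Unwinding \defref{def:stablyembedded} via the compactness reformulation following it, there is a $T'$-definable family $\{D_b : b \in Y\}$ of subsets of some $X^n$ (all sorts in $\sorts$) which is \emph{not} equal, as a family, to any $T'$-definable family with parameter variables ranging over $\sorts$ alone.

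The key idea is then to realize this failure of stable embeddedness as a $2$-ydlept that cannot be ydlept. First I would observe that, since a reduct of $T^{eq}$ is what we start with, this situation is governed by the $^{eq}$/Shelahization machinery of \defref{eq} and \defref{def:nydlept}: the reduct $L'$ picks out certain imaginary sorts and certain $0$-definable relations, and the family $\{D_b\}$ lives in the Shelahization of $T$ at some definable family. Concretely, one builds the $\mer$ $\ee$ on models of $T$ whose coupled sorts include the relevant imaginary sort $S_F$, which requires the preservation of $C_F$, the $n$-set coding $\{D_b\}$, and the atomic relations of the reduct $L'$; by \defref{def:nydlept} with $n=2$ this is a $2$-ydlept (more precisely: first identify the relevant definable family $\mathcal F$ and, inside $T_{\mathcal F}$, the ydlept given by the relations of $L'$ together with $C_F$ — this is well-behaved by construction — and then $\ee$ is the resulting $2$-ydlept). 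I would then argue: the automorphism groupoid of $\ee$, restricted to a resplendent model, is exactly the group of permutations preserving $X$, the relations of $L'$, and the family $\{D_b\}$; this is the group of $L'$-automorphisms that additionally stabilize the family setwise.

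The heart of the argument is that $\ee$ is not ydlept. Suppose it were: then by \lemref{prop:finitelymany} it is $\equiv_R$ for a finite set of $0$-definable $L$-formulas, i.e. $\ee$ is generated by finitely many $L$-definable sets $R$ that are $\ee$-invariant. Invariance means precisely that every permutation preserving $X$, $L'$, and $\{D_b\}$ preserves $R$; running the canonical-parameter construction in \lemref{lem:invariantdef} (the istem hypothesis there is used only to produce a re-parameterization, and here we are \emph{deducing} a re-parameterization from the assumed ydleptness) one would reparameterize $\{D_b\}$ using only $R$-parameters — which are $L'$-definable over the sorts $\sorts$ — contradicting the choice of $\{D_b\}$ as a witness to non-stable-embeddedness. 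I expect the main obstacle, and the place needing the most care, is making this last step precise: one has to verify that ``$\ee$ ydlept'' really does force the defining family to be $\ee$-invariant-parameterizable and hence $\sorts$-parameterizable inside the reduct, i.e. that the maximal DL reduct $\maxreddl{\ee}$ cannot secretly ``see'' more than $L'$ plus the $2$-set. The cleanest route is probably to phrase the whole thing back in the language of \prop~\ref{prop:higherydleptsubyclept} and its corollaries: if $\ee$ were ydlept then $\maxreddl{\ee}$ would be a DL reduct over which the family $\mathcal D = \{D_b\}$ is a definable family with invariant — hence $L'$-internal — parameters, which is exactly the stable-embeddedness statement we assumed to fail. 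A secondary technical point is reducing the possibly-multi-sorted failure of stable embeddedness in $T^{eq}$ to a single-sorted-looking witness so that the $n$-ydlept definitions apply verbatim; this is routine since a finite product of imaginary sorts is again (interpretable as) a single imaginary sort, but it should be stated.
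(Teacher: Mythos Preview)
Your contrapositive strategy is sound, but there are two issues.

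First, your construction of the witnessing $2$-ydlept is overcomplicated and not clearly well-formed. You try to make $\ee$ preserve both the family $\{D_b\}$ \emph{and} the atomic relations of $L'$; but ``preserving $L'$'' (a reduct of $T^{eq}$ with possibly many imaginary sorts) does not slot cleanly into the $2$-ydlept framework of \defref{def:nydlept}, which adjoins a single imaginary sort. The simpler choice --- and the one the paper makes --- is to let $\ee$ be just the $2$-set $\mer$: $M \ee M'$ iff $\{D_b^M : b\} = \{D_b^{M'} : b\}$. This is manifestly a $2$-ydlept on $\modelsof(T)$ and already suffices.

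Second, and more seriously, you correctly identify the crux --- showing that $\maxreddl{\ee}$ ``cannot secretly see more than $L'$'' --- but you do not resolve it, and your references to \lemref{lem:invariantdef} and \propref{prop:higherydleptsubyclept} do not by themselves bridge the gap. The missing ingredient is a Beth-type argument: any $\ee$-invariant $0$-definable $S$ is determined by the $2$-set, which (since the family $\{D_b\}$ is $L'$-definable) is determined by the $L'$-reduct; hence $S$ is implicitly, and therefore explicitly, $L'$-definable. Once you have $\maxreddl{\ee} \subseteq L'$, your suggested invocation of \propref{prop:higherydleptsubyclept} does finish: the family re-parameterizes over $\{S\} \subseteq L'$ with parameters from $X$, contradicting the assumed failure of stable embeddedness.

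The paper proceeds directly rather than by contrapositive, defines the same simple $\ee$, and also needs $S$ to land in the reduct $\rR$ (it asserts this). But instead of \propref{prop:higherydleptsubyclept}, the paper invokes the Chang--Makkai theorem: over any countable model $M_\rR$ of $T_\rR$, the family of all possible $D_b$'s (ranging over all $T$-expansions of $M_\rR$) is countable --- precisely because any two expansions share the same $S$, hence are $\ee$-equivalent, hence share the same $2$-set --- and so each $D_b$ is parametrically definable in $M_\rR$. Your route via \propref{prop:higherydleptsubyclept} is a legitimate and arguably more internal alternative, but you must supply the Beth step explicitly; without it the argument is incomplete.
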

\prf
Assume every 2-ydlept $\mer$ for $T$ is ydlept.    We will show that $X$ is istem.  The other direction has already been proven.

We may  assume the language of $T$ is finite:  if $T$ is not istem, this is witnessed by a finite sublanguage; while the hypothesis on $2$-ydlepts is by definition preserved under reducts.  
We have to show that $X$ is stably embedded within any reduct $\rR$ of $T^{eq}$.  We may take this reduct to include all sorts of $T^{eq}$, but not necessarily all relations among them.   Let $\rR$ be such a reduct, and $T_{\rR}$ the restriction of $T$ to $\rR$. So $T_{\rR}$ still has a unique sort $X$, but the reduct contains only some of the relations of $T$.

Let $Y$ and $R \subset Y \times X^n$ be $0$-definable in $T^{eq}$.  For $y \in Y$, let $R_y \subset X_n$ be defined by 
$x \in R_y \iff R(x,y)$.  Let $R_Y:= \{R_y: y \in Y\}$.   We have to show $\clubsuit$:

\medskip

Each $R_y$ is  definable in $\rR$ with parameters in $X$.

\medskip

Note that if $M \models T$ then we can canonically define $M^{eq}$ and hence $R_Y$: it is a $2$-subset of $P(X(M))$ that is defined in terms of the structure $M$ alone.

We have a $2$-ydlept  $\bimer$ $\ee$ on $Mod(T)$:   $M \ee M'$ iff $X(M)=X(M')$ and 
$R_Y^{M} = R_Y^{M'}$.
By assumption, $\ee$ is ydlept.  So there exist an $\rR$-definable relation $S$ on $X^{n}$ such that
$M \ee M'$ iff $X(M)=X(M')$ and $S^{M} = S^{M'}$.   
 Here $X(M)=X(M')$ simply says they have the same universe.
 Recall the Chang-Makkai theorem  \cite{changbeth,makkaibeth}, in the following ``pseudo-elementary'' form:    

\medskip

Let $T$ be a theory in a countable language $L$, and let $T'$ be an expansion to a bigger countable language, including a formula $R_0$, $T=T' \upharpoonright  L$.  If for any countable model $M$ of $T$, $\{R_0(M'):  M' \mbox{ expands } M, M' \models T' \}$ is countable, then each $R_0(M')$ is definable with parameters in $M$.   

\medskip

\def\fF{{\mathcal F}}
 
 This will be applied to  the family $\fF$  of subsets $Z$ of $X^n$ such that for
some expansion of $M_{\rR}$ to a model $M$ of $T$, $M^{eq} \models (\exists y \in Y) (R_y = Z)$.  That is, $Z \in R_Y ^M$.

\begin{claim}  Let $M_{\rR} $ be a countable model of $T_{\rR}$.   Then $\fF(M_{\rR})$ is countable. \end{claim}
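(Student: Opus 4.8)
The plan is to show that $\fF(M_{\rR})$ is countable by exhibiting an injection of $\fF(M_{\rR})$ into a countable set of definable objects, using the assumption that $\ee$ is ydlept (via the relation $S$). First I would fix a countable model $M_{\rR}$ of $T_{\rR}$ and recall that an element of $\fF(M_{\rR})$ is a subset $Z \subseteq X^n$ arising as $R_y^M$ for some expansion $M \models T$ of $M_{\rR}$ and some $y \in Y(M^{eq})$. The key point is that, although a priori there could be continuum-many such $Z$, the hypothesis that $\ee$ is ydlept forces the \emph{entire family} $R_Y^M$ — and hence every member $Z$ of it — to be controlled by the single $\rR$-definable relation $S^M$. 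Since the assignment $M \mapsto S^M$ factors through the $\ee$-class, and $\ee$ is ydlept via $S$, the set $R_Y^M$ depends only on $S^M$; and $S^M$ is an $\rR$-definable subset of $X(M_{\rR})^n$, determined by the fixed model $M_{\rR}$ up to choice of parameters.

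The main step is then a counting argument. For each expansion $M$ of $M_{\rR}$, the relation $S^M$ is a \emph{$0$-definable} relation of $T_{\rR}$, so $S^M$ is the \emph{same} subset of $X(M_{\rR})^n$ for every such $M$ — it is just $S$ interpreted in the fixed structure $M_{\rR}$. Wait: more carefully, $S$ is a \emph{formula} of $\rR$ with no parameters, so $S^M = S^{M_{\rR}}$ is a single fixed set, and therefore by ydleptness of $\ee$ the family $R_Y^M$ is the same for every expansion $M$; hence $\fF(M_{\rR})$ is the single family $R_Y^{M}$ for any one such $M$, which is a set of subsets indexed by $Y(M^{eq})$. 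That indexing set is countable because $M$ is countable, so $M^{eq}$ is countable, so $Y(M^{eq})$ is countable, whence $\fF(M_{\rR})$ is countable. The only subtlety is that different expansions $M, M'$ of $M_{\rR}$ need not be isomorphic, so a priori $R_Y^M$ and $R_Y^{M'}$ could differ; but since both have the same $S$-reduct (the fixed set $S^{M_{\rR}}$), they satisfy $M \ee M'$, and ydleptness of $\ee$ precisely means $R_Y^M = R_Y^{M'}$.

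I expect the main obstacle to be verifying that the definability of $\ee$ genuinely lets us pass from "$M$ and $M'$ have the same $\rR$-reduct $M_{\rR}$" to "$R_Y^M = R_Y^{M'}$": this requires that $S$, being $0$-definable in $\rR$, is interpreted identically in any two $\rR$-expansions of $M_{\rR}$ that agree on the $\rR$-structure, which is immediate since $M_{\rR}$ is a fixed structure and $S$ has no parameters. One should also be slightly careful that $R_Y^M$ is legitimately a $2$-subset of $P(X(M))$ defined from $M$ alone (as noted just before the claim), so that it makes sense to speak of it as an invariant of the $\ee$-class. Given these points, the countability of $\fF(M_{\rR})$ follows, and the Chang--Makkai theorem can then be applied to the family $\fF$ to obtain $\clubsuit$, completing the surrounding proof of Proposition~\ref{prop:istemandydleptcollapse}.
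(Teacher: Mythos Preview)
Your proposal is correct and takes essentially the same approach as the paper: fix any one expansion $M_1$ of $M_{\rR}$ to a model of $T$; since $S$ is $0$-definable in $\rR$, any two expansions $M, M_1$ of $M_{\rR}$ satisfy $S^{M}=S^{M_1}$, hence $M \ee M_1$, hence $R_Y^{M}=R_Y^{M_1}$; thus $\fF(M_{\rR}) \subseteq R_Y^{M_1}$, which is countable because $M_1$ (and so $M_1^{eq}$ and $Y(M_1^{eq})$) is countable. The only cosmetic difference is that you phrase it as $\fF(M_{\rR}) = R_Y^{M}$ for every expansion, whereas the paper fixes one $M_1$ and argues containment $\fF \subseteq R_Y(M_1)$; both yield the same conclusion.
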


\prf
To prove the claim, let $M_1$ be an expansion of $M_{\rR}$ to a model of $T$.  We will show that $\fF \subset R_Y(M_1)$; this of course
implies that $\fF$ is countable.  
Let $Z \in \fF$.   Then $Z \in R_Y(M)$ for some   expansion $M$ of $M_{\rR}$ to $T$.  Since
$S$ is $\rR$-definable, we have $S^M = S^{M'}$.  Thus $M \ee M'$.  So $R_Y^{M_1}=R_Y^{M}$.
Hence $Z \in R_Y^{M_1}$, as promised.
\eprf
Given the claim, we can apply the Chang-Makkai theorem, concluding that each element of $\fF$ is parameterically definable in $\rR$. Thus we have proven $\clubsuit$, which in turn proves Proposition \ref{prop:istemandydleptcollapse}.
\eprf

 We note a corollary of Proposition \ref{prop:stable-n}. A class of structures for a fixed language is said to be stable if the (incomplete) theory of the class is stable; equivalently for each $\phi(\vec x, \vec y)$ there is a maximum
$n$ such that in some model in the class there are $\vec a_1 \ldots \vec a_n$ $\vec b_1 \ldots \vec b_n$ with $\phi(\vec a_i, \vec b_j)$ iff $i <j$. For finite
graphs, the notion relates to several notions of sparsity \cite{adleradler}. 

\begin{cor} \label{cor:finstable} If $\ee$ is an $n$-ydlept over a stable theory $T$, then $\ee$ is ydlept.  In particular if $\mathcal C$ be a class of structures that is stable, and $\ee$ is $n$-ydlept, then $\ee$ is an ydlept equivalence relation when restricted to structures
in $\mathcal C$.
\end{cor}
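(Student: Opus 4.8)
The plan is to deduce this immediately from \propref{prop:stable-n} together with a transfer-to-completions argument. For the first assertion, suppose $\ee$ is $n$-ydlept over a stable $T$. A stable theory is istem (as noted just after \defref{def:istem}, since $T^{eq}$ is stable and hence so is every reduct of it), so \propref{prop:stable-n} applies directly and $\ee$ is ydlept. That already gives the first sentence with no further work.

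For the ``in particular'' clause, let $\mathcal{C}$ be a stable class of $L$-structures and let $\ee$ be an $n$-ydlept $\mer$; we want $\ee$ to be ydlept when restricted to $\mathcal{C}$. The subtlety is that $Th(\mathcal{C})$ may be incomplete, so \propref{prop:stable-n} as stated (phrased for ``a theory'') does not literally apply to the class. The natural route is: first observe that the defining data of an $n$-ydlept (a definable family together, recursively, with an $n-1$-ydlept on the Shelahization) is preserved under restriction, so $\ee$ restricted to any completion $T'$ of $Th(\mathcal{C})$ is still $n$-ydlept over $T'$. Each such $T'$ is stable (stability of the class means every completion is stable), hence istem, hence by \propref{prop:stable-n} the restriction $\ee \restriction \modelsof(T')$ is ydlept. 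Now invoke \propref{ydlept-conts}: since $\ee$ is ydlept on $\modelsof(T')$ for every completion $T'$ of $Th(\mathcal{C})$, $\ee$ is ydlept on $\modelsof(Th(\mathcal{C}))$, i.e. on $\mathcal{C}$.

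One bookkeeping point to check is that \propref{ydlept-conts} is stated for a $\bimer$, whereas the $n$-ydlept $\ee$ we start with is a priori only an $\inftybimer$. But the conclusion we have proved completion-by-completion is strong --- it says $\ee$ is \emph{ydlept} on each completion, and by \lemref{lem:special} an $n$-ydlept that is a $\bimer$ is induced by an $n$-set; in the stable case \lemref{lem:invariantdef} even re-parameterizes it canonically. In fact the cleanest formulation is to note that $n$-ydleptness is by \defref{def:nydlept} built from \emph{finitely} axiomatized data (each layer is a single definable family plus, at the base, an ydlept $\bimer$), so an $n$-ydlept is automatically a $\bimer$; thus $\ee$ is a $\bimer$ and \propref{ydlept-conts} applies verbatim. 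I expect the only real obstacle to be this matching of hypotheses between the ``theory'' formulation of \propref{prop:stable-n} and the ``class'' formulation in the corollary --- handled entirely by passing to completions and then gluing back with \propref{ydlept-conts} --- rather than anything requiring new ideas.
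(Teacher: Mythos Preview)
Your proposal is correct and follows essentially the same route as the paper: pass to completions (each stable, hence istem), apply \propref{prop:stable-n} there, and glue back using \propref{ydlept-conts}. Your extra bookkeeping check that an $n$-ydlept is automatically a $\bimer$ (so that \propref{ydlept-conts} applies) is a detail the paper leaves implicit; the digression through \lemref{lem:special} and \lemref{lem:invariantdef} is unnecessary, since the observation that \defref{def:nydlept} builds on finitely axiomatized data already suffices.
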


\begin{proof}
 Any completion $T^*$ of $T$ is stable in the usual sense. Thus by Proposition \ref{ydlept-conts}, $\ee$ is ydlept on  models of the completion, hence on models of $T$.
\end{proof}

\begin{conjecture} \label{conj:ominimalimpliesistem}  All o-minimal structures are istem.
\end{conjecture}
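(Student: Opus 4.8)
The plan is to verify the criterion of Proposition \ref{prop:istemandydleptcollapse}: it suffices to show that in an o-minimal theory $T$, the home sort $X$ is stably embedded within every reduct $\rR$ of $T^{eq}$. Equivalently, by the compactness reformulation given after Definition \ref{def:stablyembedded}, every $\rR$-definable family of subsets of $X^n$ is, as a family, $\rR$-definable with parameter variables ranging over $X$. The key structural input is the cell decomposition theorem for o-minimal structures: over any set of parameters $A$, the $A$-definable subsets of $X^n$ are exactly finite unions of $A$-cells, and a cell is coordinatized by finitely many $A$-definable continuous functions (the defining functions of its walls). So the first step would be to reduce the problem to understanding how an $\rR$-definable family of cells in $X^n$ can be canonically re-parameterized using points of $X$ alone.

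First I would set up the following: let $\rR$ be a reduct of $T^{eq}$, which we may assume contains all sorts of $T^{eq}$ but only some relations, and let $\phi(x, z)$ be an $\rR$-formula with $x$ an $n$-tuple from $X$ and $z$ ranging over some $T^{eq}$-sort $Q$; write $\mathcal{F} = \{\,\phi(M,c) : c \in Q(M)\,\}$. Working in a monster model $M \models T$, fix a member $D = \phi(M,c)$ of the family. By o-minimality, $D$ is a finite union of cells, and there is a \emph{canonical} such decomposition (e.g. the coarsest cell decomposition compatible with $D$, or one extracted from a uniform cell decomposition of the family $\phi(x,z)$ in the sense of o-minimality). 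The crucial point is that the tuple of boundary points / defining data of this canonical decomposition is a tuple $\bar a \in X^m$ which is \emph{interdefinable} with a canonical parameter $\lceil D \rceil$ for $D$ in $T^{eq}$, and the passage $D \mapsto \bar a$ is $T$-definable — indeed $\emptyset$-definable as a map from the imaginary sort coding $\mathcal{F}$ to $X^m$. The second step would be to argue that this map is actually definable in $\rR$: since $\mathcal{F}$ itself is $\rR$-definable, the imaginary sort $S_{\mathcal F}$ coding its members is an $\rR$-sort, and the function $S_{\mathcal F} \to X^m$ sending a code to the canonical boundary tuple is invariant under $\Aut(M/\rR)$ (because any $\rR$-automorphism permutes $\mathcal F$ and commutes with the $\emptyset$-definable assignment of canonical decompositions), hence — using stable embeddedness of $X$ in $T^{eq}$ itself, or Makkai's parametrized Beth theorem as in Proposition \ref{prop:higherydleptsubyclept} — it is $\rR$-definable with parameters in $X$. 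Running this through Proposition \ref{prop:istemandydleptcollapse}, or directly through Lemma \ref{lem:invariantdef}'s scheme, gives that every $2$-ydlept $\mer$ for $T$ is ydlept, hence $T$ is istem.

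The main obstacle, and the place where the argument needs real care rather than bookkeeping, is establishing that o-minimality provides a genuinely \emph{canonical, $\emptyset$-definable} choice of cell decomposition for each member of a definable family — one whose defining data lies in $X$ and is respected by \emph{all} automorphisms fixing the family setwise, not merely those fixing the relevant reduct. Plain cell decomposition is highly non-canonical (there are many cell decompositions of a given set), so one must either invoke a canonical refinement (the lattice of definable subsets of $X^n$ generated by $D$ together with the o-minimal "frontier" operation, or the decomposition into connected components of the sets where a uniform family of defining functions has fixed sign and order) and check that connected components, frontiers, and limits of definable functions are all $\emptyset$-definably assigned and valued over $X$. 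A secondary subtlety is that "connectedness" is used here, so one is implicitly using that o-minimal definable sets have finitely many definably connected components and these are definable uniformly in families — standard, but it must be quoted correctly. If the canonical-decomposition step goes through, the rest is the same Beth/Makkai invariance argument already used twice in the paper, so I expect the proof to be short modulo that one o-minimality ingredient; this is presumably why it is stated as a conjecture rather than a theorem — the authors may not have checked the canonicity carefully in full generality.
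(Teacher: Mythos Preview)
This statement is a conjecture in the paper, not a theorem; the paper offers no proof of it. The only progress recorded is the special case of DLO, proved via the more general fact that any NIP CT-minimal structure is istem. That argument does not use cell decomposition at all: it exploits Cameron's classification of reducts of DLO to show that every nontrivial reduct recovers the full structure after naming finitely many constants, so stable embeddedness descends from $T$ itself; the remaining case, where the reduct induces only pure equality on $X$, is ruled out by an NIP shattering argument combined with Pillay's theorem that stable $1$-embeddedness suffices.

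Your proposal has a genuine gap at the invariance step. You argue that the function $S_{\mathcal F} \to X^m$ assigning to each code its canonical boundary tuple is $\Aut(M/\rR)$-invariant, because an $\rR$-automorphism ``permutes $\mathcal{F}$ and commutes with the $\emptyset$-definable assignment of canonical decompositions.'' But that assignment is $\emptyset$-definable in $T$, not in $\rR$. An $\rR$-automorphism $\sigma$ need not be a $T$-automorphism; in particular it need not preserve the linear order on $X$, so there is no reason for $\sigma$ applied to the boundary data of $D$ to equal the boundary data of $\sigma(D)$. For a concrete obstruction already in DLO, take $\rR$ to be the betweenness reduct: its automorphism group contains order-reversing maps, which swap leftmost and rightmost endpoints of an interval, so the ``canonical boundary tuple'' is not $\rR$-invariant even as a set. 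The canonicity-of-cell-decomposition issue you flag in your last paragraph is therefore not the main obstacle; the real difficulty is that cell decomposition and its boundary data live in $T$ and need not be visible to an arbitrary reduct $\rR$ at all. This is exactly why the statement remains a conjecture.
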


A proof of the conjecture would imply a positive answer to the following:

\begin{question} \label{quest:ominimalimpliesistemcollapse} Does every o-minimal theory have the property that $2$-ydlepts are ydlept?
\end{question}

As a small step towards proving the conjecture, we show that DLO is istem.  The proof will go via a somewhat wider class of one-sorted structures.

\begin{defn}  We say that a structure is Cameron Thomas-minimal, or \emph{CT-minimal} for short, if every reduct, except pure equality, interprets the whole upon adding constants.\end{defn}

Restated, this says that every nontrivial reduct of $T$  is a reduct by constants, in the sense of \defref{def:reductbyconstants}.
From Cameron's classification of reducts of DLO \cite{reductsdlo} it follows that DLO is CT-minimal.   
Since DLO is NIP,  the result for DLO will follow from the following:

\begin{prop}  \label{prop:nipctistem} Any NIP  CT-minimal structure is istem.\end{prop}

\prf  We will assume $T$ has a single sort $D$ for convenience.
Let $M$ be the given structure; we may assume it is $\aleph_0$-saturated.  
Let $Q$ be an imaginary sort, and $M^-$ a reduct with sorts $(D,Q)$.     We have to show that $D$ is stably embedded in $M^-$.

We can work over a somewhat saturated model $M$, so that if $F$ is a family of definable sets, not definable in pure equality, then some element of the family is not definable in pure equality.

If every family of definable subsets of any $D^n$ in $M^-$ is definable in pure equality, then of course $D$ is stably embedded.    A \emph{stably $1$-embedded} sort is one which satisfies the definition of stably embedded where we only consider definable families of subsets, not definable families of collections of tuples: that is, we only consider $n=1$.
We will use Pillay's theorem \cite{pillayembedded}: for an NIP theory   $D$ is stably embedded iff it is \emph{stably $1$-embedded}.
 
Using the theorem, we can take $n=1$.   Let $F$ be a nontrivial family of definable subsets of $D$. 
Then by the $\aleph_0$-saturation of $M$  some element $F_c$ of $F$ is infinite and co-infinite.  

As a first case, assume that pure equality is induced on $D$ in $M^-$.
Let $m$ be any integer. Since $F_c$ is infinite and co-infinite, there is $A$ a subset of $D(M)$ with $|A \meet F_c |= | A \setminus F_c| =m$.    Since in the reduct, all $2m$-types
of distinct elements of $D$ are equal, for any subset $A'$ of $A$ 
of size $m$, there exists $c'$ such that for $a \in A$ we have $a \in F_{c'}$ iff $a \in A'$.   Since $m$ was arbitrary, we have shattered an arbitrarily large set $A$, which contradicts NIP.

This leaves the case where the induced structure on $D$ from $M^-$ is not pure equality.  By CT-minimality, after adding
constants $(d_1,d_2,\ldots)$  from $D$, the full original structure on $D$ is interpretable in $M^-$.   In this case, any $M^-$ definable relation on $D$ with parameters from $M^-$  is certainly definable with parameters from $D$ in the original structure,  hence in $M^-[d_1,d_2,\ldots]$, and hence in $M^-$.  
\eprf

The following example shows that this phenomenon does not extend to general NIP theories.
Later examples interpretable in the random graph will
show the same for simple theories.

\begin{example}  \label{ex:nipbutneedafamily}
Let $T$ be the  $\aleph_0$-categorical theory of a dense linear ordering $M$ with a distinguished
dense co-dense subset $P$.  Then $T$ is NIP,   indeed distal and dp-minimal \cite{simondistalpairs}.
We view $T$ as $2$-sorted,
with one sort $P$ and one sort $Q$= complement of $P$.
 The ordering on $P \union Q$ is thus  viewed as three relations, namely  $<_P = < \meet (P^2)$,
 $<_Q = < \meet (Q^2)$ and $<_{PQ} = < \meet  (P \times Q)$.
Let $\groupoid$
be the group of permutations $\si$ of $M$ that respect $Q$, and hence $P$,
 such that $\si \upharpoonright Q$  lifts to an automorphism of $M$.
An equivalent definition is that the equivalence relation preserves the family
of definable sets $R_p: p \in P$, where  $R_p= \{x \in Q$ and $x>p\}$.
In particular $\groupoid$ is $2$-ydlept. 

Let us now see that it is not yclept.
Note first that any finite order-preserving partial map from $Q$ to $Q$ extends to an element of $\groupoid$.

We next claim that, from the above, we can conclude that
the maximal $\groupoid$-invariant reduct of $M$ is  the structure consisting
of the sort $P$ with no additional structure, as well as the sort $Q$ with
the restricted linear order. We denote this as $(P,Q,<_Q)$. That is, 
we claim a formula $\kappa(x_1 \ldots x_m)$ is preserved by $\groupoid$ if and only
it is  definable in $(P, Q, <_Q)$.
The direction
from right to left follows since the definition of $\groupoid$ is
in terms of $R_p$ above. So it suffices to show that if
$\kappa(x_1 \ldots x_m)$ is not definable in $(P,Q, <_Q)$
then it is not  preserved by $\groupoid$. Note that  $(P,Q, <_Q)$ is
also $\aleph_0$-categorical, so there are finitely many $m$-types over the empty
set. Since $\kappa$ is not definable over the empty set, there must be a model with
some  $\vec c$ and $\vec c'$ satisfying the same type in the restricted language,
but with $\vec c$ and $\vec c'$ disagreeing on $\kappa$. Considering the
finite partial automorphism of $(P, Q, <_Q)$ mapping sending $\vec c$ to $\vec c'$, and applying the above, we get an element of 
$\groupoid$ that does not preserve
$\kappa$.

Thus we know that if $\groupoid$ were yclept, it must be definable in the language above.
And since the automorphism group of
$(P,Q, <_Q)$ is much bigger than $\groupoid$,  $\groupoid$ cannot be definable from $(P, Q, <_Q)$.
So $\groupoid$, while being $2$-ydlept, is not  ydlept, and thus is not  yclept.

We note that this theory does admit other groupoids that are  not $n$-ydlept.
Even DLO has these: see the example in Subsection \ref{ex:deftop}.
\end{example}

\begin{question} Does there exist a theory that is ``$\forall$-2-ydlept -- every $\bimer$ is $2$-ydlept -- but the theory is
not $\forall$-yclept? 
\end{question}

See \propref{prop:smallclasses} for a setting where 
$2$-ydleptness occurs naturally.

 \section{$\mer$s of low quantifier complexity} \label{sec:aecomp}

 We can classify $\bimer$s  by the  quantifier complexity of their defining $\lpluslprime$ sentences.  We regard all $L$-formulas, and their primed copies, as 
 having quantifier complexity $0$; equivalently, we assume quantifier-elimination
in the base theory.
With this convention assumed, we employ the usual terminology for prenex classes of first order formulas: universal, $\Pi_2$ etc.
 An equivalence relation on models of $T$ defined by   $\Sigma_n$   (respectively $\Pi_n$) formulas of $\lpluslprime$
 is said to be a $\Sigma_n$ (resp. $\Pi_n$) $\inftybimer$. And similarly for  a $\bimer$ if just finitely many such formulas are used.
We will now explore the relationship of $\mer$s restricted by prefix classes with each other and with the classes defined earlier.

 \begin{thm} \label{thm:ae} Let $\ee$ be a $\Pi_2$  $\inftybimer$ for  $T$.
 Then $\ee$ is yclept. 
 \end{thm}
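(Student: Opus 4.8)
The plan is to use the characterization from Corollary \ref{cor:closedyclept}: a $\inftybimer$ $\ee$ with groupoid $\groupoid$ is yclept if and only if $\groupoid(M,M)$ is a closed subgroup of $Sym(M)$ for every $\groupoid$-resplendent $M$ (and in the complete case, for one resplendent $M$). So the task reduces to showing: if $\ee$ is $\Pi_2$, then the subgroup $H = \groupoid(M,M) \leq Sym(M)$ is closed in the pointwise-convergence topology on $Sym(M)$, where $M$ is a suitably resplendent model of $T$. Since closedness can be checked completion-by-completion using Proposition \ref{yclept-conts}, I would first reduce to the case $T$ complete.

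Now write the $\Pi_2$ defining sentence (or set of sentences, in the $\inftybimer$ case) in the form $\forall \bar{x}\, \exists \bar{y}\, \theta(\bar{x}, \bar{y})$ where $\theta$ is quantifier-free in $\lpluslprime$ — that is, $\theta$ is a Boolean combination of $L$-formulas applied to $\bar{x}$, to $g(\bar{x})$, to $\bar{y}$, and to $g(\bar{y})$ (using QE in $T$). Suppose $g \in Sym(M)$ lies in the closure of $H$; I want $g \in H$. The key point is that membership in $H$ for a $\Pi_2$ condition is, roughly, about realizing an existential witness for each finite tuple, and this is exactly the sort of thing resplendence delivers. Concretely: take $M$ resplendent (and $\groupoid$-resplendent). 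Given $g$ in the closure of $H$, I want to find $g' \in H$; but actually the cleaner route is to show directly that $g$ itself satisfies the $\Pi_2$ sentence. For each finite tuple $\bar{a}$ from $M$, since $g$ is a pointwise limit of elements $h_n \in H$, we have $h_n(\bar{a}) = g(\bar{a})$ for $n$ large, and $h_n$ satisfies $\exists \bar{y}\, \theta(\bar{a}, \bar{y})$ — meaning there is $\bar{b}_n$ with $\theta(\bar{a}, \bar{b}_n)$ holding under $h_n$. The obstacle is that the witness $\bar{b}_n$ depends on $h_n$, and $h_n$ and $g$ need not agree on $\bar{b}_n$. This is where one must work harder: one should use a back-and-forth / resplendence argument to produce, for the fixed $g$, a simultaneous witness structure.

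Here is the main idea for overcoming that. Consider the $\lpluslprime$ structure $(M, M, g)$ and the partial type expressing ``$g$ satisfies $Th(\groupoid)$''. Because $g$ is in the topological closure of $H$, every finite fragment of the requirement ``$g$ preserves the relevant data'' is consistent with adding an element of $H$; using resplendence of $M$ one builds an expansion realizing all the needed existential witnesses. More precisely: form the expanded language adding a binary function symbol interpreted to agree with $g$ on a dense/cofinal set of finite tuples, together with Skolem-type function symbols $F_{\theta}$ providing the $\exists \bar{y}$ witnesses; the set of $\lpluslprime$-sentences asserting that these functions witness all instances of the $\Pi_2$ sentence, together with $\mathrm{tp}_M$-data pinning down that the function agrees with $g$ on each finite tuple, is finitely satisfiable — each finite piece being satisfied by some $h_n \in H$ since $h_n$ agrees with $g$ on the relevant finite tuple and genuinely lies in $\groupoid(M,M)$. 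By resplendence of $M$ (applied with parameters naming the finitely many elements involved, and noting $|L|$-many new symbols), there is an expansion of $(M,M)$ realizing the whole set; the resulting function is exactly $g$ (it agrees with $g$ on every finite tuple by construction), and it satisfies the $\Pi_2$ sentence by construction, so $g \in \groupoid(M,M) = H$. Hence $H$ is closed, and $\ee$ is yclept by Corollary \ref{cor:closedyclept}.

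\textbf{Remarks on the plan.} The reduction to complete $T$ (via Proposition \ref{yclept-conts}) and the invocation of Corollary \ref{cor:closedyclept} are routine. The genuinely delicate step is the resplendence argument showing the limit function $g$ still witnesses the $\Pi_2$ sentence: one must be careful that the Skolem functions and the function coding $g$ live in a language of size $< \kappa$ for $\kappa$-resplendent $M$, that the compactness/finite-satisfiability step correctly uses that each $h_n \in H$ agrees with $g$ on exactly the finite data being constrained, and that the partial type is consistent with $Th(M_A)$ for the appropriate finite parameter set $A$. In the $\inftybimer$ (infinitely many $\Pi_2$ sentences) case one handles each sentence separately and takes a common expansion, which is fine since $\groupoid$-resplendence accommodates all of $Th(\groupoid)$ at once. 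I expect the write-up to hinge on phrasing the resplendence application cleanly; everything else is bookkeeping.
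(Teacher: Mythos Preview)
Your overall strategy—reduce to complete $T$, then verify that $H=\groupoid(M,M)$ is closed in $\Sym(M)$ and invoke Corollary~\ref{cor:closedyclept}—is reasonable, but the resplendence step has a real gap. To force the function produced by the expansion to equal your fixed $g$, the theory $\Sigma$ must contain the constraints $f(a)=g(a)$ for \emph{every} $a\in M$; otherwise the expansion merely hands you some $f$ agreeing with $g$ on the finitely many named points, which says nothing about $g$ itself. But naming every element of $M$ means the parameter set is $A=M$, while $\kappa$-resplendence requires $|A|<\kappa$. Since $\kappa$-resplendence implies $\kappa$-saturation, any infinite $\kappa$-resplendent $M$ has $|M|\geq\kappa$, so one can never take $A=M$. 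Your parenthetical ``applied with parameters naming the finitely many elements involved'' conflates the parameters appearing in a single sentence of $\Sigma$ with the parameter set needed for all of $\Sigma$; it is the latter that resplendence bounds, and here it is all of $M$.

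The paper does not argue closedness directly. It shows instead that every $(M,M',g)\models\bitheorymax$ already lies in $\groupoid$: Lemma~\ref{lem:ae1} says the universal consequences of $Th(\groupoid)$ follow from $\bitheorymax$; Lemma~\ref{lem:ae2} then gives that any existentially closed model of $\bitheorymax$ is in $\groupoid$ (this is where the $\Pi_2$ hypothesis is used); and Lemma~\ref{lem:ae3} is a back-and-forth producing, for a given $(M,M',g)\models\bitheorymax$, a third model $M''$ and maps $h:M\to M''$, $j:M'\to M''$ with $(M,M'',h)$ and $(M',M'',j)$ both existentially closed and $jg=h$, whence $g=j^{-1}h\in\groupoid(M,M')$. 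If you prefer a topological route closer in spirit to yours, note that for countable $M$ a $\Pi_2$ condition cuts out a $G_\delta$ subset of the Polish group $\Sym(M)$; hence $H$ is a dense $G_\delta$ (so comeager) subgroup of its closure $\bar H$, and Pettis' theorem then forces $H=\bar H$. That argument works, but it is not the one you wrote.
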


From this and   Theorem \ref{thm:approxequiv} we obtain:
\begin{cor} \label{cor:ae}  Let $\ee$ be a $\Pi_2$  $\inftybimer$   for  $T$.  Then $\ee$ is 
 actually a $\Pi_1$ $\inftybimer$.
 \end{cor}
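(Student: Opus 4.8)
The corollary is essentially immediate by combining the two ingredients that precede it. First I would invoke Theorem~\ref{thm:ae} to conclude that the $\Pi_2$ $\inftybimer$ $\ee$ is yclept. The next step is to upgrade this to a \emph{bidefinable} equivalence relation, i.e.\ a $\bimer$, so that Theorem~\ref{thm:approxequiv} becomes applicable; here the point is that a $\Pi_2$-definable equivalence relation, being in particular defined by finitely many $\Pi_2$ sentences of $\lpluslprime$ (or a single one), is a $\bimer$ in the sense of \defref{sec:defs}. So $\ee$ is a $\bimer$ that is also yclept.

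\textbf{Main step.} Now apply Theorem~\ref{thm:approxequiv}: there is a CL-definable (in fact essentially DL) map $R$ on $M^k$ into a finite metric space $(Y,d)$ and an $\e>0$ such that $M \ee N$ iff $d(R^M(a),R^N(a)) < \e$ for all $a$. Since $R$ is essentially DL, it corresponds to finitely many DL formulas $R^1,\ldots,R^m$ of $L$ together with real values, and the condition ``$d(R^M(a),R^N(a))<\e$'' translates into a fixed Boolean condition on the truth values of $R^i(a)$ in $M$ and in $N$ — exactly as in the proof of the corollary following Theorem~\ref{thm:approxequiv}. Hence $M \ee N$ is expressed by
\[
(\forall a)\;\Phi\bigl(R^1(a),\ldots,R^m(a),\,R'^1(a),\ldots,R'^m(a)\bigr),
\]
where $\Phi$ is a fixed Boolean combination and the primed relations are those of $N$. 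Since we work modulo quantifier-elimination in the base theory, the matrix is quantifier-free in $\lpluslprime$, so this is a universal, i.e.\ $\Pi_1$, sentence. Thus $\ee$ is a $\Pi_1$ $\inftybimer$ (indeed a $\Pi_1$ $\bimer$).

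\textbf{Expected obstacle.} There is essentially no obstacle once Theorems~\ref{thm:ae} and~\ref{thm:approxequiv} are in hand; the only thing to be careful about is the bookkeeping that turns ``essentially DL map into a finite metric space, with closeness below $\e$'' into an honest quantifier-free $\lpluslprime$-formula, and the tacit use of the standing quantifier-elimination convention of \secref{sec:aecomp} so that the atomic $L$-formulas really have complexity $0$. If one does not wish to assume quantifier-elimination globally, one first passes to a definable expansion of $T$ with quantifier-elimination (which changes neither the notion of $\inftybimer$ nor that of ycleptness by the transfer results, e.g.\ the corollary following Theorem~\ref{thm:approxequiv}), proves the statement there, and then notes that a $\Pi_1$ sentence over the expanded language is, after re-expanding the quantifier-free matrix into the original language, a $\Pi_1$ sentence over $L$ as well once the original $L$ already had quantifier-elimination — which is exactly the convention in force.
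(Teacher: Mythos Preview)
There is a genuine gap in your argument: your claim that ``a $\Pi_2$-definable equivalence relation, being in particular defined by finitely many $\Pi_2$ sentences of $\lpluslprime$ (or a single one), is a $\bimer$'' is false. By the definitions in \secref{sec:aecomp}, a $\Pi_2$ $\inftybimer$ is one defined by a (possibly infinite) set of $\Pi_2$ sentences; nothing forces this set to be finite. So you cannot invoke Theorem~\ref{thm:approxequiv} as stated, since that theorem genuinely uses the $\bimer$ hypothesis (the compactness step reducing $I$ to a finite index set).

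The repair is easy and makes the argument shorter. Once Theorem~\ref{thm:ae} gives that $\ee$ is yclept, you are done without any finiteness: preserving a CL reduct is already a $\Pi_1$ $\inftybimer$. This is exactly the content of the unnamed lemma preceding \propref{prop:finitelymany}, whose proof exhibits the defining sentences in the form $(\forall x)\,(|f_n(x) - f'_n(x)| \leq 2/n)$, one for each approximating DL formula $f_n$ and each $n$. These are universal, so their (infinite) conjunction is a $\Pi_1$ $\inftybimer$. The paper's pointer to Theorem~\ref{thm:approxequiv} is really a pointer to this mechanism (yclept conditions are expressible by universal sentences), not to the finiteness conclusion of that theorem. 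Your ``Main step'' is correct for $\bimer$s and recovers the slightly stronger statement that a $\Pi_2$ $\bimer$ is a $\Pi_1$ $\bimer$; but for the corollary as stated you must drop the illegitimate passage to finitely many sentences and argue as above.
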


Note that 
we cannot conclude $\ee$ is an ydlept $\mer$: see Section \ref{sec:notydlept}.

We now begin steps toward the proof of Theorem \ref{thm:ae}.
We can assume, using Proposition \ref{yclept-conts},
 that $T$ is complete.
 We also assume, as we may, that $T$ admits QE.   Let $\groupoid$ be the associated groupoid.   Recall that $\groupoid(M,M')$, 
the set of $\groupoid$-morphisms from $M$ to $M'$,
 is the set of interpretations of the isomorphism symbol that make $M,M',\si$ a model of a certain $\lpluslprime$-theory $Th(\groupoid)$.
Recall also Definition \ref{def:bimax}, the theory $\bitheorymax$ of the maximal reduct.
 \begin{lem}\label{lem:ae2} Assume  $\groupoid$ is   $\Pi_2$.   Let $(M,M',g)$ be an existentially closed model of $\bitheorymax$.
Then $g \in \groupoid(M,M')$.
 \end{lem}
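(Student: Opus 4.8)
The plan is to exploit the $\Pi_2$ form of $Th(\groupoid)$ together with the density of $\groupoid$ in $Aut_{\maxred{\groupoid}}$ (\propref{prop:maxred}(2)) and the fact, recorded in \lemref{lem:ae1}, that every universal consequence of $Th(\groupoid)$ in $\lpluslprime$ already follows from $\bitheorymax$. Write $Th(\groupoid)$ as a set of $\Pi_2$ sentences $(\forall \bx)(\exists \by)\, \psi(\bx,\by)$ with $\psi$ quantifier-free (using QE in $T$, so $\psi$ is a Boolean combination of atomic $L$-formulas in the primed and unprimed copies and in the isomorphism symbol). Let $(M,M',g)$ be an existentially closed model of $\bitheorymax$. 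Since $(M,M',g)$ models $\bitheorymax$, it certainly satisfies all the universal consequences of $Th(\groupoid)$; what remains is to verify each $\Pi_2$ axiom.

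First I would fix one such axiom $\theta = (\forall \bx)(\exists \by)\,\psi(\bx,\by)$ and an arbitrary tuple $\ba$ from $M$ (of the appropriate coupled sorts). The goal is to find $\bb$ in $(M,M',g)$ with $\psi(\ba,\bb)$. The key move: pass to an elementary extension $(M_1,M_1',g_1) \succeq (M,M',g)$ which is sufficiently saturated and resplendent; in an elementary extension of an e.c.\ model of $\bitheorymax$, one can try to expand $(M_1,M_1',g_1)$ to a model of $Th(\groupoid)$. Concretely, I would argue that $\bitheorymax \cup Th(\groupoid)$, in a language where we keep the same $M$, $M'$ but allow reinterpreting the isomorphism symbol, is consistent with the diagram-type data we need: because $\maxred{\groupoid}$ is the maximal reduct preserved by $\groupoid$, a resplendent model $(M_1, M_1', g_1)$ of $\bitheorymax$ has $M_1 \upharpoonright \maxred{\groupoid}$ and $M_1'\upharpoonright\maxred{\groupoid}$ isomorphic via (an appropriate modification of) $g_1$, and by density of $\groupoid(M_1,M_1')$ in the $\maxred{\groupoid}$-isomorphisms (\propref{prop:maxred}(2), via \corref{cor:closedyclept}-style reasoning), there is $h \in \groupoid(M_1,M_1')$. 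Then $(M_1,M_1',h) \models Th(\groupoid)$, so it satisfies $\theta$, giving $\bb_1$ in $M_1$ with $\psi(\ba, \bb_1)$ computed using $h$ — but $\psi$ mentions the isomorphism symbol, so I need $h$ to agree with $g_1$ on the relevant finite data, or rather I need to run this at the level of types.

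The cleanest route around that last point: show that the partial type $\{\psi(\ba,\by)\}$ (with the isomorphism symbol interpreted by $g$) is finitely satisfiable in $(M,M',g)$, then use e.c.-ness to realize it. For finite satisfiability, suppose not: then $\bitheorymax \cup \{\text{quantifier-free diagram of } \ba\} \vdash (\forall \by)\,\neg\psi(\ba,\by)$, i.e.\ $\bitheorymax$ proves a universal statement incompatible with $\theta$ over the type of $\ba$. But the type of $\ba$ in $(M,M',g)$ restricted to $\maxred{\groupoid}$-data, combined with $Th(\groupoid)$, \emph{is} satisfiable (realize $\ba$ in a resplendent pair, find $h\in\groupoid$ by density, apply $\theta$); and by \lemref{lem:ae1} any universal $\lpluslprime$-consequence of $Th(\groupoid)$ is a consequence of $\bitheorymax$, so the two are compatible — contradiction. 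Having finite satisfiability, e.c.-ness of $(M,M',g)$ in $\bitheorymax$ lets us realize $\by$, since the relevant expansion is itself (the quantifier-free part of) a model of $\bitheorymax$ extending $(M,M',g)$. This handles every axiom, so $(M,M',g) \models Th(\groupoid)$, i.e.\ $g \in \groupoid(M,M')$.

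The main obstacle I anticipate is the bookkeeping around the isomorphism symbol: in the density argument we produce \emph{some} $h \in \groupoid$, but $\psi$ constrains the isomorphism symbol, so one must be careful that the type being matched is the type of $\ba$ over the \emph{reduct} $\maxred{\groupoid}$ (which $g$ and $h$ both respect) rather than over all of $L$, and that the existential witness $\by$ is found for the \emph{given} $g$, not for $h$. Getting the logical form of ``$\bitheorymax$ proves a universal consequence'' exactly right, and invoking \lemref{lem:ae1} in precisely the form needed (with the tuple $\ba$ absorbed into the universally quantified variables), is where the argument has to be executed carefully; everything else is a routine compactness-and-resplendence package.
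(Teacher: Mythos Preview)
Your proposal circles around the right ingredients but misses the clean three-line argument the paper gives, and as written has real gaps.

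The paper's proof: by \lemref{lem:ae1}, $(M,M',g)$ already satisfies every universal consequence of $Th(\groupoid)$, i.e.\ $(M,M',g)\models Th(\groupoid)_\forall$. By the standard fact that a structure models $S_\forall$ iff it embeds in a model of $S$, $(M,M',g)$ extends to some $(N,N',g')\models Th(\groupoid)$. This extension is also a model of $\bitheorymax$ (since $\groupoid$-morphisms preserve $\maxred{\groupoid}$), so existential closedness applies: every $\Pi_2$ sentence true in $(N,N',g')$ reflects down to $(M,M',g)$. Done.

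Your density detour --- passing to a resplendent pair and finding $h\in\groupoid$ --- runs into exactly the obstacle you flag yourself: it produces the \emph{wrong} interpretation of the isomorphism symbol, and there is no mechanism offered to transfer a witness for $\psi(\ba,\by)$ computed with $h$ back to one computed with $g$. That density argument is what goes into the \emph{proof} of \lemref{lem:ae1}; here you should simply invoke \lemref{lem:ae1} as a black box.

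Your ``cleanest route'' paragraph conflates two things. First, ``$\psi(\ba,\by)$ has no solution in $(M,M',g)$'' does not give ``$\bitheorymax\cup\{\text{qf-diagram of }\ba\}\vdash(\forall\by)\neg\psi(\ba,\by)$'': you would need the full diagram of $(M,M',g)$, and you would first have to use e.c.\ to pass from ``no witness here'' to ``no witness in any $\bitheorymax$-extension''. Second, ``finitely satisfiable in $(M,M',g)$, then use e.c.\ to realize it'' mixes up e.c.\ with saturation: e.c.\ reflects existentials \emph{from extensions}; it does not realize finitely satisfiable types internally. What you actually need is precisely an embedding of $(M,M',g)$ into a model of $Th(\groupoid)$ --- and that is immediate from $(M,M',g)\models Th(\groupoid)_\forall$, which is \lemref{lem:ae1}. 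Once you see this, the per-axiom, per-tuple bookkeeping evaporates.
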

 
\prf  By \lemref{lem:ae1},    $(M,M',g) \models Th(\groupoid)_\forall$.  In general, for a theory $T$ with universal part $T_{\forall}$, a structure $A \models T_\forall$ iff $A$ extends to a model of $T$; thus  $(M,M',g)$ extends to a model $(N,N',g')$ of $\groupoid$.   
Since $(M,M',g)$ is existentially closed,
 every $\Pi_2$ sentence true in $(N,N',g')$ is true in $(M,M',g)$.  But by assumption $\groupoid$ is $\Pi_2$ axiomatizable.
 So $(M,M',g) \in \groupoid$, i.e. $g \in \groupoid(M,M')$.
\eprf

  \begin{lem}\label{lem:ae3}   Let $(M,M',g)$ be a resplendent countable  model of $\bitheorymax$.    Then there exists $M'' \models T$ 
  and bijections $h: M \to M''$, $j:M' \to M''$ such that $(M,M'',h)$ and $(M',M'',j)$ are existentially closed models of $\bitheorymax$, and   $jg=h$.
 \end{lem}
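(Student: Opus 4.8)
The plan is to build $M''$ (together with the maps $h$ and $j$) as the limit of a chain of auxiliary structures engineered so that the two relevant two-model reducts become existentially closed, and then to use the resplendence of $(M,M',g)$ to carry the whole construction out without enlarging $M$ or $M'$. Concretely, work in a language $L^{+}$ with three disjoint copies $L_1,L_2,L_3$ of $L$ and function symbols $g\colon L_1\to L_2$, $h\colon L_1\to L_3$, $j\colon L_2\to L_3$, and let $S$ be the $L^{+}$-theory saying that each copy models $T$, that each of $g,h,j$ preserves (in both directions) every relation of $\maxred{\groupoid}$, and that $j\circ g=h$. A model of $S$ restricts to a model of $\bitheorymax$ on each of the two-copy fragments $\{L_1,L_3,h\}$, $\{L_2,L_3,j\}$ (and $\{L_1,L_2,g\}$), and $(M,M',g)$ itself expands to a degenerate model of $S$ by taking $L_3:=L_2$, $j:=\mathrm{id}$, $h:=g$. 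I would also isolate a one-step amalgamation lemma: given $\mathcal N\models S$ and a $\bitheorymax$-extension of $\mathcal N\upharpoonright\{L_1,L_3,h\}$ (resp.\ of $\mathcal N\upharpoonright\{L_2,L_3,j\}$), there is an $S$-model extending $\mathcal N$ that induces it. This is a compactness argument whose only content is that a $\maxred{\groupoid}$-isomorphism between models of $T$ can be extended along an elementary extension of either side, which follows from compactness and the fact that preserving $\maxred{\groupoid}$ is a $\Pi_1$-over-diagram condition.

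Next I would force existential closure by a chain of length $\omega$. Enumerate all pairs $(\ell,\vartheta)$ with $\ell$ one of the two fragments above and $\vartheta(\bar x,\bar a)$ an existential $L^{+}$-formula in that fragment's sublanguage with parameters from the structure built so far. Set $\mathcal N_0:=$ the degenerate $S$-model on $(M,M',g)$; at stage $n$, if the current $\vartheta$ is realizable in some $\bitheorymax$-extension of $\mathcal N_n\upharpoonright\ell$, extend that fragment to realize it and use the amalgamation lemma to extend $\mathcal N_n$ to $\mathcal N_{n+1}$; otherwise keep $\mathcal N_{n+1}=\mathcal N_n$. Then $\mathcal N_\omega:=\bigcup_n\mathcal N_n$ is a model of $S$ whose $\{L_1,L_3,h\}$- and $\{L_2,L_3,j\}$-fragments are existentially closed models of $\bitheorymax$: the parameters of any given existential already appear at some finite stage, so the requirement was addressed there. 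Naming the three copies $\bar M,\bar M',M''$ and the maps $\bar g\supseteq g$, $h$, $j$, this already proves the lemma with $M,M'$ replaced by the elementary extensions $\bar M\succeq M$, $\bar M'\succeq M'$.

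The step I expect to be the real obstacle is upgrading this to the statement as written, with $M$ and $M'$ themselves fixed. The observation is that each stage only requires realizing a single existential formula over finitely many parameters, and one may always take the witness to realize, over the current parameters, the canonical ``free'' extension of that formula (free over $\maxred{\groupoid}$ on each side); since $(M,M',g)$ is resplendent, this type is already realized in $(M,M',g)$. So at each stage the $L_1$- and $L_2$-parts of the witness can be located inside $M$ and $M'$, and only the fresh copy $M''$ genuinely grows; hence $\bar M=M$, $\bar M'=M'$, and taking $M''$, $h$, $j$ from $\mathcal N_\omega$ finishes the proof, with $j\circ g=h$ holding by construction. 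The point that needs the most care — the crux — is exactly this use of resplendence: because $\bitheorymax$-extensions need not be elementary, one must check that for the requirements actually encountered, ``realizable in a $\bitheorymax$-extension'' can be met by realizing a type that resplendence is guaranteed to supply. This is where the hypothesis that the model is resplendent, rather than merely a model of $\bitheorymax$, enters essentially.
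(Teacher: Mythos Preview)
Your approach is considerably more elaborate than the paper's, and the final step has a genuine gap. The paper does not build $M''$ by a chain of extensions at all: it fixes $M''$ once and for all as a countable saturated model of $T$ (a copy of $M$), and then constructs $h$ and $j$ as unions of finite partial maps by a direct back-and-forth. At stage $i$ one has finite $A_i\subset M$, $A'_i=g(A_i)\subset M'$, and $\maxred{\groupoid}$-embeddings $h_i\colon A_i\to M''$, $j_i\colon A'_i\to M''$ satisfying $j_i g=h_i$. Four interleaved tasks are performed: make $h$ surjective; make $j$ surjective; make $(M,M'',h)$ existentially closed; make $(M',M'',j)$ existentially closed. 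The e.c.\ task reduces, via QE for $T$ and the description of $\bitheorymax$, to the following: given $T$-types $p(x)$ over $A_i$ and $q(y)$ over $h_i(A_i)$ with the same restriction to $\maxred{\groupoid}$, realize $p$ by some $a\in M$ and $q$ by some $b\in M''$, and extend $h_i$ by $a\mapsto b$. This is immediate from saturation of $M$ and of $M''$. Resplendence of $(M,M',g)$ is used only to ensure that $M$ and $M'$ are themselves saturated; no amalgamation lemma and no three-sorted chain are needed.

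The gap in your argument is this: once you insist that the $L_1$- and $L_2$-parts remain equal to $M$ and $M'$, the bijections $g,h,j$ force the $L_3$-part to keep the same underlying set throughout the chain; and since your chain is by substructure inclusion, the $L$-structure on that set is frozen at its initial value, namely $M'$ (from your degenerate start with $L_3:=L_2$ and $j:=\mathrm{id}$). So the chain is stationary and you end with $(M,M'',h)=(M,M',g)$, which need not be existentially closed. The fix is exactly what the paper does: abandon the chain-of-structures picture, choose $M''$ as a separate saturated model from the outset, and build only the \emph{maps} $h,j$ by back-and-forth. Your instinct that saturation of $M$ and $M'$ is what is really at work is correct, but the ``canonical free extension'' and the invocation of resplendence as an expansion principle are not the right vehicles for it here.
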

 \prf   Let $M''$ be a saturated copy of $M$, i.e. a saturated model of the same cardinality.  We will construct
 $h,j$ by an induction of length $|M|$.    At a given stage $i$ we have small substructures $A=A_i$ of $M$, $A'=A'_i$ of $M'$
 with $g(A)=A'$, and $\maxred{\groupoid}$-embeddings $h_i: A \to M'', j_i: A' \to M''$ with $j_ig=h_i$.    We will extend $h_i$
 in some stages, and $j_i$ in other stages; then we will immediately define $j_i$ (respectively $h_i$)
 by the formula $j_ig=h_i$.    
 
 At stages $0,4,8,\ldots,\omega, \omega+4, \ldots$, we will aim to make $h$
 onto, i.e. we find the least element of $M''$ not   hit by a previous $h_{i'}$, and extend $h_i$ by mapping to it 
 an element of $M$ realizing the appropriate $\maxred{\groupoid}$-type over $A$.   Similarly, we can make $j$ onto at stages
 $1,5,9,\omega+1,\ldots$.  At stages $2,6,\ldots$ we seek to make $(M,M'',h)$ existentially closed.
 We are handed by bookkeeping two $T_{A_i}$-types $p(x),q(y)$ with the same restriction to $L^{\mathcal G}$ over $A_i$.
Then we simply realize $p$ by $a$ from $M$, we realize $q$ by  $b$ from $M''$ and we extend $h_i$ by mapping
$a \mapsto b$.  Likewise at stages $3,7,\ldots$ we ensure $(M',M'',j)$ is existentially closed.
  \eprf

 We now  complete the proof of  Theorem \ref{thm:ae}. Fix $(M,M',g)$ a saturated (or resplendent) model of $Th(\groupoid)$
and let $h,j, M''$ be as in the conclusion of 
 Lemma \ref{lem:ae3}. 
By Lemma \ref{lem:ae2} we have $h \in \groupoid(M,M'')$ and $j \in \groupoid(M',M'')$,
 hence $g \in \groupoid(M,M')$.  Thus for any saturated $(M,M',g)$  in the groupoid,
 $\maxred{\groupoid}$ satisfies $g \in \groupoid(M,M')$. 
 Hence the same holds for arbitrary  $(M,M',g)$ in $\maxred{\groupoid}$.   So 
 if $\si$ is a $\maxred{\groupoid}$-automorphism
then $\si \in \groupoid(M,M')$. Thus we have shown that the groupoid
corresponding to the maximal CL reduct is the groupoid itself, and thus the groupoid is yclept.
 This proves \thmref{thm:ae}.

 \begin{rem}  \thmref{thm:ae} is sharp, in the sense that  a $\Sigma_2$ $\bimer$ need not by yclept; see \secref{Quantifier complexity}.
 \end{rem}
 
 \begin{cor} \label{cor:unary} 
 Consider a theory $T$ in a language 
 with a single sort $V$, 
 with unary predicates $P_i$ and functions $F_j$ only, such that $T \vdash ``F_i$ is bijective''
 for each $i$. Then every $\bimer$ for $T$  is yclept.

Let $L$ be a language with a single sort $V$, a binary function symbol $+$,
predicates $P_i \subseteq V^{n_i}$ and function symbols $P_i: V^{m_i} \to V$.  
Let $T$ be a theory asserting that $+$ is an abelian group operation,
 and that each $P_i$ is a subgroup (of some $V^n$) and each $F_j$ is a homomorphism (from some $V^n$ into $V$).   Then every $\bimer$ for $T$ is yclept. 
 \end{cor}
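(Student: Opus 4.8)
The plan is to reduce both clauses to \thmref{thm:ae}. Concretely, I would show that for $T$ of either form, every $\bimer$ $\ee$ for $T$ is a $\Pi_2$ $\inftybimer$: that its defining $\lequalslprime$-sentence $\Psi$ is equivalent, modulo $T(R)\cup T(R')$, to a $\Pi_2$ sentence. Then \thmref{thm:ae} yields that $\ee$ is yclept. (By \corref{cor:ae} the sentence can even be taken $\Pi_1$ --- the shape produced by the explicit computation of Claim~\ref{clm:eedefinable} in \secref{sec:notydlept}, which is a special case of the first clause.)

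Two reductions organize the argument. First, it is enough to put $\Psi$, modulo $T(R)\cup T(R')$, into a Boolean combination of existential ($\Sigma_1$) sentences: writing such a combination in disjunctive normal form as $\bigvee_j\,(\exists\bar x_j\,\alpha_j\wedge\forall\bar y_j\,\beta_j)$ with the $\alpha_j,\beta_j$ quantifier-free and the variable blocks disjoint, one pulls each $\forall\bar y_j$ out of the disjunction using $(\forall\bar y\,A)\vee(\forall\bar z\,B)\equiv\forall\bar y\,\bar z\,(A\vee B)$ and then pushes the existential blocks in, reaching a sentence $\forall\bar y\,\exists\bar x\,\chi$ with $\chi$ quantifier-free, i.e.\ a $\Pi_2$ sentence. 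Second, up to the logical equality the pair language $\lequalslprime$ is of the same type as $L$ --- finitely many unary predicates and unary function symbols in the first clause; one binary operation together with finitely many predicates and functions in the second --- and $T(R)\cup T(R')$ inherits the structural hypothesis on $T$ (each function symbol bijective; each of $+,+'$ an abelian group operation, with the listed predicates subgroups and functions homomorphisms for the respective operation). So it suffices to know that theories of either of these two types have the property that, modulo the theory, every sentence is equivalent to a Boolean combination of existential sentences.

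For the first type this is a routine but slightly involved quantifier-reduction. The Gaifman graph of any model of $T(R)\cup T(R')$ has degree bounded by twice the number of unary function symbols --- here bijectivity is exactly what rules out large in-degree --- so by Gaifman's locality theorem, applied after relativizing $\Psi$ to the finitely many symbols it mentions, $\Psi$ is a Boolean combination of local sentences; on a bounded-degree structure each bounded-radius ball has bounded size and its elements are existentially definable from a centre (again using that $F_j^{-1}$ is then existentially definable), and tracking the finitely many possible local patterns reduces $\Psi$, over the pair theory, to a Boolean combination of $\Sigma_1$ sentences. Intuitively, a universally quantified variable reached only through function terms can be traded for the unique element bijectivity furnishes, collapsing the quantifier prefix. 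The first reduction now finishes this clause.

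For the second type locality fails --- the Gaifman graph of an infinite abelian group is essentially complete --- and one uses instead the Baur--Monk elimination of quantifiers for modules, and more generally for abelian structures, which expresses every formula as a Boolean combination of positive-primitive formulas together with ``invariant'' index conditions, all of which are existential or Boolean combinations of existential sentences. The point I expect to be the main obstacle is that the pair structure carries two \emph{a priori unrelated} abelian group operations $+,+'$ on one universe, so it is not literally an abelian structure for a single operation and Baur--Monk does not apply verbatim: one must run the elimination for $+$ and for $+'$ separately and control the atomic formulas mixing the two operations, which --- because $\lequalslprime$ is built from a single pair of models sharing a universe --- are governed by the identity map and so are unproblematic for the reassembly. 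Granting this, $\Psi$ is again a Boolean combination of existential sentences modulo the pair theory, hence $\Pi_2$, and \thmref{thm:ae} completes the proof. One could equally argue through \corref{cor:closedyclept} and \propref{prop:groupoidsubgroup}, showing directly that $\groupoid(M,M)\le Sym(M)$ is closed for resplendent $M$; the same structural input is what is needed, but the $\Pi_2$ route via \thmref{thm:ae} is cleaner.
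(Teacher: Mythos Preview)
Your overall strategy matches the paper's exactly: show the defining $\lequalslprime$-sentence is, modulo the pair theory, a Boolean combination of existential sentences, hence $\Pi_2$, and then invoke \thmref{thm:ae}. The paper makes the same reduction you describe in your first paragraph.

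For the first clause you take a genuinely different route. The paper simply observes that the pair language $\lpluslprime$ is \emph{again} a language of unary predicates and bijective unary functions --- the $P_i,P'_i$ are unary, and the $F_j,F'_j$ together with the connecting bijection $f$ are all invertible unary functions --- so the pair theory admits full quantifier elimination once inverses are named, and in particular every sentence is quantifier-free. Your Gaifman/bounded-degree argument does work here: the $r$-ball around $x$ is exactly $\{t(x):\text{depth}(t)\le r\}$, so local types are quantifier-free and basic local sentences are genuinely $\Sigma_1$. But this reproves, by a longer path, a standard QE fact; the paper's one-line observation that the pair language is of the same shape is what you want.

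For the second clause you and the paper cite the same input --- Baur--Monk/Fisher elimination for abelian structures --- and the paper says only ``proven similarly.'' Your worry is well placed: in $\lequalslprime$ the pair structure carries two unrelated group operations $+,+'$ on one universe (equivalently, in $\lpluslprime$ the connecting bijection $f$ is not a homomorphism), so the pair theory is \emph{not} an abelian structure in the sense to which Baur--Monk applies verbatim. The paper does not spell out how the pp-elimination survives this, and neither does your sketch ``run the eliminations separately and control the mixing'': terms in $\lequalslprime$ freely nest $+$ and $+'$, so there is real work in making that precise. Your diagnosis of the obstacle is sharper than the paper's treatment, but the resolution you outline is at the same level of hand-waving as the paper's ``similarly.''
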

 
 \prf   We prove the first part. The theory $\lpluslprime$ still satisfies the same description. That is, it consists of  unary predicates and invertible unary functions.  Hence it admits QE to the level of Boolean combinations of existential formulas.  In fact, as is easy to check, it admits QE as soon as we name the inverse function of each invertible function in the language. 
In particular, this is the case for sentences, and  \thmref{thm:ae}  applies.
  So any  $\bimer$ is yclept.
  
The second part is proven similarly, using QE for abelian structures, to Boolean combinations of positive primitive formulas \cite{fisher}.
\eprf

\section{Smoothness and ycleptness} \label{sec:smooth}

We  give a more robust characterization of yclept $\mer$s. relating it to the notion of smoothness
on Borel equivalence relations. We then provide two applications, one to definable equivalence relations in the usual sense (Subsection \ref{subsec:product}) and the other to consequences of cardinality restrictions on $\mer$s
(Section \ref{sec:smallfew}).

In this section we \emph{consider a consistent theory $T$ in a countable language with no finite models}.   We will only consider countable models,
indeed models with universe $\Omega=\Nn$.   This suffices of course for determining a  $\mer$ if the language is countable.

Fix a universe $\Omega$ and consider a  $\bimer$  $\ee$ on $\modelsof_\Omega(T)$.
By Morley-izing $T$, noting that the set of models of $T_\forall$ on   $\Omega$ is naturally a Polish space.
In the Morleyization, the axioms are $\forall \exists$, thus the set of models of $T$ forms a $G_\delta$
subset, we can also view  $\modelsof_\Omega(T)$ as a Polish space.    $\ee$ induces a Borel equivalence relation on $\modelsof_\Omega(T)$.   So one can ask if it is smooth in the sense of descriptive set theory, 
i.e. admits complete   invariants in a standard Borel space.   Recall that $\ee$ is smooth if there exists a Borel map $\phi$ on $Mod_\Omega(T)$ into a Polish space, such that $xEy$ iff $\phi(x)=\phi(y)$.  This depends only on the Borel structure of $\modelsof_\Omega(T) $, not on the Polish space structure.
See \cite{dh} for another use of this notion in model theory.

\begin{thm}\label{thm:smooth}      Let $\ee$ be a $\inftybimer$.  Then $\ee$ is yclept iff $\ee$ is smooth.
\end{thm}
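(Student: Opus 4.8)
The plan is to prove the two implications separately, with the easy direction being ``yclept $\Rightarrow$ smooth'' and the substantive direction being ``smooth $\Rightarrow$ yclept''.

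\textbf{The easy direction.} If $\ee$ is yclept, it is $\equiv_R$ for a CL reduct $R$ generated by countably many CL formulas $f_k(\vec x)$, each a uniform limit of $T$-definable finite-valued functions, so each $f_k^M(\vec a)$ depends in a Borel way on the model $M \in \modelsof_\Omega(T)$ and the finite tuple $\vec a$ from $\Omega = \Nn$. The complete invariant is the map $\phi(M) = (f_k^M(\vec a))_{k,\vec a}$, taking values in the Polish space $\prod_{k,\vec a}[0,1]$; this is Borel, and by definition of $\equiv_R$ we have $M \ee N$ iff $\phi(M)=\phi(N)$. Hence $\ee$ is smooth. (A minor point: one must check $\phi$ is genuinely Borel; this follows because each $f_k$ is a uniform limit of formulas whose truth values, being first-order, are clopen conditions on $\modelsof_\Omega(T)$.)

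\textbf{The hard direction: smooth $\Rightarrow$ yclept.} First I would reduce to $T$ complete using Proposition \ref{yclept-conts} together with the observation that smoothness passes to the restriction to each completion (a Borel subset of a standard Borel space is standard Borel, and the restriction of a smooth relation is smooth). Assuming $T$ complete, let $\groupoid$ be the associated $\bigwedge$-definable groupoid and $\maxred{\groupoid}$ its maximal CL reduct, from Proposition \ref{prop:maxred}. By Corollary \ref{cor:closedyclept}, it suffices to show that for a $\groupoid$-resplendent $M$ the group $\groupoid(M,M)$ is \emph{closed} in $Sym(M)$; equivalently, since $\groupoid(M,M)$ is dense in $\Aut_{\maxred{\groupoid}}(M)$ by Proposition \ref{prop:maxred}(2), it suffices to show $\groupoid(M,M) = \Aut_{\maxred{\groupoid}}(M)$, i.e.\ that $\maxred{\groupoid}$ already ``cuts out'' the groupoid. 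Suppose not: then there are models $M, N \models T$ on the universe $\Nn$ with $M \upharpoonright \maxred{\groupoid} = N \upharpoonright \maxred{\groupoid}$ but $M \not\ee N$. The plan is to turn this single failure into a Borel reduction of a non-smooth equivalence relation (e.g.\ $E_0$, eventual equality of binary sequences, or $E_\infty$) into $\ee$, contradicting smoothness. Concretely, using $\groupoid$-resplendence and a back-and-forth of length $\omega$, I would build a Borel-parametrized family $(M_\sigma)_{\sigma \in 2^\omega}$ of models of $T$, all with the same $\maxred{\groupoid}$-reduct, such that $M_\sigma \ee M_\tau$ precisely when $\sigma$ and $\tau$ are $E_0$-equivalent (patch in copies of the ``$M$ vs.\ $N$'' discrepancy along an increasing sequence of coordinates, using homogeneity of the resplendent model to glue). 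Since $E_0$ is not smooth and the assignment $\sigma \mapsto M_\sigma$ is Borel, this contradicts smoothness of $\ee$; hence $\groupoid(M,M) = \Aut_{\maxred{\groupoid}}(M)$ and $\ee$ is yclept.

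\textbf{Main obstacle.} The delicate step is the construction of the Borel family $(M_\sigma)$ witnessing non-smoothness: one must arrange simultaneously that (i) every $M_\sigma$ is a model of $T$ with the fixed $\maxred{\groupoid}$-reduct, (ii) the dependence on $\sigma \in 2^\omega$ is Borel, and (iii) $M_\sigma \ee M_\tau \iff \sigma \mathrel{E_0} \tau$. Getting (iii) right requires not only producing $\ee$-inequivalences when $\sigma, \tau$ differ in infinitely many coordinates, but also producing the $\ee$-equivalences when they differ in only finitely many — the latter uses that a finite modification can be absorbed by an element of $\groupoid$, which in turn relies on resplendence and on the density statement of Proposition \ref{prop:maxred}(2). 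A cleaner alternative, if available, is to invoke a general principle: a $\bigwedge$-definable equivalence relation on $\modelsof_\Omega(T)$ whose classes are exactly the orbits of a Polish group action (here the closure of $\groupoid(M,M)$ acting suitably) is smooth iff that group is closed, à la the Burgess/Becker--Kechris analysis; but spelling out the group action in our two-model setting is itself nontrivial, so I expect the explicit back-and-forth construction to be the most robust route.
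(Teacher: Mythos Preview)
Your easy direction is fine and matches the paper.

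For the hard direction, the paper takes exactly your ``cleaner alternative,'' and the group action you worry about is in fact straightforward. Fix a countable homogeneous, lightly saturated $M$ (so that $\dcl(a)\neq M$ for every finite $a$; this ensures the closure $\bar H$ below is perfect) and set $H=\groupoid(M,M)$, a Borel subgroup of $Sym(\Omega)$. The key one-line computation is that $\ee$, being defined by $\lequalslprime$-sentences, is invariant under the diagonal $Sym(\Omega)$-action on pairs, so for $g_1,g_2\in Sym(\Omega)$ one has $g_1H=g_2H \iff g_2^{-1}g_1\in H \iff g_2^{-1}g_1 M \ee M \iff g_1M\ee g_2M$. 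Thus $g\mapsto gM$ Borel-reduces the right $H$-coset relation to $\ee$, which is smooth by hypothesis. Now apply the dichotomy: if $H$ is non-meager in its closure $\bar H$, Pettis gives $H=\bar H$ closed; if $H$ is meager in $\bar H$, Becker--Kechris (\cite{bk}, Theorems 3.4.3, 3.4.5) says the coset relation is non-smooth, a contradiction. Then \corref{cor:closedyclept} finishes. No reduction to complete $T$ is needed.

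Your primary plan --- the explicit back-and-forth producing an $E_0$-family $(M_\sigma)$ --- has a genuine gap that you yourself flag but do not resolve. You begin from a single pair $M,N$ with equal $\maxred{\groupoid}$-reduct but $M\not\ee N$; the phrase ``patch in copies of the $M$-vs-$N$ discrepancy along an increasing sequence of coordinates'' has no evident meaning, because that discrepancy is a global relation between two entire models, not a local datum that can be inserted in a ``slot.'' Getting condition (iii) to hold --- finite differences in $\sigma$ collapse under $\ee$, infinite differences do not --- would require localizing the groupoid relation to finite pieces, and nothing in the setup provides this. In effect, the existence of such a family is precisely what the Becker--Kechris theorem supplies abstractly once $H$ is known to be meager; attempting to build it by hand is re-proving that theorem in this instance.

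A minor point: your reduction to complete $T$ invokes \propref{yclept-conts}, which is stated only for $\bimer$s, not $\inftybimer$s (its proof goes through \thmref{thm:approxequiv}). The paper's argument sidesteps this entirely by working with a single model.
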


\prf   We will show that these are also equivalent to:  $\groupoid(M,M)$ is closed in $Sym(\Omega)$, for any resplendent countable $M$.

Here we take the usual group topology on $Sym(\Omega)$; it is induced from the product topology  on $(^\Omega \Omega)^2$ via the map $g \mapsto (g,g\inv)$. $\Omega$ itself is taken to have the discrete topology.   We note that  for the functions in the groupoid, this is the same as the pointwise convergence topology.
 In general, a sequence of invertible functions may converge to a non-invertible one.  But if $g_n,g$ lie in the groupoid and 
 if $g_n$ pointwise converges to $g$,    for each $a$, we must have $g_n(a)=g(a)$ for large $n$, since the topology on $\Omega$ is by definition discrete. Then letting $b=g^{-1}(a)$ we have $g_n(b)= g(b)=a$ for large $n$,   so $g_n^{-1} (a) = b = g^{-1}(a)$ for large $n$. This equivalence allows us to apply the characterizations in
 Proposition  \ref{prop:maxred} and its corollaries.

 Assume $\ee$ is yclept.
We note that  ycleptness implies that the group $G(\Om,\Om)$ is closed  in $Sym(\Omega)$: this follows from
Corollary \ref{cor:closedyclept}. 
Thus $\ee$ is smooth: the invariant function is just the ``reduct'' function $\modelsof_\Omega(T) \to \modelsof_\Omega(T')$.  
We leave to the reader the choice
of Borel codes for $\modelsof(T)$ and $\modelsof(T')$ and the verification that the restriction map is Borel.

Now we prove the direction from closed to yclept. Assume $\groupoid(M,M)$ is closed for all $M$, or just for $\groupoid$-resplendent $M$.   
Then $\ee$ is yclept by \corref{cor:closedyclept}.

 Finally, we prove the direction from smooth to closed.  Assume $\ee$ is smooth.  Let $M$ be a homogeneous and resplendent model of $T$, meaning that for any tuple
   $a \in M^n$ there exist $b \neq b' \in M$ with $tp(b/a)=tp(b'/a)$. Therefore $\dcl(a) \neq M$.
  We will show that $\groupoid(M,M)$ is closed in $Sym(M)$.
  
  The group  $H:=\groupoid(M,M)$ is a Borel subgroup of $Sym(\Omega)$. In fact, it is at a finite level on the Borel hierarchy, since $\ee$, or the associated groupoid, is definable.
  Let $\bH$ be the closure of $H$ in $Sym(\Omega)$.    
  Note that $\bH$ is a perfect Polish space:  if $\bH$ has an isolated point, then by translating this point to the identity element $1$ we see
 that $1$ is isolated.  Thus there exists some tuple $a \in M^n$ such that the only element $h$ of $\bH$ with $h(a)=a$ is the identity.
 In particular this is true for $Aut(M)$ since $Aut(M) \subset \groupoid(M , M)=H$.   But that contradicts the assumption that 
 $M$ is homogeneous and resplendent. 
  
   If $H$ is non-meager in $\bH$, then by  \cite{pettis}
  $H=H H \inv$ contains an open neighborhood of the identity of $\bH$. Hence $H$ is open. Since $\bH$  is a topological group, this implies that it is closed in $\bH$. Thus $H$ must equal $\bH$, and thus $H$ is closed in $Sym(\Omega)$, and we are done in this case. 
  
  So consider the case where $H$ is meager in $\bH$. Then by   \cite[Theorems 3.4.3 and 3.4.5]{bk},  
     the  right coset equivalence relation of  $H$, namely $Hx=Hy$,  is not smooth. 
     However, note that for $g \in Sym(\Om)$, we have
   $M' \ee M''$ then $gM' \ee gM''$, since $g$ is an isomorphism from the $\lpluslprime$-structure $(M',M'')$ to $(gM',gM'')$, and $\ee$ is bipartite
definable, and thus  invariant under
   isomorphisms of the $\lpluslprime$-structures.   Thus for $g_1,g_2 \in \Sym(\Om)$, we see that $g_1,g_2$ are right $H$-conjugate iff $g_1 =g_2 h$ for some $h \in H$
   iff $g_2 \inv g_1 \in H$ iff $g_2 \inv g_1 M \ee M$ iff $g_1M \ee g_2 M$ iff $e(g_1M) = e(g_2M)$ where $e$ is a Borel function giving a complete invariant for $\ee$.
   This shows that  the right $H$-coset equivalence relation is smooth on $Sym(\Om)$ and in particular on $\bH$, a contradiction.
    \eprf

   \begin{rem}  While smoothness is defined with respect to the class of all models of $T$ with universe $\Om$, the proof shows that smoothness
   on the (Borel) subset of homogeneous models of $T$ with universe $\Om$ implies smoothness in full; likewise when restricted only to copies of the saturated model, if there is one, etc.
\end{rem}

We can use the characterization to argue that
 ycleptness is preserved under finite-index refinements.  There should be a common generalization of \corref{cor:finiteindex} and \lemref{fewclasses}, but we will not pursue it here.

\begin{cor} \label{cor:finiteindex} Let $\ee_0$ be an yclept  $\mer$.
Let $\ee$ be a $\bimer$ refining $\ee_0$, such that each $\ee_0$-class splits into finitely many  $\ee$ classes. 
Then $\ee$   is yclept.  \end{cor}

\prf  Since $\ee_0$ is yclept, for a suitable (CL) sublanguage $L_0$ of (the CL language generated by) $L$ we have $M \ee_0 N$  iff $M \upharpoonright L_0 = N \upharpoonright L_0$. 
Let $T_0 = T | L_0$. Note that $T_0$ is a CL theory.
Let $\Mod_{r} (T_0)$ be the set of sufficiently saturated   models of $T_0$ with universe $\Nn$ (or a finite initial segment). We defer the details of the saturation assumption, but refer to the analogs of resplendency for CL theories. 
Let $Z_{\geq k}$ be 
the set of $M_0 \in Mod_{r}(T_0)$ that expand to  $k$ pairwise $\ee$-inequivalent models of $T$.  We observe that $Z_{\geq k}$ is Borel. This follows from the fact the existence of such an expansion reduces to a consistency statement:
if an elementary extension $M^*$ has such an expansion, so does $M$ by the saturation assumption.

Let $Z_k^0=Z_{\geq k} \m Z_{\geq k+1}$.
Then $Z_k^0$ is Borel.   By the sentence just before the beginning of Section \secref{sec:defs},
we can find a Borel section $b$ from $Z_k^0$ to $k$-tuples of expansions, so $b(M_0) = (b_1(M_0),\ldots,b_k(M_0))$ 
witnesses the $k$ inequivalent classes.   Let $k(M)$ be the unique $k$ such that $M \upharpoonright L_0 \in Z_k$. Such a $k$ exists by the assumption on $\ee$.

Further define a Borel function $c: \Mod_r(T) \to \Nn$, with $c(M)=c$ iff
 $M ~ \ee  ~ b_{c}(M \upharpoonright L_0 )$. Thus $c(M)\leq k(M)$.   Now for $M,N$ resplendent models of $T$ 
 we have $M \ee N$ iff $k(M)=k(N)$, $c(M)=c(N)$, and
$M \upharpoonright L_0 =N \upharpoonright L_0$.  This shows that $\ee$ is smooth on resplendent models of $T$, and hence yclept.

\eprf

We now restate, in a slightly  more general form, what we have learned thus far about $\bimer$s.

\begin{thm} \label{thm:aerelative} 
Let $T$ be a theory in language $L$ And  $\ee$ be a $\bimer$ for $T$.
The following are equivalent:
\begin{itemize}
\item $\ee$ can be defined by a $\Pi_2$ sentence.
\item $\ee$ can be defined by a $\Pi_1$ sentence.
\item  For any $M \models T$, $Aut_{\ee}(M)$ is closed in $Sym(M)$.
\item  $\ee$ is  yclept
\item  
$\ee$ is closed in the set of pairs of models
of $T$ with the same underlying universe for the coupled sorts,
for the topology where $\{M : M \models \phi(\vec a))\}$ is basic open.
\item  (equivalent for $L$ countable) 
$\ee$ is smooth
as a Borel equivalence relation on models of $T$. 
\end{itemize}
\end{thm}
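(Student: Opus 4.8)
The plan is to organize the equivalences into a cycle that mostly recycles results already established, with only one genuinely new implication needed. I would first observe that the second, third and fourth bullets are already tied together: $\Pi_1$-definability (a universal $\lpluslprime$ sentence) gives a closed groupoid by the standard compactness/pointwise-limit argument (a universal sentence is preserved under the limit of a pointwise-convergent net of bijections in the groupoid, as sketched in the proof of Corollary \ref{cor:closedyclept}), and Corollary \ref{cor:closedyclept} then yields ycleptness; conversely ycleptness gives closedness of $Aut_\ee(M)=\groupoid(M,M)$ for $\groupoid$-resplendent $M$ by that same corollary, and then Theorem \ref{thm:approxequiv} upgrades ``yclept $\bimer$'' to ``defined by a universal $\lequalslprime$ sentence'', i.e.\ $\Pi_1$. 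The first bullet follows from the second trivially, and implies the fourth by Theorem \ref{thm:ae}; this closes the loop $\Pi_2 \Rightarrow \text{yclept} \Rightarrow \Pi_1 \Rightarrow \Pi_2$, with ``closed'' folded in via Corollary \ref{cor:closedyclept}. The only subtlety here is the quantifier-over-models in the third bullet: Corollary \ref{cor:closedyclept} is phrased for $\groupoid$-resplendent models, and for a $\bimer$ ordinary resplendence suffices, so I would note that closedness for one resplendent $M$ (in the complete case) or for all resplendent $M$ (in general, reducing to completions via Proposition \ref{yclept-conts}) already implies ycleptness, and conversely ycleptness gives closedness of $Aut_\ee(M)$ for \emph{every} $M \models T$ by the explicit argument (a permutation not fixing some CL relation $\rho$ fails to fix it on a whole basic open neighborhood).

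Next I would handle the fifth bullet, closedness of $\ee$ in the space of pairs of models on a fixed universe with the logic (``formula'') topology. Here the equivalence with ycleptness is essentially a reformulation of Corollary \ref{cor:closedyclept} transported from $Sym(M)$ to the space of pairs: a net $(M_i, N_i) \to (M,N)$ in this topology, with all pairs $\ee$-equivalent, can be analyzed by passing to an ultraproduct as in the proof of Proposition \ref{prop:maxred}(2) — the limit pair satisfies the partial type $Th(\groupoid)$ read off the $\lpluslprime$ presentation because $\ee$ is $\bigwedge$-definable — so $\ee$ closed is equivalent to $\groupoid$ being $\bigwedge$-definable by a universal type, which by the already-established chain is equivalent to ycleptness. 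I expect this to be short once the direction ``$\Pi_1 \Rightarrow$ closed in the formula topology'' is spelled out (a universal sentence defines a closed set in precisely this topology, basically by definition of the topology) and the converse is obtained from ``closed $\Rightarrow$ yclept $\Rightarrow \Pi_1$'' above.

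Finally, for the last bullet (the countable-$L$ case), I would simply invoke Theorem \ref{thm:smooth}, which already states ``$\ee$ yclept iff $\ee$ smooth'' for $\inftybimer$s, hence in particular for $\bimer$s; nothing new is needed. The one place to be careful, and the step I expect to be the main obstacle, is making the quantifier ``for any $M \models T$'' in the closedness bullet genuinely match the ``for some resplendent $M$'' / ``for all $\groupoid$-resplendent $M$'' phrasings of Corollary \ref{cor:closedyclept}: I would dispatch this by first reducing to complete $T$ (using Propositions \ref{ydlept-conts} and \ref{yclept-conts}, which transfer both the ydlept and yclept characterizations from completions to $T$, and noting that $\Pi_1$/$\Pi_2$-definability and closedness are likewise checked completion-by-completion), and then observing that for complete $T$ all resplendent models of a fixed cardinality are isomorphic, so closedness of $Aut_\ee(M)$ for one such model propagates, via Proposition \ref{prop:groupoidsubgroup}, to all models and in particular gives the ``for any $M$'' form in the easy direction from ycleptness. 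With that bookkeeping in place the theorem is just the assembly of Corollary \ref{cor:closedyclept}, Theorem \ref{thm:approxequiv}, Theorem \ref{thm:ae}, Corollary \ref{cor:ae} and Theorem \ref{thm:smooth} into a single statement.
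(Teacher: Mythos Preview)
Your proposal is correct and matches the paper's treatment: the theorem is explicitly presented there as a restatement ``in a slightly more general form'' of results already proved (Theorem~\ref{thm:ae}, Corollary~\ref{cor:ae}, Theorem~\ref{thm:approxequiv} and its corollary, Corollary~\ref{cor:closedyclept}, and Theorem~\ref{thm:smooth}), with no separate proof given. Your extra bookkeeping on the ``for any $M$'' versus ``for resplendent $M$'' issue and on the formula-topology bullet is appropriate and fills in details the paper leaves implicit.
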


We can see from the proof of Theorem \ref{thm:smooth} that for $T$ complete, quantification over all models 
can be replaced by considering any resplendent model.

We now apply these characterizations to get a  result classifying the $\bimer$s for a given theory.
 
 \begin{defn} 
Say that a  theory $T$ is:
\begin{itemize}
\item  \emph{$\forall$-ydlept} if every  $\bimer$
 is given by a DL reduct  
\item \emph{$\forall$-yclept} if every  $\bimer$ 
is given
by a CL reduct
\end{itemize}
\end{defn}

Corollary \ref{cor:unary} shows that certain theories are $\forall$-yclept.
Let us show that the property is closed under reducts.

\begin{prop}\label{prop:reducts}     Let $L$ be a language, $L_0$ a sublanguage with the same sorts, $T_0=T|L_0$. If  a $T_0$ $\inftybimer$ is yclept when viewed as
a $T$-equivalence relation, then it is yclept as a $T_0$-equivalence relation.   

Thus if $T$ is $\forall$-yclept,  any reduct $T_0$ is also $\forall$-yclept.

\end{prop}

When  $T$ is complete and one-sorted, the contrapositive of the first statement reads:    if an $\inftybimer$ is not yclept, then expanding the theory within the same sort cannot make it yclept.

\prf  Let $\modelsof_{rs} (T_0)$ be the set of recursively saturated models of $T_0$ with universe $\Omega=\Nn$.  
Following a variation on Henkin's construction, any recursively saturated model $M_0$ of $T_0$ can be extended in a deterministic way to a model of $T$;
this is the theorem that recursively saturated models are resplendent.  This construction can be represented within $Th(\Nn,M_0,T)$. Indeed
it can even be represented in  Peano arithmetic with predicates for $M_0$ and for $T$, induction axioms allowing these predicates.   Hence 
 there is a   $Th(\Nn,M_0,T)$-definable expansion of $M_0$ to a model of $T$. 
This gives
 a Borel section $s:\modelsof_{rs}(T_0) \to \modelsof_\Omega(T)$.  Any $\ee_0$ on $T_0$ also defines an equivalence relation $\ee$ on
$\modelsof_\Omega(T)$, namely $M \ee M' $ iff $M_0 \ee_0 M'_0$ where $M_0,M'_0$ are the reducts to $L_0$.
If $\ee$ is smooth, there is a Borel   function $e$ with $e(M)=e(M')$ iff $M \ee M'$. But then $e \circ s$ is a Borel invariant function for  $\ee_0$, and hence  $\ee_0$ seen as an equivalence
on $\modelsof_{\Omega} (T_0)$ is smooth too. We now have an $\inftybimer$ which is smooth over resplendent models, and this implies that it is yclept, as noted after Theorem \ref{thm:aerelative}. 
\eprf

\propref{prop:reducts}  can also be proved using a continuous logic version of \cite{makkaibeth}, 
along the same lines as the ydlept case below (\lemref{prop:dreducts}).   We still give the descriptive set-theoretic proof 
in order to demonstrate the method.   

For readers interested in descriptive set theory we point out a generalization, and a question.

\begin{prop}   Let $L_0 \leq L$, $T_0=T|L_0$, let $\ee_0$ be a $\mer$ of $T_0$ inducing a $\mer$ $\ee$ of $T$.   Then $\ee_0 | Mod_{\Nn}(T_0)$
is Borel-reducible to $\ee | Mod_{\Nn}(T)$ as  Borel equivalence relations.    
\end{prop}

To see this, find a Borel map from pairs $(M_0,N_0) \in \Mod_{\Nn}(T_0)$ to pairs $(M,N) \in \Mod_{\Nn}(T)$ with $(M_0,N_0) \prec (M,N)$.   Then $(M_0,N_0) \in \ee_0$ iff $(M,N) \in \ee)$.

As a special case, if $\ee$ is smooth then so is $\ee_0$.   

We note in this connection that an $n$-ydlept $\mer$ is $\Pi_{2n}$-definable. 
\begin{question}
Is it  true that a $\Pi_{2n}$-definable, $m$-ydlept $\mer$ is $n$-ydlept?  
 \end{question}

Here is the ydlept version of \propref{prop:reducts}.

\begin{prop}\label{prop:dreducts}  Let $L$ be a language, $L_0$ a sublanguage with the same sorts. Let $T$ be a theory over $L$ and $T_0=T|L_0$.  If $\ee_0$ is an $\inftybimer$ for $T_0$  and is ydlept when viewed as
a $T$-equivalence relation, then it is ydlept. 

Thus if $T$ is $\forall$-ydlept, any reduct $T_0$ is also $\forall$-ydlept. \end{prop}

\prf  Let $\ee$ denote $\ee_0$ lifted to $T$. 

For simplicity, assume $\ee$  is ydlept via the reduct to one relation $R$.  
Then for any models $M_0, N_0$ of $T_0$, for any expansion $M$ of $M_0$ and  $N$ of $N_0$ to $T$, if  $M_0 \ee_0 N_0$, we must have $R(M)=R(N)$. 
Assume $M_0$ admits an expansion to $T$, and take  $N_0:=M_0$.  Then  there  is a unique expansion of $M_0$ to $L_0 + \{R\}$ such that $T| L_0 + \{R\}$ holds.  

By Beth's theorem, $R$ is explicitly definable in $T$, by some $L_0$ formula $\phi$.   Thus for any $M_0, N_0$ which have an extension to $T$ we  have $M \ee_0 N$ iff  $\phi(M)=\phi(N)$.

In general, $\ee_0$ lifted to $T$ is ydlept via $R_1,R_2,\ldots$, and the same proof shows each of these is explicitly definable by an $L_0$ formula $\phi_i$,
and $M \ee_0 N$ iff  $\phi_i(M)=\phi_i(N)$ for each $i$.

\eprf 

We believe that the analogous result can be proven for $k$-ydlepts, making use of Chang-Makkai, but have not verified this.

We can use the results on reducts to get more examples of $\forall$-yclept theories.
 
\begin{cor}    \label{cor:forallydleptequivalencerelation} The theory of an equivalence relation with infinitely many classes, all of size $n$, is $\forall$-ydlept.
\end{cor}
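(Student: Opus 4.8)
The plan is to exhibit $T$ as a reduct of a theory covered by \corref{cor:unary}, conclude that $T$ is $\forall$-yclept, and then upgrade ``yclept'' to ``ydlept'' using smallness. Fix the class size $n$ and let $T'$ be the theory in the language with a single unary function symbol $s$ axiomatized by: $s^n(x)=x$ for all $x$ (which already forces $s$ to be a bijection), $s^i(x)\neq x$ for all $x$ and all $0<i<n$, and, for each $m$, the sentence asserting that there are $m$ elements lying in pairwise distinct $s$-orbits. A model of $T'$ is exactly a set whose $s$-orbits are $n$-cycles, with infinitely many orbits. Since $T'$ proves that its sole non-logical symbol $s$ is bijective, \corref{cor:unary} applies and $T'$ is $\forall$-yclept.

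Next, $T$ is the theory of a reduct of $T'$: in any model of $T'$ the formula $\phi(x,y):=\bigvee_{i=0}^{n-1} s^i(x)=y$ (where $s^i$ denotes the $i$-fold composite of $s$, with $s^0(x)=x$) defines an equivalence relation whose classes are exactly the $s$-orbits, hence one with infinitely many classes, all of size $n$; so the reduct of $T'$ to a single binary relation symbol interpreted by $\phi$ has theory precisely $T$. By \corref{cor:reducts}, $T$ is therefore $\forall$-yclept --- every $\bimer$ for $T$ is yclept. Finally, $T$ is $\aleph_0$-categorical (any two countable equivalence relations all of whose classes have size $n$ are isomorphic), hence small; so by \propref{prop:smallchar} every yclept $\mer$ over $T$ is ydlept. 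Combining the last two facts, every $\bimer$ for $T$ is ydlept, that is, $T$ is $\forall$-ydlept.

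There is essentially no real obstacle here: the proof is a routine assembly of results already established. The points worth a line of checking are that $T'$ is genuinely first-order axiomatizable in the unary-function language (it is, as displayed, using that $s^n=\mathrm{id}$ gives bijectivity); that the indicated reduct of $T'$ has the complete theory $T$ rather than a proper subtheory of it (it is complete, being $\aleph_0$-categorical with no finite models); and the degenerate case $n=1$, where $\phi(x,y)$ is just $x=y$, $T$ is the theory of an infinite pure set, and the argument goes through unchanged with $s$ the identity. Note that stability of $T$ alone, via the collapse of higher ydlepts (\corref{cor:finstable}), would yield the conclusion only for $n$-ydlept $\mer$s; the additional ingredient that makes \emph{every} $\bimer$ ydlept is the $\forall$-ycleptness coming from \corref{cor:unary}.
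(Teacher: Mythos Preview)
Your proof is correct and follows essentially the same route as the paper's: exhibit $T$ as a reduct of the theory of free $\Zz/n\Zz$-actions (your $T'$), invoke \corref{cor:unary} to get $\forall$-ycleptness of $T'$, pass to the reduct via \corref{cor:reducts}, and use smallness (\propref{prop:smallchar}) to upgrade yclept to ydlept. The paper also notes that one can alternatively use \propref{prop:dreducts} in place of the combination \corref{cor:reducts} plus smallness, but your chosen path is explicitly one of the two the paper indicates.
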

\prf It is a reduct of the theory of $\Zz/n \Zz$-actions, which is unary and hence $\forall$-yclept by \corref{cor:unary}.   By  Proposition \ref{prop:dreducts}, 
or by  \corref{prop:reducts} along with smallness,   it is ydlept.
\eprf

\ssec{Smoothness of definable equivalence relations on products} \label{subsec:product}
Mostowski and Feferman-Vaught studied the theories of  
infinite products of structures.  
We apply the smoothness characterization to obtain results on definable equivalence relations \emph{within} a model of such a theory: that is, the usual notion of ``definable equivalence relation''.

  Let $I$ be an index set, and $M_i$   $L$-structures.   Let $M=\prod_{i \in I} M_i$.      We will view $\prod_{i \in I} M_i$ with the language described
  in \cite{dh}, 2.1.  Namely there is an additional sort for the Boolean algebra $P(I)$, and for each formula $\phi$ of $L$ in $n$ variables,
  a function symbol $[\phi]$ in the same $n$ variables, taking values in $B$.  The intended interpretation is $[\phi](a)=\{i: M_i \models \phi(a(i))\} \in B$.
  
  When the $M_i$ and $M$ are countable, $\prod_{i \in I} M_i$ has a natural separable Polish space structure, and in particular a Borel structure.

\begin{thm}   Let $I$ be a countable index set, and $M_i$ countable $L$-structures.   Let $M=\prod_{i \in I} M_i$.    
 Then any definable equivalence relation on $M^n$ is smooth.  
\end{thm}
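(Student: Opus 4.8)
The plan is to combine the Feferman--Vaught analysis of the product language with basic descriptive set theory, ultimately reducing to the smoothness of a very constrained equivalence relation on a space of ``counting functions''. We may assume $I$ is infinite, say $I=\Nn$, since for finite $I$ the product $M$ is countable and every equivalence relation on $M^n$ is trivially smooth. We may also absorb the parameters defining $E$: replacing each $M_i$ by its expansion naming the corresponding coordinates of the parameter tuple reduces us to the case that $E$ is $\emptyset$-definable in the product language of \cite{dh}, 2.1; we treat variables ranging over the product sort, the Boolean-algebra sort being handled similarly.

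First I would invoke the Feferman--Vaught theorem: the formula $\varepsilon(\bar x,\bar y)$ defining $E$ is equivalent, over $\mathrm{Th}(\prod_i M_i)$, to a Boolean combination of first-order assertions in the Boolean algebra $P(I)$ applied to finitely many spectrum terms $[\psi_1](\bar x,\bar y),\dots,[\psi_r](\bar x,\bar y)$, where $\Delta=\{\psi_1,\dots,\psi_r\}$ is a finite set of $L$-formulas in $2n$ variables. Since the first-order theory of the atomic Boolean algebra $P(\Nn)$ is controlled, by Tarski's analysis, by the truncated numbers of atoms below finite Boolean combinations of the parameters, the truth value of $E(\bar a,\bar b)$ depends only on the truncated-at-$N$ counting function $q\mapsto \#\{i:\ \text{the joint }\Delta\text{-type of }(\bar a(i),\bar b(i))\text{ in }M_i\text{ is }q\}$ for some fixed $N$, i.e.\ on membership of a single finite count-vector $w(\bar a,\bar b)$ in a fixed set $\mathcal C$. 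Meanwhile, because each $M_i$ is a fixed countable structure, for each $i$ the isomorphism type of $(M_i,\bar a(i))$ ranges over a countable set as $\bar a$ varies, and these types altogether range over a fixed countable set $R$; so $h\colon \bar a\mapsto \bigl(\rho\mapsto |\{i:(M_i,\bar a(i))\cong\rho\}|\bigr)\in(\omega\cup\{\infty\})^R$ is a Borel map into a Polish space. If $h(\bar a)=h(\bar b)$, one pastes coordinatewise isomorphisms along a bijection of $\Nn$ matching equal types to obtain an automorphism of the product structure carrying $\bar a$ to $\bar b$; as $E$ is definable and hence invariant under automorphisms of the product, $h(\bar a)=h(\bar b)$ implies $E(\bar a,\bar b)$. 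Thus $E$ is coarser than the smooth equivalence relation ``$h(\bar a)=h(\bar b)$'', and it suffices to exhibit a Borel map $e$ factoring through $h$ with $e(\bar a)=e(\bar b)\iff E(\bar a,\bar b)$.

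The hard part is this last step: producing $e$ amounts to coarsening the counting function $h$ in the way dictated by the Feferman--Vaught data. Concretely, the induced relation $\bar E(g,g')$ on the range of $h$ is ``$\mathcal W(g,g')\subseteq\mathcal C$'', where $\mathcal W(g,g')$ is the finite set of joint count-vectors realizable by pairs $(\bar a,\bar b)$ with $h(\bar a)=g$, $h(\bar b)=g'$; the real content is that this is genuinely a function of $(g,g')$, and this is exactly where one must use that $E$ is an equivalence relation rather than an arbitrary definable relation — reflexivity, symmetry and transitivity are used to move between different ``couplings'' of the two coordinatewise profiles and show that the truth value of the Feferman--Vaught condition is insensitive to the coupling. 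One then checks that $(g,g')\mapsto\mathcal W(g,g')$ is Borel and factors through a continuous map of $g$ into a compact metrizable space of truncated counts on which $\bar E$ becomes manifestly smooth; composing gives $e$ and finishes the proof. The main obstacle throughout is the bookkeeping controlling how the genuinely ``cross'' formulas of $\Delta$ (those relating $\bar x$ and $\bar y$ nontrivially) interact with the counting invariants; everything else is routine once the Feferman--Vaught reduction and the orbit-equivalence observation are in place.
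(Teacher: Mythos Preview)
Your argument has a genuine gap at its central step.  The assertion that ``$h(\bar a)=h(\bar b)$ implies $E(\bar a,\bar b)$'' is false, and with it the claim that $E$ factors through $h\times h$.

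Take the simplest case: $E$ is equality on $M^n$, with all $M_i$ equal to a fixed infinite countable structure.  There are continuum many tuples $\bar a$ sharing any given value $h(\bar a)$ (e.g.\ all sequences in which every coordinate type occurs infinitely often), yet no two distinct such tuples are $E$-equivalent.  Invariance of a $0$-definable $E$ under automorphisms says only that $E(\bar a,\bar b)\iff E(g\bar a,g\bar b)$; it does \emph{not} say that orbit-equivalent tuples are $E$-related.  So ``$E$ is coarser than $\ker h$'' is simply wrong, and the reduction to producing an $e$ defined on the range of $h$ collapses.

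Your final paragraph locates the difficulty in the right place but misdiagnoses it as bookkeeping.  The obstruction is intrinsic: the Feferman--Vaught data for $E(\bar a,\bar b)$ involves the joint $\Delta$-type of $(\bar a(i),\bar b(i))$ at the \emph{same} index $i$, and this coupling information is not a function of the separate profiles $h(\bar a)$, $h(\bar b)$.  No appeal to reflexivity, symmetry and transitivity can manufacture it---equality already satisfies all three.

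The paper takes an entirely different route.  It passes to the dual disjoint-sum structure $M_*=\coprod_i M_i$ and regards an $n$-tuple $\bar a\in M^n$ as an expansion of $M_*$ by $n$ unary section-predicates.  Feferman--Vaught then shows that $E$ translates to a $\mer$ on such expansions defined by a Boolean combination of universal and existential sentences, in particular $\Pi_2$.  Smoothness follows from the paper's structural theorem that $\Pi_2$ $\mer$s are yclept, hence smooth (\thmref{thm:aerelative}).  It is precisely this detour through $\mer$s and the yclept characterization that extracts the full content of ``$E$ is an equivalence relation''; a direct combinatorial attack on the FV invariants does not.
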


The rest of this subsection will provide the proof of theorem, which will make use of the smoothness characterization for $\mer$s.

\prf  Along with the product we will consider a dual structure, $M_*:=\coprod_{i \in I} M_i$.  As a set it is the  $I$-indexed disjoint sum; each basic relation
$R$   is interpreted as $\coprod R(M_i)$; and we add another sort $I$ and a  map  $\varpi:  M_* \to I$ whose fibers are the $M_i$.  The theory $T_*$ describes
this structure, asserting that there are no relations across different $\varpi$-fibers, and that each $\varpi$-fiber is a model of $T$.  Let $T_*[n]$ be the expansion
by symbols for $n$ functions $s$ from $I$ to $M_*$.  The theory will express that  $\varpi \circ s (i) = i$ for all $I$, and similarly for any element of the product $M= \prod_{i \in I} M_i$.

 Note that any  function from $I$ to $M_*$
 is entirely determined by the image $s(I) \subset M_*$. So we can treat $s$ as a unary predicate.   
 
\begin{claim}  Under the correspondence defined immediately above,  any  definable subset  of $M^n$   maps to an elementary subset of the space of expansions 
$(M_*,u_1,\ldots,u_n)$ of $M_*$.  More precisely it is the set of models expanding $M_*$  of $T_*[n]$ along with a certain 
  Boolean combination of universal and existential sentences.  
\end{claim}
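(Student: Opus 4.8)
The plan is to translate the defining condition of a definable $D\subseteq M^n$ from the product language into the language $L_*\cup\{u_1,\ldots,u_n\}$ of $T_*[n]$, using the Feferman--Vaught theorem to eliminate the quantifiers over the Boolean-algebra sort $P(I)$. Under the correspondence set up above, a tuple $\bar a=(a_1,\ldots,a_n)\in M^n$ is encoded by the unary predicates $u_j:=\{\,a_j(i):i\in I\,\}\subseteq M_*$, and the tuples $(u_1,\ldots,u_n)$ so arising are exactly those making $(M_*,u_1,\ldots,u_n)$ a model of $T_*[n]$ (each $u_j$ a transversal of $\varpi$). Thus it suffices to produce a Boolean combination $\sigma$ of universal and existential $L_*\cup\{u_1,\ldots,u_n\}$-sentences with $\bar a\in D\iff(M_*,\bar u)\models\sigma$ for every $(M_*,\bar u)\models T_*[n]$.

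First I would Morley-ize $L$ (each $M_i$ expanding canonically; this does not enlarge the family of product-definable subsets of $M^n$, since each new $[\phi^{\mathrm{new}}]$ coincides with an already-present $[\psi]$), so that all auxiliary $L$-formulas below may be taken quantifier free. Then I invoke the Feferman--Vaught reduction: the product formula $\psi(\bar x)$ defining $D$ is equivalent, uniformly in $(M_i)_{i\in I}$, to a Boolean combination of conditions of the form ``the set $I_S(\bar x):=\bigcap_{j\in S}[\theta_j](\bar x)\cap\bigcap_{j\notin S}\bigl(I\setminus[\theta_j](\bar x)\bigr)$ has at least $m$ elements'', where $\theta_1,\ldots,\theta_k$ are quantifier-free $L$-formulas in $\bar x$, $S\subseteq\{1,\ldots,k\}$, and $m$ ranges over a finite set; here one uses that $P(I)$ is atomic, so the truth of a fixed first-order Boolean-algebra formula in $P(I)$ with finitely many named elements is determined by the cardinalities of the constituents, truncated at a bound depending only on its quantifier rank (alternatively one keeps all $m$ and $S$, at the cost of infinitely many sentences, each still of the required shape).

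Now the translation, performed modulo $T_*[n]$. For an element $i$ of the sort $I$ and a quantifier-free $L$-formula $\theta(\bar x)$, the condition $i\in[\theta](\bar a)$, i.e.\ $M_i\models\theta(a_1(i),\ldots,a_n(i))$, is expressed by the existential formula
\[
\exists y_1\cdots\exists y_n\Bigl(\textstyle\bigwedge_{l=1}^n\bigl(u_l(y_l)\wedge\varpi(y_l)=i\bigr)\;\wedge\;\theta(y_1,\ldots,y_n)\Bigr);
\]
crucially, since modulo $T_*[n]$ the witnesses $y_l$ exist and are unique, the complementary condition $i\notin[\theta](\bar a)$ is \emph{also} equivalent to an existential formula (replace $\theta$ by $\neg\theta$). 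Hence ``$i\in I_S(\bar a)$'' is a conjunction of existential formulas in the single variable $i$, so ``$I_S(\bar a)$ has at least $m$ elements'' is existential, namely $\exists i_1\cdots\exists i_m\bigl(\bigwedge_{l<l'}i_l\neq i_{l'}\wedge\bigwedge_l(i_l\in I_S(\bar a))\bigr)$, and its negation is universal. Substituting these into the Feferman--Vaught Boolean combination yields the required $\sigma$, and the expansions of $M_*$ attached to $D$ are then precisely those expanding it to a model of $T_*[n]\cup\{\sigma\}$ (or of $T_*[n]\cup\Sigma$ in the unbounded variant).

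The hard part is the middle step: pinning down the Feferman--Vaught reduction in a form that displays the dependence on the support sets $\{i:M_i\models\theta_j(\bar a(i))\}$, together with the fact that first-order conditions on the atomic algebra $P(I)$ with finitely many named elements reduce to constituent-cardinality conditions. Granting that, the last paragraph is bookkeeping, the one load-bearing point being that $T_*[n]$ forces the section witnesses to be \emph{unique}: that is what keeps both $i\in[\theta]$ and $i\notin[\theta]$ existential, and hence prevents $\sigma$ from climbing past a Boolean combination of universal and existential sentences (into $\Sigma_2$).
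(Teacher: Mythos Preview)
Your proposal is correct and follows essentially the same route as the paper: Feferman--Vaught reduction, quantifier elimination for the atomic Boolean algebra $P(I)$ down to cardinality conditions on the sets $[\psi]$, and then translation of ``at least/at most $m$'' into existential/universal $L_*\cup\{u_1,\ldots,u_n\}$-sentences. The paper writes the translation more tersely using the $u_j$ as function symbols and then remarks that passing to unary predicates preserves the quantifier complexity, whereas you make this explicit via the uniqueness of section witnesses in $T_*[n]$; these are the same observation.
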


\prf  
By the Feferman-Vaught quantifier-elimination (as described e.g. in Section 2.1 of \cite{dh}),   a formula of $\prod_{i \in I} M_i$ takes the form  $P([\phi_1],...,[\phi_n])$,
where $[\phi]$ is the Boolean truth value of $\phi$, a subset of $I$,  and $P$ is some formula of atomic Boolean algebras.    
Keeping in mind quantifier elimination in Boolean Algebras, we can assume that this is a Boolean
combination of assertions that some $[\psi]$ contains at most $m$ atoms, or at least $m$ atoms.   And this can be expressed by the universal sentence:
\[(\forall t_0 \in I) \ldots (\forall t_m \in I) \bigvee_{i\leq m}  \ \psi(  u_1(t_i),\ldots,u_n(t_i))   \]
and the obvious existential sentence for the ``at least'' case.
Of course if we treat $u_i$ as a unary predicate on $M_*$ rather than as a function, we can either universally or existentially introduce variables $z_{i,j}$ for $u_j(t_i)$, 
with $u_j(z_{i,j})$ and  $\varpi(z_{i,j})=t_i$, so it does not change the quantifier complexity.
\eprf

Now assume $E(x_1,\ldots,x_n; y_1,\ldots,y_n)$ is a definable equivalence relation on $M^n$.    By the claim, $E$ induces a $\bimer$ $\ee$ on models
of $T_*[n]$, refining the ydlept equivalence relation associated to the reduct $T_*$ of $T_*[n]$.  And furthermore $\ee$ is defined by a Boolean combination of universal (and existential) sentences, in particular $\Pi_2$.   Hence by Theorem \ref{thm:aerelative} $\ee$ is yclept. 
 By the smoothness characterization  -- the easier direction of Theorem \ref{thm:aerelative} -- it defines a smooth equivalence relation $\ee'$
on the space of expansions of $M_*$ to $T_*[n]$.   But we have a bi-continuous identification of this space of expansions with $M^n$, identifying $E$ and $\ee'$.
Thus $E$ is smooth on $M^n$. \eprf

\section{$\mer$s with small classes and few classes} \label{sec:smallfew}

 Let $T$ be a theory in a countable language.  We say that a   $\mer$ $\ee$ on $T$  has {\em small classes} if 
 for any countable set $\Om$, no    equivalence class of $\ee$ on  $\modelsof_{\Om}(T)$ has    size $2^{\aleph_0}$.    A full characterization of this class of $\mer$s   would be very interesting.  
Here we apply the results on smoothness to show that it is essentially a subclass of the $2$-ydlept class, fully determine it within the $1$-ydlepts, and give some intriguing examples 
of non-yclept $\mer$s with small classes.   We also characterize 
 $\mer$s with few classes.       
 
 \begin{lem}\label{lem:smallfinite}   Let  $T$ be a complete theory, and let $\ee$ be a  $\bimer$ with small classes.   Then $T$ essentially has a finite language, i.e. for some finite $L_1 \subset L$, every relation is $T$-equivalent to an $L_1$-formula.
 \end{lem}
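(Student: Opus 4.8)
The plan is to argue by contraposition: assuming $T$ has essentially infinite language, I will construct, on some countable universe $\Om$, an equivalence class of $\ee$ of size $2^{\aleph_0}$, contradicting the small-classes hypothesis. The starting point is the maximal CL reduct $\maxred{\groupoid}$ of the groupoid $\groupoid$ associated to $\ee$ (Proposition \ref{prop:maxred}), and the groupoid-vs.-subgroup correspondence (Proposition \ref{prop:groupoidsubgroup}): fixing a countable resplendent model $M$ with universe $\Om$, the $\ee$-class of $M$ is in bijection with $H\backslash Sym(\Om)$ restricted to the set of $g$ with $gM\models T$, where $H=\groupoid(M,M)$. So the $\ee$-class of $M$ has size $2^{\aleph_0}$ precisely when there are continuum-many cosets $Hg$ with $g^{-1}M$ a model of $T$ — equivalently, continuum-many pairwise $\ee$-inequivalent models of $T$ on $\Om$ that are isomorphic to $M$.

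The key step is to show that if the language is essentially infinite, then $\ee$ cannot be yclept, and moreover the failure is "wide" enough to produce continuum-many classes among isomorphic copies of a single model. First I would observe that if $\ee$ were yclept, then (by Remark \ref{rem:CLDLy} together with smallness considerations, or directly) the maximal CL reduct controls the class structure: the $\ee$-class of $M$ is a single $Aut_{\maxred{\groupoid}}(M)$-orbit inside the set of $L$-structures on $\Om$ elementarily equivalent to $M$ with the same $\maxred{\groupoid}$-reduct. The idea is then that if $T$ is genuinely infinite-language (so infinitely many pairwise non-equivalent formulas), one can find a single $\maxred{\groupoid}$-reduct admitting $2^{\aleph_0}$ expansions to models of $T$ lying in distinct $\ee$-classes; indeed, picking a resplendent $M$, the infinitely many "free" coordinates of the language give an infinite independent family of choices that the groupoid cannot all absorb, and branching over these yields a perfect tree of pairwise-inequivalent expansions. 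For the non-yclept case I would use Theorem \ref{thm:smooth}: if $\ee$ is not yclept it is not smooth, and a non-smooth Borel equivalence relation always has an uncountable (hence $2^{\aleph_0}$-sized, by a perfect-set argument) class — wait, that last implication is false in general, so the real work is to show that the \emph{specific} non-smoothness here comes from continuum-many copies of one model, not merely from continuum-many models up to isomorphism.

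The main obstacle, then, is exactly this: translating "essentially infinite language" into "$2^{\aleph_0}$ copies of a single resplendent $M$ in distinct $\ee$-classes." I expect to handle it as follows. Take $L_1\subseteq L$ finite; by assumption there is $\phi_{n+1}\in L$ not $T$-equivalent to any $L_1\cup\{\phi_1,\dots,\phi_n\}$-formula, so we get an infinite sequence of formulas $\phi_n$, each genuinely new over its predecessors. Realize a resplendent $M$ of $T$ on $\Om$ with large automorphism group. Because $\phi_{n+1}$ is not definable from the earlier data, inside $M$ one finds a tuple $\bar a_n$ whose $\phi_{n+1}$-value can be "flipped" by some $g_n\in Sym(\Om)$ that preserves the reduct $\maxred{\groupoid}$ on a prescribed finite region — if every such flip were already forced by $\maxred{\groupoid}$ we would contradict maximality of the CL reduct (Proposition \ref{prop:maxred}(1)) combined with the assumed essential infinitude. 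By a standard back-and-forth / amalgamation using resplendence, these local flips can be arranged on disjoint regions and composed along any branch of $2^\omega$, producing for each $\sigma\in 2^\omega$ a bijection $g_\sigma$ with $g_\sigma M\models T$, all $g_\sigma M$ isomorphic to $M$, and with $g_\sigma M \not\ee g_\tau M$ whenever $\sigma\neq\tau$ (distinguished by some $\phi_n$-value on the corresponding region, which by construction lies outside $\maxred{\groupoid}$, hence is not $\groupoid$-invariant — here one uses density, Proposition \ref{prop:maxred}(2), to convert "not $\maxred{\groupoid}$-invariant" into "genuinely moved, stably"). This yields $2^{\aleph_0}$ elements in one $\ee$-class, the desired contradiction. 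The delicate point is ensuring the flips on different branches remain $\ee$-distinguishable after composition, i.e.\ that the distinguishing formula survives the later modifications; this is secured by carrying out the construction on a sequence of \emph{disjoint} finite pieces of $\Om$ and invoking resplendence to freeze each piece once decided.
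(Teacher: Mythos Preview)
Your proposal has a fundamental error of direction, and it misses the elementary observation that makes the lemma a one-paragraph argument.

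\textbf{The direction error.} You say the $\ee$-class of $M$ has size $2^{\aleph_0}$ ``precisely when there are continuum-many cosets $Hg$ \ldots\ --- equivalently, continuum-many pairwise $\ee$-\emph{inequivalent} models of $T$ on $\Om$ isomorphic to $M$.'' That equivalence is wrong. The number of $\ee$-inequivalent isomorphic copies of $M$ is (essentially) $[Sym(\Om):H]$, while the size of the $\ee$-class of $M$ among isomorphic copies is $[H:\Aut(M)]$; these are entirely different quantities. Your tree construction then explicitly produces models with $g_\sigma M \not\ee g_\tau M$, i.e.\ many \emph{classes}, which does not contradict small classes at all. The definition of small classes requires producing a single $\ee$-class of size $2^{\aleph_0}$.

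\textbf{The missed idea.} The whole apparatus of maximal CL reducts, smoothness, and density is unnecessary here. The decisive point is that $\ee$ is a $\bimer$: it is defined by a \emph{single} $\lequalslprime$-sentence, which can mention only finitely many symbols of $L$. Let $L_0\subset L$ be the finite set of symbols occurring (primed or unprimed) in that sentence. Then whether $M\ee N$ depends only on the $L_0$-reducts of $M$ and $N$; in particular, any two expansions of a fixed countable $M_0\models T_0:=T\upharpoonright L_0$ to models of $T$ are automatically $\ee$-equivalent. Now argue the contrapositive: if $T$ is not essentially finite, one finds relations $R_1,R_2,\ldots$ with $R_{n+1}$ not definable from $L_0\cup\{R_1,\ldots,R_n\}$, and a standard tree construction yields $2^{\aleph_0}$ distinct expansions of a single $M_0$ to models of $T$. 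These all lie in one $\ee$-class, contradicting small classes. That is the paper's proof.
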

 
 \prf   Take $L_0$ a large enough finite sublanguage of $L$ mentioning any symbol used, either primed or unprimed, in $\lequalslprime$ to define $\ee$.  Then the $\ee$-class of $M$ is determined by $M \upharpoonright L_0$.  Let $T_0=T \upharpoonright L_0$.
 Then for any model $M_0$ of $T_0$, all expansions of $M_0$ to a model of $T$ are $\ee$-equivalent. 
 Suppose $T$ does not have an essentially finite language, i.e. no such $L_1$ exists.  Then there are $T$-definable elements $c_1,c_2,\ldots$
 such that no $c_n$ is definable in $L_0(c_1,\ldots,c_{n-1})$.  Now it is easy to build a tree $c_\eta:  \eta \in 2^{<\omega}$ such that for each $\eta$
 $tp_{M_0}(c_1,\ldots,c_n) = tp_{M_0}(c_{\eta \upharpoonright 0 },c_{\eta  \upharpoonright 1},\ldots,c_{\eta\upharpoonright n})$ and for any $\eta$, $c_{\eta ~ 0} \neq c_{\eta ~1  }$.  
  Any branch
 gives a different expansion of $M_0$ to a model $M$, a contradiction.  \eprf

 \begin{prop} \label{prop:smallclasses} Let $T$ be a   theory in a finite language $L$.  Let $\ee$ be a $\bimer$ on $\modelsof_{\Om}(T)$
 with small classes.  Then $\ee$ is $2$-ydlept.   \end{prop}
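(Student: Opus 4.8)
The plan is to realize $\ee$ \emph{directly} as the equivalence relation of preserving a single $\emptyset$-definable family of definable sets (a $2$-set); smallness of classes will force that family to have only countably many members in each countable model, and the Chang--Makkai theorem will then turn it into a genuine definable family.

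First I would reduce, via the opening move of the proof of Lemma~\ref{finiteT}, to the case where $L$ is a single relation symbol $R$ of some arity $k$, and, by Lemma~\ref{finiteT} itself, to the case where $T$ is finitely axiomatized (this last reduction needs a little care about inheriting small classes, or can be sidestepped by a compactness argument at the end). Fix a countable universe $\Omega$ and set, for $M\in\modelsof_{\Omega}(T)$,
\[
\mathcal F^{M}\ :=\ \{\,R^{N}\ :\ N\in\modelsof_{\Omega}(T),\ N\ee M\,\}\ \subseteq\ \mathcal P(\Omega^{k}).
\]
I claim $M\ee N$ if and only if $\mathcal F^{M}=\mathcal F^{N}$: the forward direction is transitivity of $\ee$, and conversely $R^{M}\in\mathcal F^{M}=\mathcal F^{N}$ gives $R^{M}=R^{N'}$ for some $N'\ee N$, whence $N'=M$ as $L$-structures (this is exactly where having a single relation is used) and $M\ee N$. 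Thus $\ee$ is already the equivalence relation ``preserve the family $\mathcal F$'', and the entire problem becomes to replace $\mathcal F$ by a $\emptyset$-definable family.

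Now smallness enters. For countable $\Omega$ every $\ee$-class on $\modelsof_{\Omega}(T)$ is Borel (definability of $\ee$), hence by the perfect set property has size $\le\aleph_{0}$, so $\mathcal F^{M}$ is countable for every countable $M\models T$. Let $\tau$ be an $\lequalslprime$-sentence defining $\ee$ and put $T':=T(R)\wedge T(R')\wedge\tau(R,R')$, a theory in the two-relation language with $R'$ distinguished; then $T'\upharpoonright\{R\}=T$, and for each countable $M\models T$ the set of $R'$-interpretations occurring in expansions of $M$ to a model of $T'$ is precisely $\mathcal F^{M}$, which is countable. By the Chang--Makkai theorem in the pseudo-elementary form recalled within the proof of Proposition~\ref{prop:istemandydleptcollapse}, each member of $\mathcal F^{M}$ is definable with parameters from $M$; a standard compactness argument over $\modelsof(T')$ (together with the fact that a finite union of $\emptyset$-definable families is a $\emptyset$-definable family, using $|M|\ge 2$) upgrades this to a single $L$-formula $\phi(\bar x,\bar y)$, $|\bar x|=k$, such that for every $M\models T$ every member of $\mathcal F^{M}$ has the form $\phi(M^{k},\bar c)$ for some $\bar c$; in particular $R^{M}=\phi(M^{k},\bar c_{0})$ for some $\bar c_{0}$.

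It remains to pin down the parameters. For $M\models T$ put $C^{M}:=\{\bar c:\phi(M^{k},\bar c)\in\mathcal F^{M}\}$, so $\mathcal F^{M}=\{\phi(M^{k},\bar c):\bar c\in C^{M}\}$. Now $\phi(M^{k},\bar c)\in\mathcal F^{M}$ says exactly that the pair of $L$-structures $\bigl((\Omega,R^{M}),(\Omega,\phi(M^{k},\bar c))\bigr)$ satisfies $\tau$ and that $(\Omega,\phi(M^{k},\bar c))\models T$; substituting $\phi(\bar t,\bar c)$ for each occurrence of $R'(\bar t)$ in $\tau$, and using that $T$ is finite, both conditions become a single $\emptyset$-$L$-formula $\chi(\bar y)$, so $C^{M}=\chi(M^{|\bar y|})$. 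Then the $2$-set of the $\emptyset$-formula $\psi(\bar x,\bar y):=\bigl(\chi(\bar y)\wedge\phi(\bar x,\bar y)\bigr)\vee\bigl(\neg\chi(\bar y)\wedge R(\bar x)\bigr)$ equals $\mathcal F^{M}$ in every $M\models T$ (the second disjunct only ever contributes $R^{M}\in\mathcal F^{M}$), and combining with the first paragraph shows $\ee$ is $2$-ydlept. The main obstacle is precisely the Chang--Makkai step: passing from bare countability of $\mathcal F^{M}$ to a \emph{uniform} definability --- one formula $\phi$ across all models, and one formula $\chi$ carving out the index set --- rather than a separate parametric definition for each member of each $\mathcal F^{M}$; the reductions via Lemma~\ref{finiteT} and the identification of $\ee$ with preservation of $\mathcal F$ are comparatively routine. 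Note that nothing here makes $\ee$ yclept, in keeping with the fact that $2$-sets need not be ydlept.
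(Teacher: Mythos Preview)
Your proof is correct and follows essentially the same route as the paper's: reduce to a single relation $R$, invoke Chang--Makkai to get a uniform $\phi(\bar x,\bar y)$ capturing all $\ee$-equivalent interpretations of $R$, and then modify $\phi$ so that \emph{every} instance lies in the $\ee$-class, yielding the required $2$-set. Your extra step of explicitly writing $M\ee N\Leftrightarrow\mathcal F^{M}=\mathcal F^{N}$ and then building $\chi$ to cut out the correct parameter set is just the paper's one-line ``modify $\phi$, setting it equal to $R(x)$ whenever the above fails'' spelled out in detail; you are also right to flag that the clause ``$(\Omega,\phi(\cdot,\bar c))\models T$'' needs $T$ finite, and that this can be handled by a compactness argument (choose finite $T_0\subset T$ large enough that $\tau$ is already an equivalence relation on $\Mod(T_0)$ \emph{and} that the Chang--Makkai conclusion $T_0(R)\wedge T_0(R')\wedge\tau\models(\exists\bar y)(R'=\phi(\cdot,\bar y))$ already holds) --- the paper's proof glosses over this point.
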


 Note that we do not assume completeness here. If $T$ is complete, the finite language assumption is not necessary,  by \lemref{lem:smallfinite}.

\prf  We may take the language to consist of a single relation symbol $R$.  Then in any countable model $M=(\Omega,R)$ of $T$, there are only countably
many solutions $R'$ to $R \ee R'$.  By the Chang-Makkai theorem \cite{changbeth,makkaibeth}, applied to the  $\lpluslprime$ sentence defining $\ee$,
it follows that there is a formula $\phi(\vec x, \vec y)$
over  $R$ such that in any pair $M \ee M'$  with the same domain, there is $\vec p$ in the common domain
such that $R'=\{\vec x ~ | ~  \phi(\vec x, \vec p)\}$. 
Conversely we may assume that for any parameter $\vec p$,
$\{x: \phi(x,\vec p)\} \ee R$.
This is because we can modify $\phi$, setting it equal to $R(x)$ whenever the above fails.

The formula $\phi$ thus witnesses that the relation is a $2$-ydlept.
\eprf

Although the above tells us quite a bit about $\bimer$ with small classes, it turns out that those that ydlept are even more restricted.

\begin{defn} \label{def:reductbyconstants}  Let $L_0 \subset L$ be languages and let $T$ be an $L$ theory. Let $T_0$ be the restriction of $T$ to $L_0$.  We say that $L_0$ is a
 {\em reduct by  constants} of $T$  if $T$ is equivalent to an expansion by constants of $T_0^{eq}$.
 In other words there exist  sorts $S_i$ of $T_0^{eq}$ and  $T$- $0$-definable elements $c_i$ of $S_i$, such that $T$ is obtained from $T_0$ by adjoining the $c_i$; i.e. for every relation $R$ of $T$ there exists a formula $\phi(x,u)$ of $T_0$
with $T \models R(x) \iff \phi(x,c)$, with $c$ a tuple of these $c_i$.   

We say $L_0$ is a  {\em reduct by a constant} of $T$ if this is true with a single sort $S$ and $c \in S$.
    \end{defn}

\begin{prop} \label{prop:small1}     Let $T$ be a   theory, and let   $\ee$ be a definable ydlept with small classes.   Let $L_\ee$ consist
of predicates for the
first-order definable sets of $L$ that are invariant by the groupoid of $\ee$, and let $T_\ee=T \upharpoonright L_\ee$.    Then $L_\ee$ is a reduct by a constant of $T$.  
\end{prop}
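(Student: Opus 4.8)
The plan is to run the argument of \propref{prop:smallclasses} one step further: use the small classes hypothesis to bound how many ways a model of the reduct $T_\ee$ expands to a model of $T$, apply the Chang--Makkai theorem to make each basic relation of $T$ definable over the reduct with parameters, and then observe that the \emph{canonical} such parameter is a $T$-$0$-definable imaginary element of $T_\ee$. First I would record that $L_\ee$ as defined does generate $\ee$: since $\ee$ is ydlept we have $\ee=\equiv_R$ for some DL reduct $R$, every formula of $R$ is invariant under the groupoid of $\ee$ and so lies in $L_\ee$, whence $\ee=\equiv_R=\equiv_{L_\ee}$; thus for $M,N$ on a common universe, $M\ee N$ iff $M\upharpoonright L_\ee=N\upharpoonright L_\ee$. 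Next I would reduce to the case that the language $L$ is finite (replacing $T$ by a definitional reduct to a finite sublanguage): when $T$ is complete this is exactly \lemref{lem:smallfinite}, and in general one either applies \lemref{lem:smallfinite} to each completion or repeats its binary-tree construction, an essentially infinite language producing $2^{\aleph_0}$ mutually $\ee$-equivalent expansions of a fixed reduct-model. After this reduction $L$, and with it the (countable) set of $\ee$-invariant formulas making up $L_\ee$, is countable, and $\modelsof_\Omega(T)$ is a Polish space.

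Now fix a countable universe $\Omega$ and a model $M_0\models T_\ee$ with universe $\Omega$. The set of $M\in\modelsof_\Omega(T)$ with $M\upharpoonright L_\ee=M_0$ is Borel (it is cut out by equations on the $L_\ee$-symbols), it is nonempty since $T_\ee=T\upharpoonright L_\ee$, and by the first paragraph it is a single $\ee$-class; as $\ee$ has small classes it has size $<2^{\aleph_0}$, hence being analytic it is at most countable by the perfect set property. Consequently, for every relation symbol $R$ of $L$ the family $\{R^M:\ M\models T,\ M\upharpoonright L_\ee=M_0\}$ is countable. This is precisely the hypothesis of the pseudo-elementary form of the Chang--Makkai theorem \cite{changbeth,makkaibeth}, applied with small theory $T_\ee$, big theory $T$, and distinguished relation $R$; so there is a single $L_\ee$-formula $\psi_R(x,y)$ such that for every $M\models T$ there is a tuple $a$ from $M$ with $R^M=\psi_R(M\upharpoonright L_\ee\,;a)$.

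Finally I would canonicalize the parameter. Let $Q_R$ be the imaginary sort of $T_\ee^{eq}$ of canonical parameters of the family defined by $\psi_R$, with associated membership relation $E_R(x,q)$ a formula of $T_\ee^{eq}$, and for $M\models T$ let $c_R^M\in Q_R(M\upharpoonright L_\ee)$ be the canonical parameter of $R^M$. Since $R^M$ is $0$-definable in $T$ (being the interpretation of a symbol of $L$), the element $c_R^M$ is the unique $q$ with $\forall x\,(E_R(x,q)\leftrightarrow R(x))$, an $L^{eq}$-sentence with the one free variable $q$; hence $c_R$ is a $T$-$0$-definable element of $Q_R$, and $T\vdash\forall x\,(R(x)\leftrightarrow E_R(x,c_R))$. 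With $L=\{R_1,\dots,R_k\}$ finite, set $S:=Q_{R_1}\times\dots\times Q_{R_k}$, a sort of $T_\ee^{eq}$, and $c:=(c_{R_1},\dots,c_{R_k})$, a $T$-$0$-definable element of $S$. Then each $R_i$ is $T$-equivalent to the $T_\ee^{eq}$-formula $E_{R_i}(x,\pi_i(u))$ with the single parameter $c$, and — undoing the reduction to a finite language — every relation of the original language, being $T$-equivalent to an $\{R_1,\dots,R_k\}$-formula, is $T$-equivalent to a $T_\ee^{eq}$-formula in $x$ and $c$; this shows $L_\ee$ is a reduct by a constant of $T$, completing the proof.

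The step I expect to be the main obstacle is making the reduction to a finite language fully rigorous when $T$ is not assumed complete, since \lemref{lem:smallfinite} is stated for complete theories and the tree argument must be reworked for an ydlept that need not be a $\bimer$; a secondary point needing care is the perfect-set/analyticity step, which is what lets us upgrade "size $<2^{\aleph_0}$" to "countable" and thereby meet the Chang--Makkai hypothesis as it is stated.
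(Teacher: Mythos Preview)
Your proof is correct and follows essentially the same route as the paper: reduce to a finite language via \lemref{lem:smallfinite}, apply Chang--Makkai to make each basic relation parametrically $L_\ee$-definable, and then pass to the canonical parameter in $T_\ee^{eq}$ to obtain a single $T$-$0$-definable constant. Your extra care with the perfect set step (bridging ``size $<2^{\aleph_0}$'' to ``countable'') and your flagging of the completeness hypothesis in \lemref{lem:smallfinite} are both well taken; the paper is terse on the former and implicitly assumes completeness for the latter (the incomplete case is explicitly deferred to the reader just after the proof).
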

\prf       By \lemref{lem:smallfinite},  we may   assume that $L$ is generated by a single relation $R$.  
 Since any countable model $M_0$ of $T^\groupoid$ expands in only countably
 many ways to a model $M=(M_0,R^M)$ of $T$, by \cite{makkaibeth},   $R^M$ is  $M_0$-definable with parameters.  
 By passing to an imaginary sort of $T_{\ee}^{eq}$, we may take $R$ to be defined by $\phi(x,b)$, where $b$ is a canonical parameter for $\phi$,
 and $\phi$ belongs to $L^\groupoid$, as defined in Definition \ref{def:languagetheoryofgroupoid}.  Then $b$ is a $T^\ee$-definable constant, and adding $b$ to $L_\ee$ gives us all of $L$ back.   
\eprf  

The same statement and proof hold in the yclept case, given that \cite{makkaibeth} extends to CL theories.
Based on \propref{prop:small1}   one can also describe the incomplete case; we leave the precise formulation to the reader.

We now give some examples of model equivalences  with small classes.
We will show that we cannot strengthen \propref{prop:smallclasses}  to conclude that such an equivalence relation is ydlept, or even yclept: see
 Example  \ref{ex:smallnotyclept} and \ref{ex:fincofinitemodeven}.

Recall from Proposition \ref{prop:smallclasses} that a $\bimer$ with small classes is  $2$-ydlept, for a finite language, or a  complete theory.
We give two examples 
of  $\bimer$s over a theory in a finite language  that have small classes, but which are not yclept.  The first involves an incomplete theory,  but in view of \remref{yclept-conts}, there is a completion in which
it is still non-yclept.  


\begin{example} \label{ex:smallnotyclept} 
Let $T_0$ be an extension of ZFC, or of a fragment of ZFC large enough to make sense of forcing, in  $L_0 = \{\e\}$.   
Let $L=L_0 \union \{Q\}$ with $Q$ a unary predicate.     For   $M \models T$, let $M_0$ be the reduct to $L_0$.  We will consider only countable models here.
Let $P$ be the Cohen forcing poset, say with universe $\omega$ in $M_0$.\footnote{We choose Cohen forcing  for definiteness}
Let   $T$ be the expansion of $T_0$ asserting that $Q$ is Cohen generic:   every dense subset of the Cohen forcing notion $P$
(represented by an element of $M_0$) meets $Q$ nontrivially.

Define $M \ee M'$ iff $M_0=M'_0$,    $Q' \in M_0[Q]$ and $Q \in M_0[Q']$.   

Then $\ee$  is a $\bimer$. We use that $Q' \in M_0[Q]$ exactly when: 

\begin{align*}
(M_0,Q,Q')  \models (\exists \tau \in Nset_P)(\forall p' \in P)( p' \in Q' \iff  (\exists p \in Q) p \Vdash \widehat{p'} \in \tau) \end{align*}
Here $\tau$ ranges over the set $Nset_P$ of $P$-names for subsets of $P$, $p,p' $ over elements of $P$, 
and $\widehat{p'}$ denotes the canonical name for $p'$. 
We used the definability in $M_0$ of the forcing relation $\Vdash $.

Clearly $\ee$ has countable classes; given $M_0,Q$, there are only countably many names $\tau$ and so countably many equivalent $Q'$.

  Let us show that it is not ydlept.  Suppose $R \subset M_0^n$ is a definable relation, with the same interpretation in any expansion $(M_0,Q')$ of $M_0$ equivalent to $(M_0,Q)$. We will show that $R$ is $0$-definable in $M_0$.    We can take $n=1$ here.  We 
have $R=\{x: \phi(x,Q) \}$.     If $a \in R$, then $\Vdash \phi(a,Q)$;   otherwise a finite modification $Q'$ of $Q$ will
have $\neg \phi(a,Q')$, so  $a \notin R(M_0[Q'])$, contradicting the assumption on $R$.  Similarly if $a \notin R$ then this is forced by the empty condition.  It 
follows that $R$ is $0$-definable in $M_0$, as $\{x: \ \  \Vdash x \in R' \}$.  
 Compare  \cite[parenthetical example in first paragraph of \S 6]{ah}. 
 
 One can use a similar proof to show that this model equivalence relation is not yclept. Alternatively, one can reference Corollary \ref{cor:ycleptand2ydlept} to note that if it were yclept, it would be ydlept, contradicting the above.
\end{example}

 \begin{example}  \label{ex:fincofinitemodeven} Let $T_0 = Th(B,J)$, where $B$ is the Boolean algebra of finite or cofinite subsets of $\Nn$, and $J$ is the ideal of finite subsets of $\Nn$.
 Note that $\Nn$ corresponds bijectively to the definable set of atoms of $B$, with the 
bijection mapping each number to the corresponding singleton. Let $Q$ be the set of atoms corresponding to even numbers,
 and let $T=Th(B,J,Q)$.    Note that $T$ is bi-interpretable with the product of two disjoint copies of $T_0$, one referring to the finite or cofinite sets of even numbers,
 the other to finite or cofinite sets of odd numbers; these theories admit quantifier elimination in a natural language  \footnote{The theory of
 atomic Boolean algebras with distinguished maximal ideals containing all atoms was determined, perhaps in a slightly different guise,  by Skolem \cite{skolem}
 see \cite{mostowski}}.
 
   Let $\ee $ be the equivalence relation:
 $M \ee M'$ iff $B(M)=B(M'), J(M)=J(M')$, and $Q \sd Q' \in J$.  Again $\ee$ has small classes but is not yclept. Details are left to the reader.

Note that by Proposition \ref{prop:smallclasses}, the example must be $2$-ydlept. The equivalence relation corresponds
to preserving $B$, preserving $J$ and preserving the definable family of sets $Q \sd j$ for $j \in J$.
 \end{example} 

For stable theories, $2$-ydlepts are ydlept.  We may ask:
\begin{question}
If $T$ is a simple theory, is  every model equivalence with small classes  yclept?  
\end{question}

 \begin{rem}  
 Recall that Proposition \ref{prop:small1} states that ydlepts with small classes must arise as a reduct
via constants. We note what happens in the case of a group.
Assume $T$ is an extension of the theory of groups.  In this case it is possible to remove the constant $1$ denoting the identity of the group to obtain a theory $T_0$, recover $T$ as $T_0(1)$
by adding $1$, and be sure that $T \neq T_0$.   This can be extended to expansions of the theory of groups under various assumptions
(originally of stability), and is  often very useful in the theory of definable groups and group actions.

Other examples of reducts by constants arise   among Cameron's five reducts of DLO, and Thomas's reducts of the random graph.
Reducts by constants also play an important role in the study of quasi-finite theories. 

A general criterion 
for when a theory $T$ admits a nontrivial reduct by constants would  be very interesting;
no such criterion is known to us.
\end{rem}

\begin{question}  \label{small-group}   Let $T$ be a complete  theory.  Let $\ee$ be a  $\bimer$ on $\modelsof_{\Om}(T)$
 with small classes.  Does there exist  a $T_\ee$- definable group $G$, a definable set $X$, and an action of $G$ on $P(X)$,
 such that the expansions of $M_0 \models T_\ee$ to $M \models T$ are equi-definable with subsets of $X$, and
  $\ee$ is the orbit equivalence relation?
  \end{question}

In this problem, the action of $\groupoid$ on $P(X)$ should be definable in $\lpluslprime$, but $G$ itself is definable in $L$. A stronger version would ask $\groupoid$ to act
on $X$, with the action on $P(X)$ being the naturally induced one.   A weaker version, allowing a more sophisticated form of action of $\groupoid$ on $P(X)$,
would still be very interesting.

We note that in both examples above, there is a definable group structure:
at least if a sufficiently large fragment of ZF is chosen in the set-theoretic one.
In Example  \ref{ex:fincofinitemodeven},  $\groupoid$ is the maximal ideal of the ``finite'' sets in the Boolean algebra, under the symmetric difference operation.
In Example  \ref{ex:smallnotyclept}, $\groupoid$ is the automorphism group of the complete Boolean algebra associated with the forcing notion,
as interpreted in  $M_0$.   Namely it is known that two generic filters on a forcing notion $P$ that yield the same forcing extension are conjugate,
perhaps not by $Aut(P)$, but by $Aut(B)$.
See Vopenka's theorem 59 in \cite{jech}; or Theorem 3.5.1 in \cite{intermediate}.
If there   exists a fragment of ZF that allows the basic theorems of forcing, but  not  Vopenka's theorem,  it may yield a counterexample to the strong version of our question.

\smallskip

We conclude the section with a  complete description of the dual case, $\mer$s with few  classes.

\begin{lem}\label{fewclasses}  Let $\ee$ be a  $\inftybimer$ on $\modelsof_{\Om}(T)$, with countably many classes.
Then $\ee$ is the reduct to a sublanguage $L_0$ consisting of finitely many finite relations. \end{lem}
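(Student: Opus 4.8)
The plan is to transfer the hypothesis ``countably many $\ee$-classes'' into a statement about the index of $\groupoid(M_0,M_0)$ in a symmetric group, where small index is very restrictive.  I would first reduce to the case $T$ complete (the general case is treated at the end, completion by completion).  Fix a universe $\Omega$ and a $\groupoid$-resplendent model $M_0\models T$ with universe $\Omega$, where $\groupoid$ is the $\bigwedge$-definable groupoid attached to $\ee$ by \propref{prop:equivrelationgroupoid}; put $H:=\groupoid(M_0,M_0)\leq Sym(\Omega)$, a group with $\Aut(M_0)\leq H$.  As in the proof of \thmref{thm:smooth}, $\ee$ is invariant under the diagonal action of $Sym(\Omega)$ on pairs of structures with universe $\Omega$, so for $g,g'\in Sym(\Omega)$ one has $gM_0\,\ee\,g'M_0$ iff $g^{-1}g'\in H$ (using that $H$ is a group); hence the copies of $M_0$ fall into exactly $[Sym(\Omega):H]$ distinct $\ee$-classes.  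Since there are only countably many $\ee$-classes, $[Sym(\Omega):H]\leq\aleph_0<2^{|\Omega|}$.

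Next I would apply the classical theorem of Dixon, Neumann and Thomas on subgroups of small index in infinite symmetric groups: a subgroup of $Sym(\Omega)$ of index $<2^{|\Omega|}$ contains the pointwise stabilizer $Sym(\Omega)_{(\Delta)}$ of some finite $\Delta\subseteq\Omega$.  So $Sym(\Omega)_{(\Delta)}\leq H$; being a subgroup containing an open subgroup, $H$ is open, hence closed.  As this argument applies to every $\groupoid$-resplendent $M_0$, \corref{cor:closedyclept} yields that $\ee$ is yclept, $\ee=\equiv_{\maxred{\ee}}$, and then \propref{prop:maxred}(2) identifies the closed and dense group $H$ with the whole of $\Aut(M_0\upharpoonright\maxred{\ee})$.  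Thus $Sym(\Omega)_{(\Delta)}\leq\Aut(M_0\upharpoonright\maxred{\ee})\leq Sym(\Omega)$.

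Finally I would read off the reduct.  Any permutation group of $\Omega$ between $Sym(\Omega)_{(\Delta)}$ and $Sym(\Omega)$ has only finitely many orbits on each $\Omega^{n}$, and every $Sym(\Omega)_{(\Delta)}$-invariant relation is a Boolean combination of the equalities $x_i=x_j$ and the conditions $x_i=\delta$ ($\delta\in\Delta$); hence $M_0\upharpoonright\maxred{\ee}$ is interdefinable with $(\Omega;B_1,\dots,B_j)$ for finitely many finite $\Aut(M_0\upharpoonright\maxred{\ee})$-invariant sets $B_i$.  By resplendence each $B_i$ is $0$-definable in $M_0\upharpoonright\maxred{\ee}$, hence -- $\maxred{\ee}$ being a reduct of $L$ -- $0$-definable in $M_0$ by some $L$-formula $\beta_i$; since $T$ is complete, $T\vdash$ ``$|\beta_i|\leq k_i$'' for suitable $k_i$.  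As $\maxred{\ee}$ and $L_0:=\{\beta_1,\dots,\beta_j\}$ induce the same structure on the resplendent $M_0$, they coincide as reducts of $T$, so $\ee=\equiv_{L_0}$ with $L_0$ consisting of finitely many finite relations.  For incomplete $T$ one runs the argument on each completion realized on $\Omega$ -- only countably many of which can support a non-trivial $\ee$ -- and patches the resulting data as in \secref{sec:cldl}.  The step I expect to cause the most trouble is this last one: pinning down precisely that the closed overgroups of a finite-support stabilizer in $Sym(\Omega)$ are exactly the automorphism groups of ``finitely many finite relations'', verifying that resplendence and completeness let those relations descend to genuine $L$-formulas, and organizing the patching for incomplete $T$; the Dixon--Neumann--Thomas theorem is the one essential input from outside the paper.
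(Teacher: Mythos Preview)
Your argument is correct and essentially coincides with the \emph{alternative} proof the paper sketches immediately after its main proof: the small index property for $Sym(\Omega)$ (which is precisely the Dixon--Neumann--Thomas theorem) gives a finite $\Delta$ with $Sym(\Omega)_{(\Delta)}\leq \groupoid(M,M)$, from which the conclusion follows.  You have fleshed out that sketch carefully, including the passage through \corref{cor:closedyclept} and \propref{prop:maxred}(2) and the identification of closed overgroups of a pointwise stabilizer with automorphism groups of finitely many finite relations.

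The paper's \emph{primary} proof takes a different route.  Rather than counting cosets and invoking small index directly, it observes that a countable set of classes gives a Borel complete invariant, hence smoothness, and then applies \thmref{thm:smooth} to obtain ycleptness.  From there it argues that countably many $\ee$-classes force the reduct $M_0=M\upharpoonright L_0$ to have only countably many $Sym(\Omega)$-conjugates, and applies the Chang--Makkai theorem over the pure set $\Omega$ to conclude that every relation of $M_0$ is parametrically definable in pure equality; elimination of imaginaries for pure equality then pins down the canonical parameters as finite sets of tuples from a single finite set $U$.  Your approach trades the descriptive-set-theoretic input (smoothness) and Chang--Makkai for a single group-theoretic black box (Dixon--Neumann--Thomas); the paper's approach stays closer to the machinery already developed in \secref{sec:smooth}, and has the mild advantage of not needing the completeness reduction or the patching you flag at the end.
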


\prf   Since a countable set is Borel, $\ee$ admits a Borel section and hence is smooth.  By Theorem \ref{thm:smooth} it is yclept so is determined as the reduct
to a sublanguage $L_0$, possibly of continuous logic.   Let $M \models T$ and let $M_0 =  M \upharpoonright L_0$.   Then for a permutation $\si$ of $\Omega$,
$\si(M) \ee M$ iff $\si(M_0)=M_0$.  Hence there are only countably many models on $\Omega$ isomorphic to $M_0$.
By the Chang-Makkai theorem \cite{changbeth,makkaibeth}, each relation $R$ of $M_0$ is definable with parameters over the pure set $\Omega$.
Now the theory of pure equality admits elimination of imaginaries to the level of finite structures.
Thus the canonical parameter for $R$ can be taken to be a finite set $U_R$, and a finite set $S_R$ of tuples of elements of $U_R$.  It is also easy to see that $\bigcup_R U_R$ must be a finite set $U$: otherwise it could not have countably many conjugates.  Thus $L_0$ is generated by naming an $m$-element set $U$, and possibly a subset of the set
of $m$-tuples of $U$.
\eprf

An alternative proof uses the small index property for the symmetric group.  We see that $\groupoid(M,M)$ has countable index in $Sym(M)$:   
if $s_i \in Sym(M)$ are in distinct cosets of $\groupoid(M,M)$,
then the models $s_i \inv (M)$ are pairwise inequivalent.  Hence by the small index property, for some tuple $\vec m$ of $M$, $\groupoid(M,M)$ contains every element of $Sym(M)$ fixing $\vec m$, and the result follows.

\section{Towards a basis for nontrivial $\mer$s}
\label{sec:basis}

We have given many examples of nontrivial $\mer$s,
but presented no general theory of how such a $\mer$ functions.  Most if not all of the nontrivial $\mer$s we have exhibited include  two nontrivial classes $\CF_1,\CF_2$ of definable sets 
$F,F'$
of $T$, such that equivalence of $M,N$ implies that 
each element of $\CF(M)$ is contained in some element of $\CF_2(N)$.  Nontrivial here means that they are not definable in pure equality.

Here we prove, under mild technical conditions, that a weakening of this must occur in 
every nontrivial $\mer$:  {\em some}  element of $\CF_1$ is contained in {\em some} element of $\CF_2$.  See \corref{cor:mc2}.
In case $T$ has a single sort $V$ and $\acl$ is trivial on $V$, and $T$ eliminates 
$\exists^{\infty}$, we actually get $\CF_1$ to be a family of definable subsets of $V$: see \remref{rem:mc2a}.

\begin{prop}\label{prop:mc2}  Let $T$ be a theory with QE and with elimination of $\exists^{\infty}$.
Let $T+T$ denote the $\lequalslprime$ theory that consists of $T$ on the unprimed copy and a copy of $T$ on the 
primed signature. Then:
\begin{enumerate}
\item     $T+T$ admits a model companion $\widetilde{2T}$.

Moreover, 
we have  $\diamond$:

\medskip

If $(M_1,M_2,f) \models T+T$, $A_i = \acl_T(A_i) \subset M_i$ for $i=1,2$, and $f(A_1)=A_2$ then the quantifier-free diagram
of $A_1 \cup A_2$ in $\lequalslprime$ is complete. Here $A_1$ is in the unprimed signature, $A_2$ in the primed signature.

\item   Let $\ee$ be a $\inftybimer$ on models of $T$. 
Then  $\ee$ is trivial (all models are equivalent)
iff $Th_\forall(\ee) \subseteq  \widetilde{2T}$.
\end{enumerate}
\end{prop}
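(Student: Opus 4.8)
# Proof Proposal for Proposition \ref{prop:mc2}

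The plan is to establish (1) by a standard amalgamation / model-companion criterion and then derive (2) as a fairly direct consequence, using the density/resplendence machinery of Section \ref{sec:maximalcl} together with Lemma \ref{lem:ae1}.

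\textbf{Part (1).} First I would verify that $T+T$ has the amalgamation property over substructures in the language $\lequalslprime$. Since $T$ has QE, a substructure of a model of $T+T$ is determined by a pair of $L$-substructures $B_1 \subset M_1$, $B_2 \subset M_2$ together with the restriction of $f$, a bijection between the parts of $B_1$ and $B_2$ lying in the coupled sorts. Given two extensions of such a configuration, I would amalgamate the unprimed parts as models of $T$ (using QE and the fact that $T$, being model-complete with QE, has amalgamation over substructures), amalgamate the primed parts similarly, and then extend $f$ to a bijection between the coupled sorts of the amalgams — here the disjointness of the primed and unprimed decoupled sorts means the only constraint is on coupled sorts, and since coupled universes are literally shared, $f$ on the coupled sorts is an honest bijection which extends by the usual back-and-forth once the coupled universes are identified. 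This gives amalgamation, hence a model companion $\widetilde{2T}$ exists (using that $T+T$ is, up to QE, a universal-existential theory in a countable language, so the class of existentially closed models is elementary). For the $\diamond$ clause: if $A_i = \acl_T(A_i)$ and $f(A_1) = A_2$, then the quantifier-free $\lequalslprime$-type of $A_1 \cup A_2$ is determined by: the quantifier-free $L$-diagram of $A_1$ (complete since $T$ has QE and $A_1$ is algebraically closed, hence by QE its $L$-type is determined — actually one needs $A_1$ to be a model or to invoke that over an algebraically closed set QE gives a complete quantifier-free type of the set itself, which is immediate from QE), the quantifier-free $L'$-diagram of $A_2$, and the graph of $f$ on the coupled part; elimination of $\exists^\infty$ is used to guarantee that in the model companion the algebraic closure does not grow, so that $A_i$ remains algebraically closed in the ambient e.c. model and no new quantifier-free relations appear. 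The completeness of this combined diagram is then exactly the statement that any two e.c. models of $T+T$ containing such an $(A_1, A_2, f\!\restriction)$ realize the same quantifier-free type over it, which follows from amalgamation plus model-completeness of $\widetilde{2T}$.

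\textbf{Part (2).} One direction is trivial: if $\ee$ is the total relation, then $Th(\ee)$ is just $T+T$ together with the assertion that $f$ is a bijection of the coupled sorts, which is contained in every completion of $T+T$, in particular in $\widetilde{2T}$. For the converse, suppose $Th(\ee) \subseteq \widetilde{2T}$. I want to show every pair $M, N \models T$ on a common universe is $\ee$-equivalent. By a standard argument (passing to resplendent/saturated extensions and using that $\ee$, being $\bigwedge$-definable, is preserved and reflected appropriately — this is the same move used in \propref{prop:groupoidsubgroup} and \lemref{lem:ae3}) it suffices to handle the case where $M, N$ are saturated of the same cardinality, so $M \cong N$ via some $g$. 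Now consider $(M, N, g)$: since $T$ has QE and the identity-on-coupled-sorts configuration is algebraically closed, $\diamond$ tells us that $(M, N, g)$ can be embedded into, indeed is elementarily equivalent to, an e.c. model of $T+T$, i.e. a model of $\widetilde{2T}$. But $Th(\groupoid) \subseteq \widetilde{2T}$ by hypothesis, so every model of $\widetilde{2T}$ — in particular one realizing the quantifier-free type of $(M,N,g)$ — satisfies the defining sentences of $\groupoid$; combined with the fact (Lemma \ref{lem:ae1}, or directly from $Th(\groupoid)_\forall \subseteq \bitheorymax \subseteq \widetilde{2T}$) that the universal consequences of $Th(\groupoid)$ already hold and that $\groupoid$ is $\bigwedge$-definable, a compactness/e.c. argument in the style of \lemref{lem:ae2} forces $g \in \groupoid(M,N)$. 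Hence $M \ee N$.

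\textbf{Main obstacle.} The delicate point is the interaction between elimination of $\exists^\infty$ and the $\diamond$ statement — specifically, ensuring that in the model companion $\widetilde{2T}$ the algebraic closure computed in each copy of $T$ agrees with the algebraic closure computed in the $\lequalslprime$-reduct, so that an algebraically-closed-and-$f$-matched pair $(A_1, A_2)$ genuinely pins down a complete quantifier-free diagram. Getting the amalgamation step to respect this (no new algebraic elements created when gluing, which is where $\exists^\infty$-elimination enters, via the usual Chatzidakis–Hrushovski-style bound) is the technical heart; once $\diamond$ is in hand, part (2) is a routine application of the resplendence and density results already proved.
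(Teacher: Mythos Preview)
Your treatment of Part (1) is broadly reasonable, though the paper is more concrete: it writes down explicit axioms $A(\phi_1,\theta_1,\phi_2,\theta_2)$ for $\widetilde{2T}$ (using the condition $(*)$ of \remref{rem:mc2}, derived from elimination of $\exists^\infty$) and verifies $\diamond$ by back-and-forth over algebraically closed pairs. Your amalgamation sketch would recover this, but the explicit axioms are what actually get used in Part (2).

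Part (2) has a genuine gap. You claim that $(M,N,g)$, with $g$ an isomorphism between saturated models, ``can be embedded into, indeed is elementarily equivalent to,'' a model of $\widetilde{2T}$. The elementary-equivalence claim is false: $\diamond$ says only that the quantifier-free diagram of an algebraically closed $f$-matched pair determines a complete type in $\widetilde{2T}$, not that an arbitrary model of $T+T$ is itself a model of $\widetilde{2T}$ (take $T$ the theory of an infinite set with a unary predicate $P$; then $(M,M,id)$ with $P^M=P'^M$ is not existentially closed). Embedding $(M,N,g)$ into a model of $\widetilde{2T}$ recovers only $Th(\ee)_\forall$, and your appeal to an argument ``in the style of \lemref{lem:ae2}'' fails because that lemma needs $\groupoid$ to be $\Pi_2$, whereas here $\ee$ is an arbitrary $\inftybimer$. (Separately, the reduction to $M\cong N$ via saturation assumes $T$ complete, which is not hypothesised.)

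The paper's argument for the nontrivial direction of (2) avoids this by not trying to put the given pair into $\widetilde{2T}$ at all. Instead, given a countable resplendent $(N_1,N_2)\models T+T$, it builds a \emph{third} model $M$ and bijections $f_i:N_i\to M$ such that each $(M,N_i,f_i)$ is an \emph{actual} model of $\widetilde{2T}$; this back-and-forth uses condition $(*)$ to realise the axioms $A(\phi_1,\theta_1,\phi_2,\theta_2)$ simultaneously against both $N_1$ and $N_2$. Since $Th(\ee)\subseteq\widetilde{2T}$, one gets $N_1\,\ee\,M\,\ee\,N_2$ directly, and transitivity of $\ee$ finishes. The intermediate-model trick is the missing idea.
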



Towards proving Proposition \ref{prop:mc2}, let $\Phi$
be the set of pairs $(\phi(x_1 \ldots x_n';y) ,\theta(z))$ of variable-partitioned formulas of $L$ such that 
for any  $M' \models T$ and $b' \in \theta(M')$,
 we have  $(*_{b'})$: for any $B' \subset M'$ with $b'$
 from $B'$,  
there exist $a_1',\ldots,a_n'$ in some elementary extension of $M'$ with $a_i' \notin \acl(B')$ and $\phi(a_1',\ldots,a_n',b')$.  Note that $(*_{b'})$ is equivalent to:   there are infinitely many {\em pairwise disjoint} solutions to $\phi(x_1,\ldots,x_n,b')$.
Thus  $\phi, \theta \in \Phi$ means that for a witness $b'$ of $\theta$ we have infinitely many pairwise disjoint solutions for $\phi(x_1 \ldots x_n; b')$.

Consider this condition:
 (*) given $B \subset M \models T$, elements $a_1,\ldots,a_n \in M \setminus \acl(B)$, a formula $\phi(x_1,\ldots,x_n;y)$ and a tuple $b$ from $B$ with
 $ \phi(x_1,\ldots,x_n;b)\in tp(a_1,\ldots,a_n /B)$,  there exists $\theta \in tp(b)$
such that $(\phi,\theta) \in \Phi$.

\medskip

We first argue that:

\begin{claim} (*) follows from elimination of $(\exists^\infty)$.  
\end{claim}

We prove the claim by induction on $n$.  For $n=1$ (*) is just the elimination of $(\exists^\infty)$.
For $n>1$, consider first the case where some $a_i$ is algebraic over another, say  $a_1 \in \acl(a_n,B)$.  Then we may assume
$\phi$ enforces this.  By induction, given $b',B'$, find non-algebraic  $a_1',\ldots,a_{n-1}'$ that witness $(\exists x_n) \phi$,
and choose any $a'_n$  such that $\phi(a_1',\ldots,a_n',b')$.   Then $a_n' \notin \acl(B)$ since otherwise we would have $a_1 \in \acl(B)$, contradicting the assumption.

Thus we assume no $a_i$ is algebraic over another $a_j$ and $B$.  In this case, working over $B(a_1)$, we find $\theta'(y,x_1)  \in tp(a_2,\ldots,a_{n-1}/Ba_1)$
with the property above. 

Let $\phi'(x_2, \ldots ,x_n) =  \phi(a_1,x_2,\ldots,x_n,b)$.
So $\phi'  \in tp(a_2,\ldots ,a_n) / B(a_1)$. 
We use (*) for $\phi'$ to get a formula
$\theta'(y,x_1) \in tp(b,a_1)$ with $(\phi',\theta') \in \Phi$.

We now proceed to prove Proposition \ref{prop:mc2}.
\prf 
We prove the first item.
Let the axioms $A(\phi_1,\theta_1,\phi_2,\theta_2)$,  for  pairs $(\phi_1,\theta_1)$ and $(\phi_2,\theta_2)$ in $\Phi$, assert:

\medskip

 whenever $b_i \in \theta_i(M_i)$, there exist $a_1,\ldots,a_n$ with $\phi_1(a_1,\ldots,a_n,b_1)$ and $\phi_2(fa_1,\ldots,fa_n,b_2)$.

\medskip

The axioms of  $\widetilde{2T}$  include $T+T$ along with the axioms $A(\phi_1,\theta_1,\phi_2,\theta_2)$, for all pairs $(\phi_1,\theta_1)$ and $(\phi_2,\theta_2)$ in $\Phi$.
It is easy to check that any existentially closed model of $T+T$ is a model of $\widetilde{2T}$.
The verification of the strong model completeness asserted in the first item is a routine back-and-forth over algebraically closed $(A_1,A_2,f,f\inv) \leq (M_1,M_2,f,f\inv)$ with $A_i$  algebraically closed in $M_i$.

 We now prove the second part.  Let $(N_1,N_2,g)$ be a     countable model of $T+T$, $\groupoid$-resplendent so that the condition (*) 
 holds for $N_i$ without going to an elementary extension.
    Construct $M$ and $f_i: N_i \to M$  for $i=1,2$ such that  $(M,N_i) \models \widetilde{2T}$. 
    The proof is similar to that of Lemma \ref{lem:ae3}.   
    Now we have $N_1 \ee M \ee N_2$.
    \eprf

\begin{rem} \label{rem:mc2a}  A step of the above back-and-forth goes from $A_1$ to $\acl_T(A_1(a))$ for a single element $a$.  Hence $\widetilde{2T}$ is already axiomatized by
 the axioms $A(\phi_1,\theta_1,\phi_2,\theta_2)$  when 
 one of $\phi_1,\phi_2$ implies that each $a_i$ is algebraic over each $a_j$.
\end{rem}

 \begin{cor}\label{cor:mc2}  Assume $T$ eliminates $(\exists ^\infty)$, and $\acl(0)=0$ in the given sorts. Let $\ee$ be a    $\inftybimer$ on models of $T$, that has more than one class.
Then there  is a definable family $\CF_1$  of infinite definable subsets of the set of distinct $n$-tuples,  and another definable family of co-infinite definable sets $\CF_2$ of
 distinct $n$-tuples, such that $M_1 \ee M_2$  implies that some element of $\CF_1(M_1)$ is contained in some element  of $\CF_2(M_2)$.
\end{cor}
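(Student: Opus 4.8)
The plan is to deduce the corollary directly from Proposition \ref{prop:mc2}(2) by contraposition. Suppose no pair of definable families $\CF_1, \CF_2$ of the stated kind exists with the stated containment property. I want to conclude that $\ee$ has only one class, which by Proposition \ref{prop:mc2}(2) amounts to showing $Th(\ee) \subseteq \widetilde{2T}$, i.e. that every model of $Th(\ee)$ is existentially closed in the relevant sense — equivalently, that the axioms $A(\phi_1,\theta_1,\phi_2,\theta_2)$ of Remark \ref{rem:mc2a} all hold in models of $Th(\ee)$. So the real content is: a failure of one of these axioms in some $(M_1,M_2,f) \models Th(\ee)$ produces exactly the pair of families in the conclusion.

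First I would unwind what the failure of an axiom $A(\phi_1,\theta_1,\phi_2,\theta_2)$ means, using Remark \ref{rem:mc2a} to restrict to the case where $\phi_1$ (say) forces each $a_i$ to be algebraic over each $a_j$ — but here $\acl(0)=0$, so in fact I want to use the full strength of condition (*) in Remark \ref{rem:mc2}, which gives, for a formula $\phi(x_1,\dots,x_n,b) \in tp(a_1,\dots,a_n/B)$ with the $a_i$ not algebraic over $B$, a formula $\theta \in tp(b)$ such that any realization $b'$ of $\theta$ in any model carries infinitely many pairwise disjoint solutions of $\phi(\bar x, b')$. The point is that ``$\phi(\bar x, b')$ has infinitely many pairwise disjoint solutions'' is, by elimination of $\exists^\infty$, a definable condition on $b'$; call the defining formula $\psi_1(y_1)$, and it cuts out an infinite definable set of distinct $n$-tuples for each fixed $b'$ — this is where $\CF_1$ comes from, indexed by the parameter $b'$. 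Dually, when axiom $A$ fails, there is on the primed side a parameter $b_2'$ and a co-infinite definable set $\phi_2(M_2^n, b_2')$ of distinct $n$-tuples (co-infinite because its complement contains infinitely many pairwise disjoint tuples, again by the same construction on the second copy of $T$) — this gives $\CF_2$, indexed by $b_2'$. Failure of the axiom at $(M_1,M_2,f)$ says precisely: there exist such $b_1', b_2'$ with $b_1' \in \theta_1(M_1)$, $b_2' \in \theta_2(M_2)$, yet no tuple $\bar a$ in $M_1$ with $\phi_1(\bar a, b_1')$ has $f(\bar a)$ satisfying $\phi_2(\cdot, b_2')$; equivalently, $f$ maps the $\CF_1$-set indexed by $b_1'$ into the complement of the $\phi_2(\cdot,b_2')$ set — i.e., into a co-infinite set. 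Translating back from $f$-identification to the $\lequalslprime$ picture (as in Proposition \ref{prop:equivrelationgroupoid}), this reads: $M_1 \ee M_2$, but no element of $\CF_1(M_1)$ is contained in any element of $\CF_2(M_2)$.

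So the structure of the argument is: (i) assume the conclusion fails, meaning for \emph{every} choice of definable families $\CF_1$ (infinite definable subsets of distinct $n$-tuples) and $\CF_2$ (co-infinite such), there exist $\ee$-equivalent $M_1, M_2$ with no containment; (ii) specialize to the families arising from the $(\phi_1,\theta_1)$ / $(\phi_2,\theta_2)$ data above, for every such pair of pairs; (iii) by compactness / definability of $\ee$, assemble a single model $(M_1,M_2,f) \models Th(\ee)$ in which every axiom $A(\phi_1,\theta_1,\phi_2,\theta_2)$ fails — actually one needs to be slightly careful: a single model need only witness failure of the axioms one at a time, but since each axiom separately failing in \emph{some} model of the complete-ish theory $Th(\ee)$ is enough to keep it out of $\widetilde{2T}$, and $\widetilde{2T}$ is the model companion, $Th(\ee) \not\subseteq \widetilde{2T}$ follows once even one axiom is omitted; (iv) conclude by Proposition \ref{prop:mc2}(2) that $\ee$ is trivial, contradicting the hypothesis that $\ee$ has more than one class. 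The main obstacle I anticipate is step (iii): matching up the quantifier-complexity bookkeeping so that ``omitting axiom $A(\phi_1,\theta_1,\phi_2,\theta_2)$'' is genuinely equivalent to ``$Th(\ee)$ is not contained in $\widetilde{2T}$'' — in particular making sure that the negation of the relevant axiom is consistent with $Th(\ee)$ (not merely with $T+T$), which uses $\groupoid$-resplendence as in the proof of Proposition \ref{prop:mc2}(2), and ensuring the families I extract really are \emph{definable} families of \emph{infinite} (resp. \emph{co-infinite}) sets of \emph{distinct} tuples — this last point is exactly what elimination of $\exists^\infty$ and $\acl(0) = 0$ are there to guarantee, as spelled out in Remark \ref{rem:mc2}.
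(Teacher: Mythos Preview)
Your proposal has a genuine gap, and the logic in step (iii)--(iv) is inverted. You set up the contrapositive correctly in the first paragraph: assuming no families exist, you want to show $Th(\ee) \subseteq \widetilde{2T}$. But in step (iii) you conclude $Th(\ee) \not\subseteq \widetilde{2T}$, and then in (iv) you invoke Proposition~\ref{prop:mc2}(2) to deduce triviality of $\ee$ --- which is exactly backwards, since that proposition says trivial $\iff$ $Th(\ee) \subseteq \widetilde{2T}$.

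More substantively, you never use the hypothesis $\acl(0)=0$, and this is the key. The paper's argument is direct, not contrapositive: from $\ee$ nontrivial, Proposition~\ref{prop:mc2}(2) gives $Th(\ee) \not\subseteq \widetilde{2T}$. Now $\acl(0)=0$ combined with the $\diamond$ clause of Proposition~\ref{prop:mc2}(1) gives that $\widetilde{2T}$ is \emph{complete}. Hence $Th(\ee)$ and $\widetilde{2T}$ are actually inconsistent together, and since $Th(\ee) \supseteq T+T$, compactness yields that $Th(\ee)$ \emph{proves} $\neg A(\phi_1,\theta_1,\phi_2,\theta_2)$ for some axiom $A$. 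This is what makes the containment hold for \emph{every} $\ee$-equivalent pair, as the corollary requires. Your second paragraph only extracts from a \emph{single} model $(M_1,M_2,f) \models Th(\ee)$ failing axiom $A$ a single instance of containment; without completeness of $\widetilde{2T}$ you cannot upgrade ``$\neg A$ is consistent with $Th(\ee)$'' to ``$Th(\ee) \vdash \neg A$''. (Also, $Th(\ee)$ is not complete in general --- ``complete-ish'' does not help here; it is $\widetilde{2T}$, not $Th(\ee)$, whose completeness drives the argument.) Once you have $Th(\ee) \vdash \neg A$, setting $\CF_1 = \{\phi_1(x,b): \theta_1(b)\}$ and $\CF_2 = \{\neg\phi_2(x,b'): \theta_2(b')\}$ finishes immediately.
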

\prf  Since $\acl(0)=0$, $\widetilde{2T}$ is complete.  If it implies $\ee$, then $\ee$ has only one equivalence class, contrary to the assumption.  Otherwise, it is inconsistent with $\ee$, so that $\ee$ implies the negation of some axiom $A(\phi_1,\theta_1,\phi_2,\theta_2)$.   Let
$\CF_1$ be the family of all definable sets $\phi_1(x,b)$ as $b$ ranges over $\theta_1$;
and let $\CF_2$ be the family of all 
definable sets $\neg \phi_2(x,b')$ as $b'$
ranges over $\theta_2$.

\eprf

We discuss possible extensions, variations, and applications of this result.
\begin{itemize}
\item  A similar statement holds for definable partial orders on models.

\item The conclusion of  Corollary \ref{cor:mc2}  does not  look transitive or symmetric.  The most obvious  transitive  condition that implies it
would be: (*)   $\CF_1=\CF_2$, and $M_1 \ee M_2$  implies that {\em every} element of $\CF_1(M_1)$ is contained in some element  of $\CF_2(M_2)$,
and vice versa.

It seems possible that a better approximation to (*) can be obtained by repeating the above idea with two, rather than one, intermediate models between $M$ and $N$.

\item For a general $\mer$, there is no reason to expect $\CF_1=\CF_2$ in (*); witness the 
stable yclept example in an infinite language, with  a family of equivalence relations.    But one can ask:  (**):
Does (*) hold with $\CF_1=\CF_2=\CF$, if $\ee$ is a $\bimer$?  

If so this would give a basis for nontrivial model partial orderings, or model equivalence relations.

 Assuming (**),  we obtain a map 
from $\CF(M)$ to $\CF(N)$ and another in the opposite direction.   Composing, 
we obtain a map from $\CF(M)$ to itself, non-decreasing in the sense of inclusion.  The set of elements that map to themselves forms a family of definable set that is $\ee$-invariant  (a $2$-ydlept situation.)  If this family is empty, every element of $\CF$ is a proper subset of some other element of $\CF$; implying that $T$ has the strict order property.

Thus under (**) and NSOP, the 2-ydlept $\bimer$s form a basis for the nontrivial $\bimer$s.

This  gives some motivation towards Question \ref{question:tamenotpreservenontrivial}. 
\end{itemize}

\section{NSOP and $\forall$-ydleptness}   \label{sec:theories}

Recall that we have made no general claims about which theories are $\forall$-yclept, although we provided an example
in Corollaries \ref{cor:unary} and \ref{cor:forallydleptequivalencerelation}.
 We know of no model-theoretic necessary and sufficient conditions.
We give some partial results here. Recall that a theory has NSOP if there is no formula $\phi(\vec x, \vec x')$ that defines a partial order with infinite chains. 

\begin{prop}\label{prop:yclept-unsop} 

Let $T$ be $\forall$-yclept,   and let $\leq$ be a definable partial  order on the sort $X$.
Then every nonempty definable subset has a maximal element.

 If every $\inftybimer$ on $X$ is yclept, then no definable partial order on $X$ has an infinite strictly increasing chain. Hence if   every $\inftybimer$ on any finite product of sorts of $T$ is yclept, then $T$ has NSOP.  
\end{prop}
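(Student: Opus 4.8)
\textbf{Proof proposal for Proposition \ref{prop:yclept-unsop}.}

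The plan is to argue by contrapositive for each clause, building a non-yclept $\inftybimer$ out of a failure of the order-theoretic conclusion. First I would treat the first clause: suppose $D \subseteq X$ is a nonempty definable set with no $\leq$-maximal element. For a model $M$ and a subset $Q \subseteq D(M)$, consider the condition ``$Q$ is a $\leq$-downward-closed subset of $D$ with no maximal element'' — more precisely an upward-unbounded cut. I would define an equivalence relation $\ee$ on $\modelsof_\Om(T)$ using a unary predicate expansion: expand $T$ by a predicate $Q$ constrained to name a ``generic'' cut of $(D,\leq)$ (downward closed, no top, and for every element of $D$ not in $Q$ there is a smaller element not in $Q$ — the mirror condition), and let $M \ee M'$ hold iff the two cuts $Q, Q'$ are cofinal in each other inside $(D,\leq)$. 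The key point is that ``cofinal in each other'' is expressible by a $\Pi_2$ sentence in $\lequalslprime$ (for every element of $Q$ there is a larger element of $Q'$, and vice versa), so $\ee$ is a $\Pi_2$ $\inftybimer$; by Theorem \ref{thm:ae} it is automatically yclept, which is the wrong direction. So instead I need to be more careful: the right move is to use the \emph{existence} of a definable partial order with an infinite increasing chain to manufacture a $\mer$ that is provably \emph{not} yclept, using the smoothness characterization of Theorem \ref{thm:smooth} or the closedness characterization of Corollary \ref{cor:closedyclept}.

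Here is the approach I would actually carry out for the main (second and third) clauses. Suppose $\leq$ is a definable partial order on $X$ with an infinite strictly increasing chain; by compactness there is a model with a chain of order type $\omega$, and by further compactness and saturation arguments we may realize, inside some model, many ``branches'' of length $\omega$. Now expand the language by a unary predicate $B$ and let $T^+$ assert that $B$ names an increasing $\omega$-chain in $(X,\leq)$ that is \emph{unbounded above} in a suitable definable sense — or, more robustly, that $B$ picks out a subset which is linearly ordered by $\leq$, upward-closed in the chain it generates, with no maximal element. Define $\ee$ on $\modelsof_\Om(T)$ (for $T$ the original theory, via the groupoid/reduct machinery — $\ee$ asserts two expansions have cuts that are mutually cofinal). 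The crucial computation is to show $\groupoid(M,M)$ is \emph{not} closed in $Sym(M)$ for a countable resplendent (homogeneous, lightly saturated) $M$: I would exhibit a sequence $g_n \in \groupoid(M,M)$ converging pointwise to some $g \in Sym(M) \setminus \groupoid(M,M)$, using the order structure — each $g_n$ shifts a cut by finitely much (preserving cofinality), but the pointwise limit shifts it ``all the way'', destroying cofinality because an infinite increasing chain has no top. By Corollary \ref{cor:closedyclept} this shows $\ee$ is not yclept. The final sentence of the proposition then follows immediately: if $T$ has SOP, some definable partial order on a finite product of sorts has an infinite chain, hence by the above some $\inftybimer$ on that product of sorts is not yclept, so $T$ is not ``$\forall$-yclept on all finite products of sorts''; contrapositive gives NSOP.

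For the first clause (``every nonempty definable subset has a maximal element'') I would run the analogous but simpler argument: given definable nonempty $D$ with no maximal element, the predicate $B$ now names a subset $Q \subseteq D$ that is a nonempty upward-unbounded subset, $\ee$ identifies $M, M'$ when $Q, Q'$ are mutually cofinal in $(D,\leq)$; again $\groupoid(M,M)$ fails to be closed because finite ``shifts'' along $D$ accumulate to a limit permutation that moves $Q$ off of every cofinal set — so $\ee$ is a $\bimer$ that is not yclept, contradicting $\forall$-ycleptness. The main obstacle I anticipate is the verification that these equivalence relations are genuinely \emph{elementary} ($\inftybimer$s or $\bimer$s) — i.e. that ``mutually cofinal'' together with the genericity/unboundedness axioms for $B$ really is expressible by a consistent set of $\lequalslprime$-sentences defining an equivalence relation on all of $\modelsof_\Om(T)$, and the companion check that $\groupoid(M,M)$ is a group (contains inverses and composites), which needs the cofinality relation to be transitive — this is where one must be slightly delicate about whether the cut is allowed to be empty or all of $X$, and one handles it by restricting to the subfamily of ``proper'' cuts or by adding trivial classes. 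The non-closedness computation itself, once the setup is right, should be routine: it is exactly the familiar fact that an $\omega$-chain has no supremum, repackaged.
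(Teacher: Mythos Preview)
Your proposal has a genuine gap at the level of the basic setup. You expand $T$ by a unary predicate $B$ (or $Q$) naming a cut or chain, and then define your $\mer$ on models of the expansion $T^{+}$. But the hypothesis of the proposition is that every $\bimer$ (respectively $\inftybimer$) \emph{on $T$} is yclept; producing a non-yclept $\mer$ on an expansion of $T$ does not contradict this. There is no general transfer principle taking $\forall$-ycleptness of $T$ to $\forall$-ycleptness of expansions, and you do not supply one. Your parenthetical ``for $T$ the original theory, via the groupoid/reduct machinery'' acknowledges the issue but does not resolve it: the $\mer$ you describe genuinely lives over $T^{+}$, since its definition refers to the extra predicate.

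The key idea you are missing is that the $\mer$ can be defined directly on $\modelsof_\Om(T)$, with no expansion, by comparing the two \emph{orders} $\leq_M$ and $\leq_N$ rather than two auxiliary predicates. Concretely, the paper lets $C(M,N)$ be the set of $a \in \Om$ above which (in either order) the two relations $\leq_M$ and $\leq_N$ agree, and sets $M \ee N$ iff $C(M,N)$ is cofinal in both $(\Om,\leq_M)$ and $(\Om,\leq_N)$. This is a single $\lequalslprime$-sentence, hence a $\bimer$ on $T$ itself. The non-ycleptness argument is also different from your sketch: rather than exhibiting a sequence in $\groupoid(M,M)$ converging to something outside, the paper shows $\groupoid(M,M)$ is \emph{dense} in $\Sym(\Om)$ --- any permutation of a finite set $F$ extends to a groupoid element, using $X_{\max}=\emptyset$ to find, for each $a$, some $\beta(a)\geq a$ avoiding $F$, and fixing the cofinal set $\bigcup_a \beta(a)^{+}$. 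Density plus the closedness criterion (\corref{cor:closedyclept}) then forces $\groupoid(M,M)=\Sym(\Om)$ if $\ee$ were yclept, which is absurd since $\ee$ is visibly nontrivial. Your ``cofinality of cuts'' intuition is in the right spirit, but it needs to be applied to the two definable orders themselves, not to added predicates.
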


\prf  
We start with the first part, assuming $T$ is $\forall$-yclept.
  We consider a definable partial ordering $\leq $  of a nonempty definable subset of the
sort $X$. For simplicity we assume the order is on all of $X$.
  For $a \in \Om$, let  $a^+ :=\{b \in \Om: a \leq b\}$.  Let $X_{max}$ be the set of maximal elements of $X$, i.e. 
  \[
  \{x \in X:  (\forall y \in X)(x \leq y \to x\geq y)\}
  \]

 \begin{claim}   $X_{max} \neq \emptyset$.
\end{claim}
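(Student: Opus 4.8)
The plan is to argue by contraposition: assuming $X_{max}=\emptyset$, i.e. (writing $x<y$ for $x\leq y\wedge\neg(y\leq x)$) that $T\vdash\forall x\,\exists y\,(x<y)$, I would exhibit a $\bimer$ for $T$ that is not yclept, contradicting $\forall$-ycleptness. The candidate is the relation of \emph{eventual agreement of the order}: for $M,M'$ on a common universe $\Omega$, declare $M\ee M'$ iff $M$ and $M'$ have the same reduct to $L\setminus\{\leq\}$ and, in addition, $\leq^M$ and $\leq^{M'}$ agree on a final segment of each of the two orders, i.e.
\[ (\exists c)(\forall a,b)\big((c\leq^M a\wedge c\leq^M b)\to(a\leq^M b\leftrightarrow a\leq^{M'}b)\big) \]
together with the same formula with the roles of $M$ and $M'$ exchanged. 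First one checks this is an equivalence relation on $\modelsof_{\Omega}(T)$: reflexivity and symmetry are clear, and transitivity follows because a final segment of a final segment is again a final segment, using that the orders already coincide on the outer one. Since the defining sentence is $\Sigma_2$ in $\lequalslprime$, $\ee$ is a $\bimer$ (indeed a $\Sigma_2$ one).

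Next I would show $\ee$ is not yclept. By \corref{cor:closedyclept} it suffices to produce one $\groupoid$-resplendent model $M$ of $T$ for which $\groupoid(M,M)$ fails to be closed in $Sym(M)$, where $\groupoid$ is the groupoid attached to $\ee$; concretely, $\groupoid(M,M)$ consists of the permutations of $M$ that are $L$-automorphisms away from $\leq$ and that preserve $\leq$ on some final segment (in the sense above, both ways). Fix such an $M$. Because $X_{max}=\emptyset$, compactness and saturation provide an unbounded strictly increasing chain $a_0<^M a_1<^M\cdots$ in $M$. Using resplendence one arranges, cofinally along this chain, pairs of elements whose $(L\setminus\{\leq\})$-types over the rest of $M$ coincide but whose $\leq$-comparison can be flipped; this yields a permutation $g\in Sym(M)$ that is an $L$-isomorphism off $\leq$, that disagrees with $\leq^M$ on every final segment (hence lies outside $\groupoid(M,M)$), and that is the pointwise limit of the permutations $g_k$ obtained by performing only the flips below $a_k$ and leaving everything unchanged above $a_k$ — each such $g_k$ preserves $\leq$ on the final segment determined by $a_k$, so $g_k\in\groupoid(M,M)$. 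Thus $\groupoid(M,M)$ is not closed, $\ee$ is not yclept, and $T$ is not $\forall$-yclept, proving the claim.

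Alternatively, with $\ee$ in hand one could avoid the group-theoretic step: by \corref{cor:ae} a yclept $\ee$ would be $\Pi_1$, and the $\Sigma_2$/$\Pi_1$ mismatch can be turned into an explicit counterexample along the unbounded chain; or one could invoke \thmref{thm:smooth} and produce a Borel reduction of the eventual-agreement relation $E_0$ into $\ee$ on $\modelsof_{\Nn}(T)$. In every variant the hypothesis $X_{max}=\emptyset$ is used only to supply arbitrarily long unbounded $\leq$-chains along which the region of disagreement can be pushed to infinity.

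The main obstacle I anticipate is the construction of the permutation $g$ and its approximants $g_k$: one must verify that a $\groupoid$-resplendent model really contains, cofinally along the chain, "twin" pairs with prescribed $\leq$-comparison — this is exactly where resplendence, rather than mere saturation, is needed — and that performing countably many such flips yields a genuine bijection that preserves all of $L\setminus\{\leq\}$ while spoiling $\leq$ on every final segment. Checking that $\ee$ is an equivalence relation and that $\groupoid$ satisfies the groupoid axioms (closure under composition and inverse, which again hinges on "final segment of a final segment is a final segment") is routine but should be written out.
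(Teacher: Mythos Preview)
Your overall strategy---assume $X_{max}=\emptyset$, build a $\bimer$ based on eventual agreement of the order, and show it is not yclept---is exactly the paper's.  The gap is in your specific definition of $\ee$ and the construction of $g,g_k$.

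Requiring $M\ee M'$ to have the same $L\setminus\{\leq\}$-reduct is fatal.  For any $T$ in which $\leq$ is definable from the remaining symbols (real closed fields, ordered abelian groups, etc.), every $(L\setminus\{\leq\})$-automorphism is already an $L$-automorphism, so $\groupoid(M,M)=\Aut(M)$ and your $\ee$ collapses to the full-reduct $\mer$, which is trivially yclept.  In such theories there are no distinct elements with the same $(L\setminus\{\leq\})$-type over the rest of $M$, so the ``twin pairs'' you want simply do not exist; resplendence lets you realize consistent expansions, but it cannot make an inconsistent configuration consistent.  The obstacle you flagged is not a technicality but an actual failure of the argument.

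The paper drops the $L\setminus\{\leq\}$ condition entirely.  It sets $C(M,N)$ to be the set of $a$ above which (in either order) the two orders agree, and declares $M\ee N$ iff $C(M,N)$ is cofinal in both.  Rather than building a specific limit point, it proves something stronger and easier: $\groupoid(M,M)$ is \emph{dense} in $\Sym(\Omega)$.  Given a finite $F\subset\Omega$, use $X_{max}=\emptyset$ to choose for each $a$ some $\beta(a)\geq a$ with no element of $F$ above it; the set $C^*=\bigcup_a\beta(a)^+$ is cofinal and disjoint from $F$, so any permutation of $F$ extends to one fixing $C^*$ pointwise, and such a permutation lies in $\groupoid(M,M)$ since $C^*\subseteq C(M,{}^gM)$.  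If $\ee$ were yclept, $\groupoid(M,M)$ would be closed (\corref{cor:closedyclept}), hence all of $\Sym(\Omega)$; but a permutation that scrambles the order cofinally is visibly not in $\groupoid(M,M)$, giving the contradiction.  This route needs no hypothesis on $L\setminus\{\leq\}$ and no delicate approximation.
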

 \prf  Suppose $X_{max}=\emptyset$.  Consider two models $M,N$ of $T$ with the same universe $\Om$.  Let
 \begin{align*}
     C(M,N) = \\
     \{a \in \Om:  (\forall x,y)(a \leq_M  x,y \,  \bigvee \   a \leq_N x,y)   \to  y \leq_M  x \iff y \leq_N x \}
      \end{align*}
So $C(M,N)$ is closed upwards for both both $\leq_M$ and $\leq_N$, and they agree on $C(M,N)$.
Define $M \ee N$ iff $C(M,N)$ is cofinal in both $M$ and $N$.  We can easily verify that this is an equivalence relation. We use that  $C(M,N) \meet C(N,N') \subset C(M,N')$.
If $a \in M$, then there exists $a' \in C(M,N)$ with $a \leq_M a'$, and $a'' \in C(N,N')$ with $a' \leq_N a''$.  So $a'' \in C(M,N)$ too, and with $a' \leq_M a''$, etc.
We will show that $\groupoid(M,M)$ is dense in $Sym(\Om)$.  It suffices to show that any permutation $\si_F$ of a finite set $F \subset \Om$
extends to an element of $\groupoid(M,M)$.   For each $a \in \Om$, choose $\beta(a) \geq a$ such that no element of $F$ is $\geq \beta(a)$. This is easy to
do using the condition $X_{max}=\emptyset$.  Let $C^* = \union_{a \in \Om} \beta(a)^+$.  Then $C^* \meet F = \emptyset$.  Extend
$\si_F$ to a permutation $g$ of $\Om$ fixing $C^*$.  Let $N=^g M$, i.e. $g: M \to N$ is an isomorphism.  Then $C(M,N) \supseteq C^*$, and 
$M \ee N$, i.e. $g \in \groupoid(M,N)$.  Thus  we have proved density. Since $T$ is $\forall$-yclept, we know in particular that this $\mer$ is yclept.  By Theorem
\ref{thm:aerelative} $\groupoid(M,M)$, being closed and dense, contains every permutation. 
Thus the equivalence relation has either one class or just singleton classes.
In both of these cases, any definable set has a maximal element, namely any element, contradicting the assumption.   
\eprf

This proves the first item.  For later use, note that since $T$ is $\forall$-yclept, the claim above is valid not only for $(X,\leq)$ but also for the induced partial ordering on $X',X'',\ldots$ as long as they are nonempty.

 The proof of the second item is very similar.  We prove the contrapositive.  Thus assume $\leq$ is a definable partial ordering of
$X$ with an infinite chain.  It follows that for some complete type $P$, $\leq $ has no maximal element on $P$.
$P$ is the intersection of definable sets $D_k$.
Define $C(M,N)$ by exactly the same formula as in the proof above.  But now define $M \ee N$  if $D_i^M=D_i^N$ for each $i$, and $C(M,N)$ is cofinal in both $D_i(M)$ and $D_i(N)$.
So $\ee$ is $\bigwedge$-definable.  If $\ee$ holds then
$P^M=P^N$,  and  $P \meet C(M,N)$ is cofinal
in $D_k^M$ and in $D_k^N$ for every $k$. Hence, if $M,N$ are slightly saturated, $P \meet C(M,N)$ is cofinal
in $P(M)$ and in $P(N)$.  The proof that $\ee$ is a
 $\mer$ is the same as in the previous item.   Let $M$ be a countable resplendent model of $T$.  $\groupoid=\groupoid(M,M)$ acts on $P$, and the same proof as in 
the first item  shows that $\groupoid$ induces a dense  subgroup of $Sym(P)$.
  If $\ee$ were yclept,  then $\groupoid$ would be closed and so would induce $Sym(P)$.  But clearly it is possible to construct a permutation
 of $P$ that does not extend to any element of $\groupoid$, since it is not order-preserving on any final segment of $P$.
 Thus  $\ee$ is not yclept.
    \eprf

 \begin{rem}  If  there is a unique $1$-type on $X$, then the $\mer$ constructed in the proof above is actually definable, not just $\wedge$-definable. If $T$ is $\aleph_0$-categorical and has a definable partial order on $X$ with infinite chains, we have exhibited a non-yclept  $\bimer$ on it. \end{rem}

We remark on a higher-dimensional generalization, with the same proof, mutatis mutandis.   Assume $\leq$ is given on $X^k$.   By a {\em hyperplane} of $X^n$
we just mean a subset defined by $x_i =a$, where $x_i$ is one of the coordinates.  We   call a definable set $Y$ meager if for any $a$, for some $b \geq a$,
$b^+ \meet Y = \emptyset$.
Then Proposition \ref{prop:yclept-unsop} generalizes to:  Let $\leq$ be a definable partial ordering on $X^n$, such that any hyperplane is meager.
Then $T$ admits a non-yclept $\mer$.

We believe that the proof of Proposition \ref{prop:yclept-unsop} can be extended
to show that if every 
$\inftybimer$  of $T$ is $k$-ydlept for some $k$, then $T$ has NSOP.

\section{Examples} \label{sec:examples}

In this section we give a number of examples.
In each case, we discuss the equivalence relation and the corresponding groupoid.
The examples point to the different ways to approach $\mer$s: for example, through
giving the equivalence relation, through giving the groupoid, or for showing a complete set of invariants.

\ssec{First-order reducts}   Let $L_0$ be a  reduct of $L$, and $\ee_0$ the equivalence relation that preserves $L_0$. 
Clearly, the invariants are   the  $L_0$ reduct.  If $L_0$ is finite, this is a $\bimer$.
The corresponding groupoid  is the $T$-groupoid whose morphisms are $L_0$-isomorphisms.

\ssec{Definable topologies}\label{ex:deftop} 

Let $\tau$ be definable family of definable sets, forming a basis for a topology in $T$  \cite{flumziegler}.
The invariants are   the set of open sets, and
the groupoid is the set of   homeomorphisms.
For $DLO$, with the usual definable topology,  the maximal CL reduct is trivial. 

For the specific model $\Rr$, any automorphism preserving the topology preserves also the ``betweenness'' relation  of triples.
But for $\Qq$, the homeomorphism group is dense.  For instance interchanging  $(-2 \pi,- \pi)$  with $(\pi,2\pi)$ while fixing  the rest is a homeomorphism.
Thus it is not the case that any automorphism of any model fixing $\tau$ also fixes the betweenness relation.

It follows easily that the maximal CL reduct is trivial.   Further, no definable family of definable sets is invariant, unless it is definable from pure equality.
Hence the $\mer$ is not $n$-ydlept for any $n$.
The fact that the $\mer$ is not $n$-ydelpt also follows from the facts that:
\begin{inparaenum} \item the CL reduct is trivial, hence in particular the $\mer$ is not ydlept.
\item plus Proposition \ref{prop:stable-n} and Proposition \ref{prop:nipctistem}, which tell us that DLO is istem, so a $\mer$ that is $n$-ydlept would need to be ydlept. 
\end{inparaenum}

Note that Proposition \ref{prop:maxred} implies $n$-transitivity of homeomorphisms on saturated models, for each $n$;
but not for an arbitrary model, witness $\Rr$, where orientation is preserved.

\ssec{Uniform continuity}   \label{ex:unifcont} Let $T=Th(\Qq,+,<)$.   Consider the category of uniformly continuous maps:
\[ (\forall u>0)(\exists v>0) (\forall x)(\forall x') (|x-x'|<v \to (f(x)-f(x')|< u ) \]
For maps from $M$ to $N$, $v,x,x'$ range over $M$ but $u \in N$.

Let $\groupoid$ be the groupoid of  invertible morphisms in this category.     The same can be done on a definable set $X$;
take $X$ to be the  plane (the set of pairs).  Then even looking at just the single model $\Rr$,
it is clear that  the maximal CL reduct is trivial.

It is clear that no definable family of definable subsets of the plane $\Pi=\Rr^2$ can    be preserved by all uniformly continuous maps, unless it is definable in pure equality on $\Pi$.  In particular a family that is preserved by all such maps must consist of finite and co-finite sets.
For if $X\subset \Pi$ is neither finite nor co-finite, it will have a 1-dimensional boundary which is a finite Boolean combination  of lines and points, but with at least one line or interval among them.  Then a uniformly continuous
map can be found that moves the interval to an arc of circle say, or in any case something nonlinear.   
This shows that the  equivalence   relation induced by uniformly continuous maps on $\Pi$ is not  $n$-``unary ydlept'' for any $n$, i.e. where only
definable families of subsets of $\Pi$ itself are allowed.

 It should not be difficult (though possibly a little tedious) to extend this and 
 show that no definable family of definable subsets of  $\Pi^n$ is preserved, unless it is definable in pure equality alone.    Given this, it follows  that the  equivalence  
relation induced by uniformly continuous maps on $\Pi$ is not  $n$-ydlept for any $n$.

 \ssec{Higher-order internal set theory}  \label{ex:ho} 
 
 We give an example of an $n$-ydlept equivalence relation over a sort $P$, 
that  is not $(n-1)$-ydlept over sort $P$.    
$T_n$ has sorts $P,P_1,\ldots,P_n$ with certain graph structures on them;  the equivalence relation $\ee_n$ looks only at $P$ and a certain  
 $n$-subset of $P$, associated with the graph structures.  In particular, two models with the same $P$ and same $n$-set on $P$ will be $\ee_n$-equivalent.

We construct $T=T_n$.   The language will have  a ``main'''
sort $P=P_0$ and   additional sorts $P_i$ for $0<i<1+n$. There will be  binary relations 
$\G_i \subset P_i \times P_{i+1}$ for each $0 \leq i<n$.    The axioms state that:
\begin{itemize}
    \item 
$(P_0,P_1,\G_0)$ and $(P_{n-1},P_n,\G_{n-1})$  are random bipartite graphs;
\item
  for $0<i<n$:  for any  disjoint  finite $A, A' \subset P_{i-1}$ and $B,B' \subset P_{i+1}$, there exists an element $c \in P_i$
such that for all $a \in A, a' \in A', b \in B, b' \in B'$ we have  $(a,c) \in \G_i, (c,b) \in \G_{i+1},  (a',c) \notin \G_i, (c,b') \notin \G_{i+1}$.  
\end{itemize}
It is easy to check that $T_n$ is complete, $\aleph_0$-categorical with QE.

A $1$-block on $P$ is a subset of the form $\G(b)=\{a \in P: (a,b) \in \G\}$, where
$b \in P_1$.   Similarly we define $2$-blocks as sets of blocs represented by elements of $P_2$, etc.   
Let $\groupoid_n$ be the  $\bigp$ given by the  groupoid  of bijections $P \to P$ respecting the families
of $i$-blocks  for each $i \leq n$.    Thus the associated $\bimer$ $\ee_n$ makes $M,N$ equivalent if the $m$-sets described above are equal for $M$ and for $N$, for all $m \leq n$.  

By definition, $\ee_n$ is $(n+1)$-ydlept, indeed an $(n+1)$-set reduct. As with any $(n+1)$ set over sort $P_0$, we can consider it
as a $\mer$ with only one coupled sort, $P_0$, and the other sorts decoupled.
 Thus, in considering the class of models that can be considered equivalent, we do not restrict the universes of the
sorts $P_1 \ldots P_n$.

We conjecture that the $\ee_n$ cannot be expressed as an $n$-set  $\mer$.
Or equivalently, by  Lemma \ref{lem:special},  that it separates
$(n+1)$-ydlept $\mer$s from $n$-ydlept $\mer$s. Below we present only a partial result, showing that
it separates $(n+1)$-ydlepts over the given $P_0$ from $n$-ydlepts with the same $P_0$.
Equivalently, the argument below will give an $(n+1)$-ydlept $\mer$ over coupled sort $P_0$ that is not an $n$-ydlept over coupled sort $P_0$.

    \begin{lem} \label{ydleptLS}  Let $T$ be a countable theory, $\ee$ an $(n+1)$-set $\mer$ on $T$ over coupled sort $S_0$.  
Let $M \models T$ with $|S_0|$ countable.
Then there exists $M' \prec M$, $|M'| \leq \beth_n$, such that $M \ee M'$.
\end{lem}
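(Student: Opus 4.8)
The plan is to prove this by induction on $n$, peeling off one layer of the $(n+1)$-set at a time via a downward Löwenheim–Skolem argument that controls the size of the auxiliary sorts introduced by the Shelahization. For the base case, $n=0$: $\ee$ is a $1$-set $\mer$ over $S_0$, i.e.\ it is the $\bimer$ of preserving a single definable subset $D\subseteq S_0^k$ (a $0$-set). Since $S_0(M)$ is countable and $D$ is built only from the coupled sort, $D^M$ is determined by $S_0(M)$; take any elementary substructure $M'\prec M$ with $S_0(M')=S_0(M)$ and $|M'|\le\aleph_0=\beth_0$, which exists by downward Löwenheim–Skolem since $L$ is countable. Then $D^{M'}=D^M$ so $M\ee M'$.

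For the inductive step, suppose the statement holds for $n$ and let $\ee$ be an $(n+2)$-set $\mer$ over $S_0$. By the definition of $(n+2)$-sets (and the equivalence with $(n+2)$-ydlepts, \lemref{lem:special}), $\ee$ is given by a definable family $\mathcal F$ over $S_0$, say parametrized via $\phi(\vec x_0,\vec p)$ with $\vec x_0$ of sort $S_0$, together with an $(n+1)$-set $\mer$ $\ee_1$ on the Shelahization $T_{\mathcal F}$ over the coupled sorts $S_1 = S_0\cup\{S_F\}$, where $S_F$ is the new imaginary sort indexing $\mathcal F$; and $M\ee N$ iff they agree on the $2$-set $\mathcal F$ over $S_0$ and their canonical $T_{\mathcal F}$-expansions are $\ee_1$-equivalent. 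First I would bound $|S_F(M)|$: the number of distinct sets in $\mathcal F(M)$ is at most $|S_0(M)|^{|\vec x_0|}\cdot(\text{something})$ — more carefully, $S_F(M)$ is a set of canonical parameters for subsets of $S_0^{|\vec x_0|}(M)$, so $|S_F(M)|\le 2^{|S_0(M)|}\le 2^{\aleph_0}$. Hmm — this already exceeds $\beth_0$, which is why the bound in the conclusion is $\beth_n$ rather than something smaller, and why we lose one beth per layer.

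So the heart of the argument is: apply the induction hypothesis to $\ee_1$ as a $\mer$ on $T_{\mathcal F}$ over coupled sort $S_1$, but I must first arrange that the coupled sort $S_1$ of the expanded model $M_1:=M^{\mathrm{eq}}\restriction T_{\mathcal F}$ has size at most $\beth_1$ (so that the induction hypothesis, which requires the coupled sort to be countable, does not literally apply — here is the subtlety). The fix is to strengthen the inductive hypothesis to: \emph{if the coupled sort $S_0$ of an $(n+1)$-set $\mer$ has size $\le\kappa$ (with $\kappa\ge\aleph_0$), then $M$ has an elementary substructure $M'$ of size $\le\beth_n(\kappa)$, with the same $S_0$, with $M\ee M'$} — where $\beth_0(\kappa)=\kappa$ and $\beth_{m+1}(\kappa)=2^{\beth_m(\kappa)}$. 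With this uniform statement, the induction goes through: given $M$ with $|S_0(M)|\le\kappa$, form $M_1$; then $|S_1(M_1)|=|S_0(M)|+|S_F(M_1)|\le\kappa+2^{\kappa}=2^\kappa=\beth_1(\kappa)$; apply the induction hypothesis to $\ee_1$ over $S_1$ to get $M_1'\prec M_1$ of size $\le\beth_n(2^\kappa)=\beth_{n+1}(\kappa)$ with $M_1\ee_1 M_1'$ and the same $S_1$; the reduct $M'$ of $M_1'$ to $L$ is an elementary substructure of $M$ (since the $T_{\mathcal F}$-expansion is canonical and conservative — no new $L$-definable sets, as noted after \defref{eq}), it has the same $S_0$, and it agrees with $M$ on $\mathcal F$ (as $S_0$ and $S_F$ are preserved) and on $\ee_1$; hence $M\ee M'$. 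Finally specializing to $\kappa=\aleph_0$ gives $|M'|\le\beth_{n}(\aleph_0)=\beth_n$, which is the stated bound (noting the index shift: an $(n+1)$-set gives $n$ applications of the "take $2^{-}$" step beyond the base, landing at $\beth_n$).

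The main obstacle I anticipate is purely bookkeeping: getting the index of $\beth$ to match the statement exactly (the off-by-one between "$(n+1)$-set" and "$\beth_n$"), and being careful that the downward Löwenheim–Skolem step at each stage can be performed \emph{while fixing the entire coupled sort pointwise} — this needs that the coupled sorts are genuinely stably embedded in the relevant sense for the LS argument, or rather just that we can choose the Skolem hull to contain all of $S_1(M_1)$, which is fine as long as $|S_1(M_1)|$ is at most the target size, exactly what the recursive beth bound guarantees. No deep model theory is needed beyond Löwenheim–Skolem and the basic properties of Shelahization from \defref{eq}.
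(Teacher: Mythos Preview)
Your argument is correct. The induction via Shelahization works: the key points---that $|S_F(M_1)|\le 2^\kappa$ since $S_F$ parametrizes subsets of $S_0^k$, that the strengthened hypothesis with relative beths $\beth_n(\kappa)$ is what actually propagates, and that $M_1'\prec M_1$ in $L_F$ restricts to $M'\prec M$ in $L$ with the same $S_0$ and the same $\mathcal F$---are all handled. The only place to be slightly careful is that the definition of $(n{+}1)$-ydlept allows $S_1$ to properly contain $S_0\cup\{S_F\}$; but since you start from an $(n{+}2)$-\emph{set} rather than a general $(n{+}2)$-ydlept, you can arrange the decomposition so that $S_1=S_0\cup\{S_F\}$ exactly, and the size bound goes through.

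The paper's proof, by contrast, is a single sentence: since the $(n{+}1)$-set associated to $\phi(\vec x_1;\ldots;\vec x_{n+1})$ is an element of the $n$-fold iterated power set of $S_0^{|\vec x_1|}$, one can choose at most $\beth_n$ witnesses (at all levels simultaneously) and take a L\"owenheim--Skolem hull of size $\beth_n$ containing $S_0(M)$ and those witnesses. Your approach unpacks exactly this counting into an explicit recursion, gaining a uniform statement over any base cardinal $\kappa$ and making transparent why each layer costs one exponential; the paper's approach simply asserts the bound and trusts the reader to see the witness count. Both are the same L\"owenheim--Skolem idea, differently organized.
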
 \prf  The equivalence relation $\ee$ requires having the same $n$-set for a certain $(n+1)$-ary relation $R$.
By L\"owenheim-Skolem, there exists $M' \prec M$ with the same $n$-set as $M$, and $|M'| \leq \beth_n$.   \eprf

\begin{cor} \label{randomline-sorts} The $\mer$ $\ee_n$ 
defined cannot be induced by an $n$-set over coupled sort $P_0$.
\end{cor}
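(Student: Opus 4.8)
The plan is to apply \lemref{ydleptLS} to a single well-chosen model and read off a contradiction from cardinalities. Suppose $\ee_n$ were induced by an $n$-set over the coupled sort $P_0$. An $n$-set is an $(n'+1)$-set with $n'=n-1$, so \lemref{ydleptLS}, applied with $n-1$ in place of $n$, would give: for every $M\models T_n$ with $|P_0^M|$ countable there is $M'\prec M$ with $P_0^{M'}=P_0^M$, $|M'|\le\beth_{n-1}$, and $M\ee_n M'$. But $M\ee_n M'$ forces $M$ and $M'$ to have the same $n$-set over their common $P_0$, i.e.\ the same family of $n$-blocks; so if $M$ has at least $\beth_n$ pairwise distinct $n$-blocks, then $|P_n^{M'}|\ge\beth_n$, hence $|M'|\ge\beth_n>\beth_{n-1}$, a contradiction. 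Thus it suffices to exhibit one model $M^*\models T_n$ with $P_0^{M^*}$ countably infinite and with at least $\beth_n$ pairwise distinct $n$-blocks over $P_0^{M^*}$.

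I would build $M^*$ by a level-by-level genericity construction in which $P_0$ is fixed, countably infinite, and never enlarged, while elements are freely added to $P_1,\dots,P_n$. The invariant to maintain at level $j$ (for $1\le j\le n$) is that there is a set $Q_j\subseteq P_j$ with $|Q_j|\ge\beth_j$ whose elements have pairwise distinct $j$-blocks. For the base, take $P_0=Q_0$ countably infinite. For the step, given such a $Q_{j-1}$, choose an independent family $\mathcal A$ of $\beth_j=2^{\beth_{j-1}}$ subsets of $Q_{j-1}$ and, in a suitable extension adding no element of $P_0,\dots,P_{j-1}$, introduce for each $A\in\mathcal A$ an element $p_A\in P_j$ whose $\G_{j-1}$-neighbourhood is exactly $A$; then the $j$-block of $p_A$ is $\{(j-1)\text{-bl}(q):q\in A\}$, and since the $(j-1)$-blocks of the elements of $Q_{j-1}$ are pairwise distinct, $A\mapsto (j\text{-block of }p_A)$ is injective, so $Q_j:=\{p_A:A\in\mathcal A\}$ works. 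The genericity axioms of $T_n$ are $\forall\exists$, so they survive the union of the resulting chain, and $M^*$ is then a model of $T_n$ with the desired properties.

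The delicate point — and the main obstacle — is the extension step: one must realise the prescribed $\G_{j-1}$-neighbourhoods while preserving all genericity axioms of $T_n$ and, crucially, without being forced to add elements to $P_0$. Choosing the families $\mathcal A$ to be \emph{independent} is what makes this work: every finite Boolean combination of sets in $\mathcal A$ is infinite, so the demands made by the level-$j$ genericity axiom (for finite disjoint $B,B'\subseteq P_j$, a $c\in P_{j-1}$ adjacent to exactly the elements of $B$ among $B\cup B'$, with prescribed edges below) remain simultaneously satisfiable, and the whole system of requirements is consistent — witnessed inside any monster model of $T_n$ — so it can be met by adding elements only above $P_0$. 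That $P_0$ can be kept countably infinite while the higher sorts grow to size $2^{\aleph_0}$ is precisely the elementary phenomenon that a countable family of subsets of $\mathbb N$ can shatter a dense independent family of continuum-many subsets of $\mathbb N$; consequently the two random-bipartite-graph axioms at the ends of the diagram impose no obstruction, provided one fixes such a dense independent family at each end at the outset. With $M^*$ in hand, the cardinality count of the first paragraph completes the proof.
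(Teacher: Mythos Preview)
Your proof is correct and takes essentially the same approach as the paper: pick a model with $|P_0|$ countable and $\beth_n$ distinct $n$-blocks, apply \lemref{ydleptLS} (with $n-1$ in place of $n$) to get $M'\prec M$ of size $\le\beth_{n-1}$ with $M\ee_n M'$, and read off a cardinality contradiction from the $n$-block count. The paper simply asserts ``Let $M\models T_n$ with $|P_i(M)|=\beth_i$'' and that $|M'|\ge\beth_n$ follows from sharing the same $n$-blocks; you have filled in the construction of such a model via independent families and the inductive verification that the $i$-blocks stay pairwise distinct, which the paper leaves implicit (note that distinctness of $i$-blocks is in fact automatic in any model of $T_n$, since the axioms force extensionality level by level).
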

\prf Let $M \models T_n$ with $|P_i(M)| = \beth_i$.   If  $\ee_n$ were $n$-ydlept, by \lemref{ydleptLS} there would be $M' \prec M$,
$|M'| \leq \beth_{n-1}$ with $M \ee M'$.  But this implies that $M,M'$ have the same family of $n$-blocks, so $|M'| \geq \beth_n$, a contradiction.
\eprf

\begin{rem} \label{rem:randomline}
We give some additional observations on this family of examples:
\begin{enumerate}
\item   $T_1$ is the theory of random bipartite graphs.   Any $T_n$ is interpretable in $T_2$.   Namely if $M=(P_0,P_1,P_2; \G_0,\G_1) \models T_2$,
define $P_3:=P_1, P_4:=P_0,  P_5=P_1\ldots $. So $P_{i}=P_1$ for $i$ odd, $P_{4j} = P_0$, and $P_{4j+2}=P_2$.   This is a model of 
$T_\infty= \bigcup T_n$ since the axioms concern only consecutive triples of $P_i$'s, and are verified in $T_2$.

\item  Similarly, the random graph with two edge colors --- i.e. the model completion of the theory
of two disjoint binary relations $(P,E_1,E_2)$ --- interprets each $T_n$. Let $P_k=P$ for each $k$,
and let the edge relation between $P_k$ and $P_{k+1}$ be $E_1$ if $k$ is even, $E_2$ if $k$ is odd.

The random graph with two constants interprets the random four-partite graph, and thus interprets each $T_n$.

Can we combine the higher-ydlept $\mer$s $\ee_n$ on each $T_n$ exhibited above to find a $\bimer$ on the random graph that is not $n$-ydlept for any $n$?

\item    Simon Thomas'  conjecture \cite{simonthomasconjecture}, that there are only finitely many reducts of certain theories $T$, would imply that there are only finitely many ydlept $\mer$s 
 on such theories. Thomas proved it for the random graph, and Lu \cite{yunlu}
extended  this to the bipartite random graph $T_1$.   We are not sure if reducts of $T_2$ have been looked at; in any case  the  example shows   that $T_2$ has an infinite chain of non-ydlept $\mer$s.

\end{enumerate}\end{rem}

 \ssec{A quasi-finite topology}  \label{ex:qft}  We give an example  similar to  
the previous Example \ref{ex:ho} for $n=2$.  But, like the example in Subsection \ref{ex:deftop}, it can be formulated topologically.  
  Let $T$ be the theory of a vector space over $\Ff_p$
with nondegenerate symplectic form.   

Let $V$ be a saturated model. For example,  the unique countable model.    We have a topology on $V$ generated
by the subgroups of finite index $V_a$, where $V_a =\{v \in V: (a,v)=0 \}$.  Any open subgroup of index $p$ can be shown 
to have the form $V_a$ for some $a$.  Let $\groupoid$ be the group of homeomorphic vector space automorphisms. This contains
$Aut(V)$ and is definable, since any $V_a$ must go to some $V_{b}$.  
By definition, this  is the 
 $2$-ydlept equivalence relation generated by the definable family $V_a:a \in M$, along with the definable set $+$.

The first order structure preserved is just $+$ which
does not explain $\groupoid$.  So it is not yclept.
Note that this example is quasi-finite, and thus quasi-finite theories can have $\bimer$s that are not yclept.
   Here the topology does not have a definable basis, but the homeomorphism groupoid turns out to be definable anyway.

\ssec{Definable differential geometry}
\label{ex:diffgeo}

 Let $T$ be the theory with $2$ sorts. One sort $R$ supports a real closed field structure, and the other   sort $X$ being  some fixed 
definable subset of $n$-space over a real closed field, with the induced structure: for example, the $2$-sphere as a subset of $3$-space.  
 
Consider first the case that $X$ is just the segment $(0,1)$ in $1$-space.
Let $\groupoid(\Psi)$ be the groupoid of bijections $f$ that are isomorphisms on the $R$ sort, and -- identifying the two fields via the isomorphism --  acting by $k$-times differentiable diffeomorphisms on the $X$-sort.  
Consider, for example, $n=k=1$,

\begin{align*} (\forall x \in X)(\exists c \in R) (\forall \e \in R')(\exists  \delta \in R) (\forall y \in X)\\
(|x-y|<\delta) \implies (|f(y)- f(x)- c \cdot (y-x) | < \e \cdot |y-x| )  
\end{align*}

There are many rich variations here, e.g. asking to preserve Riemannian structure.
This may give a new way of thinking about them model-theoretically.

Many of these geometric structures can be presented either via (charts and local) isomorphisms of a specified type,
or via sheaves, which can be thought of as higher-order reducts.  
 For 
the differentiable structure above, invariants can be taken as the differentiable functions on $X$ into $R$.
See \cite{cartanconnection,cartangeometry}.

\ssec{Cofinal higher order set theory}  \label{ex:cof} Here we assume, as above, that  $T$ has $n$ disjoint predicates $P=P_0,\ldots,P_n$ and random graph structures $\G_i$ on $P_i \times P_{i+1}$ for each $0\leq i<n$.   But now $P$ is also assumed to carry a partial ordering $\leq$.   
We define partial orderings $\leq_k$ on $k$-sets in $P$ as follows:  $0$-sets are elements of $P$ and we let $\leq_0 = \leq$.   If $a,b$ are $(k+1)$-sets, thus a set of $k$-sets,
we define $a \leq_{k+1} b$ iff $(\forall u \in a)(\exists v \in b)(u \leq_k v)$.  

We can define a partial order relation on models of $T$ with the same universe:  $M \leq M'$ iff the $(n+1)$-set consisting of
$n$-sets coded in $P_n$  in $M$ is $\leq_{n+1}$ than the corresponding one of $M'$.
This give rise to a $\mer$ defined by $M \ee M'$ exactly when $M \leq M' $ and $M' \leq M$.

Already for the case $n=0$ this  includes the example in Subsection \ref{ex:deftop}:  Start with the partial ordering on pairs $(a,u)$
where $a$ is a point and $u$ an element of the definable basis, with $a \in u$; where $(a,u) \leq (b,v)$ iff
$a=b$ and $u \subset v$.

It would be interesting to see a natural set of invariants, for example, based on a notion of second order reduct,
generalizing the way that a {\em topology} is the invariant of cofinal equivalence on bases.

This example cannot be captured by an $n$-ydlept higher set theory for any $n$, as in Example \ref{ex:ho},  since 
 e.g. for the topology of the example in Subsection \ref{ex:deftop} on $\Qq$, no definable family of definable sets  can be invariant.
It is not clear if analogues can exist for simple or even stable theories.

\ssec{Continuous logic fragments} \label{ex:CLexample}
We give an example of a (stable) theory $T$ and $\bigwedge$-definable equivalence relation on models of $T$, 
 given by equality of certain continuous logic reducts, 
 whose maximal \emph{first-order} invariant reduct is trivial.

Let $T$ be a theory with infinitely many independent unary predicates $P_n$, $n=0,1,\ldots$.
Let $a=(a_n)$ be a sequence of positive reals whose partial sums converges.   Let $\phi_n$ be the characteristic function of $P_n$,
and define $\psi_a(x) = \sum_n a_{n} \phi_n$.    Let $\groupoid_a$ be the groupoid preserving $\psi_a$.   It can be checked
to be $\bigwedge$-definable.  E.g. for the $b_n$ below, equivalence of a primed and unprimed model  holds iff for each $n$, for all $x$,
$|\sum_{i <n } \phi_n(x) - \sum_{i<n} \phi'_n| \leq 2^{-n}$; this last expression is just a certain Boolean combination of $P_i(x)$
and $P_i'(x)$ for $i <n$. Thus its maximal CL-approximation, as given
by Proposition \ref{prop:maxred}, is itself.

In case $a_n=3^{-n}$, we have $\groupoid_a = Iso_T$, i.e. the full structure can be recovered.
But if $b_n=2^{-n}$, $\groupoid_b \neq Iso_T$. Any permutation mixing up elements lying in   $P_0 \setminus (\union_{k\geq 1}P_k)$
 with elements lying in $\meet_{k \geq 1} P_k \setminus P_0$ is an automorphism.  
It is clear that $\groupoid_b$ does not preserve any   relations that are first-order definable 
 in $T$.

 \ssec{An yclept but not ydlept example over a stable theory} \label{ex:stablex}  
Recall that in Section \ref{sec:notydlept} we have given a stable theory with a $\bimer$ that is yclept but not ydlept. Here
we provide a much simpler example of something that is yclept but not ydlept in a stable theory -- but one that is not a $\bimer$.

\begin{example} \label{ex:easyycleptnotydlept}
Let $T$ be the (stable) theory of equivalence relations $E_q$ for $q \in \Qq$, with $E_p \subset \ee_q$ if $p<q$.
Let $\ee$ be the equivalence relation on models of $T$ making two models $(\Om,E_p)_{p \in \Qq}$ and $(\Om,E'_p)_{p \in \Qq}$ equivalent if for any $p<q<r \in \Qq$, $E_{p} \subset E'_q \subset E_r$.  This is $\bigwedge$-definable.  

The maximal CL reduct of any finite sublanguage is just the equality.
However, the example is still yclept.   For any $a,b$
the CL binary relation with value $\inf \{ p \in \Qq : a E_p b \} \in \{-\infty\} \union \Rr \union \{\infty\}$ is definable,
and determines $\ee$.

A similar example with unary predicates is also possible.  
\end{example}

   \ssec{The Lipschitz groupoid} \label{ssec:lipschitz} 

We give an
 example of an equivalence relation which is definable by a  $\Sigma_2$ sentence, which we refer to as the
    \emph{Lipschitz groupoid}.    We have the theory of real closed fields, presented via the field sort $R$ as well as another sort $X$
   which is just another copy of $R$.    $T$ knows about the identification of $R$ with $X$, and in particular includes the function
   $(x,y) \mapsto |x-y|$, from $X^2$ to $R$.    

We consider the category 
   whose objects are models of $T$, and whose morphisms are   bijections $g: M \to M' \models T$ that are field isomorphisms on $R$,     and Lipshitz on $X$:  
   \[
   (\exists \delta \in R^{M'} ~ \delta>0)(\forall x,x' \in X^M)(  |gx-gx'| \leq \delta \cdot g(|x-x'|) )
   \]
   The Lipschitz groupoid   consists of the invertible maps in this category.  We leave to the reader the verification that this is indeed a category: that is,  closed under composition.

It is easy to see that the maximal yclept reduct has at most the ordering relation on  $X$, by density: for any two increasing $n$-tuples in $X$ there is a Lipschitz map taking one to the other, e.g. a continuous piecewise linear   map, with finitely many slopes.     But there are many non-Lipschitz maps that preserve the ordering: for example, a map with slope $n$ on $[n,n+1]$.      Hence this relation is not yclept, and thus is not $\Pi_2$-definable by Theorem \ref{thm:ae}.

Note that two sorts $X,R$ are needed since while they look the same to $T$, the identifying map and even the distance map $X^2 \to R$ is not preserved
by the groupoid.

\subsection{Quantifier complexity} \label{Quantifier complexity}

Let us summarize the examples from    this section,
keeping in mind
that the non-yclept ones cannot be $\Pi_2$ by Theorem \ref{thm:ae}.    
Some basic observations are:
 
\begin{itemize}
\item Every yclept  $\bimer$
is universal; and we saw that conversely universal $\mer$s are yclept.  
  
 \item The  basis-definable topologies example, as in  \ref{ex:deftop} is $\Pi_3$.  Witness the definition of continuity in   metric space style:
   \[ (\forall x)(\forall \e)(\exists \delta)(\forall y)(d(x,y)< \delta \to \ldots)\]
   Likewise uniform continuity (Example \ref{ex:unifcont}), as well
   as Example \ref{ex:qft}.  

\item As noted in Subsection \ref{ssec:lipschitz}, the Lipschitz example is $\Sigma_2$.
   
  \item The differential examples  \ref{ex:diffgeo} are, as written, $\Pi_5$.   Higher order differentiability, if written naively, leads to formulas of progressively higher complexity. 
  
  On the other hand if we define the $\mer$ using  polynomial approximations - asserting existence of a polynomial of degree $d$ approximating the function to $O(d+1)$ - 
  one stays at a bounded complexity level for any order of differentiability. 
  
  The latter $\mer$ is a refinement of the former; to show that they are equivalent would require a proof of Taylor's theorem that remains valid in nonstandard models; the most standard proof
  does not have this property, since it invokes Cauchy's mean value theorem.  The ``alternative proof'' in the wikipedia entry on Taylor's theorem, using l'Hopital's rule, does fit the bill, at least in the one-variable case.

  \item The $n$-ydlept examples   \ref{ex:ho}.n. are typically $\Pi_{2n-1}$. 
   
   \item 
     The cofinal set theory examples in Example \ref{ex:cof}.n. have quantifier complexity at most $\Pi_{2n+2}$
       \end{itemize}

 \ssec{Separating ydlept and higher ydlept in the finite}
 
 We comment briefly on  the case where we restrict to \emph{finite structures}, with empty theory.
In this context, we give a direct proof that there is a $2$-ydlept equivalence relation that is not  ydlept. We believe
that the proof idea extends to  separate $(n+1)$-ydlept from $n$-ydlept in the finite, but have not verified this.

Consider the theory with two sorts $P$ and $Q$, $G(x,y)$ be a binary relation with $x$
of sort $P$ and $y$ of sort $Q$,  and $T$ the theory of the random bipartite graph. Given
$x_0$ of sort $P$ in some model of $T$, the \emph{adjacency set of $x_0$} is the set of $y$ such
that $G(x_0,y)$ holds.  We can then talk about the  adjacency sets of a model $G$: the set of
adjacency sets of $x$ as $x$
ranges over sort $P$ interpreted in $G$. 
We consider the $2$-ydlept $\ee$ generated by the partitioned formula $G(x;y)$. That is, two models $G$ and $G'$
are equivalent  if they have the same adjacency sets.
\begin{prop} \label{prop:finitesep} $\ee$ is not ydlept over finite models.
\end{prop}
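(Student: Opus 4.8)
The plan is to prove Proposition~\ref{prop:finitesep} by a counting argument, playing the doubly‑exponential number of $\ee$‑classes against the at most singly‑exponential number of reducts cut out by a \emph{fixed finite} list of formulas.

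First I would fix notation. Since $\ee$ is the $2$‑ydlept generated by the partitioned formula $G(x;y)$, read so that the adjacency sets are subsets of $Q$, the coupled sort is $Q$ and the sort $P$ is decoupled; thus $\ee$ compares finite bipartite graphs $(P,Q,G)$ and $(P',Q,G')$ sharing the sort $Q$, and $G\ee G'$ holds exactly when $\mathcal S_G:=\{\,N_G(a):a\in P\,\}$ and $\mathcal S_{G'}$ coincide as subsets of $\mathcal P(Q)$, where $N_G(a)=\{y\in Q: G(a,y)\}$. Correspondingly, a DL reduct witnessing ydleptness is a finite list $\mathcal R=\{\phi_1,\dots,\phi_N\}$ of first‑order $\{P,Q,G\}$‑formulas whose free variables all have sort $Q$ (each $\phi_i$ being free to quantify over $P$ and to mention $G$); let $q_i$ be the number of free variables of $\phi_i$ and $r=\max_i q_i$. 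Suppose toward a contradiction that $\ee$ restricted to finite structures equals $\equiv_{\mathcal R}$.

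Now the counting step. Fix a finite $Q$ with $|Q|=n$. On one side, every nonempty $\mathcal S\subseteq \mathcal P(Q)$ is realised as $\mathcal S_G$ — take $P=\mathcal S$ and declare $a\in P$ adjacent to precisely the members of $a$ — and distinct $\mathcal S$ give $\ee$‑inequivalent structures (all with coupled universe $Q$, the underlying $P$ being allowed to differ since $P$ is decoupled); so $\ee$ has at least $2^{2^n}-1$ classes among structures with coupled universe $Q$. On the other side, for any finite $G$ over this $Q$ the reduct $G\upharpoonright\mathcal R$ is completely determined by the tuple $(\phi_1^G,\dots,\phi_N^G)$, and each $\phi_i^G$ is merely a subset of the finite set $Q^{q_i}$; hence $\equiv_{\mathcal R}$ has at most $\prod_{i=1}^N 2^{n^{q_i}}\le 2^{Nn^r}$ classes among those structures. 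Choosing $n$ large enough that $2^n-1>Nn^r$ — valid for all sufficiently large $n$, the left side being exponential and the right polynomial in $n$ — forces $2^{2^n}-1>2^{Nn^r}$, so $\equiv_{\mathcal R}$ has strictly fewer classes than $\ee$ on structures with coupled universe $Q$. By pigeonhole there are $G,G'$ with $\mathcal S_G\ne \mathcal S_{G'}$ but $G\upharpoonright\mathcal R=G'\upharpoonright\mathcal R$, i.e.\ $G\equiv_{\mathcal R}G'$ while $G\not\ee G'$, contradicting $\equiv_{\mathcal R}=\ee$.

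The step that needs care — and essentially the only subtlety — is the finiteness of $\mathcal R$: over finite structures one cannot invoke compactness to replace an arbitrary DL reduct by a finite one, and indeed $\ee$ \emph{does} equal $\equiv_{\mathcal R}$ for the \emph{infinite} reduct $\{\psi_k:k\ge 0\}$, where $\psi_k(y_1,\dots,y_k)$ says that some $P$‑vertex is adjacent to exactly $y_1,\dots,y_k$. So the statement must be understood, as is appropriate for $\bimer$s throughout the paper, as ydlept via \emph{finitely many} formulas; this is the right reading since $\ee$ is itself a $\bimer$ (it is defined by a single $\Pi_3$ sentence in the two‑model language), and under that reading the argument above is complete. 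I would also remark that the same device, applied to the theories $T_n$ of Example~\ref{ex:ho} while counting the realisable families at the top level, is what should separate $(n+1)$‑ydlept from $n$‑ydlept over finite models.
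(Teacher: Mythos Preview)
Your proof is correct and takes a genuinely different route from the paper's. The paper argues constructively: it fixes a candidate formula $\psi$, passes to finite graphs satisfying enough random bipartite graph axioms so that $\psi$ is equivalent to a quantifier-free $\phi$, writes $\phi$ in DNF, and then splits on whether any disjunct contains a relational atom. If none does, $\phi$ is a pure equality formula and any two graphs on the same domain agree on it while differing on adjacency sets; if some disjunct does, the paper exhibits, for any witness $\vec x_0$, a graph $G'$ with the \emph{same} adjacency sets as $G$ (obtained by swapping the adjacency sets of finitely many $P$-vertices with those of suitably chosen ``random'' partners) in which $\vec x_0$ fails $\phi$. Either way, $\equiv_{\{\psi\}}\neq\ee$ on finite structures, and the argument extends to finite lists in the obvious way.

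Your counting argument sidesteps all of this: you simply observe that over a fixed $Q$ of size $n$ there are $\geq 2^{2^n}-1$ $\ee$-classes but at most $2^{Nn^r}$ possible $\mathcal R$-reducts, and compare growth rates. This is shorter, needs no quantifier elimination or approximation by random graphs, and makes transparent why the statement must be read as ``ydlept via finitely many formulas'' --- a point you handle carefully and which the paper leaves implicit. What the paper's approach buys in exchange is explicit witnesses: it actually names $\ee$-equivalent $G,G'$ disagreeing on a given $\psi$, which is more informative than a pigeonhole existence and is the kind of construction one would want when pushing toward the higher separations you allude to at the end.
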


Given first-order open formula $\psi$, let $C_\psi$ be the class of graphs that satisfy sufficiently many axioms for the
random graph. We will need the number of axioms to be high enough that $\psi$ is equivalent to
a quantifier-free formula $\phi$ for all graphs in $C_\phi$. 

We write $\phi$ as $\bigvee_i A_i$ where $A_i=\bigwedge_j \phi_{ij}$ where $\phi_{ij}$
is an equality atom, a relational atom, or the negation of a relational or equality axiom.
We further decompose the $A_i$ into a subset $A$ that include a relational atom
and a subset $B$ that do not include a relational atom.

We also can assume:
\begin{itemize}
\item the $A_i$ are pairwise contradictory
\item each $A_i$ specifies the equalities among $\vec x$
\end{itemize}

The first case to consider is where  $A$ is empty.

\begin{lem} If $A$ is empty, there
are $G, G'$ that agree on the $\phi$ reduct and disagree on some adjacency set.
\end{lem}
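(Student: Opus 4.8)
The plan is to exploit that when $A$ is empty, the formula $\phi$ is simply a Boolean combination of equalities among the variables $\vec x$; in particular, whether a tuple satisfies $\phi$ depends only on the \emph{equality type} of the tuple, not on any graph edges. So the $\phi$-reduct of a bipartite graph $G$ carries no information beyond the two cardinalities $|P(G)|$ and $|Q(G)|$ (and which coordinates of a tuple land in which sort). Hence, to prove the lemma, it suffices to exhibit two finite bipartite graphs $G$ and $G'$ on the \emph{same} vertex sets $P$ and $Q$, which therefore automatically agree on the $\phi$-reduct, but which have different families of adjacency sets.

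Concretely, first I would fix the vertex sets: take $|P| = n$ and $|Q| = m$ where $m$ is chosen large enough that $2^m \geq n+1$, so that there are at least $n+1$ distinct subsets of $Q$ available as adjacency sets. (We also need $n,m$ large enough that $G, G' \in C_\phi$, i.e. that they satisfy the finitely many random-bipartite-graph axioms that make $\psi \equiv \phi$; this is harmless since we may take $n, m$ as large as we like.) Now define $G$ by choosing the $n$ adjacency sets of the vertices of $P$ to be $n$ \emph{distinct} subsets of $Q$, say $S_1, \ldots, S_n$; and define $G'$ by choosing the adjacency sets to be $S_1, S_1, S_3, S_4, \ldots, S_n$ — that is, duplicate the first adjacency set and drop $S_2$. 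Then the family of adjacency sets of $G$ is $\{S_1, \ldots, S_n\}$, which contains $S_2$, while the family of adjacency sets of $G'$ is $\{S_1, S_3, S_4, \ldots, S_n\}$, which does not contain $S_2$. So $G$ and $G'$ disagree on the $2$-set generated by $G(x;y)$, hence are not $\ee$-equivalent; but since they share the same $P$ and $Q$ and $\phi$ only sees equality types of tuples, they agree on the $\phi$-reduct. This contradicts the assumption that $\ee$ is generated (on $C_\phi$) by preserving the $\phi$-reduct.

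The main subtlety — though a minor one — is bookkeeping around the reduction to $C_\phi$: we are given a first-order open formula $\psi$ defining the purported ydlept witness, and we need to pass to the quantifier-free equivalent $\phi$ on the class where enough random-graph axioms hold, then verify the constructed $G, G'$ genuinely lie in that class. Since the random bipartite graph axioms are existential-positive extension statements, any graph with $|P|$ and $|Q|$ sufficiently large relative to the quantifier count of the axioms in question can be taken to satisfy them (or we simply observe that the statement "$\ee$ is ydlept over finite models" would have to hold for \emph{arbitrarily large} finite models, so it is enough to refute it there). The genuinely new content — that $\phi$ with $A$ empty is equality-invariant and hence blind to the edge relation — is immediate from the syntactic description of $\phi$ as $\bigvee_i B_i$ with each $B_i$ a conjunction of (negated) equality atoms. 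I expect no real obstacle here; the interesting work in Proposition~\ref{prop:finitesep} will come in the complementary case where $A$ is nonempty, which is not part of this lemma.
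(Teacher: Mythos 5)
Your proposal is correct and follows essentially the same route as the paper: since $A=\emptyset$ means $\phi$ is a pure equality formula, any two bipartite graphs on the same domain automatically agree on the $\phi$-reduct, so one simply picks two such graphs with different families of adjacency sets (the paper's proof is exactly this one line, without even the explicit construction or the $C_\phi$ bookkeeping you supply).
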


\prf Take any $G$ and $G'$ with the same domain but differing on some adjacency set.
\eprf

Note that if $A$ is non-empty, there will be witnesses to disjuncts in $A$ in sufficiently large models.
And since disjuncts are disjoint, no disjuncts in $E$ will be satisfied by these tuples.

\begin{lem} For each $G$ in $C_\phi$, for each $\vec x_0$ in  $G$ satisfying a disjunct
of $A$ in $\phi$, there
is $G'$ containing $\vec x_0$ with the same adjacency sets as $G'$,
 with $\vec x_0$ not satisfying $\phi$ in $G'$.
\end{lem}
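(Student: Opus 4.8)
The plan is to build $G'$ from $G$ by surgically modifying the adjacency of a single vertex $x_0^*$ (one of the coordinates of $\vec x_0$) so as to destroy the disjunct $A_i$ witnessed by $\vec x_0$, while keeping the \emph{set} of adjacency sets unchanged. The key mechanism is that in the random bipartite graph the adjacency set of a vertex is a ``generic'' subset of $Q$, so we have a lot of freedom: we will move one vertex's adjacency set onto the adjacency set of a different vertex, and simultaneously shuffle finitely many other vertices to compensate, so that the multiset—indeed the set—of adjacency sets is preserved.

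First I would use the normalization already in place: $\phi = \bigvee_i A_i$ with the $A_i$ pairwise contradictory, each $A_i$ fixing the equality type of $\vec x_0$, and $A$ the disjuncts containing a relational atom. Fix $G \in C_\phi$ and $\vec x_0$ satisfying some $A_i$ in $A$. Since $A_i$ contains a relational atom, it asserts either $G(x_a, y_b)$ or $\neg G(x_a,y_b)$ for some coordinate $x_a$ of sort $P$ and $y_b$ of sort $Q$ among $\vec x_0$; call that $P$-coordinate $x_0^*$. The goal is to change the adjacency of $x_0^*$ at the single point $y_b$ (flip it), which immediately falsifies $A_i$; because the disjuncts are pairwise contradictory and each $A_j$ fixes the same equality type on $\vec x_0$, flipping one edge incident to $x_0^*$ cannot make $\vec x_0$ satisfy any other $A_j$ either—any $A_j$ it could now satisfy would have to agree with the new edge relation, but then in $G$ the old tuple would have satisfied the $A_j$ obtained by flipping that atom back, contradicting that $\vec x_0 \models A_i$ and the $A_j$ are disjoint. (This is the one place I would need to argue carefully; see below.)

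The compensation step: after flipping the edge $(x_0^*, y_b)$, the vertex $x_0^*$ now has a new adjacency set $S'$ which may or may not already occur in $G$. If $S'$ already occurs as the adjacency set of some other $P$-vertex, we are almost done: we need only also ensure the \emph{old} adjacency set $S$ of $x_0^*$ still occurs, which we arrange by picking a fresh $P$-vertex $z$ (large models have plenty) and redefining its adjacency set to be $S$—but then $z$'s old adjacency set might disappear, so we iterate. Since only finitely many vertices are touched and the model is large, a finite ``musical chairs'' argument closes this up: formally, one picks a large finite sub-bipartite-graph, lists the finitely many adjacency sets affected, and uses the richness (every sufficiently large random bipartite graph realizes every small configuration) to rearrange adjacency sets among a bounded pool of $P$-vertices so that the image set of adjacency sets is exactly the original one. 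This yields $G'$ on the same domain, with the same family of adjacency sets, and with $\vec x_0 \not\models \phi$ in $G'$.

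\textbf{Main obstacle.} The delicate point is the claim that flipping a single edge incident to $x_0^*$ does not accidentally push $\vec x_0$ into another disjunct $A_j$ of $\phi$ (or of $B$): I would handle this by first absorbing the edge-incidence information of $\vec x_0$ into the ``quantifier-free description'' refinement—i.e. refine the $A_i$ further so that each one decides, for every coordinate pair, the relation $G(x,y)$, which is legitimate since $\phi$ is quantifier-free; then flipping exactly one atom of $A_i$ lands in a disjunct that was, by pairwise-disjointness, not satisfied in $G$, and—crucially—not among the ones satisfiable at all unless it contains the flipped atom with the new value, which is exactly the negation of a clause of $A_i$; disjointness then forces $\vec x_0 \not\models \phi$ in $G'$. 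The secondary bookkeeping obstacle is making the ``musical chairs'' rearrangement precise while staying inside $C_\phi$ (i.e. keeping enough random-graph axioms true); this is routine because we only alter a bounded number of vertices and $C_\phi$ is defined by finitely many extension axioms, each of which survives bounded local modification in a sufficiently large graph.
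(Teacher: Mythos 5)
Your core mechanism has a genuine gap. The lemma demands that $G'$ have \emph{exactly the same family of adjacency sets} as $G$, and your plan gives the vertex $x_0^*$ the new adjacency set $S'=S\,\triangle\,\{y_b\}$ obtained by flipping one edge. In a typical member of $C_\phi$ this set $S'$ is not the adjacency set of \emph{any} vertex of $G$: the class $C_\phi$ only imposes finitely many extension axioms, and these never force two vertices to have adjacency sets differing in exactly one prescribed point (a large random bipartite graph, which satisfies the axioms, has all pairwise symmetric differences of adjacency sets large). Since in your $G'$ the vertex $x_0^*$ carries $S'$, no amount of ``musical chairs'' among the \emph{other} $P$-vertices can remove $S'$ from the family of adjacency sets, and you cannot modify $x_0^*$ again without undoing the falsification; your proposal only treats the case where $S'$ happens to occur already, which is exactly the case that does not need help. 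The paper's proof avoids creating a new adjacency set altogether: it uses the randomness axioms to find another vertex $c'_i$ whose \emph{already existing} adjacency set has the required trace on the finitely many relevant $Q$-points (it contains $E_i$ and avoids $D_i$), and then \emph{swaps the whole adjacency sets} of $c_i$ and $c'_i$. A swap merely permutes the assignment of adjacency sets to vertices, so the family is preserved by construction, and the chosen witness atoms become false.

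A second, independent problem is your disjointness argument that the flip cannot push $\vec x_0$ into another disjunct. If $\vec x_0\models A_j$ in $G'$, then in $G$ it satisfied the conjunction obtained from $A_j$ by reversing the flipped atom; but that conjunction need not be a disjunct of $\phi$, so pairwise contradictoriness of the $A_j$ yields no contradiction. Refining the $A_i$ to complete atomic descriptions does not repair this: the complete type obtained after the flip may perfectly well be one of the disjuncts of the refined $\phi$ (flipping one atom of a complete description simply lands on another complete description, and nothing prevents that one from implying $\phi$). What the argument really needs is the freedom to prescribe the modified vertex's adjacency pattern on \emph{all} the relevant $Q$-coordinates simultaneously, so that every targeted atom can be made to fail at once; replacing the whole adjacency set by a suitably chosen existing one, as in the paper, provides exactly this freedom, while a single prescribed edge flip does not.
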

\prf
For each  $A_i$ in $A$ satisfied by $\vec x_0$ in $G$ we choose $B_i$ a relational atom or the negation
of such in $A_i$ such that $\vec x_0$ satisfies $B_i$, and let $B'_i$ be the corresponding ground atom with
$\vec x_0$ substituted.
We will construct $G'$ so that $\vec x_0$ does not satisfy $B_i$ in $G'$.
Since the disjuncts of $E$ are not satisfied in $G$, they will not be satisfied in $G'$.

We let $c_1 \ldots c_k$ be the elements of $P$ occurring as the first elements  of such an atom.
Let $S_i$ be the adjacency set of $c_i$.
For each $c_i$, we let $D_i$ be the set of elements $d$ such that $G(c_i, d)$ occurs
as an atom in $B_i$ and $E_i$ be the elements $e$ such that $\neg G(c_i, e)$ occurs as an atom of
$B_i$. Thus $S_i$ contains $D_i$ and is disjoint from $E_i$.

We claim that there is $c'_i$ whose adjacency set contains each $E_i$ and is disjoint from $D_i$.
This follows easily from the fact that $G$ is sufficiently random.
Let $G'$ be formed from $G$ by swapping the adjacency sets of each $c_i$ and each $c'_i$.
By construction, the adjacency sets represented in $G'$ are the same as those in $G$.
\eprf

Putting the two lemmas together proves Proposition \ref{prop:finitesep}.

\section{Some additional questions}  \label{sec:questions}

While we have examined examples of definable equivalence relations that are not ydlept, and not even yclept or $k$-ydlept. But we know little about
which theories admit such examples. 
Even with very strong hypotheses on the theories we cannot say anything about all $\bimer$s.
\begin{question}  
 \label{question:tameforall}  Are  totally categorical theories $\forall$-ydlept?  Or $\forall$-yclept? Or even ``$\forall$-$\omega$-ydlept: every $\bimer$ is an $n$-ydlept for some $n$.
\end{question}
We do not know the answer for a single non-disintegrated strictly minimal set. Nor do we know it for the Morley rank two theory, interpretable over pure equality,  of pairs of elements from an infinite set.
In the opposite direction, we do not know a counterexample even for superstable $T$.   For stable theories, we know from Proposition \ref{prop:stable-n} that higher ydlepts are ydlept. And we know it is possible to have yclept equivalence relations that are not ydlept:
see Example \ref{ex:stablex}, for $\inftybimer$s,  and the example from Section \ref{sec:notydlept}.  But   at this level too we do not know if non-yclept definable equivalence relations are possible.

We do not have example with $\bimer$s that are not $n$-ydlept for any $n$ even weakening the hypotheses on the theories considerably.

For example, moving out to NSOP theories, for all we know every $\bimer$ could be $n$-ydlept for some $n$.

\begin{question} \label{question:tamenotpreservenontrivial}   Let $\ee$ be a nontrivial $\bimer$ on the models of an NSOP, $\aleph_0$-categorical  theory $T$.  
Must   $\ee$ preserve a definable family of definable sets,
not definable in pure equality?    

\end{question}

We do not know the answer for the random graph.  Without $\aleph_0$-categoricity,  \ref{ex:stablex} is a counterexample.

 We also know almost nothing about what happens over finite structures.  In particular, our examples of $\bimer$s that are not $n$-ydlept for any $n$ are all on
 infinite structures.

 We can show that for every $\bimer$ there is an ydlept $\mer$ that agrees with it on finite structures ``modulo some small distortion''.
 
Let  $Mod_{fin}(T)$ denote the category of finite models of $T$, with morphisms being isomorphisms.

\begin{prop}  \label{prop:interpfinite} Let $L$ be a finite language, let $T$ be an $L$-theory and let $\ee$ be a $\bimer$.  Then there is an ydlept $\mer$
$(T_1,\ee_1)$  and an interpretation $\psi$ taking models of  $T_1$ to models of $T$ such that for every pair of models $M_1, M'_1$ of $T_1$, 
$\psi(M_1) \ee \psi(M'_1)$ iff $M_1 \ee_1 M'_1$, and
$\psi$  induces an isomorphism between  $Mod_{fin}(T_1)$ and $Mod_{fin}(T)$.  
\end{prop}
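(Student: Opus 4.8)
The plan is to build $(T_1, \ee_1)$ so that a model $M_1$ of $T_1$ "is" a model $M$ of $T$ together with a finite-to-one combinatorial encoding of the distinction between $\ee$-classes that becomes invisible — i.e.\ collapses to equality of a single finite reduct — precisely on finite structures. Concretely, I would start from the observation that by \propref{prop:smallclasses}-style reasoning (or more simply, by the Chang--Makkai theorem applied to the $\lpluslprime$ sentence defining $\ee$), on a \emph{finite} model $M$ of $T$ the $\ee$-class of $M$ is determined by a formula $\phi(\vec x, \vec p)$ over the original signature, with $\vec p$ ranging over $M$: that is, the $\ee$-class is a definable family of definable relations indexed by tuples from $M$. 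So restricted to $Mod_{fin}(T)$, the relation $\ee$ is already $2$-ydlept, hence — by the ``finite-language, small-classes'' analysis — there is a first-order formula $\theta(\vec x, \vec y)$ such that two finite models are $\ee$-equivalent iff they carry the same $\theta$-family of sets. The idea is then to let $T_1$ be $T$ expanded by a new imaginary sort $S$ together with a bijection between $S$ and the $\theta$-family (the Shelahization $T_{\theta}$ of \defref{eq}), and take $\ee_1$ to be the ydlept $\mer$ on $T_1$ given by the reduct consisting of the original $L$-structure \emph{together with} the new sort $S$ and its membership relation $C_\theta$ but \emph{not} the defining formula $\theta$ itself — so that $\ee_1$-equivalence of two expanded models is exactly equality of this reduct.

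The key steps, in order, would be: (1) From the definability of $\ee$ and Chang--Makkai, extract the formula $\theta(\vec x, \vec y)$ describing the $\ee$-class on finite models as a definable family $\mathcal F_\theta$; one must be slightly careful to normalize $\theta$ (as in the proof of \propref{prop:smallclasses}) so that \emph{every} fibre $\theta(\vec x, \vec p)$ lands in the family, i.e.\ $\{\vec x : \theta(\vec x,\vec p)\}$ is always $\ee$-equivalent to the original relation. (2) Set $T_1 := T_{\mathcal F_\theta}$ via \defref{eq}, with new sort $S_{\mathcal F_\theta}$ and relation $C_{\mathcal F_\theta}$; let $\psi$ be the tautological interpretation of $T$ in $T_1$ (forgetting $S$ and $C$). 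By the remark after \defref{eq}, every model of $T$ extends uniquely to a model of $T_1$, so $\psi$ restricted to finite models is an equivalence of categories $Mod_{fin}(T_1) \simeq Mod_{fin}(T)$ — indeed the inverse is the canonical expansion, which is functorial in isomorphisms. (3) Define $\ee_1$ on $Mod(T_1)$ as ``same $L$-reduct on the shared universe, same sort $S$ with same $C_{\mathcal F_\theta}$'': this is an ydlept $\mer$ by construction (it is $\equiv_R$ for the DL reduct $R$ consisting of all of $L$ plus $S, C$). (4) Verify the transfer equation $\psi(M_1)\,\ee\,\psi(M'_1) \iff M_1\,\ee_1\,M'_1$. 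For finite $M_1, M'_1$ this is the content of step (1): agreeing on the $\mathcal F_\theta$-family is the same as agreeing on the $S$-sort-plus-$C$ data (the bijection in $T_{\mathcal F_\theta}$ makes these interchangeable), and on finite models agreeing on the family is equivalent to $\ee$ by choice of $\theta$. Since $\psi$ already induces an isomorphism of the finite-model categories, having the biconditional hold on finite models is in fact all that is literally required by the statement; but it is worth remarking that the same equation holds on all models, because the forward direction (if the reducts agree then the families agree then, by definability of $\ee$ and an ultraproduct/compactness argument, $\ee$ holds) survives to the infinite case, and conversely $\ee$ need not be captured on infinite models — which is exactly the "small distortion" caveat in the surrounding text.

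The main obstacle I expect is \emph{step (1)}, specifically the normalization: Chang--Makkai gives a parametrically definable description of each $\ee$-class on each finite model, but to package this as a \emph{single} $0$-definable family $\mathcal F_\theta$ with the self-consistency property (every fibre lies in the family) one needs the finite-model version of the argument in \propref{prop:smallclasses}, and one should check that the "patch $\theta$ to equal the original relation when the parametric description fails" trick is uniformly first-order expressible. A secondary subtlety is making sure $\psi$ is genuinely an \emph{isomorphism} (not merely an equivalence) of $Mod_{fin}(T_1)$ and $Mod_{fin}(T)$ as the statement demands: here one uses that $T_1$ is the Shelahization, so a model of $T_1$ is a model of $T$ plus the \emph{canonically determined} new sort — there is no choice, and the new sort's underlying set is a fixed quotient construction, so on finite models one can even arrange (by a standard coding of the imaginary sort inside $V^{eq}$ and then inside a fixed sort) that $\psi$ is literally a bijection on objects compatible with morphisms. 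I would handle this by stating the interpretation so that $S_{\mathcal F_\theta}$ is realized as a definable set of canonical parameters inside $T^{eq}$, making the expansion a definitional one and hence $\psi$ an iso of the finite categories on the nose.
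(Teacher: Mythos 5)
Your step (1) is the crux, and it does not go through. The Chang--Makkai theorem, and the argument of \propref{prop:smallclasses}, apply to countable \emph{infinite} models and need the hypothesis that each model has only countably many $\ee$-equivalent expansions (small classes); the present proposition assumes nothing of the sort, so there is no reason each $\ee$-class should be given by a parametrically definable family in the original language. On finite models the parametric definability of each individual class is trivial but useless: what you need is a \emph{single} formula $\theta(\vec x,\vec y)$ working uniformly across all finite models of $T$, and there is no compactness over the class of finite structures to extract one. Indeed, the assertion that $\ee$ restricted to $Mod_{fin}(T)$ is $2$-ydlept is (a strong form of) Question \ref{question:fin}, which the paper explicitly leaves open even for the empty theory; so it cannot be harvested as a by-product of Chang--Makkai. (A secondary wobble: in your step (3) you say $\ee_1$ is $\equiv_R$ for the reduct consisting of \emph{all of} $L$ plus $S,C$; that makes $\ee_1$ equality of the $L$-structures and kills the biconditional already for finite models, so you must mean the reduct $S,C$ alone.) There is also a misreading of the statement: it demands $\psi(M_1)\ \ee\ \psi(M_1')$ iff $M_1\ \ee_1\ M_1'$ for \emph{every} pair of models of $T_1$, not only finite ones, and you concede that one direction of your construction fails for infinite models; the ``small distortion'' in the surrounding text refers to the exponential size increase of $\psi\inv(M)$, not to a weakening of the equivalence.

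The paper's proof takes a different route that avoids needing any uniform presentation of $\ee$ inside $T$. After reducing to a single binary relation $R$ on a predicate $A$ (via bi-interpretation with graphs), it adjoins a sort $B$ with a membership relation $\epsilon\subset A^2\times B$ satisfying a weak set theory (extensionality, singletons, pairwise unions), so that at least all finite subsets of $A^2$ are coded. The definability of $\ee$ as a $\bimer$ is then used directly to define $D=\{b\in B:\ (A,ext(b))\ \ee\ (A,R)\}$ by a first-order formula of $T_1$, and $\ee_1$ is the ydlept $\mer$ preserving the $0$-definable sets $A,B,\epsilon,D$ --- crucially \emph{not} $R$. On finite models $B$ codes the full power set of $A^2$, so $D$ is precisely the code of the $\ee$-class of the $A$-part, and preserving $D$ amounts to $\ee$-equivalence of the $\psi$-images; $\psi$ is just the forgetful functor. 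If you want to salvage your plan, you would have to replace step (1) by some device of this kind, in which the new sort parametrizes arbitrary candidate relations rather than a definable family of $T$.
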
  

\prf   We know that $T$ is bi-interpretable with a theory of graphs; so we may assume $L$ consists of a single sort and a single binary relation $R$.
$L_1$ will have a new predicate $A$, the relation symbol $R$ on $A^2$, and defining $B$ to be the complement of $A$, also a ternary relation $\e \subset A^2 \times B$.
The theory $T_1$ says   $\epsilon$ is a ``membership'' relation on $A^2  \times B$, in the sense that extensionality holds,    any singleton set exists, and the union of two sets exists. 
Let $ext(b)$ denote the set $\{a \in A^2: a ~ \e ~ b\}$. Then an example of extensionality would be  $(\forall a \in A^2)(\exists b \in B) (ext(b)=\{a\})$, etc. If $N \models T$, we write 
  $A(N)$ for the set defined by $A$, but also for  the $L$-structure $(A^N,R^N)$. 
  
We now give the definition of $\ee_1$. Let 
\[ D=  \{b \in B: (A, ext(b)) ~ \ee ~ (A, R) \}
\]
This is a definable subset of $B$ in $N$.   Given  two models $N,N'$ of $T'$ and  a bijection $g: N \to N_1 $ the pair $(N, g N')$ is in the groupoid $\groupoid$ corresponding to $\ee_1$ 
   iff  $g$ preserves the $0$-definable sets $A, B,\epsilon,  D$.    

 The interpretation $\psi$ is just the forgetful functor, forgetting about $B$ and $\e$.   
 On finite models, it is clear that this functor is
 an equivalence, i.e. any finite $M \models T$ is obtained in this way, and if $N_1,N_2 \in Mod_{fin}(T_1)$ then any isomorphism between $\psi N_1 $ and $\psi N_2$  lifts to an isomorphism $N _1 \to N_2$.

For finite $N_1,N_2 \models T_1$, a bijection $g: N_1 \to N_2$ lies in the groupoid corresponding to $\ee_1$ iff it preserves $A$,$B$,$\epsilon$ and the restriction to 
$A(N_1) \to A(N_2)$ is in the groupoid  of $\ee$.

 \eprf
 
Note that the size increase in applying $\psi \inv$ is simply exponential in a polynomial of $n$. More precisely, for some $k$,  for a finite model of $T$ with $n$ elements, $\psi \inv(M)$ has size $\leq 2^{n^k}$;  even in case the $\mer$ is $4$-ydlept, say.

\begin{question} \label{question:fin}  
Let $\ee$ be a  $\bimer$ over the empty theory. Is $\ee$ equivalent over finite structures to an $n$-ydlept model equivalence relation?

Note that the hypothesis can be weakened to: 

\medskip

$\ee$ is given by an $\lequalslprime$ sentence that defines
an equivalence relation over finite models, or over all models.

\medskip

In either case, we do not know the answer.

\end{question}

And the same question for a quite different, highly structured world:  

\begin{question} \label{question:psf}
Let $M$ be   smoothly approximable.  Let $C$ be the class of finite homogeneous substructures of $M$.  Let $\ee$ be a definable model equivalence relation on $C$.  Is $\ee$ equivalent on $C$ to an $n$-ydlept one? 
\end{question} 

 Compare Example \ref{ex:qft}; the topological equivalence relation there is not ydlept, but is equivalent to an ydlept equivalence relation over finite structures, namely, the one preserving $+$.
 
\begin{rem} \label{question:finitelygen}   Let $\ee$ be  $\bimer$  whose maximal CL reduct $R$  is essentially  DL.   
 Must  the $\mer$ generated by $R$  be definable?  

We  outline a construction that, we believe will give a negative answer.
 Construct a bipartite graph $(D,Q,I)$ such that infinitely many elements of $Q$ are $0$-definable.  One way to do this would be to ensure
  there is a unique $c_n \in Q$ with $|D(c_n)|=n$.   Let $\ee$ be the $2$-ydlept $\bimer$ that makes $M,N$ equivalent iff 
  $\{D^M(c): c \in Q(M) \} =   \{D^N(c): c \in Q(N) \}$.    Then for each $n$, $\ee$ refines the ydlept $\mer$ corresponding to the $0$-definable set $D(c_n)$.
  Thus the maximal CL reduct $R$ of $\ee$ contains the definable sets $D(c_n)$.  
  We expect that making $(D,Q,I)$ as generic as possible subject to the above, $R$ will be generated by the $D(c_n)$, but no
  finite set of these will generate $R$.   
  \end{rem}
 \begin{question}  Can a construction similar to \ref{question:finitelygen}, with suitable $I$, provide a $2$-yclept $\ee$ such that the  maximal 
CL reduct preserved by $\ee$ does not generate an ydlept equivalence relation?
 \end{question}
 
.

\bibliographystyle{alpha}
\bibliography{modeleq}

 \pagebreak

 \tableofcontents

 \end{document}